\let\oldref\ref
\renewcommand{\ref}[1]{\mbox{\oldref{#1}}%
}}
\renewcommand*{\backref}[1]{}
\renewcommand*{\backrefalt}[4]{%
\ifnum#1=1 %
\ifnum#3=1 %
Cited on p. %
\else
Cited on p. %
\fi
\else
Cited on pp. %
\fi
#2,\par
}
\newcommand{\comment}[1]{\textcolor{red}{#1}}
\renewcommand{\comment}[1]{}
\newcommand{\ZZ}{\mathbb{Z}}
\newcommand{\QQ}{\mathbb{Q}}
\newcommand{\CC}{\mathbb{C}}
\newcommand{\FF}{\mathbb{F}}
\newcommand{\RRR}{\mathcal{R}} 
\newcommand{\CCC}{\mathcal{C}}
\newcommand{\NNN}{\mathcal{N}}
\newcommand{\VVV}{\mathcal{V}}
\newcommand{\PPP}{\mathcal{P}} 
\newcommand{\K}{\mathrm{K}} 
\newcommand{\D}{\mathrm{D}} 
\newcommand{\R}{\mathrm{R}} 
\renewcommand{\L}{\mathrm{L}} 
\renewcommand{\H}{\mathrm{H}} 
\newcommand{\X}{\mathrm{X}} 
\newcommand{\field}{\mathfrak{F}}
\newcommand{\red}{\mathrm{red}}
\newcommand{\Chain}{\mathrm{C}} 
\newcommand{\Weyl}{\mathrm{W}} 
\newcommand{\Nm}{\mathrm{Nm}} 
\newcommand{\mon}{\mathsf{mon}}
\newcommand{\pf}{\mathsf{pf}}
\newcommand{\rpf}{\mathsf{rpf}}
\newcommand{\lpf}{\mathsf{lpf}}
\newcommand{\groth}{\mathsf{groth}} 
\newcommand{\wirth}{\mathsf{wirth}}
\newcommand{\ltm}{\mathsf{ltm}}
\newcommand{\assoc}{\mathsf{assoc}}
\newcommand{\sym}{\mathsf{sym}}
\newcommand{\one}{\mathbf{1}}
\newcommand{\blank}{\raisebox{-2pt}{$-$}}
\newcommand{\normal}{\trianglelefteq} 
\newcommand{\subgroup}{\le}
\newcommand{\curlyle}{\preccurlyeq}
\newcommand{\eg}{\textit{e.g.}}
\newcommand{\ie}{\textit{i.e.}}
\newcommand{\opcit}{\textit{op.\,cit.}\ }
\renewcommand{\setminus}{\smallsetminus}
\renewcommand{\emptyset}{\varnothing}
\newcommand{\Fil}{\mathcal{F}\kern-1.5pt{\mathit{il}} } 
\newcommand{\too}{\longrightarrow}
\newcommand{\To}{\Rightarrow}
\newcommand{\Too}{\Longrightarrow}
\newcommand{\raiso}{\stackrel{\cong}{\too}}
\newcommand{\laiso}{\stackrel{\cong}{\longleftarrow}}
\newcommand{\Raiso}{\stackrel{\cong}{\Too}}
\newcommand{\ol}[1]{\overline{#1}}
\newcommand{\ul}[1]{\underline{#1}}
\newcommand{\alg}[1]{\mathbf{#1}}
\newcommand{\set}[2]{\left\{#1\,\middle|\,#2\right\}}
\newcommand{\wt}[1]{\widetilde{#1}}
\newcommand{\cat}[1]{\mathscr{#1}}
\newcommand{\orbitrep}[2]{\NNN_{#1,#2}}
\newcommand{\lie}[1]{\mathfrak{#1}}
\newcommand{\longtwoheadrightarrow}%
{\longrightarrow\hspace{-1.2em}\rightarrow\hspace{.2em}}
\renewcommand{\twoheadrightarrow}%
{\rightarrow\hspace{-1.2em}\rightarrow\hspace{.2em}}
\newcommand{\longhookrightarrow}%
{\lhook\joinrel\relbar\joinrel\rightarrow}
\DeclareMathOperator{\Ad}{Ad}
\DeclareMathOperator{\Aut}{Aut}
\DeclareMathOperator{\cInd}{c-Ind}
\DeclareMathOperator{\Ext}{Ext}
\DeclareMathOperator{\GL}{GL}
\DeclareMathOperator{\Lie}{Lie}
\DeclareMathOperator{\nInd}{\boldsymbol{i}} 
\DeclareMathOperator{\nPRes}{\boldsymbol{r}} 
\DeclareMathOperator{\pInd}{\mathit{i}} 
\DeclareMathOperator{\cosoc}{cosoc}
\DeclareMathOperator{\JH}{JH}
\DeclareMathOperator{\Inf}{Inf}
\DeclareMathOperator{\val}{val}
\DeclareMathOperator{\Sp}{Sp} 
\DeclareMathOperator{\height}{ht}
\DeclareMathOperator{\ev}{ev}
\DeclareMathOperator{\coev}{coev}
\DeclareMathOperator{\Res}{Res}
\DeclareMathOperator{\Ord}{Ord}
\DeclareMathOperator{\Hom}{Hom}
\DeclareMathOperator{\RHom}{RHom}
\DeclareMathOperator{\iHom}{\ul{Hom}}
\DeclareMathOperator{\id}{id}
\DeclareMathOperator{\Nat}{Nat}
\DeclareMathOperator{\Ind}{Ind}
\DeclareMathOperator{\RInd}{RInd}
\DeclareMathOperator{\ind}{ind}
\DeclareMathOperator{\Supp}{Supp}
\DeclareMathOperator{\pr}{pr}
\DeclareMathOperator{\Rep}{Rep}
\newaliascnt{main}{subsubsection}
\newtheorem{prop}[main]{Proposition}
\newtheorem{thm}[main]{Theorem}
\newtheorem{lem}[main]{Lemma}
\newtheorem{cor}[main]{Corollary}
\newtheorem{propintro}{Proposition}
\newtheorem{thmintro}[propintro]{Theorem}
\theoremstyle{definition}
\newtheorem{defn}[main]{Definition}
\newtheorem{notation}[main]{Notation}
\newtheorem{ex}[main]{Example}
\newtheorem{rmk}[main]{Remark}
\newtheorem*{defn*}{Definition}
\newtheorem*{claim*}{Claim}
\theoremstyle{remark}
\newtheorem*{rmk*}{Remark}
\newtheorem*{ex*}{Example}
\newtheorem*{notation*}{Notation}
\numberwithin{equation}{subsection}
\renewcommand{\@secnumfont}{\bfseries}
\def\section{\@startsection{section}{1}%
  \z@{.7\linespacing\@plus\linespacing}{.5\linespacing}%
  {\Large\normalfont\bfseries\centering}}
\let\c@equation\c@subsubsection
\newenvironment{hideproof}{\noindent\textit{Proof.}}{\hfill\qedsymbol}
\newif\ifhideproofs
\newcounter{hideeq}[main]
\newenvironment{hideeq}{\refstepcounter{hideeq}\equation}%
{\tag{\themain.\thehideeq}\endequation}
\title[The Geometrical Lemma]{The Geometrical Lemma for Smooth Representations in Natural Characteristic}
\author{Claudius Heyer}
\thanks{The project was funded by the Deutsche
Forschungsgemeinschaft (DFG, German Research Foundation) – Project-ID 427320536
– SFB 1442, as well as under Germany's Excellence Strategy EXC 2044 390685587,
Mathematics Münster: Dynamics–Geometry–Structure.}
\address{Mathematisches Institut, Westf\"alische Wilhelms-Universit\"at
M\"unster, Einsteinstra\ss{}e 62, D-48149 M\"unster, Germany}
\email{cheyer@uni-muenster.de}
\subjclass[2020]{11F85, 18G80, 20G25}
\begin{document}
\begin{abstract} 
The Geometrical Lemma is a classical result in the theory of (complex) smooth
representations of $p$-adic reductive groups, which helps to analyze the
parabolic restriction of a parabolically induced representation by providing a
filtration whose graded pieces are (smaller) parabolic inductions of parabolic
restrictions. In this article, we establish the Geometrical Lemma for the
derived category of smooth mod $p$ representations of a $p$-adic reductive
group. 

As an important application we compute higher extension groups between
parabolically induced representations, which in a slightly different context had
been achieved by Hauseux assuming a conjecture of Emerton
concerning the higher ordinary parts functor. We also compute the (cohomology
functors of the) left adjoint of derived parabolic induction on principal series
and generalized Steinberg representations.
\end{abstract} 
\maketitle
\tableofcontents

\section{Introduction}
\subsection{History and motivation} 
Fix a finite extension $\field/\QQ_p$ and let $G$ be (the group of
$\field$-points of) a connected reductive $\field$-group. In the theory of
complex smooth representations of $G$, the Geometrical Lemma, independently due
to Bernstein--Zelevinsky \cite{Bernstein-Zelevinski.1977} and Casselman
\cite{Casselman.1995}, is one of the main tools in the classification of
irreducible smooth representations in terms of parabolically induced
representations. To state it, we fix a parabolic subgroup $P=
M\ltimes U$ of $G$ with Levi quotient $M$ and unipotent radical
$U$. Denote by $\nInd_{P}^{G} \colon \Rep_{\CC}(M)\to
\Rep_{\CC}(G)$ the normalized parabolic induction functor on the categories of
smooth representations. Its left adjoint is the normalized parabolic restriction
functor $\nPRes^{G}_{P}$. If $Q = L\ltimes N$ is another parabolic
subgroup with Levi quotient $L$ and unipotent radical $N$ such that $P\cap Q$ contains a fixed minimal parabolic subgroup, then the Geometrical
Lemma states that the functor $\nPRes^{G}_{Q} \nInd_{P}^{G}$ admits a filtration
by subfunctors such that the graded pieces are given by functors of the form
\[
\nInd_{g^{-1}Pg\cap L}^{L} \circ
g_*^{-1} \circ \nPRes^{M}_{M\cap
gQg^{-1}},
\]
where $g_*^{-1} \colon \Rep_{\CC}\bigl(M\cap gLg^{-1}\bigr) \raiso
\Rep_{\CC}\bigl(g^{-1}Mg\cap L\bigr)$ is the equivalence of categories induced
by conjugation with $g^{-1}$ and $g$ runs through a certain set of double coset
representatives of $P\backslash G/Q$. All known proofs rely on the use of Haar
measures and the exactness of $\nPRes^{G}_{P}$. They are easily adapted to prove
a Geometrical Lemma for smooth representations over a field $k$ of
characteristic $\neq p$. 

However, if $k$ is a field of characteristic $p$, then the proofs break down
completely. Since there is no modulus character, one cannot talk about
normalization. We denote by $\pInd_{P}^{G} \colon \Rep_k(M)\to \Rep_k(G)$ the
unnormalized parabolic induction functor and $\L^0(U,\blank)$ its
left adjoint. In this context, Haar measures do not exist and
$\L^0(U,\blank)$ is not exact; the Geometrical Lemma takes on a
much simpler but also less satisfactory form: the functor $\L^0(N,\blank)
\pInd_{P}^{G}$ is isomorphic to $\pInd_{P \cap L}^{L} \L^0(M\cap N,\blank)$, see
\cite[Theorem~5.5]{AHV.2019}; in other words, there is a filtration with only
one graded piece. The reason for this pathological behaviour comes down to the
fact that there exists no non-trivial Haar measure on $U$.

As it turns out, there is a full Geometrical Lemma once we pass to the derived
categories.

\subsection{Main results} 
From now on, let $k$ be a field of characteristic $p$. For any $p$-adic Lie
group $H$ we denote by $\D(H)$ the unbounded derived category of the category
$\Rep_k(H)$ of smooth $k$-linear representations of $H$. With the notation from above the parabolic induction extends to a derived functor $\R\!\pInd_P^G\colon \D(M)\to \D(G)$. 
By the main result of \cite{Heyer.2022}, there exists a left adjoint
$\L(U,\blank)$. It should be noted that we need to assume that $\field$ has characteristic zero. If $\field$ has positive characteristic, then $G$ is not a $p$-adic Lie group and the methods of \opcit do not apply; in particular the existence of $\L(U,\blank)$ remains unknown.

To state the
Geometrical Lemma, we fix a set $\orbitrep{P}{Q}$ of double coset
representatives of $P\backslash G/Q$ which normalizes a maximal $\field$-split
torus (of $G$) that is contained in $P\cap Q$. For the notion of filtration on a
triangulated functor, we refer to Definition~\ref{defn:filtration}. If $H$ is a $p$-adic Lie group, we denote by $\dim H$ its dimension as a $p$-adic manifold in the sense of \cite{Schneider.2011}.

\begin{thmintro}[Corollary~\ref{cor:geometrical}] 
\label{thmintro:geometrical}
The functor $\L(N,\blank)\circ \R\!\pInd_P^G \colon \D(M)\to \D(L)$ admits a
filtration of length $\lvert\orbitrep{P}{Q}\rvert$ with graded pieces of the
form
\[
\R\!\pInd_{n^{-1}Pn\cap L}^L \circ (\omega_n\otimes_k\blank) \circ n_*^{-1}
\circ \L(M\cap nNn^{-1},\blank),
\]
for $n\in \orbitrep{P}{Q}$, where $\omega_n \in \D(n^{-1}Mn\cap L)$ is a
character in cohomological degree $-\dim(n^{-1}\ol{U}n\cap N)$ and
$\ol{U}$ is the unipotent radical of the parabolic opposite $P$.
\end{thmintro}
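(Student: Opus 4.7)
The plan is to mimic the classical Bernstein--Zelevinsky/Casselman argument but in the derived category. The starting point is the Bruhat decomposition $G = \bigsqcup_{n \in \orbitrep{P}{Q}} PnQ$. I would order $\orbitrep{P}{Q} = \{n_1,\ldots,n_r\}$ compatibly with the closure order on double cosets, so that for each $i$ the union $V_i := \bigsqcup_{j \le i} Pn_j Q$ is open in $G$. The open-closed decompositions $V_{i-1} \subset V_i$ with locally closed complement $Pn_i Q$ should give rise to exact triangles of ``relative'' parabolic induction functors indexed by restriction of sections to the appropriate stratum, and stringing these together furnishes a filtration of $\R\!\pInd_P^G$ whose graded pieces are attached to the individual double cosets. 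Post-composing with $\L(N,\blank)$ then yields the desired filtration of $\L(N,\blank) \circ \R\!\pInd_P^G$ of length $\lvert\orbitrep{P}{Q}\rvert$.

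Next I would identify the graded piece associated with a fixed $n \in \orbitrep{P}{Q}$. The stratum $PnQ$ is homeomorphic to $P \times^{P \cap nQn^{-1}} nQ$, and the corresponding contribution can be rewritten via iterated derived Mackey decomposition. After conjugating by $n^{-1}$, the subgroup $n^{-1}Pn \cap Q$ splits according to $Q = L\ltimes N$ into a Levi part $n^{-1}Pn \cap L$ and a unipotent part, and using $P = M\ltimes U$ with opposite unipotent radical $\ol{U}$ the unipotent part further decomposes into $n^{-1}Mn \cap N$, $n^{-1}Un \cap N$, and $n^{-1}\ol{U}n \cap N$. The first two pieces produce --- via the usual juggling of adjoint pairs and the Mackey formula for open embeddings --- the expected functors $\R\!\pInd_{n^{-1}Pn\cap L}^{L}$ and $\L(M\cap nNn^{-1},\blank)$, the latter after transport by $n_*^{-1}$.

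The third piece $V_n := n^{-1}\ol{U}n \cap N$ is a unipotent $p$-adic Lie group of dimension $d_n = \dim V_n$. For such a group, the derived left adjoint $\L(V_n,\blank)$ applied to the trivial representation computes --- by a Lazard-type Poincar\'e duality for smooth representations of $p$-adic Lie groups --- to a one-dimensional object in cohomological degree $-d_n$, namely the character of $n^{-1}Mn \cap L$ arising from its adjoint action on $\det\Lie V_n$. This is precisely the origin of the shift by $-\dim(n^{-1}\ol{U}n \cap N)$ and of the character $\omega_n$ in the statement.

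The main obstacle, I expect, is to make the Bruhat filtration rigorous at the level of triangulated functors rather than abelian representations. Unlike in the classical setting, one cannot simply take submodules of a compactly induced space: each open-closed triangle must be promoted to a distinguished triangle of functors $\D(M)\to\D(L)$, and the resulting tower must be shown to form a filtration in the sense of Definition~\ref{defn:filtration} compatible with post-composition by $\L(N,\blank)$. Verifying the requisite base change and projection formulas between the open and closed strata, while bookkeeping the modular characters and cohomological shifts at each step, is where the bulk of the technical work is likely to reside.
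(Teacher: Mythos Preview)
Your outline is essentially the paper's proof: the Bruhat stratification yields a filtration of $\cInd_P^G$ by open $P\times Q$-invariant subsets (Lemma~\ref{lem:filtration}, Proposition~\ref{prop:filtration}), and each graded piece $\Phi_{PnQ}$ is identified with $\Psi_n$ by factoring $\cInd_{n^{-1}(P)\cap Q}^Q$ through $(n^{-1}(P)\cap L)N$ (Proposition~\ref{prop:geometrical}, Theorem~\ref{thm:geometrical}). The one point to sharpen is that the ``usual juggling of adjoint pairs'' is not purely formal here: the commutation of $\L_N$ with inflation and with compact induction (Lemmas~\ref{lem:LN_Inf_commute}, \ref{lem:LN_cInd_commute}, \ref{lem:omega_LN_cInd}) uses as its only non-formal input the fact that $\L_H(\one)\cong\one$ for unipotent $H$, equivalently that derived inflation along $P\to M$ is fully faithful---this is what replaces the Haar-measure arguments of the classical proof and is precisely what produces the character $\omega_n$ in degree $-\dim(n^{-1}\ol{U}n\cap N)$.
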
 

To fix ideas, we note that, if one chooses $\orbitrep{P}{Q}$ carefully and if
$G$ is $\field$-split, then $\omega_n$ is concentrated in degree
$-[\field:\QQ_p]\ell(w)$, where $\ell(w)$ denotes the length of the image $w$ of
$n$ in the (finite) Weyl group. Thus, $\omega_n$ contributes a cohomological
shift which is not detected on the abelian categories; this gives a conceptual
explanation of why there is no proper Geometrical Lemma for (underived) smooth
mod $p$ representations. The twist by the character $\omega_n$ is not surprising
as it occurs also in the classical context although it is hidden in the
normalization of the parabolic induction and restriction functors.

The proof of Theorem~\ref{thmintro:geometrical} follows the general strategy
employed by Bernstein--Zelevinsky and Casselman in that the problem is reduced
to checking certain compatibilities
between compact induction and (derived) coinvariants. Since we do not have Haar
measures at our disposal (which are the primary tool in the classical setting),
it is not possible to write down the required isomorphisms by hand. In comparison, our proof is more conceptual. As a non-formal input the proof uses that the derived inflation functor
$\R\Inf^M_P\colon \D(M)\to \D(P)$ is fully faithful, which follows from the fact
that $U$ is a unipotent group, see \cite[Example~3.4.24]{Heyer.2022}. Hence, the
same proof will apply in other contexts as well.

A general method to determine explicitly the characters $\omega_n$, for $n\in
\orbitrep{P}{Q}$, is presented in Propositions~\ref{prop:duality-character}
and~\ref{prop:explicit-character}. This is applied in Lemma~\ref{lem:delta_w}
to deduce a concrete description of the characters $\omega_n$.

As an application, we will compute several $\Ext$-groups between parabolically
induced representations. These are virtually identical with the main results of
\cite{Hauseux.2016, Hauseux.2018}; the important differences are that Hauseux
computes higher extensions in the category of locally admissible representations
and relies for the strongest of these results on an open conjecture of Emerton
\cite[Conjecture~3.7.2]{EmertonII}, which to our knowledge has only been resolved
for $\GL_2$. In contrast, the $\Ext$-groups in this paper are computed in the
category of all smooth representations and do not rely on conjectural
statements. 

To state the results concerning $\Ext$-groups, we introduce some
notation. We fix a maximal $\field$-split torus contained in a minimal parabolic
subgroup $B$. These choices come with a (relative) root system together with a
set of simple roots. Fix standard parabolic subgroups $P=M\ltimes U$ and $Q =
L\ltimes N$. We choose a distinguished set $\orbitrep{P}{Q}$ of
double coset representatives of $P\backslash G/Q$, see
\S\ref{sss:distinguished-reps} for more details. For each $n\in \orbitrep{P}{Q}$
we consider the smooth character $\delta_n$ of $n^{-1}Mn\cap L$ given by
$\omega_n = \delta_n[\dim(n^{-1}\ol Un\cap N)]$.

\begin{thmintro}[Theorems~\ref{thm:PS} and~\ref{thm:Ext}]~ 
\begin{enumerate}[label=(\alph*)]
\item Assume $Q=B$. Let $\chi\colon M\to k^\times$ be a smooth character, let
$r\in \ZZ_{\ge0}$, and denote $Z(L)$ the center of $L$.
\begin{enumerate}[label=(\roman*)]
\item Let $\chi'\colon L\to k^\times$ be a smooth character. If
\[
\Ext_G^r\bigl(\pInd_P^G\chi, \pInd_B^G\chi'\bigr) \neq 0,
\]
then there exists $n\in \orbitrep{P}{B}$ such that $\dim(n^{-1}\ol Un\cap N) \le
r$ and $\chi'\cong \delta_n\otimes_k n_*^{-1}\chi$ after restriction to $Z(L)$.
\item Assume $\delta_n\otimes_k n_*^{-1}\chi\not\cong \chi$ after restriction to
$Z(L)$, for all $n\in\orbitrep{P}{B}$. For each $n\in\orbitrep{P}{B}$ with
$\dim(n^{-1}\ol Un\cap N)\le r$ one has $k$-linear isomorphisms
\[
\Ext_G^r\bigl(\pInd_P^G\chi, \pInd_B^G(\delta_n\otimes_kn_*^{-1}\chi)\bigr) \cong
\Ext_L^{r-\dim(n^{-1}\ol Un\cap N)}(\one,\one) \cong \H^{r-\dim(n^{-1}\ol Un\cap
N)}(L,k),
\]
where $\H^*(L,k)$ denotes continuous group cohomology.
\end{enumerate}

\item Let $V \in \Rep_k(M)$ and $W\in \Rep_k(L)$.
\begin{enumerate}[label=(\roman*)]
\item Assume $P\nsubseteq Q$ and $P\nsupseteq Q$, that $V$ is left cuspidal and
that $W$ is right cuspidal (see Definition~\ref{defn:left_cuspidal}). Then
\[
\Ext_G^1\bigl(\pInd_P^GV, \pInd_Q^GW\bigr) = 0.
\]
\item Assume $P=Q$. For each $0\le i <[\field:\QQ_p]$ the functor $\pInd_P^G$
induces a $k$-linear isomorphism 
\[
\Ext^{i}_M(V,W) \raiso \Ext_G^i\bigl(\pInd_P^GV, \pInd_P^GW\bigr).
\]
If moreover $V$ is left cuspidal or $W$ is right cuspidal, and
$V$ and $W$ admit distinct central characters, then
\[
\Ext_G^{[\field:\QQ_p]} \bigl(\pInd_P^GV, \pInd_P^GW\bigr) \cong
\bigoplus_{\alpha\in \Delta_M^{\perp,1}} \Hom_M\bigl(\delta_{n_\alpha} \otimes_k
n_{\alpha*}^{-1}V, W\bigr),
\]
where $\Delta_M^{\perp,1}$ denotes the set of simple roots $\alpha$ of $G$
which are orthogonal to all simple roots of $M$ and such that the associated
root space has dimension $[\field:\QQ_p]$ as a $p$-adic manifold. Here,
$n_\alpha\in \orbitrep{P}{P}$ denotes the lift of the simple reflection
corresponding to $\alpha$.

\item Assume $P\supsetneqq Q$ and that $V$ is left cuspidal. For all $0\le i\le
[\field:\QQ_p]$, the functor $\pInd_P^G$ induces a $k$-linear isomorphism
\[
\Ext_M^i\bigl(V, \pInd_{M\cap Q}^MW\bigr) \raiso \Ext_G^i\bigl(\pInd_P^GV, \pInd_Q^GW\bigr).
\]

\item Dually, assume $P\subsetneqq Q$ and that $W$ is right cuspidal. For all
$0\le i\le [\field:\QQ_p]$, the functor $\pInd_Q^G$ induces a $k$-linear isomorphism
\[
\Ext_L^i\bigl(\pInd_{P\cap L}^LV, W\bigr) \raiso \Ext_G^i\bigl(\pInd_P^GV, \pInd_Q^GW\bigr).
\]
\end{enumerate}
\end{enumerate}
\end{thmintro}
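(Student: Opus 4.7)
The plan is to combine the Geometrical Lemma (Theorem~A) with the adjunction $(\L(N,\blank), \R\pInd_Q^G)$ to produce a spectral sequence computing $\Ext_G^*(\pInd_P^G V, \pInd_Q^G W)$. Since derived inflation is fully faithful by \cite[Example~3.4.24]{Heyer.2022}, the functor $\R\pInd_P^G$ is fully faithful, and so
\[
\Ext_G^i(\pInd_P^G V, \pInd_Q^G W) \cong \Hom_{\D(L)}\bigl(\L(N, \R\pInd_P^G V), W[i]\bigr).
\]
Theorem~A equips $\L(N, \R\pInd_P^G V)$ with a length-$|\orbitrep{P}{Q}|$ filtration whose $n$-th graded piece is
\[
F_n(V) = \R\pInd_{n^{-1}Pn\cap L}^L\bigl(\omega_n\otimes_k n_*^{-1}\L(M\cap nNn^{-1}, V)\bigr).
\]
Applying $\Hom_{\D(L)}(\blank, W[i])$ produces a convergent spectral sequence whose $E_1$-page assembles the groups $\Hom_{\D(L)}(F_n(V), W[i])$ and which abuts to $\Ext_G^i(\pInd_P^G V, \pInd_Q^G W)$. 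A second adjunction together with Frobenius reciprocity rewrites each such term as an $\R\Hom$ inside a smaller Levi subgroup, where the character $\omega_n$ appears explicitly as a cohomological shift by $-\dim(n^{-1}\ol Un\cap N)$ and a twist by $\delta_n$.

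For part~(a), $Q=B$ forces $L=T$ a torus, $n^{-1}Pn\cap T = T$, and $n^{-1}Un\cap T = 1$, so each graded piece collapses to $\omega_n\otimes_k n_*^{-1}\L(M\cap nNn^{-1}, \chi)$; restricted to $Z(L)$, this is the character $\delta_n\otimes n_*^{-1}\chi$ placed in cohomological degree $-\dim(n^{-1}\ol Un\cap N)$. Non-vanishing of $\Ext_G^r$ therefore forces the existence of some $n$ satisfying the dimension bound and the central-character identity of~(i). Under the central-character disjointness of~(ii), distinct $n$ contribute to distinct central-character blocks, the spectral sequence collapses at $E_1$, and what remains is $\Ext_L^{r-\dim(n^{-1}\ol Un\cap N)}(\one,\one) = \H^{r-\dim(n^{-1}\ol Un\cap N)}(L, k)$.

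Part~(b) is handled case by case within the same spectral sequence, using cuspidality together with dimension bounds. In~(i), the assumptions $P\nsubseteq Q$ and $P\nsupseteq Q$ imply that for each $n\in\orbitrep{P}{Q}$ either $M\cap nNn^{-1}\neq 1$ — in which case left cuspidality of $V$ annihilates the inner $\L$-term in degree zero — or $n^{-1}Pn\cap L$ is a proper parabolic of $L$, in which case the second adjunction brings in a $\L$ against a proper unipotent subgroup and right cuspidality of $W$ kills the term; either way, every $E_1$-contribution to $\Ext^1$ vanishes. In~(ii) with $P=Q$, the identity coset yields $F_1=\id$, contributing precisely $\Ext_M^i(V,W)$; every other $n\in\orbitrep{P}{P}$ satisfies $\dim(n^{-1}\ol Un\cap N)\ge[\field:\QQ_p]$ because every nonzero root space has dimension at least $[\field:\QQ_p]$ as a $p$-adic manifold, so the identity piece is the unique contribution for $i<[\field:\QQ_p]$; in degree $i=[\field:\QQ_p]$ precisely the simple reflections $n_\alpha$ with $\alpha\in\Delta_M^{\perp,1}$ enter, and the cuspidality and central-character hypotheses make the derived terms collapse to the stated $\Hom_M(\delta_{n_\alpha}\otimes n_{\alpha*}^{-1}V, W)$. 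Parts~(iii) and~(iv) are symmetric: under $P\supsetneqq Q$ and left cuspidality of $V$, only the cosets whose graded piece reassembles via adjunction to $\R\pInd_{M\cap Q}^M V$ survive in degrees $\le[\field:\QQ_p]$, giving $\Ext_M^i(V, \pInd_{M\cap Q}^M W)$; and~(iv) is strictly dual.

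The main obstacle is verifying that the spectral sequence collapses in the degree range claimed. Concretely, one must show that for every $n$ outside the distinguished coset of a given case, the shift encoded in $\omega_n$ strictly dominates every higher-cohomology contribution of the derived $\L$-functors appearing in $F_n$, and that the cuspidality hypotheses precisely annihilate $\L$ against proper unipotent radicals in the relevant degrees. Both ingredients hinge on the explicit description $\omega_n = \delta_n[\dim(n^{-1}\ol Un\cap N)]$ from Lemma~\ref{lem:delta_w} and on the lower bound $[\field:\QQ_p]$ for root-space dimensions as $p$-adic manifolds; once these are in hand, the remaining bookkeeping — matching conjugations, shifts, and adjunctions — is intricate but essentially formal.
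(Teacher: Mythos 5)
Your overall strategy matches the paper's: transfer the $\Ext$-computation to $\D(L)$ via the adjunction $\L(N,\blank)\dashv\pInd_Q^G$, feed in the filtration from the Geometrical Lemma, and then sort contributions by the concentration degree of $\omega_n$ and by central characters, with cuspidality annihilating the relevant $\L^0$ and $\R^0$ against nontrivial unipotent radicals. You package this as a single spectral sequence induced by the filtration; the paper instead uses the composite-functor spectral sequence of \cite[Corollary~4.1.3]{Heyer.2022} together with the explicit direct-sum formula \eqref{eq:PS} for part~(a), and manual distinguished-triangle chases (the triangles \eqref{eq:Ext-1}, \eqref{eq:Ext-2}) for part~(b). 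These are different bookkeeping devices for the same information, so I would not call the route genuinely different.

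That said, you should correct a few points. The sentence deducing the adjunction isomorphism from full faithfulness of $\R\pInd_P^G$ is wrong twice: the isomorphism $\Ext_G^i(\pInd_P^GV,\pInd_Q^GW)\cong\Hom_{\D(L)}(\L(N,\pInd_P^GV),W[i])$ is simply the adjunction $\L(N,\blank)\dashv\pInd_Q^G$ applied to an arbitrary first argument (nothing involving $P$-induction is needed), and moreover $\R\!\pInd_P^G\colon\D(M)\to\D(G)$ is \emph{not} fully faithful -- the very Geometrical Lemma you invoke produces nonzero terms $\L^{-j}(U,\pInd_P^GV)$ for $j>0$, so the unit of $\L(U,\blank)\dashv\pInd_P^G$ fails to be an isomorphism. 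In part~(a), $L=Z=Z_G(T)$ is a minimal Levi and is a torus only when $\alg{G}$ is $\field$-split; otherwise it is strictly larger. The isomorphism $\Ext_Z^i(\chi',\chi')\cong\H^i(Z,k)$ in~(a)(ii) is not a formality; the paper appeals to a theorem of Fust. And in~(b)(i) the second adjunction brings in $\R(n^{-1}(U)\cap L, W)$, not an $\L$-term; it is the vanishing $\R^0(n^{-1}(U)\cap L,W)=0$ from right cuspidality that kills the contribution.
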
 

We further compute the representations $\L^{-j}(N, \Sp_P^G)$ for all $j\ge0$,
where $\Sp_P^G$ denotes the generalized Steinberg representation attached to
$P$, \ie, the unique irreducible quotient of $\pInd_P^G(\one)$; see
Theorem~\ref{thm:Generalized_Steinberg} and
Corollary~\ref{cor:Generalized_Steinberg}. To our knowledge, this is the first
computation of this kind. This raises the question whether one can compute
$\L^{-j}(N,V)$ for all irreducible smooth representations $V$. However, for
supersingular $V$ the answer seems to be out of reach with the current methods
available. A naive hope would be that $\L(N,V) = 0$ for all supersingular $V$,
but \cite[Theorem~10.37]{Hu-Wang.2022} shows that already for $G=\GL_2(\field)$,
where $\field\supsetneq \QQ_p$ is unramified, there exist a supersingular
representation $V$ and a principal series $\pInd_P^GW$ such that
$\Ext_G^1(V,\pInd_P^GW) \neq 0$, and this implies $\L^{-1}(U,V) \neq 0$. In view of
this it is unclear what one should expect.
\subsection{Organization of the paper} 
The Geometrical Lemma is the content of Corollary~\ref{cor:geometrical}. The preparatory lemmas concerning compact induction and derived coinvariants are proved in~\S\ref{ss:induction_coinvariants}; these in turn rely on the abstract results about functors in monoidal categories which are presented in appendix~\ref{ss:abstract}. The various characters that implicitly appear in the Geometrical Lemma are completely determined in~\S\ref{ss:duality}.

Regarding applications, we compute $\L^{-j}(N,V)$ whenever $V$ is a principal series representation (Example~\ref{ex:LU(PS)}) or a generalized Steinberg representation (Theorem~\ref{thm:Generalized_Steinberg} and Corollary~\ref{cor:Generalized_Steinberg}).  Finally, we use the Geometrical Lemma to compute many Ext-groups between parabolically induced representations in Theorems~\ref{thm:PS} and~\ref{thm:Ext}.

\subsection{Acknowledgments} 
I wish to thank Lucas Mann for several interesting discussions. I have further
profited from discussions with Peter Schneider, Eugen Hellmann, Damien Junger,
Kieu Hieu Nguyen, Konstantin Ardakov, Claus Sorensen, and Karol Kozio\l. I thank an anonymous referee for providing detailed feedback and making several helpful suggestions.


\section{Preliminaries}
\subsection{Notation and conventions} 
We fix a finite extension $\field/\QQ_p$ and a coefficient field $k$ of characteristic $p$.
The mod $p$ cyclotomic character of $\QQ_p^\times$ is given by the composition $\varepsilon\colon \QQ_p^\times \to \ZZ_p^\times \to \FF_p^\times \subseteq k^\times$, where the first map is given by $x\mapsto xp^{-\val_p(x)}$ for the usual $p$-adic valuation $\val_p$. 
We put $\varepsilon_{\field} \coloneqq \varepsilon\circ \Nm_{\field/\QQ_p}$, where $\Nm_{\field/\QQ_p}$ is the norm of $\field/\QQ_p$.

If $\alg H$ is an algebraic group defined over $\field$, we denote its
group of $\field$-points by the corresponding lightface letter, that is,
$H = \alg H(\field)$.

We fix a field $k$ of characteristic $p>0$.

For a $p$-adic Lie group $G$, we denote by $\dim(G)$ its dimension as a $p$-adic manifold. We denote by $\Rep_k(G)$ the Grothendieck abelian category of smooth $k$-linear $G$-representations. 

The (unbounded) derived category of $\Rep_k(G)$ is denoted by $\D(G)$; it is a \emph{tensor triangulated category}, that is, a triangulated category which is symmetric monoidal and such that the functors $\blank\otimes_kX$ are triangulated for any $X$ in $\D(G)$. The tensor unit is $\one = k[0]$. We denote the right adjoint of $\blank\otimes_kX$ by $\iHom(X,\blank) \colon \D(G)\to \D(G)$.
The \emph{smooth dual} of $X$ is denoted by $X^\vee \coloneqq \iHom(X,\one)$. 

The category $\D(G)$ is naturally enriched over $\D(k)\coloneqq \D(\{1\})$, the unbounded derived category of the category of $k$-vector spaces; we denote by $\RHom_G(X,Y) \in \D(k)$ the derived Hom-complex, for each $X,Y\in \D(G)$; it defines a triangulated functor in each variable.

The derived category $\D(G)$ comes with a natural t-structure. For any integer $n\in\ZZ$ we denote by $\D^{\le n}(G)$ (resp.\ $\D^{\ge n}(G)$) the full subcategory of objects $X$ in $\D(G)$ satisfying $\H^i(X) = 0$ for all $i>n$ (resp.\ $i<n$). Denote by $\H^i\colon \D(G)\to \Rep_k(G)$ the $i$-th cohomology functor. Note that $\RHom_G(X,Y) \in \D^{\ge0}(k)$ provided $X\in \D^{\le0}(G)$ and $Y\in \D^{\ge0}(G)$.

If $F, G\colon \cat C \to \cat D$ are two functors, we denote by $\Nat(F,G)$ the class of natural transformations from $F$ to $G$.

\subsection{Compact induction and derived coinvariants} 
\label{ss:induction_coinvariants}
Let $k$ be a field of characteristic $p$. 

Many arguments in this section rely crucially on the mate correspondence, which is recalled in \S\ref{sss:mates}.
\subsubsection{} 
\label{sss:compact_induction}
Let $G$ be a $p$-adic Lie group and $H\subgroup G$ a closed subgroup. 
Given a smooth $H$-re\-pre\-sen\-ta\-tion $V$, we denote by $\cInd_H^GV \in
\Rep_k(G)$
the space of all locally constant functions $f\colon G\to V$ which satisfy
$f(hg) = hf(g)$ for all $h\in H$, $g\in G$, and have compact support in
$H\backslash G$; note that $f$ is fixed by an open
subgroup of $G$ under the right translation action. The functor $V\mapsto
\cInd_H^GV$ is exact and hence extends to a triangulated functor on the
(unbounded) derived categories:
\[
\cInd_H^G\colon \D(H) \too \D(G).
\]
As $\cInd_H^G$ clearly commutes with direct sums, Brown representability shows
that it admits a right adjoint $\RRR^G_H$,
cf.~\cite[Corollary~2.3.10(a)]{Heyer.2022}. The functors $\cInd_H^G$ and
$\RRR^G_H$ are transitive, \ie, if $H\subgroup H'\subgroup G$ is a closed
intermediate group, then $\cInd_H^G \cong \cInd_{H'}^G\cInd_H^{H'}$ and
$\RRR^G_H \cong \RRR^{H'}_H \RRR^G_{H'}$.
If $K\subgroup G$ is an open subgroup, we observe that $\RRR^G_K \cong \Res^G_K$ by Frobenius reciprocity, where $\Res^G_K$ is the restriction functor; in this case we prefer to write $\ind_K^G$ instead of $\cInd_K^G$.

\subsubsection{}\label{sss:smooth_induction} 
With $H\subgroup G$ as above, given a smooth $H$-representation $V$, the group
$G$ acts by right translation on the space of all functions $f\colon G\to V$
which satisfy $f(hg) = hf(g)$ for all $h\in H$, $g\in G$; we denote by $\Ind_H^G
V \in \Rep_k(G)$ the subspace of functions which are fixed by an open subgroup
of $G$. The functor $V\mapsto \Ind_H^GV$ is left exact; if $H\backslash G$ is
compact, then $\cInd_H^G \Raiso \Ind_H^G$ is even exact. Taking the right
derived functor, we obtain a triangulated functor
\[
\RInd_H^G\colon \D(H) \too \D(G),
\]
which is right adjoint to restriction $\Res^G_H\colon \D(G)\to \D(H)$ by Frobenius reciprocity. By a slight abuse of notation we write $\Ind_H^G$ for $\RInd_H^G$ in case
$H\backslash G$ is compact.

The restriction functor
$\Res^G_H\colon \D(G)\to\D(H)$ satisfies the following compatibility with
compact induction.

\begin{lem}[Projection formula] 
\label{lem:projection_formula}
Let $H\subgroup G$ be a closed subgroup. There exists an isomorphism
\[
\cInd_H^G\bigl(X\otimes_k\Res^G_HY\bigr)
\raiso 
\cInd_H^G(X) \otimes_k Y 
\qquad \text{in $\D(G)$}
\]
which is natural in $X \in \D(H)$ and $Y\in\D(G)$. Moreover, the natural map
\[
\iHom\bigl(\cInd_H^GX, Y\bigr) \raiso \RInd_H^G\iHom\bigl(X, \RRR^G_HY\bigr)
\]
is an isomorphism in $\D(G)$, for all $X\in \D(H)$ and $Y\in \D(G)$.
\end{lem}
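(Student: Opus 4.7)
The plan is to treat the two assertions in turn: first establish the projection formula by constructing a natural transformation at the level of abelian categories and lifting it to the derived category via exactness, and then deduce the second ($\iHom$) compatibility formally by a Yoneda/adjunction argument.

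For the projection formula, at the level of abelian categories, for $V\in\Rep_k(H)$ and $W\in\Rep_k(G)$ I would define
\[
\Phi_{V,W}\colon \cInd_H^G(V)\otimes_k W \too \cInd_H^G\bigl(V\otimes_k\Res^G_HW\bigr),\qquad f\otimes w\longmapsto \bigl(g\mapsto f(g)\otimes gw\bigr),
\]
verify directly that $\Phi_{V,W}$ is $G$-equivariant and natural in both variables, and check that it is an isomorphism by exhibiting a pointwise inverse: any $F\in\cInd_H^G(V\otimes_k\Res^G_HW)$ is supported on finitely many right $H$-cosets, and on each such coset $Hg$ one can write $F(g)=\sum_j v_j\otimes w_j\in V\otimes W$ and send $F|_{Hg}$ to $\sum_j[g,v_j]\otimes g^{-1}w_j\in\cInd_H^GV\otimes_kW$, where $[g,v_j]\in\cInd_H^GV$ denotes the function supported on $Hg$ with value $v_j$ at $g$. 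Since $\cInd_H^G$ is exact (the construction is pointwise over $H\backslash G$) and $\otimes_k$ is exact over the field $k$, both sides of the projection formula define exact functors in each variable, so $\Phi$ extends degreewise to complexes and yields the desired natural isomorphism of triangulated bifunctors $\D(H)\times\D(G)\to\D(G)$. One could alternatively argue via compact generators of the form $\ind_K^H\one$ with $K\subseteq H$ compact open, reducing the derived statement to the abelian one via the transitivity $\cInd_H^G\circ\ind_K^H=\cInd_K^G$.

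For the second isomorphism, I would identify $\iHom(\cInd_H^GX,Y)$ and $\RInd_H^G\iHom(X,\RRR^G_HY)$ as representing the same functor on $\D(G)$. Concretely, for $Z\in\D(G)$, tensor-hom in $\D(G)$, the projection formula just established (after using $Z\otimes\cInd X\cong\cInd X\otimes Z$), the adjunction $\cInd_H^G\dashv\RRR^G_H$, tensor-hom in $\D(H)$, and the adjunction $\Res^G_H\dashv\RInd_H^G$ yield a chain
\begin{align*}
\RHom_G\bigl(Z,\iHom(\cInd_H^GX,Y)\bigr)
&\cong \RHom_G\bigl(Z\otimes_k\cInd_H^GX,Y\bigr) \\
&\cong \RHom_G\bigl(\cInd_H^G(X\otimes_k\Res^G_HZ),Y\bigr) \\
&\cong \RHom_H\bigl(X\otimes_k\Res^G_HZ,\RRR^G_HY\bigr) \\
&\cong \RHom_H\bigl(\Res^G_HZ,\iHom(X,\RRR^G_HY)\bigr) \\
&\cong \RHom_G\bigl(Z,\RInd_H^G\iHom(X,\RRR^G_HY)\bigr),
\end{align*}
each step natural in $Z$. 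The enriched Yoneda lemma then produces the asserted natural isomorphism in $\D(G)$.

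The main technical obstacle I anticipate is the explicit inverse to $\Phi_{V,W}$: while the forward map is essentially given by a formula, the coset-by-coset inverse requires a short verification that $\sum_j[g,v_j]\otimes g^{-1}w_j$ is independent both of the representative $g$ chosen in the coset $Hg$ and of the presentation of $F(g)\in V\otimes W$ as a finite sum of simple tensors; bilinearity of $\otimes_k$ and the identity $[hg,hv]=[g,v]$ make this transparent. Once this underived projection formula is in place, the passage to the derived category is formal by exactness, and the $\iHom$-statement is purely diagrammatic.
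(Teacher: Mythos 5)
Your proposal is correct and follows essentially the same route as the paper: construct the map $\Phi_{V,W}\colon f\otimes w\mapsto[g\mapsto f(g)\otimes gw]$ on the abelian categories, note exactness of all functors involved to extend to derived categories, and then obtain the $\iHom$ statement from the projection formula by an adjunction/Yoneda argument (the paper phrases this as ``passing to the right adjoints,'' which is exactly the chain of $\RHom$-isomorphisms you write out). The only cosmetic difference is that the paper cites \cite[Lemma~2.5]{AHV.2019} for the underived isomorphism where you supply an explicit coset-by-coset inverse.
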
 
\begin{proof} 
We describe the inverse map.
For any $V\in \Rep_k(H)$ and $W\in \Rep_k(G)$, the morphism $\cInd_H^G(V)\otimes_kW
\to \cInd_H^G(V\otimes_k\Res^G_HW)$ given by $f\otimes w\mapsto
[g\mapsto f(g)\otimes gw]$ is a natural isomorphism,
cf.~\cite[Lemma~2.5]{AHV.2019}. Since $\cInd_H^G$, $\Res^G_H$, and
$\blank\otimes_k\blank$ are exact functors, this isomorphism readily extends to
the derived categories. 

The isomorphism $\cInd_H^G (X\otimes_k\blank) \Res^G_H \Raiso
\cInd_H^GX\otimes_k\blank$ yields, by passing to the right adjoints, an
isomorphism $\iHom(\cInd_H^GX,\blank) \Raiso \RInd_H^G \iHom(X,\blank)
\RRR^G_H$, which proves the last assertion.
\end{proof} 

\subsubsection{} 
\label{sss:coinvariants}
Given a closed normal subgroup $N\normal G$, the inflation 
$\Inf^{G/N}_G\colon \D(G/N)\to \D(G)$ along the projection $G\to G/N$ admits a
right adjoint $\R\H^0(N,\blank)$ as well as a 
left adjoint, see~\cite[Theorem~3.2.3]{Heyer.2022}, which we call the functor of
\emph{derived coinvariants} and denote
\[
\L_N\colon \D(G) \too \D(G/N).
\]
Note that for any subgroup $N\subgroup H\subgroup G$ there is an isomorphism $\Res^{G/N}_{H/N} \L_N \cong \L_N \Res^G_H$ by \cite[Proposition~3.2.19]{Heyer.2022} so that the notation should not lead to confusion.

If $G$ is compact and torsion-free, then $\R\H^0(N,\blank)$ admits a
right adjoint denoted $F^{G/N}_G$, \cite[Lemma~3.1.3]{Heyer.2022}. In
this case, we call $\omega_G\coloneqq F^{G/N}_{G}(\one) \in \D(G)$ the
\emph{dualizing complex}; we remark that $\omega_G \cong k[\dim N]$,
\cite[Proposition~3.1.10]{Heyer.2022}.

\begin{lem}\label{lem:Res_F_commute} 
Let $G$ be compact and torsion-free, and let $N\subgroup H\subgroup G$ be closed
subgroups such that $N\normal G$. Then one has a commutative diagram
\[
\begin{tikzcd}
\D(G/N) \ar[r,"\Res^{G/N}_{H/N}"] \ar[d,"F^{G/N}_G"']
&
\D(H/N) \ar[d,"F^{H/N}_H"]
\\
\D(G) \ar[r,"\Res^G_H"']
&
\D(H).
\end{tikzcd}
\]
\end{lem}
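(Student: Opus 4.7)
The strategy is to pass to left adjoints via the mate correspondence recalled in \S\ref{sss:mates}, reducing the claim to a compatibility between derived invariants and compact induction that is easy to verify on the abelian level.

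Since $G$ is compact, $H\backslash G$ is compact, so by \S\ref{sss:smooth_induction} one has $\cInd_H^G = \Ind_H^G$, and hence $\cInd_H^G$ is simultaneously a left and a right adjoint to $\Res^G_H$; the same holds for $\cInd_{H/N}^{G/N}$ and $\Res^{G/N}_{H/N}$. Consequently, $\Res^G_H\circ F^{G/N}_G$ admits $\R\H^0(N,\blank)\circ \cInd_H^G$ as left adjoint, while $F^{H/N}_H\circ \Res^{G/N}_{H/N}$ admits $\cInd_{H/N}^{G/N}\circ \R\H^0(N,\blank)$ as left adjoint. By uniqueness of adjoints, the claimed diagram will commute as soon as we exhibit a natural isomorphism
\[
\R\H^0(N, \cInd_H^G X) \;\cong\; \cInd_{H/N}^{G/N}\, \R\H^0(N, X)
\]
of functors $\D(H)\to \D(G/N)$.

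On the abelian level this is elementary: for $V\in \Rep_k(H)$, the hypothesis $N\normal G$ with $N\subgroup H$ implies that $f\in \cInd_H^G V$ is $N$-fixed under right translation precisely when $f$ takes values in $V^N$ and factors through $G/N$, yielding a natural identification $(\cInd_H^G V)^N \cong \cInd_{H/N}^{G/N}(V^N)$. To lift this to the derived setting, I would exploit ambidexterity: $\cInd_H^G$ coincides with $\Ind_H^G$, the right adjoint of the exact functor $\Res^G_H$, and hence preserves injectives. Given an injective resolution $V\to I^\bullet$ in $\Rep_k(H)$, the complex $\cInd_H^G I^\bullet$ is then an injective resolution of $\cInd_H^G V$ in $\Rep_k(G)$, so that $\R\H^0(N,\blank)$ is computed on it by the underived invariants; the abelian iso applied termwise realises both sides of the displayed derived iso as the complex $\cInd_{H/N}^{G/N}((I^\bullet)^N)$.

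The main obstacle is not the abelian calculation itself but the mate-correspondence bookkeeping at the derived level, together with the technicality of replacing termwise injective resolutions by K-injective replacements when working on the unbounded derived category $\D(H)$; the decisive input here is the ambidexterity $\cInd_H^G = \Ind_H^G$ granted by the compactness of $G$, which forces $\cInd_H^G$ to preserve injectives (and, by standard arguments, K-injectives) and thereby makes the derivation functorial.
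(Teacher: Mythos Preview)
Your reduction via ``uniqueness of adjoints'' breaks down at the very first step: the claim that $\cInd_H^G$ is a \emph{left} adjoint to $\Res^G_H$ is false when $H$ is a non-open closed subgroup, which is the generic situation here. Compactness of $G$ gives $\cInd_H^G=\Ind_H^G$, hence $\Res^G_H\dashv\cInd_H^G$; but $\cInd_H^G\dashv\Res^G_H$ would force $\RRR^G_H\cong\Res^G_H$, and the paper only asserts this for open subgroups (\S\ref{sss:compact_induction}). Concretely, take $G=\ZZ_p$, $H=\{0\}$, $N=\{0\}$: then $\Hom_{\ZZ_p}\bigl(\cInd_0^{\ZZ_p}k,\,k\bigr)$ is the space of translation-invariant $k$-linear functionals on $C^\infty(\ZZ_p,k)$, which vanishes in characteristic~$p$ (no mod~$p$ Haar measure), whereas $\Hom_k\bigl(k,\Res^{\ZZ_p}_0 k\bigr)=k$. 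So the two composites in the square do \emph{not} share the common left adjoint you describe.

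Your intermediate isomorphism $\R\H^0(N,\cInd_H^G X)\cong \cInd_{H/N}^{G/N}\,\R\H^0(N,X)$ is, by contrast, correct, and the K-injectivity argument you sketch is sound. But passing to right adjoints on this identity yields $\RRR^G_H\,F^{G/N}_G\cong F^{H/N}_H\,\RRR^{G/N}_{H/N}$, which is the lemma with $\RRR$ in place of $\Res$ --- a different statement. The paper closes exactly this gap: it builds the comparison map $\beta\colon \Res^G_H F^{G/N}_G\Rightarrow F^{H/N}_H\Res^{G/N}_{H/N}$ as an iterated right mate of the tautological $\alpha\colon \Res^G_H\Inf^{G/N}_G\Raiso\Inf^{H/N}_H\Res^{G/N}_{H/N}$, uses the projection-formula framework of Lemma~\ref{lem:abstract-a} to reduce to $\beta_{\one}$, and then observes that $\beta_{\one}$ is a nonzero map between invertible objects (both dualizing complexes are $\cong k[\dim N]$), hence an isomorphism. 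This ``nonzero between invertibles'' trick is precisely the input that substitutes for the missing left adjoint of $\Res^G_H$.
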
 
\begin{proof} 
We apply Lemma~\ref{lem:abstract-a} to the isomorphism $\alpha\colon \Res^G_H
\Inf^{G/N}_G \Raiso \Inf^{H/N}_{H} \Res^{G/N}_{H/N}$. By
\S\ref{sss:coinvariants} and the projection formula for $\R\H^0(N,\blank)$,
\cite[Lemma~3.1.2]{Heyer.2022}, the assumptions \ref{Assumption1} and
\ref{Assumption2} of \S\ref{sss:setup1} are satisfied.
Thus, we obtain a commutative diagram
\[
\begin{tikzcd}
\Res^G_H \omega^{G/N}_G \otimes_k \Res^G_H \Inf^{G/N}_G(X)
\ar[r,"\cong"] \ar[d,"\beta_{\one} \otimes\alpha"']
&
\Res^G_H F^{G/N}_G(X) \ar[d,"\beta"]
\\
\omega^{H/N}_H\otimes_k \Inf^{H/N}_H \Res^{G/N}_{H/N}(X)
\ar[r,"\cong"']
&
F^{H/N}_H \Res^{G/N}_{H/N}(X),
\end{tikzcd}
\]
where $\beta \coloneqq r(r(\alpha^{-1})^{-1})$ in the notation
of~\S\ref{sss:setup1}. The top and bottom horizontal maps are
isomorphisms by \cite[Corollary~3.1.7]{Heyer.2022}. The assertion is that
$\beta$ is an isomorphism. But note that $\beta$ is non-zero (as the right mate
of a non-zero map) and hence $\beta_{\one}\neq 0$ by the commutativity of the
diagram. But since $\omega^{G/N}_G$ and $\omega^{H/N}_H$ are characters,
$\beta_{\one}$ is necessarily an isomorphism.
Since
also $\alpha$ is invertible, we deduce that the left vertical map in the
diagram is an isomorphism. Therefore, $\beta$ is an isomorphism.
\end{proof} 

\begin{lem}\label{lem:LN_Inf_commute} 
Let $H, N \normal G$ be closed normal subgroups such that $HN$ is closed. Assume
$\L_{H\cap N}(\one) \cong \one$.\footnote{Recall that by \cite[Proposition~3.2.19]{Heyer.2022} this condition is independent of the group containing $H\cap N$ as a normal subgroup.} Then one has a commutative diagram
\[
\begin{tikzcd}
\D(G/H) \ar[r,"\Inf^{G/H}_G"] \ar[d,"\L_{HN/H}"'] & \D(G) \ar[d,"\L_N"]
\\
\D(G/HN) \ar[r,"\Inf^{G/HN}_{G/N}"'] & \D(G/N).
\end{tikzcd}
\]
\end{lem}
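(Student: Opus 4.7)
The plan is to construct a canonical comparison map via the mate correspondence, use the hypothesis $\L_{H\cap N}(\one)\cong\one$ together with a projection formula to reduce to the case $H\cap N=\{1\}$, and then handle that special case by an application of the abstract mate formalism of the appendix.

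First I would observe that the transitivity of inflation furnishes a canonical natural isomorphism
\[
\alpha\colon \Inf^{G/N}_G\circ\Inf^{G/HN}_{G/N} \raiso \Inf^{G/H}_G\circ\Inf^{G/HN}_{G/H}
\]
(both composites being $\Inf^{G/HN}_G$). The mate of $\alpha$ with respect to the adjunctions $\L_N\dashv\Inf^{G/N}_G$ and $\L_{HN/H}\dashv\Inf^{G/HN}_{G/H}$ is a canonical natural transformation
\[
\mu\colon \L_N\circ\Inf^{G/H}_G \To \Inf^{G/HN}_{G/N}\circ\L_{HN/H},
\]
and the lemma amounts to the assertion that $\mu$ is invertible.

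For the reduction step, I would factor $\Inf^{G/H}_G \cong \Inf^{G/(H\cap N)}_G\circ\Inf^{G/H}_{G/(H\cap N)}$ and invoke the projection formula for the left adjoint $\L_{H\cap N}$ of $\Inf^{G/(H\cap N)}_G$ (see \cite[Lemma~3.1.2]{Heyer.2022}) together with the hypothesis, obtaining
\[
\L_{H\cap N}\,\Inf^{G/H}_G X \cong \L_{H\cap N}(\one)\otimes_k\Inf^{G/H}_{G/(H\cap N)}X \cong \Inf^{G/H}_{G/(H\cap N)}X.
\]
Composing with $\L_{N/(H\cap N)}$ and using transitivity $\L_N \cong \L_{N/(H\cap N)}\circ\L_{H\cap N}$, the left-hand side of the diagram becomes $\L_{N/(H\cap N)}\,\Inf^{G/H}_{G/(H\cap N)}X$. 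Because $G/H$, $G/N$, $G/HN$, and $HN/H$ are canonically unchanged upon replacing $(G,H,N)$ by $(G/(H\cap N),H/(H\cap N),N/(H\cap N))$, the claim is reduced to the case $H\cap N=\{1\}$.

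In that case, $H$ and $N$ centralize each other in $G$ (both being normal with trivial intersection), so $HN$ is the internal direct product $H\times N$, and the quotient $G\twoheadrightarrow G/N$ restricts to a group isomorphism $H\raiso HN/N$. I would then deduce the invertibility of $\mu$ by applying the abstract Lemma~\ref{lem:abstract-a} to the isomorphism $\alpha$; the required projection formulas for $\L_N$ and $\L_{HN/H}$ are provided by \cite[Section~3]{Heyer.2022}. The hard part will be identifying the formally produced mate with $\mu$, which amounts to carefully tracking a zigzag of unit and counit maps through the commutative diagrams arising from the transitivity of inflation, and verifying that the projection-formula hypotheses of the abstract lemma indeed hold in the relevant configuration.
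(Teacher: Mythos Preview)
Your construction of the comparison map $\mu$ as a left mate is correct and matches the paper. Your reduction to $H\cap N=\{1\}$ is a valid and genuinely different move from the paper's: you spend the hypothesis $\L_{H\cap N}(\one)\cong\one$ upfront (via the counit $\L_{H\cap N}\Inf^{G/(H\cap N)}_G\Raiso\id$, which is \cite[Corollary~3.4.23]{Heyer.2022} rather than Lemma~3.1.2) to pass to $G/(H\cap N)$. The paper instead reduces along the other axis: it applies the conservative functor $\Res^{G/N}_1$ together with \cite[Proposition~3.2.19]{Heyer.2022} to reduce to the case $G=N$, and only \emph{then} uses the hypothesis (which in that case reads $\L_H(\one)\cong\one$).

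The gap is in your final step. Lemma~\ref{lem:abstract-a} is not the right tool here. Its hypothesis~\ref{Assumption1} demands a chain of three adjoints $f_{(1)}\dashv f^*\dashv f_*\dashv f^{(1)}$; for $f^*=\Inf^{G/N}_G$ the further right adjoint $f^{(1)}$ to $\R\H^0(N,\blank)$ exists only when the ambient group is compact and torsion-free. More importantly, the conclusion of that lemma concerns the double right mate $\beta=r(r(\alpha^{-1})^{-1})$, not the left mate $\mu$ you defined; so even where the hypotheses hold, the lemma says nothing about $\mu$. After your reduction the hypothesis has been consumed and the case $H\cap N=\{1\}$ still needs an argument.

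The quickest repair is to compose your reduction with the paper's: having reached $H\cap N=\{1\}$, apply $\Res^{G/N}_1$ to reduce further to $G=N$, whereupon $H=H\cap N=\{1\}$ and the statement is a tautology. Alternatively, just follow the paper's route from the start: after reducing to $G=N$, the hypothesis gives that the counit $\varepsilon\colon\L_H\Inf^{G/H}_G\Raiso\id$ is invertible, and a short diagram chase in the mate correspondence identifies the map $\L_G\Inf^{G/H}_G\To\L_{G/H}$ with the composite $(\L_{G/H}\varepsilon)\circ(\psi\,\Inf^{G/H}_G)$, where $\psi\colon\L_G\Raiso\L_{G/H}\L_H$ is the transitivity isomorphism.
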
 
\begin{proof} 
The natural isomorphism $\Inf^{G/H}_G \Inf^{G/HN}_{G/H} \Raiso
\Inf^{G/N}_G \Inf^{G/HN}_{G/N}$ induces, by passing to the left mates, a natural
transformation 
$
\L_N \Inf^{G/H}_G \To \Inf^{G/HN}_{G/N} \L_{HN/H}
$
which we claim is an isomorphism of functors $\D(G/H)\to \D(G/N)$. This can be
checked after applying the conservative functor $\Res^{G/N}_1$. By the
compatibility of restriction with derived coinvariants,
\cite[Proposition~3.2.19]{Heyer.2022}, and inflation we reduce to the case
$G=N$. Hence, we have to show that the natural map
\begin{equation}\label{eq:LN_Inf_commute}
\L_G \Inf^{G/H}_G \Too \L_{G/H}
\end{equation}
which arises as the left mate of $\varphi\colon \Inf^{G/H}_G \Inf^1_{G/H}
\Raiso \Inf^1_G$ is an isomorphism. Let us denote by $\psi\colon
\L_G\Raiso \L_{G/H}\L_H$ the isomorphism obtained from $\varphi$
by passing to the left adjoints. 
By \cite[Corollary~3.4.23]{Heyer.2022}, the
hypothesis $\L_H(\one)\cong \one$ means that the counit $\varepsilon\colon
\L_H\Inf^{G/H}_G \Raiso \id_{\D(G/H)}$ is an isomorphism.
Consider the following commutative diagram
\[
\begin{tikzcd}
&[-5.3em]
\Nat\bigl(L_{G/H}, \L_{G/H}\bigr)
\ar[r,leftrightarrow] \ar[d,"(\L_{G/H}\varepsilon)^*"']
&
\Nat\bigl(\Inf^1_{G/H}, \Inf^1_{G/H}\bigr) 
\ar[d,"\Inf^{G/H}_G"] 
&
[-7.5em]\ni\id_{\Inf^1_{G/H}}
\\
&
\Nat\bigl(\L_{G/H}\L_H\Inf^{G/H}_G, \L_{G/H}\bigr)
\ar[r,leftrightarrow] \ar[d,"(\psi\Inf^{G/H}_G)^*"']
&
\Nat\bigl(\Inf^{G/H}_G\Inf^1_{G/H}, \Inf^{G/H}_G\Inf^1_{G/H}\bigr)
\ar[d,"\varphi_*"]
\\
\eqref{eq:LN_Inf_commute}\in 
&
\Nat\bigl(\L_G\Inf^{G/H}_G, \L_{G/H}\bigr)
\ar[r,leftrightarrow]
&
\Nat\bigl(\Inf^{G/H}_G \Inf^1_{G/H}, \Inf^1_G\bigr),
\end{tikzcd}
\]
where the horizontal maps are given by passing to the right/left mates. Now, the
map \eqref{eq:LN_Inf_commute} is the image of $\id_{\Inf^1_{G/H}}$ under the
lower-right circuit. By the commutativity of the diagram we deduce that
\eqref{eq:LN_Inf_commute} coincides with the composition
\[
\L_G\Inf^{G/H}_G \xRightarrow{\psi\Inf^{G/H}_G} \L_{G/H}\L_H\Inf^{G/H}_G
\xRightarrow{\L_{G/H}\varepsilon} \L_{G/H}
\] 
of two isomorphisms and is thus itself an isomorphism.
\end{proof} 

\begin{lem}\label{lem:LN_cInd_commute} 
Let $N\subgroup H\subgroup G$ be closed subgroups such that $N\normal G$ is
normal. One has commutative diagrams
\[
\begin{tikzcd}
\D(H) \ar[r,"\cInd_H^G"] \ar[d,"\L_N"'] & \D(G) \ar[d,"\L_N"]
&
\D(G/N) \ar[r,"\RRR^{G/N}_{H/N}"] \ar[d,"\Inf^{G/N}_G"']
& \D(H/N) \ar[d,"\Inf^{H/N}_H"]
\\
\D(H/N) \ar[r,"\cInd_{H/N}^{G/N}"'] & \D(G/N)
& \D(G) \ar[r,"\RRR^G_H"'] & \D(H).
\end{tikzcd}
\]
\end{lem}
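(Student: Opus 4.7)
The plan is to prove the first diagram and derive the second by passing to right adjoints. Concretely, taking right adjoints under the adjunctions $\cInd \dashv \RRR$ (horizontal) and $\L_N \dashv \Inf$ (vertical) transforms any natural isomorphism $\L_N\cInd^G_H \Raiso \cInd^{G/N}_{H/N}\L_N$ into the required isomorphism $\RRR^G_H\Inf^{G/N}_G \Raiso \Inf^{H/N}_H\RRR^{G/N}_{H/N}$ of the second diagram, so the second follows from the first at no extra cost.

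To set up the first diagram, I would define $\beta\colon \L_N\cInd^G_H \To \cInd^{G/N}_{H/N}\L_N$ as the left mate, under the vertical adjunctions $\L_N\dashv\Inf$, of the natural isomorphism
\[
\alpha\colon \Inf^{G/N}_G\cInd^{G/N}_{H/N} \Raiso \cInd^G_H\Inf^{H/N}_H.
\]
The isomorphism $\alpha$ arises from the abelian-level $G$-equivariant assignment $\bar f\mapsto \bar f\circ\pi$, where $\pi\colon G\to G/N$ denotes the projection; this map is well-defined precisely because $N\subgroup H$, and lifts to the derived category since both $\cInd$ and $\Inf$ are exact.

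The hard part will be verifying that $\beta$ is a natural isomorphism, since mate correspondence produces only a natural transformation. Both $\L_N\cInd^G_H$ and $\cInd^{G/N}_{H/N}\L_N$ commute with arbitrary direct sums (being compositions of left adjoints), so it suffices to verify $\beta_V$ is an iso when $V$ ranges over a compact generating family of $\D(H)$, for example $V=\ind^H_K(\one)$ with $K$ running through compact open subgroups of $H$. For such $V$, transitivity of compact induction gives $\cInd^G_H V \cong \cInd^G_K(\one)$, and the claim reduces to identifying $\L_N\cInd^G_K(\one)$ with $\cInd^{G/N}_{H/N}\L_N\ind^H_K(\one)$. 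I expect this to be handled by a Mackey-style decomposition of $K\backslash G/N$ combined with the compatibility $\Res^{G/N}_{H/N}\L_N \cong \L_N\Res^G_H$ from \S\ref{sss:coinvariants} and Lemma~\ref{lem:LN_Inf_commute}. Morally, both sides compute the same ``derived balanced tensor product'' along $H\to G$ and $G\to G/N$, and the asserted isomorphism is a form of associativity; however, in the absence of Haar measures, this Mackey step carries the genuine technical content of the argument and may in practice be more elegantly packaged through the abstract mate formalism of appendix~\ref{ss:abstract}.
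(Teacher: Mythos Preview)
Your setup---defining $\alpha$ as the obvious isomorphism $\Inf^{G/N}_G\cInd^{G/N}_{H/N}\Raiso\cInd^G_H\Inf^{H/N}_H$, taking its left mate, and deducing the second diagram from the first by passing to right adjoints---matches the paper exactly.

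Where you diverge is in verifying that the mate is an isomorphism. You propose testing on compact generators $\ind^H_K(\one)$ of $\D(H)$ via a Mackey-type decomposition, but leave that step open. The paper instead restricts along a compact open subgroup $K\subgroup G$ (not $K\subgroup H$), using conservativity of $\Res^H_{K\cap H}$ together with transitivity of $\RRR$ to reduce to the case where $G$ itself is compact and torsion-free. The crucial non-formal input in that regime is the duality $\L_N\cong\R\H^0(N,\omega^{G/N}_G\otimes_k\blank)$ of \cite[Corollary~3.1.8]{Heyer.2022}, only available for compact torsion-free groups. Feeding the isomorphism $\Res^G_H\Inf^{G/N}_G\Raiso\Inf^{H/N}_H\Res^{G/N}_{H/N}$ into Lemma~\ref{lem:abstract-b} then yields a commutative square in which three sides are isomorphisms by this duality, forcing the fourth (the desired mate) to be one too.

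You correctly anticipate that the appendix machinery is the right packaging, but the ingredient you are missing is this duality: without Haar measures there is no direct way to evaluate $\L_N$ on $\ind^G_K(\one)$, and it is the identification of $\L_N$ with a twisted $\R\H^0(N,\blank)$ in the compact case that does the work your Mackey step was meant to do.
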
 
\begin{proof} 
Denoting $\pr\colon G\to G/N$ the projection map, we consider the natural
isomorphism 
\begin{equation}\label{eq:cInd_Inf_commute}
\cInd_H^G \Inf^{H/N}_H \Raiso \Inf^{G/N}_G \cInd_{H/N}^{G/N}
\end{equation}
of functors $\D(H/N)\to \D(G)$, whose inverse is given by $f\mapsto f\circ\pr$ on
the level of underived categories. Passing to the left and right mates,
respectively, we obtain natural transformations
\begin{align}\label{eq:LN_cInd_commute_left}
\L_N\cInd_H^G &\Too \cInd_{H/N}^{G/N}\L_N\\
\label{eq:LN_cInd_commute_right}
\Inf^{H/N}_H\RRR^{G/N}_{H/N} &\Too \RRR^G_H\Inf^{G/N}_G.
\end{align}
Note that \eqref{eq:LN_cInd_commute_right} is obtained from
\eqref{eq:LN_cInd_commute_left} by passing to the right adjoints, and hence one
is an isomorphism if and only if the other is. Thus, it
suffices to prove that \eqref{eq:LN_cInd_commute_right} is an isomorphism.

Let $K\subgroup G$ be any open subgroup; we write $K_H = K\cap H$ and $K_N =
K\cap N$. Consider the following commutative diagram:
\[
\begin{tikzcd}[column sep=normal]
\Res^H_{K_H} \Inf^{H/N}_H \RRR^{G/N}_{H/N}
\ar[d,Rightarrow, "\Res^H_{K_H}\eqref{eq:LN_cInd_commute_right}"'] 
\ar[r,Rightarrow, "\cong"]
&
\Inf^{K_H/K_N}_{K_H}\Res^{H/N}_{K_H/K_N} \RRR^{G/N}_{H/N}
\ar[r,Rightarrow, "\cong"]
&
\Inf^{K_H/K_N}_{K_H} \RRR^{K/K_N}_{K_H/K_N} \Res^{G/N}_{K/K_N}
\ar[d,Rightarrow, "\eqref{eq:LN_cInd_commute_right} \Res^{G/N}_{K/K_N}"]
\\
\Res^H_{K_H} \RRR^G_H \Inf^{G/N}_G 
\ar[r,Rightarrow, "\cong"']
&
\RRR^K_{K_H} \Res^G_K \Inf^{G/N}_G 
\ar[r,Rightarrow, "\cong"']
&
\RRR^K_{K_H} \Inf^{K/K_N}_K \Res^{G/N}_{K/K_N}.
\end{tikzcd}
\]
Here, the upper right and lower left horizontal maps are isomorphisms, because
compact induction, hence also its right adjoint, is transitive,
\cite[I.5.3]{Vigneras.1996}. As $\Res^H_{K_H}$ is conservative,
\eqref{eq:LN_cInd_commute_right} is an isomorphism if and only if the left
vertical map is an isomorphism, if and only if the right vertical map is an
isomorphism.

Thus, replacing $G$ by $K$, we may assume from the beginning that $G$ is compact
and torsion-free. In this setting, the map \eqref{eq:LN_cInd_commute_left} reads
\begin{equation}\label{eq:LN_cInd_commute_left-2}
\L_N \Ind_H^G \Too \Ind_{H/N}^{G/N} \L_N.
\end{equation}
We finish by proving that \eqref{eq:LN_cInd_commute_left-2} is an isomorphism.
In the proof of Lemma~\ref{lem:Res_F_commute} we verified that the isomorphism
$\alpha\colon \Res^G_H \Inf^{G/N}_G \Raiso \Inf^{H/N}_{H}
\Res^{G/N}_{H/N}$ satisfies the assumptions \ref{Assumption1} and
\ref{Assumption2} of \S\ref{sss:setup1} in the notation of which the
map \eqref{eq:LN_cInd_commute_left-2} is just $\gamma\coloneqq
l(r(\alpha)^{-1})$. We apply Lemma~\ref{lem:abstract-b} in the context
\[
\begin{tikzcd}[sep=large,font=\small]
\D(G/N)\ar[d,"\ol{f}^* = \Inf^{G/N}_G"'] \ar[r,"\ol{g}^* = \Res^{G/N}_{H/N}"] 
& \D(H/N) \ar[d,"f^* = \Inf^{H/N}_H"] \\
\D(G) \ar[r,"g^* = \Res^G_H"'] & \D(H)
\end{tikzcd}
\]
and with $a = \one$; we thus obtain a commutative diagram
\[
\begin{tikzcd}[sep=normal,font=\small]
\R\H^0(N,\omega^{G/N}_G\otimes_k \blank)\Ind_H^G
\ar[r,Rightarrow,"\cong"] \ar[d,Rightarrow,"\cong"']
&[-1pt]
\R\H^0(N,\blank) \Ind_H^G \bigl(\Res^G_H\omega^{G/N}_G\otimes_k\blank\bigr)
\ar[r,Rightarrow,"\cong"]
&[-1pt]
\Ind_{H/N}^{G/N} \R\H^0(N,\omega^{H/N}_H\otimes_k\blank)
\ar[d,Rightarrow,"\cong"]
\\
\L_N\Ind_H^G
\ar[rr,Rightarrow,"\gamma"']
&&
\Ind_{H/N}^{G/N}\L_N
\end{tikzcd}
\]
commutes. The left and right vertical maps are isomorphisms by
\cite[Corollary~3.1.8]{Heyer.2022}. Hence, $\gamma$ is an isomorphism.
\end{proof} 

\begin{rmk}\label{rmk:RN_Ind} 
Let $N\normal G$ and $H\subgroup G$ be closed  subgroups such that $G = HN$.
Then one has a natural isomorphism
\[
\R\H^0(N,\blank) \RInd_H^G \Raiso \R\H^0(H\cap N,\blank)
\]
of functors $\D(H)\to \D(G/N)$ arising from the
isomorphism $\Inf^{H/H\cap N}_{H} \Raiso \Res^G_H\Inf^{G/N}_G$ by passing to the
right adjoints.

We will need the following dual statement.
\end{rmk} 

\begin{lem}\label{lem:omega_LN_cInd} 
Let $N\normal G$ and $H\subgroup G$ be closed subgroups such that $G = HN$.
Assume there exists a subnormal series 
$
H\cap N = N_r \normal N_{r-1} \normal \dotsb \normal N_1 \normal N_0 = N
$
by closed subgroups such that $\L_{N_i}(\one) = \one$ for all $i=1,\dotsc,r$.
Then
\begin{equation}\label{eq:omega}
\omega_{H\backslash G} \coloneqq \L_N \cInd_H^G(\one)\quad \in \D(G/N)
\end{equation}
is a character concentrated in degree $-\dim H\backslash G$, and the following
diagram commutes:
\[
\begin{tikzcd}
\D(H) \ar[d,"\L_{H\cap N}"'] \ar[r,"\cInd_H^G"] & \D(G) \ar[d,"\L_N"]
\\
\D(H/H\cap N) \ar[r,"\omega_{H\backslash G}\otimes \blank"'] & \D(G/N).
\end{tikzcd}
\]
\end{lem}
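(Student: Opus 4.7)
The plan is to combine the duality $\L_N\cong \R\H^0(N,\omega^{G/N}_G\otimes_k\blank)$ with the projection formula and Remark~\ref{rmk:RN_Ind} to rewrite $\L_N\cInd_H^G(X)$ in terms of $\R\H^0(H\cap N,\blank)$, and then to invoke the subnormal series hypothesis to identify the result as a character in the required degree.

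First, following the Mackey-style argument of Lemma~\ref{lem:LN_cInd_commute}, I would reduce to the case where $G$ is compact and torsion-free; the hypotheses on the subnormal series descend because $\L_{N_i}(\one)=\one$ is intrinsic to $N_i$. In this setting, Corollary~3.1.8 of~\cite{Heyer.2022} gives $\L_N(\blank)\cong \R\H^0(N,\omega^{G/N}_G\otimes_k\blank)$ with $\omega^{G/N}_G\cong k[\dim N]$. Combined with the projection formula (Lemma~\ref{lem:projection_formula}) and Remark~\ref{rmk:RN_Ind} (valid because $G=HN$ and $G$ is compact, so $\cInd=\RInd$), this chain of manipulations yields
\[
\L_N\cInd_H^G(X)\cong \R\H^0\bigl(H\cap N,\Res^G_H\omega^{G/N}_G\otimes_k X\bigr)\cong \R\H^0(H\cap N,X)[\dim N]
\]
for every $X\in\D(H)$.

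Applied to $X=\one$, the hypothesis $\L_{N_r}(\one)=\L_{H\cap N}(\one)=\one$ forces $\R\H^0(H\cap N,k)\cong k[-\dim(H\cap N)]$ via the analogous duality on $H$; hence $\omega_{H\backslash G}\cong k[\dim N - \dim(H\cap N)] = k[\dim(H\backslash G)]$, which is a character concentrated in degree $-\dim(H\backslash G)$. For the commutative diagram, the same duality on $H$ gives $\L_{H\cap N}(X)\cong \R\H^0(H\cap N,X)[\dim(H\cap N)]$, so
\[
\omega_{H\backslash G}\otimes_k\L_{H\cap N}(X)\cong \R\H^0(H\cap N,X)[\dim N],
\]
matching the formula above for $\L_N\cInd_H^G(X)$; naturality of all of the above isomorphisms yields the diagram.

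The hard part will be the reduction to compact $G$: the condition $\L_{N_i}(\one)=\one$ typically fails for compact pro-$p$ subgroups, so a naive restriction to an open compact $K$ does not preserve the whole subnormal series. A safer strategy is to proceed by induction on $r$, applying Lemma~\ref{lem:LN_cInd_commute} or Lemma~\ref{lem:LN_Inf_commute} along each step $N_i\normal N_{i-1}$ to collapse the series one layer at a time—using $\L_{N_i}(\one)=\one$ to discard each layer via the abstract mate lemmas—until one reaches a base case (a semidirect-product situation with $H\cap N=1$) in which the character can be computed directly by the duality argument outlined above.
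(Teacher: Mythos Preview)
Your main argument has a genuine gap that you half-recognize but do not resolve. The reduction to compact torsion-free $G$ replaces each $N_i$ by $K\cap N_i$, and for a compact pro-$p$ group one \emph{never} has $\L_{K\cap N_i}(\one)\cong\one$ unless $K\cap N_i$ is trivial (indeed $\R\H^0(K\cap N_i,k)$ has nonvanishing $\H^0$ and $\H^{\dim(K\cap N_i)}$). So your sentence ``the hypotheses on the subnormal series descend because $\L_{N_i}(\one)=\one$ is intrinsic to $N_i$'' is simply wrong: the condition is intrinsic to $N_i$, but after the reduction you are no longer working with $N_i$. You notice this two paragraphs later, but by then you have already used the lost hypothesis: your claim $\R\H^0(H\cap N,k)\cong k[-\dim(H\cap N)]$ is false in the compact setting whenever $H\cap N\neq\{1\}$, and consequently $\omega_{H\backslash G}$ as you compute it is \emph{not} concentrated in a single degree, and the diagram does not commute. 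The duality $\L_N\cong\R\H^0(N,\omega\otimes\blank)$ is only available in the compact torsion-free case, while the hypothesis $\L_{N_i}(\one)=\one$ is only available in the original (typically unipotent) case; you cannot have both at once, and your argument needs both.

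Your ``safer strategy'' is the correct instinct and is essentially what the paper does, but the details are nontrivial. The paper does \emph{not} reduce to compact $G$; it first constructs a specific natural transformation $\rho_{N,H}\colon \L_N\cInd_H^G(X\otimes X')\to \L_N\cInd_H^G(X)\otimes_k\L_{H\cap N}(X')$ as a mate, then reduces to the case $N=G$ (using an abstract compatibility lemma for the map $\ltm$ from the appendix), and finally proves $\rho_{G,H}$ is an isomorphism by induction on $r$. The base case $r=1$ (so $H\normal N=G$) uses Lemma~\ref{lem:LN_cInd_commute} and the hypothesis $\L_H(\one)=\one$ to factor through $N/H$; the inductive step peels off $N_1\normal N$ via transitivity of $\cInd$ and another appendix lemma. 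Only at the very end, to identify the \emph{degree} of $\omega_{H\backslash G}$, does the paper pass to a compact open subgroup---and there it is harmless because by that point one has already reduced (via the induction) to computing $\L_{N/N_1}\cInd_1^{N/N_1}(\one)$, i.e.\ the case $H=\{1\}$, where your duality computation goes through cleanly.
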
 

\begin{rmk*}
We will check in Corollary~\ref{cor:explicit-character} that $\Inf^{G/N}_H\omega_{H\backslash G}$ is independent of the choice of $N$.
\end{rmk*}

\begin{proof} 
We will implicitly make the identification $H/H\cap N\cong G/N$ throughout the
proof. Let $Y\in \D(k)$ be arbitrary and put $X \coloneqq \Inf^1_{H}Y$. Consider
the natural isomorphism
\[
\L_N \cInd_H^G (X\otimes_k \blank) \Res^G_H\Inf^{G/N}_G
\xRightarrow[\cong]{\L_N\pf} \L_N (\cInd_H^G X\otimes_k \blank)
\Inf^{G/N}_G \xRightarrow[\cong]{\lpf} \L_N\cInd_H^GX\otimes_k \blank,
\]
where $\pf$ is the isomorphism in Lemma~\ref{lem:projection_formula} and $\lpf$
is the map $\lpf_f$ from \S\ref{sss:setup2} for the functor $f^* =
\Inf^{G/N}_G$; note that $\lpf$ is an isomorphism by
\cite[Corollary~3.3.5]{Heyer.2022}. Using $\Res^G_H\Inf^{G/N}_G = \Inf^{G/N}_H$
and passing to the left mate in the displayed isomorphism above, we obtain a natural map
\[
\rho_{N,H} \colon \L_N\cInd_H^G(X\otimes X') \Too \L_N\cInd_H^GX \otimes_k
\L_{H\cap N}X'
\]
for any $X'\in \D(H)$. We claim that $\rho_{N,H}$ is an isomorphism, which
then for $X = \one$ witnesses the commutativity of the asserted diagram.
\medskip

We reduce to the case $N=G$ as follows: apply the discussion of
\S\ref{sss:setup2} to the diagram
\[
\begin{tikzcd}[column sep=tiny]
\D(G/N)
\ar[rrr,"\Res^{G/N}_1"] \ar[dr,"\Inf^{G/N}_G"] 
\ar[dd,bend right,"\Inf^{G/N}_H"']
& &[5em] & 
\D(k)
\ar[dl,"\Inf^1_N"] 
\ar[dd,bend left,"\Inf^1_{H\cap N}"]
\\
& 
\D(G)
\ar[r,"\Res^G_N"] \ar[dl,shift right,"\Res^G_H"']
& 
\D(N)
\ar[dr,shift left,"\Res^N_{H\cap N}"]
\\
\D(H)
\ar[rrr,"\Res^H_{H\cap N}"'] \ar[ur,shift right,dashed, "\cInd_H^G"'] 
& & & 
\D(H\cap N).
\ar[ul,shift left, dashed, "\cInd_{H\cap N}^N"]
\end{tikzcd}
\]
Note that, since $G=HN$, restriction of functions induces a natural
isomorphism whose inverse
\[
\cInd_{H\cap N}^N\Res^{H}_{H\cap N} \Raiso \Res^G_N \cInd_H^G
\]
plays the role of $\phi$ in \S\ref{sss:setup2}.
The condition \ref{Assumption3} is satisfied by
\cite[Theorem~3.2.3]{Heyer.2022}. The isomorphisms in \ref{Assumption4} are
supplied by Lemma~\ref{lem:projection_formula} for
which \ref{Assumption4} is easily verified. Now, Lemma~\ref{lem:abstract-2}
provides a commutative diagram
\[
\begin{tikzcd}[column sep=3em]
\L_N\cInd_{H\cap N}^N\bigl(\Res^H_{H\cap N}X\otimes_k \Res^H_{H\cap N}X'\bigr)
\ar[r,"\rho_{N,H\cap N}"] \ar[d,"\cong"']
&
\L_N\cInd_{H\cap N}^N\Res^H_{H\cap N}X\otimes_k \L_{H\cap N}\Res^H_{H\cap N}X'
\ar[d,"\cong"]
\\
\Res^{G/N}_1 \L_N\cInd_H^G(X\otimes_kX')
\ar[r,"\Res^{G/N}_1\rho_{N,H}"']
&
\Res^{G/N}_1\bigl(\L_N\cInd_H^GX\otimes_k \L_{H\cap N}X'\bigr).
\end{tikzcd}
\]
The maps $l(\sigma)$ and $l(\rho)$ in Lemma~\ref{lem:abstract-2} correspond to
the natural transformations
\[
l(\sigma)\colon \L_N\Res^G_N \Raiso \Res^{G/N}_1\L_N
\quad\text{and}\quad
l(\rho)\colon \L_{H\cap N}\Res^{H}_{H\cap N} \Raiso \Res^{G/N}_1 \L_{H\cap N},
\]
which are isomorphisms by \cite[Proposition~3.2.19]{Heyer.2022}. Therefore, the
vertical maps are isomorphisms.
Since $\Res^{G/N}_1$ is conservative, it suffices to show that the top map is an
isomorphism. In other words, we may assume from the beginning that $N=G$ and
have to show that the map
\begin{equation}\label{eq:rho_N,H}
\rho_{N,H}\colon \L_N\cInd_H^N(X\otimes_kX') \too \L_N\cInd_H^NX\otimes_k
\L_{H\cap N}X'
\end{equation}
is a natural isomorphism. We induct on the length $r$ of the subnormal series.
Let $r=1$ so that $H\normal N$.\footnote{Note that, if $r=0$, then $N=H=G$ and
the assertion of the lemma is tautological. It is true, although not trivial,
that $\rho_{N,N}$ is an isomorphism and will be left as an easy exercise.} 
We apply the discussion in \S\ref{sss:setup3} to the diagram
\[
\begin{tikzcd}
\D(k)
\ar[r,"\Inf^1_{N/H}"] \ar[dr,bend right,equals]
&
\D(N/H)
\ar[d,shift right,"\Res^{N/H}_1"'] \ar[r,"\Inf^{N/H}_{N}"]
&
\D(N)
\ar[d,shift left,"\Res^{N}_{H}"]
\\
&
\D(k)
\ar[u,shift right,dashed,"\cInd_1^{N/H}"'] \ar[r,"\Inf^1_{H}"']
&
\D(H).
\ar[u,shift left,dashed,"\cInd_H^N"]
\end{tikzcd}
\]
The map $l(\phi)$ in Lemma~\ref{lem:abstract-3} is now given by
$\L_H\cInd_H^N\Raiso \cInd_{1}^{N/H}\L_H$, see~\eqref{eq:LN_cInd_commute_left}.
The conditions \ref{Assumption3}--\ref{Assumption5} are clearly satisfied. By
Lemma~\ref{lem:abstract-3}, and since $\L_N\cong \L_{N/H}\L_H$, we obtain a
commutative diagram
\[
\begin{tikzcd}
\L_N\cInd_H^N\bigl(\Inf^1_{H}Y\otimes_k X'\bigr)
\ar[d,"\cong"'] \ar[r,"\rho_{N,H}"]
&
\L_N\cInd_H^N\Inf^1_HY\otimes_k \L_HX'
\ar[d,"\cong"]
\\
\L_{N/H} \cInd_1^{N/H}\L_H\bigl(\Inf^1_{H}Y \otimes_k X'\bigr)
\ar[d,"\cong"']
&
\L_N\Inf^{N/H}_N\cInd_1^{N/H}Y\otimes_k \L_HX'
\ar[d,"\varepsilon\otimes\id","\cong"']
\\
\L_{N/H}\cInd_{1}^{N/H}(Y\otimes_k \L_HX')
\ar[r,"\rho_{N/H,1}"',"\cong"]
&
\L_{N/H}\cInd_{1}^{N/H}Y\otimes \L_HX'.
\end{tikzcd}
\]
The lower left vertical map is an isomorphism by
\cite[Corollary~3.3.5]{Heyer.2022}, and the lower right vertical map is an
isomorphism by \cite[Corollary~3.4.23]{Heyer.2022} and the hypothesis
$\L_H(\one) \cong \one$. It is obvious from the construction that $\rho_{N/H,1}$
is an isomorphism. Hence, $\rho_{N,H}$ is an isomorphism, which settles the case
$r=1$.

Let now $r\ge2$. We apply the discussion in \S\ref{sss:setup4} to the diagram
\[
\begin{tikzcd}
& & 
\D(N)
\ar[dl,shift right,"\Res^N_{N_1}"']
\ar[dd,shift left, bend left,"\Res^{N}_{H}"]
\\
\D(k)
\ar[urr,bend left,"\Inf^1_N"]
\ar[r,"\Inf^1_{N_1}"]
\ar[drr,bend right,"\Inf^1_{H}"']
& 
\D(N_1)
\ar[dr,shift right,"\Res^{N_1}_H"']
\ar[ur,dashed,shift right,"\cInd_{N_1}^N"']
\\
& & 
\D(H)
\ar[ul,dashed,shift right, "\cInd_{H}^{N_1}"']
\ar[uu,dashed,shift left, bend right,"\cInd_{H}^{N}"]
\end{tikzcd}
\]
Note that there is a natural isomorphism $\cInd_H^N \Raiso \cInd_{N_1}^N
\cInd_{H}^{N_1}$, \cite[I.5.3]{Vigneras.1996}. Again, the conditions
\ref{Assumption6}--\ref{Assumption8} are easily verified. Let $Y'\in \D(k)$ be
arbitrary and put $Z\coloneqq \Inf^1_{N_1}Y'$. By
Lemma~\ref{lem:abstract-4} we obtain a commutative diagram
\[
\begin{tikzcd}
\L_N\cInd_{H}^{N}\bigl(\Res^{N_1}_{H}Z\otimes_k X\otimes_k X'\bigr)
\ar[d,"\rho_{N,H}"']
\ar[r,"\cong"]
&[2em]
\L_N\cInd_{N_1}^{N}\bigl(Z\otimes_k
\cInd_{H}^{N_1}(X\otimes X')\bigr)
\ar[d,"\rho_{N,N_1}","\cong"']
\\
\L_N\cInd_{H}^{N}\bigl(\Res^{N_1}_{H}Z\otimes_k X\bigr) \otimes_k \L_HX'
\ar[d,"\cong"']
&
\L_N\cInd_{N_1}^{N} Z\otimes_k\L_{N_1}
\cInd_{H}^{N_1}\bigl(X\otimes X'\bigr)
\ar[d,"\id\otimes\rho_{N_1,H}","\cong"']
\\
\L_N\cInd_{N_1}^{N}\bigl(Z\otimes_k
\cInd_{H}^{N_1}X\bigr) \otimes \L_HX'
\ar[r,"\rho_{N,N_1}\otimes \id"',"\cong"]
&
\L_N\cInd_{N_1}^{N} Z \otimes_k \L_{N_1}\cInd_{H}^{N_1}X \otimes_k \L_HX'.
\end{tikzcd}
\]
Here, the top horizontal and lower left vertical map are isomorphisms by
Lemma~\ref{lem:projection_formula}. The maps $\rho_{N,N_1}$ and $\rho_{N_1,H}$
are isomorphisms by the induction hypothesis. It follows that the top left
vertical map is an isomorphism. In particular, for $Y'=\one$, this proves that
\eqref{eq:rho_N,H} is an isomorphism. This finishes the induction step.
\medskip

It remains to prove that $\omega_{H\backslash G} = \L_N\cInd_H^G\one$ is a
character concentrated in degree $-\dim H\backslash G$. Note that
$\Res^{G/N}_1\omega_{H\backslash G} \cong \omega_{H\cap N\backslash N}$, so that
we may assume $N=G$ from the beginning. We first treat the case $H=\{1\}$ so
that the claim becomes $\L_N\cInd_1^N(\one) \cong k[\dim N]$. Fix a
torsion-free open compact subgroup $K_N \subgroup N$; note that $\dim K_N= \dim
N$. We compute
\begin{align*}
\L_N\cInd_1^{N}(\one) &\cong \L_N \ind_{K_N}^N \Ind_1^{K_N}(\one)\\
&\cong \L_{K_N} \Ind_1^{K_N}(\one) & & \text{(by
\cite[Proposition~3.2.11]{Heyer.2022})}\\
&\cong \R\H^0\bigl(K_N, \omega_{K_N}\otimes_k\Ind_1^{K_N}(\one)\bigr) & &
\text{(by \cite[Proposition~3.1.8]{Heyer.2022})}\\
&\cong \R\H^0\bigl(K_N, \Ind_1^{K_N}k[\dim K_N]\bigr) & & \text{(by
\cite[Proposition~3.1.10]{Heyer.2022})}\\
&\cong k[\dim K_N],
\end{align*}
where the last isomorphism is Shapiro's lemma (or Remark~\ref{rmk:RN_Ind}). 
This settles the case $H=\{1\}$.

If $r=1$, we compute 
\[
\L_N\cInd_H^N(\one)\cong
\L_N\Inf^{N/H}_{N}\cInd_{1}^{N/H}(\one)\cong \L_{N/H}\cInd_{1}^{N/H}(\one) \cong
k[\dim N/H],
\]
where the first isomorphism is \eqref{eq:cInd_Inf_commute}, and the second
follows from the assumption $\L_H(\one)\cong \one$. Now, for $r\ge2$, we have
canonical isomorphisms
\begin{align*}
\omega_{H\backslash N} &= \L_N\cInd_H^N(\one)\\
&\cong \L_{N/N_1} \L_{N_1} \cInd_{N_1}^N \cInd_{H}^{N_1}(\one)\\
&\cong \L_{N/N_1} \cInd_1^{N/N_1} \L_{N_1}\cInd_{H}^{N_1}(\one) & & \text{(by
Lemma~\ref{lem:LN_cInd_commute})}\\
&\cong \L_{N/N_1}\cInd_1^{N/N_1}(\one) \otimes_k
\L_{N_1}\cInd_{H}^{N_1}(\one) & & \text{(induced by $\rho_{N/N_1,1}$)}\\
&= \omega_{N/N_1} \otimes_k \omega_{H\backslash N_1}.
\end{align*}
Since $\dim(N/N_1) + \dim(H\backslash N_1) = \dim(H\backslash N)$, it follows
that $\omega_{H\backslash N} \cong k[\dim H\backslash N]$. This finishes the
proof.
\end{proof} 
\subsection{The duality character} 
\label{ss:duality}

\subsubsection{}\label{sss:cyclotomic} 
We denote by $\varepsilon\colon \QQ_p^\times \to k^\times$ the mod $p$ cyclotomic character given by sending $x$ to the image of $xp^{-\val_p(x)}$ in $k$. If $\field/\QQ_p$ is a finite field extension, we put $\varepsilon_{\field} \coloneqq \varepsilon \circ \Nm_{\field/\QQ_p}$, where $\Nm_{\field/\QQ_p}$ denotes the norm map. 

\subsubsection{}\label{sss:d_G} 
The purpose of this section is to give an explicit description of the
character \eqref{eq:omega}. 
Let $G$ be a $p$-adic Lie group of dimension $d$ and denote by $\lie{g}$ its
$\QQ_p$-Lie algebra (cf.~\cite[p.~100]{Schneider.2011}). The determinant of the
adjoint action of $G$ on $\lie{g}$ provides a character $\det\lie{g}
\colon G\to \QQ_p^\times$. 
Put
\[
\lie{d}_G\coloneqq \varepsilon \circ \det(\lie g) \colon G \too k^\times.
\]
Note that, if $G$ is even a Lie group over a finite extension $\field/\QQ_p$ (in the sense of \cite{Schneider.2011}), then $\lie g$ is an $\field$-vector space and $\lie{d}_G = \varepsilon_{\field}\circ \det_{\field}(\lie g)$ by \cite[III, \S9, no.\,4, Proposition~6]{Bourbaki.2007}, where $\det_{\field}(\lie g)$ denotes the determinant of the $\field$-linear action of $G$ on $\lie g$.

\begin{defn}\label{defn:duality-character} 
Let $\Delta\colon G\to G\times G$, $g\mapsto (g,g)$ be the diagonal and write
$G_1 \coloneqq G\times\{1\}$ and $G_2\coloneqq \{1\}\times G$ as subgroups of
$G\times G$. The \emph{dualizing character} of $G$ is defined as
\[
\omega_G \coloneqq \L_{G_2} \cInd_{\Delta(G)}^{G\times G}(\one) \in \D(G),
\]
where we identify $(G\times G)/G_2 \cong G$ via the first projection. 

We leave it as an easy exercise for the reader to check that the automorphism on $G\times G$ given by $(g,h)\mapsto (h,g)$ induces an isomorphism $\omega_G \cong \L_{G_1}\cInd_{\Delta(G)}^{G\times G}(\one)$.
\end{defn} 

\begin{prop}\label{prop:duality-character} 
One has $\omega_G \cong \lie{d}_G[d]$ and $\omega_G^\vee \cong\RRR^{G\times
G}_{\Delta(G)}(\one)$.
\end{prop}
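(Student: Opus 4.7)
The statement contains two assertions, which I would prove in sequence.

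For the second, $\omega_G^\vee \cong \RRR^{G\times G}_{\Delta(G)}(\one)$, the strategy is to chain the two projection formulas already at hand. The projection formula for derived coinvariants \cite[Corollary~3.3.5]{Heyer.2022} gives a natural isomorphism $\L_{G_2}(Z \otimes_k \Inf^G_{G\times G} W) \cong \L_{G_2}(Z) \otimes_k W$ in $\D(G)$, and passing to right adjoints in the variable $W$ yields
\[
\iHom(\L_{G_2} Z, W) \cong \R\H^0\bigl(G_2, \iHom(Z, \Inf^G_{G\times G} W)\bigr).
\]
Specialising to $Z = \cInd_{\Delta(G)}^{G\times G}(\one)$ and $W = \one$, and then applying Lemma~\ref{lem:projection_formula} to rewrite the inner Hom, one obtains
\[
\omega_G^\vee \cong \R\H^0\bigl(G_2, \RInd_{\Delta(G)}^{G\times G}\RRR^{G\times G}_{\Delta(G)}(\one)\bigr).
\]
The factorisation $G\times G = \Delta(G)\cdot G_2$ with $\Delta(G) \cap G_2 = \{1\}$ then activates Remark~\ref{rmk:RN_Ind}: the outer composition $\R\H^0(G_2, \RInd_{\Delta(G)}^{G\times G}\blank)$ reduces to $\R\H^0(\{1\}, \blank) = \id$, yielding the second isomorphism.

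For the first assertion, I would apply Lemma~\ref{lem:omega_LN_cInd} to the setup $N = G_2$, $H = \Delta(G)$. The hypothesis $G\times G = HN$ holds, and since $H\cap N = \{1\}$ the trivial subnormal series $\{1\} \normal G_2$ of length one satisfies $\L_{\{1\}}(\one) = \one$ vacuously. The lemma then identifies $\omega_G$ as a smooth character concentrated in cohomological degree $-\dim(\Delta(G)\backslash(G\times G)) = -d$, so $\omega_G \cong \delta[d]$ for some smooth character $\delta\colon G\to k^\times$. To recognise $\delta$ as $\lie{d}_G$, I would fix $g\in G$ and pick a compact open torsion-free subgroup $K\subset G$ normalised by $g$ (which exists by taking $\bigcap_i g^i K_0 g^{-i}$ for a sufficiently small initial $K_0$). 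The closing computation in the proof of Lemma~\ref{lem:omega_LN_cInd} then gives $\omega_G|_K \cong k[d]$, consistent with $\lie{d}_G|_K = 1$ (the adjoint eigenvalues on $K$ lie in $1+p\ZZ_p$ and are killed by $\varepsilon$). To compute $\delta(g)$ itself, I would use \cite[Proposition~3.1.10]{Heyer.2022} to realise $\omega_K \cong k[d]$ via the top exterior power $\bigwedge^{d}_{\QQ_p}\lie{g}$; the conjugation action of $g$ on this line is multiplication by $\det_{\QQ_p}\Ad(g)|_{\lie{g}}$, whose image under $\varepsilon$ is by definition $\lie{d}_G(g)$.

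The principal obstacle is the last identification. The structural part (that $\omega_G$ is a shifted character) drops out of Lemma~\ref{lem:omega_LN_cInd} for free, but pinning down the character requires carefully unpacking Heyer's description of $\omega_K$ for compact torsion-free $K$ in terms of the top exterior power of the Lie algebra, and checking that the conjugation action of an element $g$ normalising $K$ on this line transports, after reduction via $\varepsilon$, to $\lie{d}_G(g)$.
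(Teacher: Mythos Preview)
Your argument for the second assertion $\omega_G^\vee \cong \RRR^{G\times G}_{\Delta(G)}(\one)$ is correct and coincides with the paper's proof: both chain the projection formula for $\L_{G_2}$ (equivalently, \cite[Corollary~3.3.7]{Heyer.2022}) with Lemma~\ref{lem:projection_formula} and then collapse via Remark~\ref{rmk:RN_Ind}.

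For the first assertion your approach diverges from the paper, and it contains a genuine gap. The claim that for an arbitrary $g\in G$ there exists a compact open torsion-free subgroup $K$ normalised by $g$ is false. Take $G = \GL_2(\QQ_p)$ and $g = \diag(p,1)$: for any compact open $K_0$ the intersection $\bigcap_{i\ge 0} g^i K_0 g^{-i}$ forces the lower-left matrix entry into $\bigcap_i p^i\ZZ_p = \{0\}$, so the intersection is not open. More generally, an element not contained in any compact subgroup will typically fail to normalise any compact open subgroup. This breaks your strategy of reading off $\delta(g)$ from the Lie-algebra description of $\omega_K$.

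The paper avoids this entirely: it identifies $\omega_G^\vee$ with Kohlhaase's duality character $\chi_G[-d]$ from \cite[Definition~3.12]{Kohlhaase.2017} by observing that restriction of functions gives a $k$-linear isomorphism $\cInd_{\Delta(G)}^{G\times G}(\one) \cong \cInd_1^{G_2}(\one)$ under which the residual $G_1$-action is exactly the one Kohlhaase uses to define $\chi_G$ via $\Ext^d_G(\cInd_1^G k, k)$. The computation $\chi_G = \lie{d}_G^{-1}$ is then imported from \cite[Theorem~5.1]{Kohlhaase.2017}. So the hard analytic input---tracking how a non-compact element acts on the top exterior power of $\lie g$---is outsourced to Kohlhaase rather than redone.
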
 
\begin{proof} 
We compute
\begin{align*}
\iHom(\omega_G,\one) &\cong \R\H^0\bigl(G_2, \iHom(\cInd_{\Delta(G)}^{G\times G}(\one), \one)\bigr) & &
\text{(\cite[Corollary~3.3.7]{Heyer.2022})}\\
&\cong \R\H^0\bigl(G_2, \RInd_{\Delta(G)}^{G\times
G}\iHom\bigl(\one,\RRR^{G\times G}_{\Delta(G)}(\one)\bigr)\bigr) & &
\text{(Lemma~\ref{lem:projection_formula})}\\
&\cong \RRR^{G\times G}_{\Delta(G)}(\one),
\end{align*}
where the last isomorphism uses Remark~\ref{rmk:RN_Ind} and that
canonically $\iHom(\one,X) \cong X$ for each $X\in \D(G)$.

We prove $\omega_G\cong \lie{d}_G[d]$ by showing $\omega_G^{\vee} \cong
\chi_G[-d]$, where $\chi_G$ is Kohlhaase's duality character. The claim then
follows from \cite[Theorem~5.1]{Kohlhaase.2017}. Observe that restriction of
functions induces a $k$-linear isomorphism $\cInd_{\Delta(G)}^{G\times G}(\one)
\Raiso \cInd_1^{G_2}(\one)$. Under this identification the $G\times G$ action on
$\cInd_1^{G_2}(\one)$ is given by $\bigl((g_1,g_2)f\bigr)(g) = f(g_1^{-1}gg_2)$.
In this way, $G$ acts on the $1$-dimensional $k$-vector space
$\Ext^d_G(\cInd_1^{G_2}(k), k)$ through the opposite $G_1$-action on
$\cInd_1^{G_2}(k)$, which is denoted $\chi_G$ in
\cite[Definition~3.12]{Kohlhaase.2017}. We finish by computing
\begin{align*}
\Res^G_1\omega_G^\vee 
&\cong \RHom_{\D(k)}\bigl(\Res^G_1\L_{G_2}\cInd_{\Delta(G)}^{G\times G}(\one),
\one\bigr)\\
&\cong \RHom_{\D(k)}\bigl(\L_{G_2}\Res^{G\times G}_{G_2}
\cInd_{\Delta(G)}^{G\times G}(\one), \one\bigr) & &
\text{(\cite[Proposition~3.2.19]{Heyer.2022})}\\
&\cong \RHom_{\D(G)}\bigl(\Res^{G\times G}_{G_2}\cInd_{\Delta(G)}^{G\times
G}(\one), \one\bigr) & & \text{(adjunction and $\Inf^1_G(\one) = \one$)}\\
&\cong \chi_G[-d]. & & \qedhere
\end{align*}
\end{proof} 

\begin{rmk*} 
The idea for defining the dualizing character as $\omega_G$ comes from the
theory of six functor formalisms. The computation in \cite[Proof of
Theorem~11.6]{Condensed} (which goes back to an idea of Verdier \cite[Proof of
Theorem~3]{Verdier.1969}) shows that $\RRR^{G\times G}_{\Delta(G)}(\one)$
has to be the inverse dualizing object if derived smooth mod $p$
representations were part of a six functor formalism.
\end{rmk*} 

\begin{lem}\label{lem:RRR(1)-invertible} 
Let $H\subgroup G$ be a closed subgroup. Then $\RRR^G_H(\one)$ is a character
concentrated in degree $\dim(G/H)$.
\end{lem}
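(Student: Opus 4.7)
The plan is to reduce the general statement to the diagonal case handled by Proposition~\ref{prop:duality-character}, by embedding the inclusion $H \subgroup G$ diagonally into the product group $H \times G$. Concretely, I would consider the closed subgroup $\Delta(H) \coloneqq \{(h,h) : h \in H\} \cong H$ and the closed normal subgroup $N \coloneqq \{1\} \times G$ of $H \times G$. These satisfy $\Delta(H)\cdot N = H \times G$ and $\Delta(H) \cap N = \{1\}$, so the hypothesis of Lemma~\ref{lem:omega_LN_cInd} is met by the trivial subnormal series $1 = N_1 \normal N_0 = N$ (the condition $\L_{N_1}(\one) \cong \one$ being vacuous for $N_1 = 1$).

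Applying that lemma, $\omega \coloneqq \L_N \cInd_{\Delta(H)}^{H\times G}(\one) \in \D(H)$ is a character concentrated in degree $-\dim((H\times G)/\Delta(H)) = -\dim G$. I would then dualize by rerunning the computation from the proof of Proposition~\ref{prop:duality-character}: using \cite[Corollary~3.3.7]{Heyer.2022}, Lemma~\ref{lem:projection_formula}, and Remark~\ref{rmk:RN_Ind} in sequence,
\[
\omega^\vee \cong \R\H^0\bigl(N, \iHom(\cInd_{\Delta(H)}^{H\times G}(\one), \one)\bigr) \cong \R\H^0\bigl(N, \RInd_{\Delta(H)}^{H\times G}\RRR^{H\times G}_{\Delta(H)}(\one)\bigr) \cong \RRR^{H\times G}_{\Delta(H)}(\one),
\]
the last step using $\Delta(H) \cap N = \{1\}$. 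Thus $\RRR^{H\times G}_{\Delta(H)}(\one)$ is a character in $\D(H)$ concentrated in degree $+\dim G$.

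To extract $\RRR^G_H(\one)$ itself, I would use transitivity through $H \times H \subgroup H \times G$ together with the external-product factorization $\RRR^{H\times G}_{H\times H}(\one) \cong \one \boxtimes \RRR^G_H(\one)$, giving
\[
\RRR^{H\times G}_{\Delta(H)}(\one) \cong \RRR^{H\times H}_{\Delta(H)}\bigl(\one \boxtimes \RRR^G_H(\one)\bigr).
\]
A further projection-formula argument---applying $\RRR^{H\times H}_{\Delta(H)}$ to the identity $\cInd_{\Delta(H)}^{H\times H}(V \otimes_k X) \cong \cInd_{\Delta(H)}^{H\times H}(V) \otimes_k (\one \boxtimes X)$ supplied by Lemma~\ref{lem:projection_formula}---would yield a natural isomorphism $\RRR^{H\times H}_{\Delta(H)}(\one \boxtimes X) \cong \omega_H^\vee \otimes_k X$ for every $X \in \D(H)$, where $\omega_H^\vee$ is the character from Proposition~\ref{prop:duality-character}, concentrated in degree $\dim H$. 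Combining these ingredients, $\omega_H^\vee \otimes_k \RRR^G_H(\one)$ is a character in degree $\dim G$, so $\RRR^G_H(\one)$ is a character in degree $\dim G - \dim H = \dim(G/H)$. The main obstacle will be establishing the K\"unneth-type isomorphism $\RRR^{H\times H}_{\Delta(H)}(\one \boxtimes X) \cong \omega_H^\vee \otimes_k X$ for \emph{arbitrary} $X$---i.e., without circularly assuming $X$ is dualizable---which requires a careful manipulation of the projection formula together with the invertibility of $\omega_H^\vee$.
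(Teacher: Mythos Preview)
Your approach is valid but differs substantially from the paper's. The paper argues much more directly: restricting to a torsion-free compact open $K \subgroup G$ reduces (via transitivity of $\RRR$ and conservativity of restriction) to the case where both $G$ and $H$ are compact torsion-free; in that regime the further right adjoint $F^1_G$ of $\R\H^0(G,-)$ exists with $F^1_G(\one) \cong \one[\dim G]$, and taking double right adjoints of the identification $\Res^G_H \Inf^1_G \cong \Inf^1_H$ yields $\RRR^G_H F^1_G \cong F^1_H$, whence $\RRR^G_H(\one)[\dim G] \cong \one[\dim H]$.

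Your route via the diagonal in $H \times G$ and Lemma~\ref{lem:omega_LN_cInd} is more circuitous but does work, and has the appeal of staying entirely within the six-functor-style machinery already set up for the duality character. The obstacle you flag is genuine but closeable with exactly the ingredients you name: apply Lemma~\ref{lem:omega_LN_cInd} once more, now to $\Delta(H) \subgroup H\times H$ with normal subgroup $H_1 = H\times\{1\}$ (again $\Delta(H)\cap H_1 = \{1\}$, so the subnormal-series hypothesis is vacuous), obtaining $\L_{H_1}\cInd_{\Delta(H)}^{H\times H} \cong \omega_H \otimes_k (-)$ as endofunctors of $\D(H)$ after identifying $(H\times H)/H_1 \cong H \cong \Delta(H)$. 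Since $\omega_H$ is invertible by Proposition~\ref{prop:duality-character}, passing to right adjoints gives $\RRR^{H\times H}_{\Delta(H)}(\one\boxtimes X) \cong \omega_H^\vee\otimes_k X$, which is precisely your K\"unneth isomorphism. Note that your sketched mechanism---``applying $\RRR^{H\times H}_{\Delta(H)}$ to the projection formula for $\cInd$''---does not literally produce this: taking mates of Lemma~\ref{lem:projection_formula} in the variable $X$ yields an identity of the shape $\RRR(\one\boxtimes X) \cong \R\H^0\bigl(H_1,\iHom(\cInd_{\Delta(H)}^{H\times H}\one,\one\boxtimes X)\bigr)$, which still needs further unwinding. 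The paper's argument is shorter; yours avoids both the reduction to compact groups and any appeal to the functor $F^1$.
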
 
\begin{proof} 
Let $K\subgroup G$ be a torsion-free compact open subgroup. By our observations
in \S\ref{sss:compact_induction}, we compute
$\Res^H_{H\cap K}\RRR^G_H(\one) \cong \RRR^K_{H\cap K}\Res^G_K(\one)$.
Therefore, we may assume from the beginning that $G$ and $H$ are compact and
torsion-free. The isomorphism $\Res^G_H\Inf^1_G \Raiso \Inf^1_H$ yields an
isomorphism $\RRR^G_H F^1_G \Raiso F^1_H$ by passing to the right adjoints
twice. Since $F^1_G(\one) = \one[\dim G]$ and $F^1_H(\one) = \one[\dim H]$
(see~\S\ref{sss:coinvariants}), we deduce $\RRR^G_H(\one[\dim G]) \cong
\one[\dim H]$. The assertion follows.
\end{proof} 

\begin{lem}\label{lem:RRR-dualizable} 
Let $H\subgroup G$ be a closed subgroup and let $X,Y\in \D(G)$. The natural map
\[
\RRR^G_H(X)\otimes_k \Res^G_H(Y) \too \RRR^G_H(X\otimes_k Y)
\]
is an isomorphism provided $Y$ is dualizable.
\end{lem}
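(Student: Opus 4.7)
The plan is to prove this by a standard Yoneda argument that reduces the question to the projection formula already established in Lemma~\ref{lem:projection_formula} together with the fact that $\Res^G_H$ preserves dualizable objects. First I would identify the natural map as the adjoint (under the adjunction $(\cInd_H^G, \RRR^G_H)$) of the composite
\[
\cInd_H^G\bigl(\RRR^G_H(X)\otimes_k \Res^G_H(Y)\bigr) \raiso \cInd_H^G\RRR^G_H(X)\otimes_k Y \xrightarrow{\varepsilon\otimes \id} X\otimes_k Y,
\]
where the first map is the projection formula and $\varepsilon$ is the counit.

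Next I would use that $\Res^G_H\colon \D(G)\to \D(H)$ is strong symmetric monoidal (essentially forgetting the $G$-action), so it preserves dualizable objects: a dualizable $Y\in \D(G)$ with dual $Y^\vee$ restricts to a dualizable $\Res^G_H(Y)\in \D(H)$, and its dual is canonically identified with $\Res^G_H(Y^\vee)$. For any test object $W\in \D(H)$ this yields a chain of natural isomorphisms
\begin{align*}
\Hom_{\D(H)}\bigl(W, \RRR^G_H(X)\otimes_k \Res^G_H(Y)\bigr)
&\cong \Hom_{\D(H)}\bigl(W\otimes_k \Res^G_H(Y^\vee), \RRR^G_H(X)\bigr)\\
&\cong \Hom_{\D(G)}\bigl(\cInd_H^G(W\otimes_k \Res^G_H(Y^\vee)), X\bigr)\\
&\cong \Hom_{\D(G)}\bigl(\cInd_H^G(W)\otimes_k Y^\vee, X\bigr)\\
&\cong \Hom_{\D(G)}\bigl(\cInd_H^G(W), X\otimes_k Y\bigr)\\
&\cong \Hom_{\D(H)}\bigl(W, \RRR^G_H(X\otimes_k Y)\bigr),
\end{align*}
using in order: dualizability of $\Res^G_H(Y)$, the adjunction $(\cInd_H^G, \RRR^G_H)$, the projection formula, dualizability of $Y$, and the adjunction again. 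By Yoneda this induces an isomorphism $\RRR^G_H(X)\otimes_k \Res^G_H(Y) \raiso \RRR^G_H(X\otimes_k Y)$.

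The main obstacle — and it is a formal one — is to verify that the isomorphism produced by this Yoneda chain actually coincides with the natural map in the statement. This comes down to a diagram chase invoking the triangle identities for the adjunction, the naturality/compatibility of the projection formula isomorphism from Lemma~\ref{lem:projection_formula} with the unit and counit, and the coherence axioms that characterize duals (in particular that the identification $(\Res^G_H Y)^\vee \cong \Res^G_H(Y^\vee)$ intertwines the evaluation and coevaluation maps). Once these compatibilities are unpacked, the agreement is automatic and the lemma follows.
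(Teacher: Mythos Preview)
Your argument is correct and uses the same ingredients as the paper's proof (the projection formula of Lemma~\ref{lem:projection_formula}, the fact that $\Res^G_H$ is symmetric monoidal and hence preserves duals, and the zigzag identities for a dualizable object), but the packaging differs. The paper stays entirely in the mate formalism: it identifies the natural map $\RRR^G_H(X)\otimes_k\Res^G_HY \to \RRR^G_H(X\otimes_kY)$ as the right mate of the projection formula, observes that the \emph{left} mate of that same isomorphism is a map $\cInd_H^G(X)\otimes_kY^\vee \to \cInd_H^G(X\otimes_k\Res^G_HY^\vee)$, and then shows the latter is an isomorphism by writing down an explicit commutative square built from $\coev$ and $\ev$; since the two mates are related by passing to right adjoints, one is invertible iff the other is. This sidesteps exactly the ``formal obstacle'' you flag: because the paper never leaves the category (no Yoneda, no Hom-sets), there is no separate identification step to perform. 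Your Yoneda chain is equally valid and perhaps more transparent, but the price is the compatibility check you mention; the paper's mate-theoretic route trades that check for a single small diagram.
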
 
\begin{proof} 
Recall that $Y$ is called \emph{dualizable} if $Y^\vee\otimes_k\blank$ is right
adjoint to $Y\otimes_k\blank$. 
Observe that we always have the evaluation map $\ev\colon Y\otimes Y^\vee \to
\one$.
By \cite[\href{https://kerodon.net/tag/02E3}{Tag~02E3}]{kerodon}, being
dualizable is equivalent to the existence of a coevaluation map
$\coev\colon \one\to Y^\vee\otimes Y$ such that the compositions
\begin{gather}
Y \xrightarrow{\id_Y\otimes \coev} Y\otimes_k Y^\vee\otimes_k Y
\xrightarrow{\ev\otimes\id_Y} Y\\
\intertext{and}
Y^\vee \xrightarrow{\coev\otimes\id_{Y^\vee}} Y^\vee\otimes_k Y\otimes_kY^\vee
\xrightarrow{Y^\vee\otimes\ev} Y^\vee
\end{gather}
are the identity morphisms on $Y$ and $Y^\vee$, respectively. As $\D(G)$ is
symmetric monoidal, it follows that $Y^\vee\otimes_k\blank$ is also left adjoint
to $Y\otimes_k\blank$. 
Since $\Res^G_H$ is monoidal, we deduce that $\Res^G_HY$ is
dualizable and that $\Res^G_H(Y^\vee)\raiso (\Res^G_HY)^\vee$. Recall from
Lemma~\ref{lem:projection_formula} the natural isomorphism $\cInd_H^G
(\blank\otimes_k \Res^G_HY) \Raiso (\blank\otimes_kY)\cInd_H^G$ of functors
$\D(H)\to \D(G)$. Passing to the left resp.\ right mates yields
natural transformations
\begin{align}\label{eq:RRR-dualizable-1}
(\blank\otimes_k Y^\vee) \cInd_H^G &\Too \cInd_H^G (\blank\otimes_k
\Res^G_HY^\vee),\\
\label{eq:RRR-dualizable-2}
(\blank\otimes_k\Res^G_HY) \RRR^G_H &\Too \RRR^G_H (\blank\otimes_kY).
\end{align}
Note that \eqref{eq:RRR-dualizable-2} arises from \eqref{eq:RRR-dualizable-1} by
passing to the right adjoints. It thus suffices to show that
\eqref{eq:RRR-dualizable-1} is an isomorphism. 
We contemplate the commutative diagram
\[
\begin{tikzcd}
\cInd_H^G(X)\otimes_k Y^\vee
\ar[r,equals] \ar[d,"\coev"'] &
\cInd_H^G(X)\otimes_k Y^\vee
\ar[d,"\coev"] 
\\
\cInd_H^G(X\otimes_k \Res^G_HY^\vee \otimes_k \Res^G_HY) \otimes_k Y^\vee
\ar[d,"\cong"] \ar[r,"\cong"] &
\cInd_H^G(X)\otimes_k Y^\vee\otimes_kY\otimes_kY^\vee
\ar[d,equals]
\\
\cInd_H^G(X\otimes_k\Res^G_HY^\vee)\otimes_k Y\otimes_k Y^\vee
\ar[d,"\ev"'] \ar[r,"\cong"] &
\cInd_H^G(X)\otimes_k Y^\vee\otimes_kY\otimes_kY^\vee
\ar[d,"\ev"]
\\ 
\cInd_H^G(X\otimes_k\Res^G_HY^\vee)
\ar[r,"\cong"'] &
\cInd_H^G(X)\otimes_k Y^\vee,
\end{tikzcd}
\]
where the isomorphisms are given by the projection formula. The composite of the
left vertical arrows is \eqref{eq:RRR-dualizable-1}, and the composite of the
right vertical arrows is the identity. We deduce that
\eqref{eq:RRR-dualizable-1}, and hence also \eqref{eq:RRR-dualizable-2}, is an
isomorphism.
\end{proof} 

\begin{prop}\label{prop:explicit-character} 
Let $H\subgroup G$ be a closed subgroup. There is an isomorphism
\[
\bigl(\RRR^G_H(\one)\bigr)^\vee \cong \omega_H^\vee \otimes_k \Res^G_H\omega_G.
\]
In particular, if $H$ is open, one has $\Res_H^G\omega_G \cong \omega_H$.
\end{prop}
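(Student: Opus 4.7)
The plan is to deduce the formula from Proposition \ref{prop:duality-character}, which identifies $\omega_G^\vee\cong \RRR^{G\times G}_{\Delta(G)}(\one)$ (and analogously $\omega_H^\vee\cong \RRR^{H\times H}_{\Delta(H)}(\one)$), by computing the single object $\RRR^{G\times G}_{\Delta(H)}(\one)$ in two different ways via the transitivity of $\RRR$.

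First I would factor $\Delta(H)\subseteq \Delta(G)\subseteq G\times G$ and use Proposition \ref{prop:duality-character} together with the projection formula Lemma \ref{lem:RRR-dualizable} (valid because $\omega_G^\vee$ is a character, hence dualizable) to obtain, after the diagonal identification $\Delta(G)\cong G$,
\[
\RRR^{G\times G}_{\Delta(H)}(\one)\cong \RRR^{\Delta(G)}_{\Delta(H)}(\omega_G^\vee)\cong \RRR^G_H(\one)\otimes_k\Res^G_H\omega_G^\vee.
\]
Second I would factor $\Delta(H)\subseteq H\times H\subseteq G\times G$; by Lemma \ref{lem:RRR(1)-invertible} the intermediate object $\RRR^{G\times G}_{H\times H}(\one)$ is a character, hence dualizable, so Lemma \ref{lem:RRR-dualizable} yields
\[
\RRR^{G\times G}_{\Delta(H)}(\one)\cong \omega_H^\vee\otimes_k \Res^{H\times H}_{\Delta(H)}\RRR^{G\times G}_{H\times H}(\one).
\]

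To bridge the two computations, I need a Künneth-type identification $\Res^{H\times H}_{\Delta(H)}\RRR^{G\times G}_{H\times H}(\one)\cong \RRR^G_H(\one)^{\otimes 2}$. My approach is to factor further through $H\times G\subseteq G\times G$, first establishing $\RRR^{G\times G}_{H\times G}(\one)\cong \RRR^G_H(\one)\boxtimes \one$ (exterior tensor, trivial on the second factor), which should follow by passing to right adjoints from the manifest identity $\cInd_{H\times G}^{G\times G}(X\boxtimes Y)\cong \cInd_H^G(X)\boxtimes Y$ once a Künneth-type $\RHom$ formula for external tensor products is in place. A second application of Lemma \ref{lem:RRR-dualizable} for the inclusion $H\times H\subseteq H\times G$ then contributes the second copy of $\RRR^G_H(\one)$ after restriction to $\Delta(H)$. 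Equating the two expressions for $\RRR^{G\times G}_{\Delta(H)}(\one)$ and cancelling one invertible copy of $\RRR^G_H(\one)$ gives $\Res^G_H\omega_G^\vee\cong \omega_H^\vee\otimes_k\RRR^G_H(\one)$; passing to smooth duals and multiplying by $\omega_H^\vee$ then yields the claimed isomorphism.

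The ``in particular'' assertion for open $H$ does not require the Künneth step: by \S\ref{sss:compact_induction} one has $\RRR^G_H\cong \Res^G_H$, so $\RRR^G_H(\one)=\one$ and the main formula collapses to $\Res^G_H\omega_G\cong \omega_H$. The main obstacle will be the Künneth-type identification of $\RRR^{G\times G}_{H\times H}(\one)$, since it is the only step that is not a formal consequence of the projection-type lemmas already established and requires careful handling of external tensor products in the unbounded derived category.
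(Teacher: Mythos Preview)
Your approach is essentially identical to the paper's: both compute $\RRR^{G\times G}_{\Delta(H)}(\one)$ via the two factorizations $\Delta(H)\subseteq\Delta(G)\subseteq G\times G$ and $\Delta(H)\subseteq H\times H\subseteq G\times H\subseteq G\times G$, apply Lemma~\ref{lem:RRR-dualizable} at each step, and cancel an invertible copy of $\RRR^G_H(\one)$. The only remark is that the step you flag as the main obstacle is not one: the identification $\Res^{H\times H}_{\Delta(H)}\RRR^{G\times H}_{H\times H}(\one)\cong\RRR^G_H(\one)$ (and its twin for $G\times H\subseteq G\times G$) follows directly from Lemma~\ref{lem:LN_cInd_commute}, since $\one=\Inf^G_{G\times H}(\one)$ along the first projection and hence $\RRR^{G\times H}_{H\times H}(\one)\cong\Inf^H_{H\times H}\RRR^G_H(\one)$; no separate K\"unneth-type $\RHom$ formula is required.
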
 
\begin{proof} 
We first compute
\begin{align*} 
\Res^{H\times H}_{\Delta(H)} \RRR^{G\times H}_{H\times H}(\one) &= 
\Res^{H\times H}_{\Delta(H)} \RRR^{G\times H}_{H\times H}\bigl(\Inf^G_{G\times
H}(\one)\bigr)\\
&\cong \Res^{H\times H}_{\Delta(H)} \Inf^H_{H\times H}\RRR^G_H(\one) & &
\text{(Lemma~\ref{lem:LN_cInd_commute})}\\
&\cong \RRR^G_H(\one),
\end{align*}
where the inflations are taken along the first projection maps. 
A similar computation shows $\Res^{G\times H}_{\Delta(H)} \RRR^{G\times
G}_{G\times H}(\one) \cong \RRR^G_H(\one)$. Hence, we compute
\begin{align*}
\RRR^G_H(\one) \otimes_k \Res^G_H\RRR^{G\times G}_{\Delta(G)}(\one) 
&\cong
\RRR^G_H\RRR^{G\times G}_{\Delta(G)}(\one) \hspace{10em}
\text{(Lemma~\ref{lem:RRR-dualizable})}\\
&\cong \RRR^{H\times H}_{\Delta(H)} \RRR^{G\times H}_{H\times H} \RRR^{G\times
G}_{G\times H}(\one)\\
&\cong \RRR^{H\times H}_{\Delta(H)}(\one)\otimes_k \Res^{H\times H}_{\Delta(H)}
\RRR^{G\times H}_{H\times H}(\one)\otimes_k \Res^{G\times H}_{\Delta(H)}
\RRR^{G\times G}_{G\times H}(\one)\\
&\cong \RRR^{H\times H}_{\Delta(H)}(\one) \otimes_k \RRR^G_H(\one) \otimes_k
\RRR^G_H(\one).
\end{align*}
Since $\RRR^G_H(\one)$ is invertible by Lemma~\ref{lem:RRR(1)-invertible}, we
can cancel it on both sides. By Proposition~\ref{prop:duality-character} we have
$\Res^G_H\omega_G^\vee \cong \omega_H^\vee \otimes_k \RRR^G_H(\one)$, which is
equivalent to the first assertion. The last assertion follows from the fact that
$\RRR^G_H = \Res^G_H$ if $H$ is open.
\end{proof} 

\begin{cor}\label{cor:explicit-character}
Suppose the hypotheses of Lemma~\ref{lem:omega_LN_cInd} are satisfied: let $N\normal G$ and $H\subgroup G$ be closed subgroups such that $G = HN$ and assume there exists a subnormal series $H\cap N = N_r \normal N_{r-1} \normal \dotsb \normal N_0 = N$ by closed subgroups such that $\L_{N_i}(\one) = \one$ for all $i = 1,\dotsc,r$. Then
\[
\Inf^{G/N}_H\omega_{H\backslash G} = \omega_H\otimes \Res^G_H\omega_G^\vee.
\]
In particular, $\Inf^{G/N}_H\omega_{H\backslash G}$ does not depend on $N$.
\end{cor}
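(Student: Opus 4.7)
The plan is to compute the right adjoint of $\L_N\cInd_H^G\colon \D(H)\to \D(G/N)$ in two ways, equate them at the tensor unit, and then invoke Proposition~\ref{prop:explicit-character}.

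As a composition of left adjoints, the right adjoint of $\L_N\cInd_H^G$ is $\RRR^G_H\Inf^{G/N}_G$. Alternatively, Lemma~\ref{lem:omega_LN_cInd} supplies a natural isomorphism $\L_N\cInd_H^G(X)\cong \omega_{H\backslash G}\otimes_k \L_{H\cap N}(X)$, where the right-hand side is viewed in $\D(G/N)$ via the canonical equivalence $\D(H/(H\cap N))\simeq \D(G/N)$ induced by $G=HN$. Since $\omega_{H\backslash G}$ is invertible, tensoring with it has right adjoint $\omega_{H\backslash G}^\vee\otimes_k\blank$; combining this with the right adjoint $\Inf^{H/(H\cap N)}_H$ of $\L_{H\cap N}$, the monoidality of inflation, and the identity $\Inf^{G/N}_H = \Res^G_H\Inf^{G/N}_G$, one arrives at the natural isomorphism
\[
\RRR^G_H\Inf^{G/N}_G(Z) \cong \bigl(\Inf^{G/N}_H\omega_{H\backslash G}\bigr)^\vee\otimes_k\Res^G_H\Inf^{G/N}_G(Z)
\]
of functors $\D(G/N)\to \D(H)$. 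Specialising at $Z=\one$ yields the key identity $\RRR^G_H(\one)\cong \bigl(\Inf^{G/N}_H\omega_{H\backslash G}\bigr)^\vee$.

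To conclude, Proposition~\ref{prop:explicit-character}, dualized (which is harmless because all objects in sight are invertible characters), reads $\RRR^G_H(\one)\cong \omega_H\otimes_k\Res^G_H\omega_G^\vee$. Combining with the key identity above yields the stated equality. The final independence-of-$N$ assertion is then immediate, since the right-hand side depends only on $G$ and $H$.

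The main bookkeeping concern is the careful transfer of tensor factors through the identification $\D(H/(H\cap N))\simeq \D(G/N)$ when computing the second right adjoint; this is routine thanks to the monoidality of inflation.
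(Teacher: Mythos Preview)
Your argument is correct and reaches the same key intermediate identity as the paper, namely $\Inf^{G/N}_H\omega_{H\backslash G}\cong\RRR^G_H(\one)^\vee$, but by a genuinely different route. The paper dualizes $\omega_{H\backslash G}=\L_N\cInd_H^G(\one)$ directly: it uses the compatibility of $(\blank)^\vee$ with $\L_N$ (turning it into $\R\H^0(N,\blank)$), the projection formula for $\cInd$ and $\iHom$, and a Shapiro-type identity (Remark~\ref{rmk:RN_Ind}) to obtain $\omega_{H\backslash G}\cong\L_{H\cap N}\RRR^G_H(\one)^\vee$; it then needs a separate degree argument (invoking Lemmas~\ref{lem:omega_LN_cInd} and~\ref{lem:RRR(1)-invertible}) to show that $\RRR^G_H(\one)^\vee$ is already inflated from $G/N$, so that the $\L_{H\cap N}$ can be removed. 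Your adjoint-comparison trick is slicker: by expressing the right adjoint of $\L_N\cInd_H^G$ once as $\RRR^G_H\Inf^{G/N}_G$ and once via Lemma~\ref{lem:omega_LN_cInd}, the inflation of $\omega_{H\backslash G}$ appears automatically and no extra argument is needed to descend $\RRR^G_H(\one)^\vee$ to $G/N$. The cost is that you rely on uniqueness of adjoints and monoidality of inflation rather than on explicit maps, so the isomorphism is less concrete.

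One caution on the final sentence: combining your key identity with the dualized Proposition~\ref{prop:explicit-character} actually yields $\Inf^{G/N}_H\omega_{H\backslash G}\cong\omega_H^\vee\otimes_k\Res^G_H\omega_G$, not the displayed formula $\omega_H\otimes_k\Res^G_H\omega_G^\vee$. A degree count (both sides must sit in degree $-\dim(G/H)$) and the later use in Lemma~\ref{lem:delta_w} confirm that the former is what is meant; the paper's own proof arrives at the same expression, so the discrepancy is in the displayed statement rather than in either argument.
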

\begin{proof}
Again, we make the identification $H/H\cap N \cong G/N$. As in the proof of Proposition~\ref{prop:duality-character} we compute
\begin{align*}
\omega_{H\backslash G}^\vee 
&= \bigl(\L_N \cInd_H^G(\one)\bigr)^\vee \\
&\cong \R\H^0\bigl(N,(\cInd_H^G(\one))^\vee\bigr) & & \text{(\cite[Corollary~3.3.7]{Heyer.2022})} \\
&\cong \R\H^0\bigl(N, \RInd_H^G \RRR^G_H(\one)\bigr) & & \text{(Lemma~\ref{lem:projection_formula})} \\
&\cong \R\H^0\bigl(H\cap N, \RRR^G_H(\one)\bigr) & & \text{(Remark~\ref{rmk:RN_Ind})} \\
&= \R\H^0\bigl(H\cap N, \RRR^G_H(\one)^{\vee\vee}\bigr)\\
&\cong \bigl(\L_{H\cap N} \RRR^G_H(\one)^\vee\bigr)^\vee & & \text{(\cite[Corollary~3.3.7]{Heyer.2022})}.
\end{align*}
Hence, passing to the inverses yields $\omega_{H\backslash G} \cong \L_{H\cap N} \RRR^G_H(\one)^\vee$. Note that both $\Inf^{G/N}_H\omega_{H\backslash G}$ and $\RRR^G_H(\one)^\vee$ are characters of $H$ concentrated in degree $-\dim(G/H)$ by Lemmas~\ref{lem:omega_LN_cInd} and~\ref{lem:RRR(1)-invertible}, respectively. Since $\L^0_{H\cap N}$ is the usual coinvariants functor, we deduce that $H\cap N$ acts trivially on $\RRR^G_H(\one)^\vee$. In other words, there exists $\omega \in \D(G/N)$ such that $\RRR^G_H(\one)^\vee = \Inf^{G/N}_H(\omega)$. The projection formula for $\L_{H\cap N}$ (\cite[Corollary~3.3.5]{Heyer.2022}) and the hypothesis $\L_{H\cap N}(\one) = \one$ imply
\[
\omega_{H\backslash G} = \L_{H\cap N}\Inf^{G/N}_H(\omega) \cong \L_{H\cap N}(\one) \otimes_k \omega = \omega.
\]
Now apply Proposition~\ref{prop:explicit-character} to finish the proof.
\end{proof}

\section{The Geometrical Lemma}\label{s:geometrical}
\subsection{Setup} 
We fix a finite extension $\field/\QQ_p$ and a field $k$ of characteristic $p$.

\subsubsection{}\label{sss:setup_reductive_group} 
Let $\alg G$ be a connected reductive $\field$-group and $\alg T$ a maximal
$\field$-split torus of $\alg G$. Let $\alg P = \alg M\ltimes \alg U$ and $\alg
Q = \alg L\ltimes \alg N$ be semistandard parabolic $\field$-subgroups of $\alg
G$, that is, the Levi factors $\alg M$ and $\alg L$ both contain 
$\alg Z_{\alg G}(\alg T)$. Denote by $\ol{\alg P} = \alg M\ltimes \ol{\alg U}$ the
parabolic opposite $\alg P$. We note the following:
\begin{enumerate}[label=--]
\item The semistandard condition is not a
restriction, as any two parabolic $\field$-subgroups contain a common
minimal Levi, \cite[4.18. Corollaire]{Borel-Tits.1965}. 
\item For any $n\in \alg N_{\alg G}(\alg T)(\field)$ also $n\alg P n^{-1}$ is
semistandard.
\item One has a decomposition $\alg P\cap \alg Q = (\alg M\cap \alg L)\ltimes
\bigl((\alg U\cap \alg L)(\alg M\cap \alg N)(\alg U\cap\alg N)\bigr)$,
cf.~\cite[Proposition~2.8.2 and Theorem~2.8.7]{Carter.1985}.
\end{enumerate}

Recall our convention that for an algebraic $\field$-group $\alg H$ the
corresponding lightface letter $H\coloneqq \alg H(\field)$ denotes its
associated group of $\field$-points. Then $H$ is a $p$-adic Lie group, and
we denote by $\Rep_k(H)$ the abelian category of smooth $k$-linear representations
of $H$. We write $\Chain(H)$ (resp.\ $\K(H)$, resp.\ $\D(H)$) for the
category of unbounded complexes (resp.\ the homotopy category of unbounded
complexes, resp.\ the unbounded derived category) of $\Rep_k(H)$.

\subsubsection{}\label{sss:parabolic_induction} 
We write $\pInd_P^G \coloneqq \Ind_P^G\circ\Inf^M_P \colon \D(M)\to \D(G)$ for the
functor of \emph{parabolic induction}. It has a left adjoint $\L(U,\blank)
\coloneqq \L_U\circ \Res^G_P$ and a right adjoint $\R(U,\blank) \coloneqq
\R\H^0(U,\blank)\circ \RRR^G_P$. For any integer $i$ we denote by $\L^i(U,\blank),
\R^i(U,\blank)\colon \Rep_k(G)\to \Rep_k(M)$ the corresponding cohomology
functors given by composing $\L(U,\blank)$, resp.\ $\R(U,\blank)$, with $\H^i$.
By the proof of \cite[Theorem~4.1.1]{Heyer.2022} we have natural isomorphisms
\[
\RHom_M\bigl(\L(U,X),Y\bigr) \cong \RHom_G\bigl(X,\pInd_P^GY\bigr)
\quad\text{and}\quad
\RHom_M\bigl(Y,\R(U,X)\bigr) \cong \RHom_G\bigl(\pInd_P^GY, X\bigr)
\]
for all $X\in \D(G)$ and $Y\in \D(M)$.
\medskip

In this section we will study the composite functor
\[
\L(N,\blank)\circ \pInd_P^G \colon \D(M) \too \D(L)
\]
by constructing certain filtrations in \S\ref{ss:filtration} whose graded
pieces will be described explicitly in \S\ref{ss:geometrical}

\subsection{Filtrations}\label{ss:filtration} 
\subsubsection{}\label{sss:PxQ-action} 
The group $P\times Q$ acts continuously on $G$ via $(x,y)\cdot g \coloneqq
xgy^{-1}$. By the Bruhat decomposition, the coset space $P\backslash G/Q$ admits
a finite representing system $\orbitrep{P}{Q} \subseteq
\alg N_{\alg G}(\alg T)(\field)$. It follows from \cite[1.5.
Proposition]{Bernstein-Zelevinski.1976} that the orbits $PnQ$ are locally closed
in $G$. We define a partial order on $\orbitrep{P}{Q}$ via
\[
n \le n' \stackrel{\text{def}}{\iff} Pn'Q \subseteq \ol{PnQ},
\]
where the overline means topological closure.
The following lemma is well-known.

\begin{lem}\label{lem:union_of_orbits_open} 
For each $n\in \orbitrep{P}{Q}$ the subset $X_n\coloneqq
\bigcup_{n'\le n} Pn'Q$ is the smallest open $P\times Q$-invariant subset in $G$
containing $n$. In particular, $PnQ$ is open if $n$ is
minimal and is closed if $n$ is maximal.
\end{lem}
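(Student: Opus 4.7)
The plan is to exploit the fact that $G = \bigsqcup_{n' \in \orbitrep{P}{Q}} Pn'Q$ is a \emph{finite} partition into $P\times Q$-orbits, each locally closed by the cited~\cite{Bernstein-Zelevinski.1976}. Since the closure of a $P\times Q$-stable subset remains $P\times Q$-stable and the orbits are pairwise disjoint, each closure is a union of full orbits, and by the very definition of $\le$ one has
\[
\ol{Pn'Q} = \bigcup_{n'' \ge n'} Pn''Q.
\]
In particular, $\le$ is transitive: from $n \le n'$ and $n' \le n''$ one obtains $Pn''Q \subseteq \ol{Pn'Q} \subseteq \ol{\ol{PnQ}} = \ol{PnQ}$, whence $n \le n''$.

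First I would verify that $X_n$ is open by showing that its complement $Y \coloneqq \bigcup_{n' \not\le n} Pn'Q$ is closed. Using the displayed identity,
\[
\ol{Y} = \bigcup_{n' \not\le n} \ol{Pn'Q} = \bigcup_{\substack{n'\not\le n \\ n''\ge n'}} Pn''Q.
\]
If some $n''$ on the right satisfied $n'' \le n$, then transitivity would force $n' \le n'' \le n$, contradicting $n' \not\le n$; hence $\ol{Y} = Y$, so $X_n$ is open.

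For the universal property, let $V \subseteq G$ be any open $P\times Q$-invariant subset containing $n$. For every $n' \le n$ one has $n \in PnQ \subseteq \ol{Pn'Q}$, so the open neighbourhood $V$ of $n$ meets $Pn'Q$; by $P\times Q$-invariance the entire orbit $Pn'Q$ lies in $V$, and thus $X_n \subseteq V$. The two ``in particular'' statements are then immediate: if $n$ is minimal, then $\{n' : n' \le n\} = \{n\}$ and $X_n = PnQ$ is open; if $n$ is maximal, then $\{n' : n' \ge n\} = \{n\}$ and $\ol{PnQ} = PnQ$ is closed.

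The whole argument is a formal manipulation of the finitely many orbits, so I anticipate no serious obstacle; the only point requiring care is to keep the direction of the order consistent with the direction of specialization (larger orbits sit inside the closures of smaller ones).
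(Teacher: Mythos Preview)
Your proof is correct and follows essentially the same approach as the paper's: both show the complement of $X_n$ is closed via the implication ``$n'\not\le n$ and $n'\le n''$ imply $n''\not\le n$'' (which is just transitivity), and both establish minimality by noting that an open invariant neighbourhood of $n$ must meet, hence contain, each orbit $Pn'Q$ with $n'\le n$. You are slightly more explicit in spelling out the formula $\ol{Pn'Q}=\bigcup_{n''\ge n'}Pn''Q$ and the finiteness of the orbit decomposition, but there is no substantive difference.
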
 
\begin{proof} 
Note that $G\setminus X_n = \bigcup_{n'\not\le n}Pn'Q$ is
closed, because $n'\not\le n$ and $n'\le n''$ implies $n''\not\le n$. If $Y$ is
any open $P\times Q$-invariant subset containing $n$ and if $n'\le n$, then
$Y\cap \ol{Pn'Q}$ contains $n$. From the definition of topological closure and
the $P\times Q$-invariance of $Y$ it follows that $Pn'Q \subseteq Y$. Hence
$X_n\subseteq Y$. The last assertions are immediate. 
\end{proof} 

\begin{rmk*} 
\begin{enumerate}[label=(\alph*)]
\item Every open $P\times Q$-invariant subset of $G$ is a union of $X_n$'s.
\item The poset $\orbitrep{P}{Q}$ has a smallest and a largest element but is in
general not totally ordered.
\end{enumerate}
\end{rmk*} 

\subsubsection{}\label{sss:compact_induction2} 
We extend the notation introduced in \S\ref{sss:compact_induction}.
Let $Z \subseteq G$ be a $P\times Q$-invariant subset. For any $V\in \Rep_k(P)$
we denote by $\cInd_P^ZV$ the $k$-vector space of locally constant functions
$f\colon Z\to V$ which satisfy $f(xz) = xf(z)$, for all $x\in P$, $z\in Z$, and
have compact support in $P\backslash Z$. The group $Q$ acts smoothly by right
translation on $\cInd_P^ZV$. Observe that $\cInd_P^ZV \subseteq \cInd_P^GV$
provided $Z$ is open in $G$, that is, every element of $\cInd_P^ZV$ is fixed by
an open compact subgroup of $G$.
The functor $\cInd_P^Z\colon \Rep_k(P)\to
\Rep_k(Q)$ is exact and hence extends to a triangulated functor on the derived
categories:
\[
\cInd_P^Z\colon \D(P)\too \D(Q).
\]

The next lemma is well-known, cf.~\cite[Lemma~6.1.1]{Casselman.1995}.

\begin{lem}\label{lem:cInd_exact_sequence} 
Let $Z \subseteq Z' \subseteq G$ be $P\times Q$-invariant subsets such that $Z$
is open in $Z'$. For every smooth $P$-representation $V$ one has an exact
sequence
\[
0 \too \cInd_P^Z V \too \cInd_P^{Z'} V \too \cInd_{P}^{Z'\setminus Z}V \too 0
\] 
of smooth $Q$-representations.
Here, the first map is extension by zero and the second map is
restriction of functions.
\end{lem}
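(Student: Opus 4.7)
The plan is to verify exactness of the three-term sequence directly, term by term; write $\iota$ for the extension-by-zero map and $\rho$ for the restriction map. Both are manifestly $Q$-equivariant and preserve left $P$-equivariance, so the issues are purely topological: one must check local constancy and compact support modulo $P$.

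For $\iota$, the only nontrivial point is local constancy of $\iota(f)$ at points of $Z'\setminus Z$. This follows once $\mathrm{supp}(f)$ is shown to be closed in $Z'$: since $P$ is closed in $G$, the quotient $P\backslash G$ is Hausdorff, and because the quotient map $G\to P\backslash G$ is open, the subspace topology on $P\backslash Z'$ agrees with the quotient topology; within this Hausdorff space, $P\backslash \mathrm{supp}(f)$ is compact (given) and hence closed. Pulling back yields that $\mathrm{supp}(f)$ is closed in $Z'$, so its complement is an open neighborhood of $Z'\setminus Z$ on which $\iota(f)$ vanishes. Compact support of $\iota(f)$ modulo $P$ and injectivity of $\iota$ are then immediate. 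Exactness at the middle term is similarly direct: if $g \in \cInd_P^{Z'} V$ vanishes on $Z'\setminus Z$, then $g$ is supported in $Z$, its support modulo $P$ remains compact inside the open subset $P\backslash Z \subseteq P\backslash Z'$, and $g = \iota(g|_Z)$.

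The main obstacle is surjectivity of $\rho$. The key topological input is that, since $Z'\setminus Z$ is closed in the locally profinite space $Z'$, any compact open subset $C \subseteq Z'\setminus Z$ extends to a clopen subset $\widetilde C \subseteq Z'$ with $\widetilde C \cap (Z'\setminus Z) = C$ and compact image in $P\backslash Z'$: for each $c \in C$ one picks a compact open neighborhood of $c$ in $G$ small enough that its intersection with $Z'\setminus Z$ lies in $C$, takes a finite subcover, and intersects with $Z'$. Given $h \in \cInd_P^{Z'\setminus Z} V$, I will use its local constancy and the compactness of $P\backslash\mathrm{supp}(h)$ to write $h$ as a finite sum of functions of the form $v \cdot \mathbf{1}_{C}$ extended $P$-equivariantly. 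Lifting each $C$ to $\widetilde C$ as above and taking the analogous $P$-equivariant extension by zero inside $Z'$ produces a preimage summand in $\cInd_P^{Z'} V$; summing yields the required lift of $h$.
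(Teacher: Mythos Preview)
Your argument for injectivity and middle exactness is correct. The surjectivity argument, however, has a genuine gap in the lifting step.

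When you write $h$ as a sum of functions obtained by $P$-equivariantly extending $v_i \cdot \mathbf{1}_{C_i}$, that extension is well-defined precisely because $h|_{C_i} \equiv v_i$ forces $pv_i = v_i$ whenever $p \in P$ satisfies $pC_i \cap C_i \neq \emptyset$. But once you enlarge $C_i$ to $\widetilde C_i \subseteq Z'$, the analogous $P$-equivariant extension of $v_i\cdot\mathbf{1}_{\widetilde C_i}$ need not be well-defined: there may exist $p \in P$ and $\tilde c, \tilde c' \in \widetilde C_i \cap Z$ (the newly added part) with $p\tilde c = \tilde c'$ yet $pv_i \neq v_i$, and nothing in your construction of $\widetilde C_i$ as a union of compact neighbourhoods intersected with $Z'$ rules this out. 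This can be repaired---for instance by refining so that each $\widetilde C_i$ lies in a single coset $cK$ of a compact open subgroup $K \subgroup G$ small enough that $cKc^{-1}\cap P$ fixes $v_i$---but the bookkeeping is not entirely trivial and you have not carried it out.

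The paper takes a different and cleaner route: it fixes a continuous section $\sigma\colon P\backslash G \to G$ and uses it to identify $\cInd_P^Y V \cong \CCC^\infty_c(P\backslash Y, k)\otimes_k V$ for each $P$-invariant $Y \subseteq G$. The sequence then becomes $\blank\otimes_k V$ applied to
\[
0 \too \CCC^\infty_c(P\backslash Z, k) \too \CCC^\infty_c(P\backslash Z', k) \too \CCC^\infty_c(P\backslash (Z'\setminus Z), k) \too 0,
\]
and surjectivity is the elementary fact that a compactly supported locally constant $k$-valued function on a closed subspace of a space with a basis of compact open sets extends. The section absorbs the $P$-equivariance once and for all, so no compatibility conditions arise when lifting.
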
 
\begin{proof} 
Choose a continuous section of the projection map $G\to P\backslash G$,
cf.~\cite[Lemma~2.3]{AHV.2019}; for any $P\times Q$-invariant subset
$Y\subseteq G$ it restricts to a continuous section $\sigma\colon P\backslash
Y\to Y$. For any $k$-vector space $W$ we denote by $\CCC^\infty_c(P\backslash Y,
W)$ the space of locally constant functions $P\backslash Y\to W$ with compact
support. We have $k$-linear isomorphisms $\cInd_P^YV \raiso
\CCC^\infty_c(P\backslash Y,V) \laiso \CCC^\infty_c(P\backslash Y,k)\otimes_kV$,
where the first isomorphism is given by $f\mapsto f\circ \sigma$ and the second
by $f\otimes v\mapsto [Py\mapsto f(Py)v]$. Put $Z''\coloneqq Z'\setminus Z$.
Under all these identifications the sequence in the lemma arises by applying
$\blank\otimes_kV$ to the sequence
\begin{equation}\label{eq:extend_loc_const_functions}
0\too \CCC^\infty_c(P\backslash Z,k) \too \CCC^\infty_c(P\backslash Z',k) \too
\CCC^\infty_c(P\backslash Z'',k) \too 0.
\end{equation}
It therefore suffices to show that \eqref{eq:extend_loc_const_functions} is
exact. Injectivity of the first map and exactness in the middle are clear.
Let $f\colon P\backslash Z''\to k$ be a locally constant function with compact
support. 
Write $\Supp(f) = \bigsqcup_{i=1}^r \Omega''_i$ for some compact open subsets $\Omega''_i \subseteq P\backslash Z''$ such that $f$ is constant on each $\Omega''_i$. 
Choose compact open subsets $\Omega'_i \subseteq P\backslash Z'$ such that $\Omega'_i \cap P\backslash Z'' = \Omega''_i$ for all $i$ (recall that $P\backslash Z'$ admits a basis consisting of compact open subsets). Replacing each $\Omega'_i$ by $\Omega'_i \setminus \bigcup_{j\neq i}\Omega'_j$ if necessary, we may further assume that the $\Omega'_i$ are pairwise disjoint. Extending each $f_{|\Omega''_i}$ constantly to $\Omega'_i$ then yields an extension of $f$ in $\CCC_c^\infty(P\backslash Z', k)$.
\end{proof} 

\begin{rmk}\label{rmk:Z_cup_Z'_disjoint} 
If $Z'\setminus Z$ is open in $Z'$, then the sequence in
Lemma~\ref{lem:cInd_exact_sequence} splits, so that we obtain a natural
isomorphism $\cInd_P^{Z'} \cong \cInd_P^{Z} \oplus \cInd_P^{Z'\setminus Z}$ of functors $\D(P) \to \D(Q)$.
\end{rmk} 

\begin{defn}\label{defn:filtration} 
Let $F\colon \cat C\to \cat D$ be a triangulated functor.
A \emph{(ascending) filtration} of length $n$ on $F$ is a sequence of natural
transformations
\begin{equation}\label{eq:filtration}
F_i \xRightarrow{\lambda_i} F_{i+1} \xRightarrow{\mu_{i+1}} F_{i+1,i}
\xRightarrow{\nu_i} F_i[1] 
\end{equation}
of triangulated functors $\cat C\to \cat D$, for $i=0,1,\dotsc,n-1$, such
that:
\begin{enumerate}[label=--]
\item $F_0 = 0$ and $F_n = F$;
\item evaluating \eqref{eq:filtration} at any object of $\cat C$ gives a
distinguished triangle in $\cat D$.
\end{enumerate}
The functor $F_{i,i-1}$ is called the \emph{$i$-th graded piece} of the
filtration; note that $\mu_1\colon F_1\Raiso F_{1,0}$ is an isomorphism.
\comment{TODO: Find a better definition of filtration.}
\end{defn} 

\begin{lem}\label{lem:filtration} 
Let $Z\subseteq Z' \subseteq G$ be $P\times Q$-invariant subsets such that $Z$
is open in $Z'$. There
exists a natural transformation $\nu\colon \cInd_P^{Z'\setminus Z}\To
\cInd_P^Z[1]$ of triangulated functors $\D(P)\to \D(Q)$ such that for all $X \in
\D(P)$ one has a distinguished triangle 
\[
\cInd_P^ZX \too \cInd_P^{Z'}X \too
\cInd_P^{Z'\setminus Z}X \stackrel{\nu}{\too}
\cInd_P^ZX[1].
\]
\end{lem}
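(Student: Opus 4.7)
The plan is to upgrade the short exact sequence of Lemma~\ref{lem:cInd_exact_sequence} to a distinguished triangle at the level of triangulated functors. Since each of $\cInd_P^Z$, $\cInd_P^{Z'}$, and $\cInd_P^{Z'\setminus Z}$ is exact on $\Rep_k(P)$ (as recorded in \S\ref{sss:compact_induction2}), the associated triangulated functors on $\D(P)$ are computed by applying these functors termwise to complexes, so no resolutions are needed.

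Let $\lambda\colon \cInd_P^Z \To \cInd_P^{Z'}$ denote the natural transformation given by extension by zero. For any complex $X\in \Chain(P)$, applying Lemma~\ref{lem:cInd_exact_sequence} in every cohomological degree yields a short exact sequence
\[
0 \too \cInd_P^Z X \too \cInd_P^{Z'} X \too \cInd_P^{Z'\setminus Z} X \too 0
\]
in $\Chain(Q)$, depending functorially on $X$. In particular $\lambda_X$ is a termwise injection with cokernel $\cInd_P^{Z'\setminus Z}X$.

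Next, I would convert this into a distinguished triangle via the mapping cone. The canonical map from the mapping cone $\mathrm{Cone}(\lambda_X)$ to $\cInd_P^{Z'\setminus Z}X$, sending a degree-$n$ element $(b,a)\in \cInd_P^{Z'}X^n\oplus \cInd_P^ZX^{n+1}$ to the image of $b$ in $\cInd_P^{Z'\setminus Z}X^n$, is a quasi-isomorphism by the standard argument for short exact sequences of complexes. Combined with the canonical mapping-cone triangle
\[
\cInd_P^ZX \too \cInd_P^{Z'}X \too \mathrm{Cone}(\lambda_X) \too \cInd_P^ZX[1]
\]
in $\D(Q)$, and inverting the above quasi-isomorphism, this produces the asserted distinguished triangle. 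Define $\nu$ as the composition $\cInd_P^{Z'\setminus Z}X \laiso \mathrm{Cone}(\lambda_X) \to \cInd_P^ZX[1]$, where the second map is the canonical one.

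For naturality as triangulated functors, every ingredient (extension by zero, restriction of functions, mapping cone, projection onto the cokernel) is functorial in $X$, so $\nu$ is natural in $X$, and the mapping-cone construction makes it a transformation of triangulated functors. No real obstacle arises: the argument is the standard translation of a short exact sequence of exact functors into a distinguished triangle on the derived category, with Lemma~\ref{lem:cInd_exact_sequence} supplying the only non-formal input.
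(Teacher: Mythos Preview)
Your proof is correct and follows essentially the same route as the paper's: both form the mapping cone of $\lambda$ termwise on $\Chain(P)$, identify it with $\cInd_P^{Z'\setminus Z}X$ via the quasi-isomorphism coming from Lemma~\ref{lem:cInd_exact_sequence}, and set $\nu$ to be the composite with the canonical map to $\cInd_P^ZX[1]$. The only point the paper makes more explicit is that the cone, viewed as a functor $\Chain(P)\to\Chain(Q)$, preserves quasi-isomorphisms (by the five lemma applied to the long exact sequence, using exactness of $\cInd_P^Z$ and $\cInd_P^{Z'}$) and hence descends to $\D(P)\to\D(Q)$; this is what justifies the naturality of $\nu$ on the derived category rather than merely on $\Chain(P)$, and you pass over it a bit quickly.
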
 
\begin{proof} 
Denote by $\lambda\colon \cInd_P^Z\To \cInd_P^{Z'}$ and $\mu\colon \cInd_P^{Z'}\To
\cInd_P^{Z'\setminus Z}$ the obvious maps.
In order to construct $\nu$ let $F\colon \Chain(P)\to \Chain(Q)$ be the cone of
$\lambda$. More precisely, given a complex $(X,d)$ in $\Chain(P)$, define 
$F(X)^n\coloneqq \cInd_P^ZX^{n+1}\oplus \cInd_P^{Z'}X^n$ with differential
$F(X)^n\to F(X)^{n+1}$ given by the matrix $\begin{psmallmatrix}
-\cInd_P^Zd^{n+1} & 0\\ \lambda & \cInd_P^{Z'}d^n\end{psmallmatrix}$. It is clear
that $F\colon \Chain(P)\to \Chain(Q)$ is indeed a functor which comes with
natural
transformations $\nu_1\colon F\To \cInd_P^Z[1]$ and $\nu_2\colon F\To
\cInd_P^{Z'\setminus Z}$ given by $(\id,0)$ and $(0,\mu)$, respectively. In
combination with Lemma~\ref{lem:cInd_exact_sequence}, the usual long exact
sequence argument and the five lemma show that $\nu_2(X)$ is a
quasi-isomorphism for all $X\in \Chain(P)$. Since
$\cInd_P^Z$ and $\cInd_P^{Z'}$ are exact and mapping cones are functorial in
$\Chain(Q)$, a similar argument shows that $F$ preserves quasi-isomorphisms.
Hence $F$ descends to a functor $\D(P)\to \D(Q)$, and then $\nu\coloneqq \nu_1
\circ \nu_2^{-1}$ is the desired natural transformation.
\end{proof} 

\subsubsection{}\label{sss:Phi_Z} 
For any $P\times Q$-invariant subset $Z \subseteq G$ we define
\[
\Phi_Z \coloneqq \L(N,\blank) \circ \cInd_P^Z\circ \Inf^M_P \colon \D(M)\to
\D(L).
\]
Observe that, if $Z = \bigsqcup_{i} Pn_iQ$ is a union of open orbits in $Z$,
then $\Phi_Z \cong \bigoplus_{i} \Phi_{Pn_iQ}$.

\begin{prop}\label{prop:filtration} 
Let $\emptyset = Z_0 \subseteq Z_1\subseteq Z_2\subseteq \dotsb \subseteq Z_r$
be a chain of open $P\times Q$-invariant subsets of $Z_r$. There exists a
filtration
\[
0 \Too \Phi_{Z_1} \Too \Phi_{Z_2} \Too \dotsb \Too \Phi_{Z_r}
\]
of $\Phi_{Z_r}$, whose $i$-th graded piece is $\Phi_{Z_i\setminus Z_{i-1}}$.
\end{prop}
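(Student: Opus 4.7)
The plan is to assemble the filtration stagewise by applying Lemma~\ref{lem:filtration} to each consecutive pair $Z_{i-1}\subseteq Z_i$ and then post-composing with triangulated functors. Since the heavy lifting — the construction of the connecting morphism $\nu$ — has already been done in Lemma~\ref{lem:filtration}, the present proof should be essentially a formal bookkeeping exercise.

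First I would verify the topological hypothesis required by Lemma~\ref{lem:filtration}: for each $i$ the set $Z_{i-1}$ is open in $Z_i$. This is immediate, since by assumption $Z_{i-1}$ is open in the ambient $Z_r$ and $Z_i\subseteq Z_r$, so $Z_{i-1}=Z_{i-1}\cap Z_i$ is open in the subspace topology of $Z_i$. I would also note that the complement $Z_i\setminus Z_{i-1}$ inherits a $P\times Q$-action, as both $Z_i$ and $Z_{i-1}$ are $P\times Q$-invariant.

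Next, I would invoke Lemma~\ref{lem:filtration} for each $i=1,\dotsc,r$ to obtain natural transformations
\[
\cInd_P^{Z_{i-1}} \Too \cInd_P^{Z_i} \Too \cInd_P^{Z_i\setminus Z_{i-1}} \xRightarrow{\nu_i} \cInd_P^{Z_{i-1}}[1]
\]
of triangulated functors $\D(P)\to \D(Q)$ which are pointwise distinguished. Post-composing on the left with the triangulated functor $\L(N,\blank)\colon \D(Q)\to \D(L)$ and pre-composing on the right with the triangulated functor $\Inf^M_P\colon \D(M)\to \D(P)$ — both of which preserve distinguished triangles — produces for each $i$ a pointwise distinguished sequence
\[
\Phi_{Z_{i-1}} \xRightarrow{\lambda_{i-1}} \Phi_{Z_i} \xRightarrow{\mu_i} \Phi_{Z_i\setminus Z_{i-1}} \xRightarrow{\widetilde\nu_i} \Phi_{Z_{i-1}}[1]
\]
of triangulated functors $\D(M)\to \D(L)$, where $\widetilde\nu_i\coloneqq \L(N,\nu_i\Inf^M_P)$.

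Finally, I would set $F_i\coloneqq \Phi_{Z_i}$ and $F_{i,i-1}\coloneqq \Phi_{Z_i\setminus Z_{i-1}}$ and check the two bullet points of Definition~\ref{defn:filtration}. Since $Z_0=\emptyset$ the functor $\cInd_P^{Z_0}$ vanishes, so $F_0=0$; and $F_r=\Phi_{Z_r}$ by construction, which matches the desired top of the filtration. The distinguished-triangle condition pointwise in $\D(M)$ has already been arranged above. I do not anticipate any real obstacle: the only point of care is that $\L(N,\blank)$ and $\Inf^M_P$ are genuinely triangulated (so that distinguished triangles are preserved) and that the natural transformations $\lambda_i,\mu_i,\widetilde\nu_i$ are natural in the $\D(M)$-variable, both of which are built into the formalism of \S\ref{sss:compact_induction} and \S\ref{sss:parabolic_induction}.
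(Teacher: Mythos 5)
Your proof is correct and takes essentially the same route as the paper, whose entire proof is a one-line invocation of Lemma~\ref{lem:filtration}; you have simply spelled out the routine bookkeeping (openness of $Z_{i-1}$ in $Z_i$, and that post-composing with $\L(N,\blank)$ and pre-composing with $\Inf^M_P$ preserves the pointwise distinguished triangles) that the paper leaves implicit.
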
 
\begin{proof} 
Use Lemma~\ref{lem:filtration}.
\end{proof} 

\begin{defn}\label{defn:height} 
Let $\emptyset = Z_0\subseteq Z_1 \subseteq Z_2\subseteq \dotsb \subseteq Z_r =
G$ be the unique chain of open $P\times Q$-invariant subsets of $G$ for which
the $P\times Q$-orbits in $Z_i\setminus Z_{i-1}$ are the open orbits in
$G\setminus Z_{i-1}$ of maximal dimension. 
We call $\height(\orbitrep{P}{Q}) \coloneqq r$ the \emph{height} of $\orbitrep{P}{Q}$. 
The \emph{height} $\height(n)$ of $n\in \orbitrep{P}{Q}$ is defined as
the smallest integer $i$ with $n\in Z_{i}$. 
\end{defn} 

\subsection{The Geometrical Lemma}\label{ss:geometrical} 
We keep the notations of the previous subsections.

\begin{prop}\label{prop:geometrical} 
The following diagram commutes:
\[
\begin{tikzcd}
\D(M\cap Q) \ar[d,"\L_{M\cap N}"'] \ar[r,"\Inf^{M\cap Q}_{P\cap Q}"]
&
\D(P\cap Q) \ar[r,"\cInd_{P\cap Q}^Q"]
&
\D(Q) \ar[d,"\L_N"]
\\
\D(M\cap L) \ar[r,"\omega\otimes\blank"']
&
\D(M\cap L) \ar[r,"\pInd_{P\cap L}^L"']
&
\D(L),
\end{tikzcd}
\]
where $\omega \coloneqq \Res^{P\cap L}_{M\cap L}\L_N
\cInd_{P\cap Q}^{(P\cap L)N}(\one)$ is a character concentrated in
degree $-\dim(\ol U\cap N)$.
\end{prop}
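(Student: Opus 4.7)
\textit{Plan.} The strategy is to factor the compact induction $\cInd_{P\cap Q}^Q$ through the intermediate subgroup $(P\cap L)N$ of $Q$, which contains both $P\cap Q$ and $N$. By transitivity of compact induction and Lemma~\ref{lem:LN_cInd_commute} (applied to $N\normal Q$ with $N\subseteq (P\cap L)N$), one has
\[
\L_N\circ \cInd_{P\cap Q}^Q \;\cong\; \cInd_{P\cap L}^L\circ \L_N\circ \cInd_{P\cap Q}^{(P\cap L)N},
\]
after identifying $(P\cap L)N/N\cong P\cap L$ and $Q/N\cong L$. Since $L/(P\cap L)$ is compact, $\cInd_{P\cap L}^L\circ \Inf^{M\cap L}_{P\cap L} = \pInd_{P\cap L}^L$; thus it suffices to show that the inner composite $\L_N\circ \cInd_{P\cap Q}^{(P\cap L)N}\circ \Inf^{M\cap Q}_{P\cap Q}$ is naturally isomorphic to $\Inf^{M\cap L}_{P\cap L}\circ (\omega\otimes_k\blank)\circ \L_{M\cap N}$.

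\textit{Inner piece via Lemma~\ref{lem:omega_LN_cInd}.} Observe that $(P\cap L)N = (P\cap Q)\cdot N$ and $(P\cap Q)\cap N = (M\cap N)(U\cap N)$, using $M\cap U = \{1\}$ and $L\cap N = \{1\}$. To apply Lemma~\ref{lem:omega_LN_cInd} inside $(P\cap L)N$ to the normal subgroup $N$ and the closed subgroup $P\cap Q$, one requires a subnormal series from $(M\cap N)(U\cap N)$ to $N$ whose terms $N_i$ satisfy $\L_{N_i}(\one)\cong \one$. Every closed subgroup of the connected unipotent $\field$-group $N$ is itself unipotent, and hence satisfies this condition by \cite[Example~3.4.24]{Heyer.2022}; such a chain is constructed inductively using the nilpotence of $N$ (e.g.\ by iteratively quotienting by the center and lifting). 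The lemma then delivers a character $\omega_{H\backslash G} = \L_N\cInd_{P\cap Q}^{(P\cap L)N}(\one)\in \D(P\cap L)$ concentrated in cohomological degree $-\dim(\ol U\cap N)$, together with a natural isomorphism
\[
\L_N\cInd_{P\cap Q}^{(P\cap L)N}(X) \;\cong\; \omega_{H\backslash G}\otimes_k \L_{(M\cap N)(U\cap N)}(X).
\]

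\textit{Commuting coinvariants past inflation and identifying $\omega$.} Apply Lemma~\ref{lem:LN_Inf_commute} inside $G'=P\cap Q$ to the normal subgroups $H'=(U\cap L)(U\cap N)=\ker(P\cap Q\to M\cap Q)$ and $N'=(M\cap N)(U\cap N)=\ker(P\cap Q\to P\cap L)$. A direct computation yields $H'N'/H'\cong M\cap N$ and $H'\cap N' = U\cap N$; since $U\cap N$ is unipotent, $\L_{U\cap N}(\one)\cong \one$, so the lemma provides
\[
\L_{(M\cap N)(U\cap N)}\circ \Inf^{M\cap Q}_{P\cap Q}\;\cong\; \Inf^{M\cap L}_{P\cap L}\circ \L_{M\cap N}.
\]
On the other hand, $\omega_{H\backslash G}$ is a smooth $k^\times$-valued character of $P\cap L$ (up to the shift), and the $p$-adic unipotent group $U\cap L$ admits no nontrivial smooth characters to $k^\times$ in characteristic $p$; therefore $\omega_{H\backslash G}\cong \Inf^{M\cap L}_{P\cap L}\omega$ with $\omega = \Res^{P\cap L}_{M\cap L}\omega_{H\backslash G}\in \D(M\cap L)$. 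Chaining the two isomorphisms via the monoidality of $\Inf^{M\cap L}_{P\cap L}$ assembles the desired commutative diagram.

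\textit{Main obstacle.} The only non-formal ingredient is the verification of the subnormal-series hypothesis of Lemma~\ref{lem:omega_LN_cInd}; once granted, the proof is a formal combination of the structural lemmas of \S\ref{ss:induction_coinvariants}. An explicit construction of the series could alternatively be given by refining the root-space filtration of $N$ coming from the decomposition $N = (\ol U\cap N)(M\cap N)(U\cap N)$ by adding root subgroups in $\Phi(\ol U\cap N)$ one at a time in order of decreasing height, each step being a normal extension with abelian $\field$-vector group quotient.
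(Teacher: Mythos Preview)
Your proof is correct and follows essentially the same route as the paper: both factor $\cInd_{P\cap Q}^Q$ through the intermediate group $(P\cap L)N$, apply Lemma~\ref{lem:LN_cInd_commute} to the outer piece, Lemma~\ref{lem:omega_LN_cInd} to the inner piece, and Lemma~\ref{lem:LN_Inf_commute} (with the same triple $(P\cap Q,\,U\cap Q,\,P\cap N)$, noting $U\cap Q=(U\cap L)(U\cap N)$ and $P\cap N=(M\cap N)(U\cap N)$) to commute coinvariants past inflation, then use triviality of characters on $U\cap L$ to descend $\omega'$ to $M\cap L$. The paper organizes this as a three-square diagram rather than a sequential composition and justifies the degree via the manifold decomposition $(P\cap L)N=(P\cap Q)\times(\ol U\cap N)$, but the content is the same.
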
 
\begin{proof} 
We first verify the commutativity of the diagram
\[ 
\begin{tikzcd}
\D(M\cap Q)
\ar[d,"\L_{M\cap N}"'] \ar[r,"\Inf^{M\cap Q}_{P\cap Q}"]
&
\D(P\cap Q)
\ar[d,"\L_{P\cap N}"'] \ar[r,"\cInd_{P\cap Q}^{(P\cap L)N}"]
&[2em]
\D\bigl((P\cap L)N\bigr)
\ar[d,"\L_N"] \ar[r,"\cInd_{(P\cap L)N}^{Q}"]
&[2em]
\D(Q)
\ar[d,"\L_N"]
\\
\D(M\cap L)
\ar[r,"\Inf^{M\cap L}_{P\cap L}"']
&
\D(P\cap L)
\ar[r,"\omega'\otimes\blank"']
&
\D(P\cap L)
\ar[r,"\cInd_{P\cap L}^L"']
&
\D(L),
\end{tikzcd}
\] 
where $\omega'\coloneqq \omega_{(P\cap Q)\backslash(P\cap L)N} \in \D(P\cap L)$
is defined as in \eqref{eq:omega}. The left diagram commutes by
Lemma~\ref{lem:LN_Inf_commute} applied to $(G,H,N) = (P\cap Q, U\cap Q, P\cap
N)$; note that $\L_{U\cap N}(\one)\cong \one$ by
\cite[Example~3.4.24]{Heyer.2022}.
The diagram on the right commutes by Lemma~\ref{lem:LN_cInd_commute}.
Since $\alg N$ is a unipotent algebraic group, the hypotheses of
Lemma~\ref{lem:omega_LN_cInd} are easily seen to be satisfied. Hence, also the
middle diagram commutes and $\omega'$ is a character of $P\cap L$ concentrated in
degree $-\dim(\ol U\cap N)$; here, the
computation of the degree uses the decomposition $(P\cap L)N = (P\cap Q)\times
(\ol U\cap N)$ as $p$-adic manifolds. As the only character of $U\cap L$ is the
trivial one, we have
$\omega' = \Inf^{M\cap L}_{P\cap L}\omega$. Hence, since
$(\omega'\otimes_k\blank)\Inf^{M\cap L}_{P\cap L} = \Inf^{M\cap L}_{P\cap L}
(\omega\otimes_k\blank)$ and compact induction is transitive, this proves the
assertion.
\end{proof} 

\begin{notation} 
Given a closed subgroup $H\subgroup G$ and $g\in G$, we write $g(H) \coloneqq
gHg^{-1}$ and denote by $g_*\colon \D(H)\raiso \D(g(H))$ the ``inflation'' along
the conjugation map $g(H)\raiso H$.
\end{notation} 

\begin{lem}\label{lem:cInd_isomorphism} 
There is a natural isomorphism 
\[
\cInd^{PnQ}_P \Raiso \cInd^Q_{n^{-1}(P)\cap Q}
\Res^{n^{-1}(P)}_{n^{-1}(P)\cap Q} n_*^{-1}
\]
of functors $\D(P)\to \D(Q)$.
\end{lem}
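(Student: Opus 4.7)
The plan is to construct the isomorphism on the underlying abelian categories by a direct change-of-variables and then observe that it extends automatically to the derived functors, since every functor in sight is exact.

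For $V\in \Rep_k(P)$ I would define
\[
\Phi_V\colon \cInd_P^{PnQ}V \too \cInd_{n^{-1}(P)\cap Q}^Q\bigl(\Res^{n^{-1}(P)}_{n^{-1}(P)\cap Q} n_*^{-1}V\bigr),\qquad \Phi_V(f)(q) \coloneqq f(nq).
\]
That this lands in the right place comes from the identity $nq' = (nq'n^{-1})n$ for $q'\in n^{-1}(P)\cap Q$:
\[
\Phi_V(f)(q'q) = f\bigl((nq'n^{-1})\cdot nq\bigr) = (nq'n^{-1})\cdot f(nq),
\]
and by the very definition of $n_*^{-1}$ the element $q'\in n^{-1}(P)$ acts on $n_*^{-1}V$ precisely via $nq'n^{-1}\in P$, which is the required equivariance. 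Local constancy of $\Phi_V(f)$ is immediate from that of $f$, and the support condition transfers through the homeomorphism $P\backslash PnQ\raiso (n^{-1}(P)\cap Q)\backslash Q$, $Pnq\mapsto (n^{-1}(P)\cap Q)q$. The $Q$-equivariance of $\Phi_V$ follows because both sides carry the right translation action, and naturality in $V$ is clear.

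For the inverse, the assignment $\Psi_V(\phi)(pnq)\coloneqq p\cdot \phi(q)$ is well-defined by running the previous computation backwards: if $pnq = p'nq'$, set $s\coloneqq q'q^{-1}\in n^{-1}(P)\cap Q$; then $nsn^{-1} = (p')^{-1}p$, and the equivariance of $\phi$ gives $p\cdot\phi(q) = p'\cdot\phi(q')$. Local constancy and the support condition for $\Psi_V(\phi)$ follow using a continuous section of $PnQ\to P\backslash PnQ$, exactly as in the proof of Lemma~\ref{lem:cInd_exact_sequence}. A direct check shows that $\Phi_V$ and $\Psi_V$ are mutually inverse, giving an isomorphism in $\Rep_k(Q)$ natural in $V$. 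Since each of $\cInd_P^{PnQ}$, $\cInd_{n^{-1}(P)\cap Q}^Q$, $\Res^{n^{-1}(P)}_{n^{-1}(P)\cap Q}$, and $n_*^{-1}$ is an exact functor of abelian categories, $\Phi$ extends term-by-term to complexes and descends to the desired natural isomorphism of triangulated functors $\D(P)\to \D(Q)$.

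I do not anticipate any serious obstacle: the content of the statement is the Mackey-type identification expressing that the $P\times Q$-stabilizer of $n$ is the graph of the group isomorphism $n^{-1}(P)\cap Q\raiso P\cap n(Q)$, $q'\mapsto nq'n^{-1}$. The only pitfall is tracking on which side of $n$ the conjugation lives, since both $n_*$ and its inverse enter naturally here; I would double-check consistency by evaluating the formulas against the right-translation action of $Q$ from both directions.
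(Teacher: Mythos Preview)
Your proof is correct and follows essentially the same approach as the paper: define $\Phi_V(f)(q)=f(nq)$ on the abelian categories, check it is a natural isomorphism, and invoke exactness to pass to $\D(P)\to\D(Q)$. The paper is terser---it cites \cite[1.6.~Corollary]{Bernstein-Zelevinski.1976} for the homeomorphism $(n^{-1}(P)\cap Q)\backslash Q \raiso P\backslash PnQ$ rather than writing out the inverse and verifications---but the content is identical.
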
 
\begin{proof} 
Note that, by \cite[1.6. Corollary]{Bernstein-Zelevinski.1976}\footnote{The
condition ``countable at infinity'' is automatic for $\field$-points of linear
algebraic groups.}, the inclusion $Q\hookrightarrow n^{-1}(P)Q$ and the homeomorphism $n^{-1}(P)Q\xrightarrow{n\cdot} PnQ$ induce homeomorphisms $n^{-1}(P)\cap Q\backslash Q \raiso n^{-1}(P)\backslash n^{-1}(P)Q \raiso P\backslash PnQ$. It follows that for any $V \in \Rep_k(P)$ the natural map
\begin{align*}
\cInd_P^{PnQ}V &\too \cInd_{n^{-1}(P)\cap Q}^Q \Res^{n^{-1}(P)}_{n^{-1}(P)\cap
Q} n^{-1}_*V,\\
f &\longmapsto [q\mapsto f(nq)]
\end{align*}
is an isomorphism in $\Rep_k(Q)$. As all functors involved are
exact, the assertion follows.
\end{proof} 

\subsubsection{}\label{sss:omega_n} 
For any $n\in \orbitrep{P}{Q}$ we consider the functors $\D(M)\to \D(L)$ given by
\begin{align*}
\Phi_{PnQ} &= \L_N \circ \cInd^{PnQ}_P \circ\Inf^M_P,
\\
\Psi_n &\coloneqq \pInd_{n^{-1}(P)\cap L}^L \circ (\omega_n\otimes_k\blank) \circ
n^{-1}_* \circ \L(M\cap n(N), \blank),
\end{align*}
where $\omega_n\coloneqq \Res^{n^{-1}(P)\cap L}_{n^{-1}(M)\cap
L}\L_N\cInd_{n^{-1}(P)\cap Q}^{(n^{-1}(P)\cap
L)N}(\one)$ is by Lemma~\ref{lem:omega_LN_cInd} a character of $n^{-1}(M)\cap L$
concentrated in degree $-\dim(n^{-1}(\ol U)\cap N)$.

\begin{thm}\label{thm:geometrical} 
For every $n\in \orbitrep{P}{Q}$ there is a natural isomorphism $\Phi_{PnQ}
\Raiso \Psi_n$. 
\end{thm}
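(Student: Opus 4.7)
The plan is to reduce Theorem~\ref{thm:geometrical} to Proposition~\ref{prop:geometrical} applied to the conjugated pair $(n^{-1}(P), Q)$ of semistandard parabolics, after transporting across the conjugation equivalence $n_*^{-1}\colon \D(M) \raiso \D(n^{-1}(M))$. Once the various compatibilities of conjugation with inflation, restriction, compact induction, and coinvariants are in place, the identification is largely mechanical.

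Concretely, I would first apply Lemma~\ref{lem:cInd_isomorphism} to rewrite
\[
\Phi_{PnQ} \cong \L_N\circ \cInd_{n^{-1}(P)\cap Q}^Q\circ \Res^{n^{-1}(P)}_{n^{-1}(P)\cap Q}\circ n_*^{-1}\circ \Inf_P^M,
\]
and then use the elementary identity $n_*^{-1}\circ \Inf_P^M \cong \Inf_{n^{-1}(P)}^{n^{-1}(M)}\circ n_*^{-1}$. The next step is the crux: verify the compatibility
\[
\Res^{n^{-1}(P)}_{n^{-1}(P)\cap Q}\circ \Inf_{n^{-1}(P)}^{n^{-1}(M)} \cong \Inf_{n^{-1}(P)\cap Q}^{n^{-1}(M)\cap Q}\circ \Res^{n^{-1}(M)}_{n^{-1}(M)\cap Q},
\]
coming from a commutative square of groups whose horizontal arrows are the projections $n^{-1}(P)\cap Q \to n^{-1}(M)\cap Q$ and $n^{-1}(P)\to n^{-1}(M)$, with kernels $n^{-1}(U)\cap Q$ and $n^{-1}(U)$ respectively. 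Since $n\in \alg N_{\alg G}(\alg T)(\field)$, the parabolic $n^{-1}(P)$ is semistandard, and the decomposition
\[
n^{-1}(P)\cap Q = (n^{-1}(M)\cap L)\ltimes \bigl((n^{-1}(U)\cap L)(n^{-1}(M)\cap N)(n^{-1}(U)\cap N)\bigr)
\]
recalled in \S\ref{sss:setup_reductive_group} shows that the top horizontal map is well-defined with image $(n^{-1}(M)\cap L)(n^{-1}(M)\cap N) = n^{-1}(M)\cap Q$.

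After this substitution, $\Phi_{PnQ}$ becomes
\[
\L_N\circ \cInd_{n^{-1}(P)\cap Q}^Q\circ \Inf_{n^{-1}(P)\cap Q}^{n^{-1}(M)\cap Q}\circ \Res^{n^{-1}(M)}_{n^{-1}(M)\cap Q}\circ n_*^{-1}.
\]
Now Proposition~\ref{prop:geometrical} applied to the pair $(n^{-1}(P), Q)$ transforms the first three factors into $\pInd_{n^{-1}(P)\cap L}^L\circ (\omega_n\otimes_k \blank)\circ \L_{n^{-1}(M)\cap N}$, and the resulting character coincides with the $\omega_n$ of \S\ref{sss:omega_n} by inspection of the formula. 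Finally, since conjugation by $n^{-1}$ carries the pair $M\cap n(N) \normal M\cap n(Q)\subgroup M$ to $n^{-1}(M)\cap N \normal n^{-1}(M)\cap Q\subgroup n^{-1}(M)$, one obtains $\L_{n^{-1}(M)\cap N}\circ \Res^{n^{-1}(M)}_{n^{-1}(M)\cap Q}\circ n_*^{-1} \cong n_*^{-1}\circ \L(M\cap n(N), \blank)$. Stringing everything together yields the desired $\Phi_{PnQ}\raiso \Psi_n$. The main care lies in the verification of the commutative square of groups in the middle step; all remaining identifications are formal consequences of the mate calculus and the naturality of the equivalence $n_*^{-1}$.
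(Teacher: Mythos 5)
Your proposal is correct and follows essentially the same three-step reduction as the paper's own proof: apply Lemma~\ref{lem:cInd_isomorphism} to rewrite $\cInd_P^{PnQ}$ via conjugation, commute restriction past inflation using the semistandard decomposition of $n^{-1}(P)\cap Q$, and then invoke Proposition~\ref{prop:geometrical} for the pair $(n^{-1}(P),Q)$, with the final identification of $\Psi_n$ via equivariance of coinvariants under $n_*^{-1}$. The paper presents this as a terse four-line computation; you have merely unpacked the same chain of identifications.
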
 
\begin{proof} 
Applying Lemma~\ref{lem:cInd_isomorphism} and Proposition~\ref{prop:geometrical}
to $(P,Q) = (n^{-1}(P),Q)$, we compute
\begin{align*}
\Phi_{PnQ} &\Raiso \L_N\cInd_{n^{-1}(P)\cap Q}^{Q}
\Res^{n^{-1}(P)}_{n^{-1}(P)\cap Q} n^{-1}_* \Inf^M_P\\
&= \L_N \cInd_{n^{-1}(P)\cap Q}^Q\Inf^{n^{-1}(M)\cap Q}_{n^{-1}(P)\cap Q}
n^{-1}_* \Res^M_{M\cap n(Q)}\\
&\Raiso i^L_{n^{-1}(P)\cap L} (\omega_n\otimes_k\blank) \L_{n^{-1}(M)\cap N}
n^{-1}_* \Res^M_{M\cap n(Q)}\\
&\cong \Psi_n. \qedhere
\end{align*}
\end{proof} 

\comment{TODO: Does this theorem have any implications for
\cite[Conjecture~3.3.4]{Hauseux.2018}?}

\begin{cor}[Geometrical Lemma]\label{cor:geometrical} 
Let $\emptyset = Z_0 \subseteq Z_1\subseteq Z_2 \subseteq \dotsb \subseteq Z_r =
G$ be a chain of open $P\times Q$-invariant subsets of $G$ such that
$Z_i\setminus Z_{i-1} = Pn_iQ$ for all $1\le i\le r$, where $n_i\in
\orbitrep{P}{Q}$. It induces a filtration on $\Phi_G = \L(N,\blank) \pInd_P^G$ whose
$i$-th graded piece is $\Psi_{n_i}$.
\end{cor}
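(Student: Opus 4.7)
The plan is to assemble the corollary directly from two results already established in the excerpt: the filtration machinery of Proposition~\ref{prop:filtration}, and the identification of individual orbit contributions with the functors $\Psi_n$ from Theorem~\ref{thm:geometrical}. Nothing substantive remains to prove; this is essentially a bookkeeping step.

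First I would apply Proposition~\ref{prop:filtration} to the given chain $\emptyset = Z_0 \subseteq Z_1 \subseteq \dotsb \subseteq Z_r = G$ of open $P\times Q$-invariant subsets. This immediately produces a filtration
\[
0 \Too \Phi_{Z_1} \Too \Phi_{Z_2} \Too \dotsb \Too \Phi_{Z_r}
\]
of triangulated functors $\D(M) \to \D(L)$, whose $i$-th graded piece is $\Phi_{Z_i \setminus Z_{i-1}}$. By hypothesis $Z_i \setminus Z_{i-1} = Pn_iQ$ is a single $P\times Q$-orbit, so the $i$-th graded piece is $\Phi_{Pn_iQ}$.

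Next I would invoke Theorem~\ref{thm:geometrical} for each $n_i$, which supplies a natural isomorphism $\Phi_{Pn_iQ} \Raiso \Psi_{n_i}$. Replacing the graded pieces in the filtration via these isomorphisms identifies the $i$-th graded piece with $\Psi_{n_i}$, as required.

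Finally I would verify $\Phi_{Z_r} = \Phi_G = \L(N,\blank)\circ \pInd_P^G$. Since $P\backslash G$ is compact, the map $\cInd_P^G \Raiso \Ind_P^G$ is an isomorphism (see \S\ref{sss:smooth_induction}), and therefore $\pInd_P^G = \Ind_P^G \circ \Inf^M_P \cong \cInd_P^G \circ \Inf^M_P$. Composing with $\L(N,\blank) = \L_N \circ \Res^G_Q$ (which, applied to $\cInd_P^G$, agrees with $\L_N$ applied to the $Q$-equivariant object $\cInd_P^GX$) yields $\Phi_G = \L(N,\blank)\circ \pInd_P^G$. There is no real obstacle here; the only point requiring minor care is this last identification, and it is formal.
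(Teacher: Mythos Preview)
Your proof is correct and follows exactly the same approach as the paper, which simply says to combine Proposition~\ref{prop:filtration} and Theorem~\ref{thm:geometrical}. Your additional care in verifying $\Phi_G = \L(N,\blank)\circ \pInd_P^G$ via the compactness of $P\backslash G$ is a helpful clarification, but the core argument is identical.
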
 
\begin{proof} 
Combine Proposition~\ref{prop:filtration} and Theorem~\ref{thm:geometrical}.
\end{proof} 

\begin{cor}\label{cor:geometrical-2} 
The functor $\L(N,\blank)\circ \pInd_P^G$ admits a filtration
of length $\height(\orbitrep{P}{Q})$ whose $i$-th graded piece is
$\bigoplus_{n, \height(n)=i} \Psi_n$.
\end{cor}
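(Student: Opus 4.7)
The plan is to apply Proposition \ref{prop:filtration} to the distinguished chain $\emptyset = Z_0 \subseteq Z_1 \subseteq \dotsb \subseteq Z_r = G$ singled out in Definition \ref{defn:height}, where $r = \height(\orbitrep{P}{Q})$. This immediately yields a filtration of $\Phi_G = \L(N,\blank)\circ \pInd_P^G$ of the claimed length, whose $i$-th graded piece is $\Phi_{Z_i \setminus Z_{i-1}}$.

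Next, I would identify $\Phi_{Z_i \setminus Z_{i-1}}$ with $\bigoplus_{\height(n)=i} \Psi_n$ in two steps. First, by the very definition of the height stratification, $Z_i \setminus Z_{i-1}$ is the disjoint union of the orbits $PnQ$ with $\height(n) = i$, and each such orbit is open in $G \setminus Z_{i-1}$, hence a fortiori open in $Z_i \setminus Z_{i-1}$. The observation at the end of \S\ref{sss:Phi_Z} (which follows by iterating Remark \ref{rmk:Z_cup_Z'_disjoint}) then supplies a natural isomorphism $\Phi_{Z_i \setminus Z_{i-1}} \cong \bigoplus_{\height(n)=i} \Phi_{PnQ}$. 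Second, Theorem \ref{thm:geometrical} identifies each summand via $\Phi_{PnQ} \Raiso \Psi_n$, and assembling these over a fixed height $i$ gives the desired description of the $i$-th graded piece.

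Since every ingredient is already in place, the argument is purely a bookkeeping combination of Proposition \ref{prop:filtration}, the direct sum decomposition from \S\ref{sss:Phi_Z}, and Theorem \ref{thm:geometrical}; there is no genuine obstacle. The one small point worth flagging is the verification that the height-$i$ orbits are open inside $Z_i \setminus Z_{i-1}$, which however is immediate from the characterization of the $Z_i$ in Definition \ref{defn:height}.
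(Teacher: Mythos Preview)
Your proposal is correct and follows exactly the approach the paper uses: the paper's proof reads ``Combine Proposition~\ref{prop:filtration}, \S\ref{sss:Phi_Z}, and Theorem~\ref{thm:geometrical},'' and you have simply spelled out how these three ingredients fit together. The extra remark about openness of the height-$i$ orbits in $Z_i\setminus Z_{i-1}$ is accurate and implicit in Definition~\ref{defn:height}.
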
 
\begin{proof} 
Combine Proposition~\ref{prop:filtration}, \S\ref{sss:Phi_Z}, and
Theorem~\ref{thm:geometrical}.
\end{proof} 

\begin{rmk} 
There is a dual version of Corollary~\ref{cor:geometrical} which states that
there exists a descending filtration on $\R(U,\blank) \pInd_Q^G$ with graded pieces
(in some order) $\pInd_{M\cap n(Q)}^M (\Omega_n \otimes_k\blank)
n_*\R\bigl(n^{-1}(U)\cap L,\blank\bigr)$, where $\Omega_n \coloneqq \Res^{P\cap
n(Q)}_{M\cap n(L)} \RRR^{(P\cap n(L))n(N)}_{P\cap n(Q)}(\one)$ is a character
concentrated in degree $\dim(\ol U\cap n(N))$ and $n\in \orbitrep{P}{Q}$. However,
for the proof it seems we need to employ the theory of $\infty$-categories.
Granted this more general framework, we could deduce this version by passing
everywhere in the filtration to the right adjoints. 
\comment{TODO: explain in more detail in an appendix?}
\end{rmk} 

\section{Applications}\label{s:applications}
\subsection{Setup and notation}\label{ss:appl_setup} 
\subsubsection{} 
We fix a finite field extension $\field/\QQ_p$ and a coefficient field $k$ of characteristic $p$.

\subsubsection{}\label{sss:setup_reductive} 
Let $\alg{G}$ be a connected reductive $\field$-group. 
Fix a maximal $\field$-split torus $\alg{T}\subgroup \alg{G}$.
We choose a set of simple roots $\Delta_G$ inside the (relative) reduced root
system $\Sigma\coloneqq \Phi(\alg{G}, \alg{T})^{\mathrm{red}}$ together with
the associated set $\Sigma^+$ of positive roots, which corresponds to some
minimal parabolic subgroup $\alg{B}$ containing $\alg{Z}\coloneqq
\alg{Z}_{\alg{G}}(\alg{T})$.
Put
\[
\Delta_{G}^1\coloneqq \set{\alpha\in \Delta_G}{\dim_{\field} \alg{U}_\alpha=1},
\]
where $\alg{U}_\alpha$ denotes the root $\field$-group normalized by $\alg{Z}$ and  associated with $\alpha\in \Sigma$, see \cite[5.2]{Borel-Tits.1965} for the definition. 
We denote by $\Weyl$ the finite Weyl group associated with $(\alg{G},\alg{T})$.
We fix once and for all a set $\NNN \subseteq
\alg{N}_{\alg{G}}(\alg{T})(\field)$ of
representatives of $\Weyl$ and denote by $n_w$ the element of $\NNN$ corresponding
to $w\in \Weyl$. We denote by $n_\alpha \in \NNN$ the element lifting the simple
reflection $s_\alpha$ attached to $\alpha\in \Delta_G$. 
For each $w\in \Weyl$ we put
\[
d_w \coloneqq d_{n_w} \coloneqq \sum_{\alpha\in \Sigma^+\cap
w^{-1}(-\Sigma^+)} \dim_{\field}\alg{U}_{\alpha}.
\]
We also abbreviate $d_{\alpha} \coloneqq d_{s_\alpha}$ for $\alpha\in\Sigma$.

To each standard parabolic subgroup $\alg{P} = \alg{M}\ltimes \alg{U}$
corresponds a subset $\Delta_M \subseteq \Delta_G$. Set
\[
\Delta^{\perp}_{M} \coloneqq \set{\alpha\in\Delta_G}{\text{$\langle \alpha,
\beta^\vee\rangle = 0$ for all $\beta\in \Delta_M$}}
\quad\text{and}\quad
\Delta_{M}^{\perp,1} \coloneqq \Delta_{M}^{\perp}\cap \Delta_G^1.
\]
We denote by $\Sigma_M = \Phi(\alg{M},\alg{T})^{\red} \subseteq \Sigma$ the root system
of $\alg{M}$; the positive roots corresponding to $\Delta_M$ are $\Sigma_M^+ =
\Sigma_M\cap \Sigma^+$. We denote by $\Weyl_M \subgroup \Weyl$ the finite Weyl
group associated with $(\alg{M},\alg{T})$.
\subsubsection{}\label{sss:distinguished-reps} 
Fix standard parabolic subgroups $\alg{P}=\alg{M}\ltimes\alg{U}$ and
$\alg{Q}=\alg{L}\ltimes\alg{N}$ of $\alg{G}$. 
We will choose a distinguished set $\orbitrep{P}{Q}$ of double coset
representatives of $P\backslash G/Q$ as follows: Put
\[
D_M\coloneqq \set{w\in \Weyl}{w(\Delta_M) \subseteq \Sigma^+}
\]
and define $D_L$ similarly. For the various properties of the set
\[
D_{M,L}\coloneqq (D_M)^{-1}\cap D_L = \set{w\in \Weyl}{\text{$w\in D_L$ and
$w^{-1}\in D_M$}}
\]
we refer to \cite[\S2.7]{Carter.1985}. Let us only mention that $D_{M,L}$ is a
set of double coset representatives of $\Weyl_M\backslash \Weyl/\Weyl_L$ and
that each $w\in D_{M,L}$ is the unique element of minimal length in $\Weyl_M
w\Weyl_L$. Finally, let $\orbitrep{P}{Q} \subseteq \NNN$ be the subset
corresponding to $D_{M,L}$.
\begin{lem}\label{lem:dw=dim} 
For each $w\in D_{M,L}$ one has
\[
[\field:\QQ_p]\cdot d_w = \dim\bigl(n_w^{-1}(\ol{U}) \cap N\bigr).
\]
\end{lem}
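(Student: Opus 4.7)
The plan is to reduce to a Lie-algebra / root-space computation and then match the resulting index set with the one defining $d_w$ by exploiting the positivity properties of $w\in D_{M,L}$. Since $\alg n_w^{-1}(\ol{\alg U})\cap \alg N$ is a smooth linear algebraic $\field$-subgroup of $\alg G$, its dimension as a $p$-adic manifold equals $[\field:\QQ_p]$ times its algebraic $\field$-dimension, so it suffices to prove
\[
d_w = \dim_\field\bigl(\alg n_w^{-1}(\ol{\alg U})\cap \alg N\bigr).
\]

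Both $\ol{\alg U}$ and $\alg N$ are $\alg T$-stable unipotent $\field$-subgroups of $\alg G$; since $n_w\in \alg N_{\alg G}(\alg T)(\field)$, the intersection $\alg n_w^{-1}(\ol{\alg U})\cap \alg N$ is again $\alg T$-stable (and automatically smooth in characteristic zero), and its Lie algebra is the intersection of the two Lie algebras, decomposing into a direct sum of $\alg T$-root spaces. Using that $n_w^{-1}\alg U_\alpha n_w = \alg U_{w^{-1}(\alpha)}$, the roots of $\alg n_w^{-1}(\ol{\alg U})$ are $w^{-1}(\Sigma^-\setminus \Sigma_M^-)$, while those of $\alg N$ are $\Sigma^+\setminus \Sigma_L^+$. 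Setting $\Omega \coloneqq \bigl\{\gamma\in \Sigma : \gamma\in \Sigma^+\setminus \Sigma_L^+,\; w(\gamma)\in \Sigma^-\setminus \Sigma_M^-\bigr\}$, we therefore have
\[
\dim_\field\bigl(\alg n_w^{-1}(\ol{\alg U})\cap \alg N\bigr) = \sum_{\gamma\in \Omega} \dim_\field \alg U_\gamma.
\]

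The crux is to identify $\Omega$ with $S_w\coloneqq \Sigma^+\cap w^{-1}(-\Sigma^+) = \{\alpha\in \Sigma^+ : w(\alpha)\in \Sigma^-\}$, which would make the right-hand side exactly $d_w$. The inclusion $\Omega\subseteq S_w$ is trivial. For the reverse, let $\alpha\in S_w$. If $\alpha\in \Sigma_L^+$, write $\alpha = \sum_{\beta\in \Delta_L}c_\beta\beta$ with $c_\beta\in \ZZ_{\ge0}$; since $w\in D_L$ gives $w(\Delta_L)\subseteq \Sigma^+$, the image $w(\alpha)$ is a non-negative integer combination of positive roots and hence lies in $\Sigma^+$, contradicting $w(\alpha)\in \Sigma^-$. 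Similarly, if $w(\alpha)\in \Sigma_M^-$, then $-w(\alpha)\in \Sigma_M^+$, and applying the same argument to $w^{-1}\in D_M$ yields $-\alpha = w^{-1}(-w(\alpha))\in \Sigma^+$, contradicting $\alpha\in \Sigma^+$. Hence $\alpha\in \Omega$, and the asserted equality follows. I expect no genuine obstacle here: the essential input is the standard fact that membership in $D_{M,L}$ forces $w(\Sigma_L^+)\subseteq \Sigma^+$ and $w^{-1}(\Sigma_M^+)\subseteq \Sigma^+$, and everything else is routine bookkeeping of $\alg T$-weights in unipotent subgroups.
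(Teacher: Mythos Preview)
Your proof is correct and follows essentially the same approach as the paper: both reduce to showing that the root set $\{\alpha\in\Sigma^+\setminus\Sigma_L^+ : w(\alpha)\in -(\Sigma^+\setminus\Sigma_M^+)\}$ coincides with $\Sigma^+\cap w^{-1}(-\Sigma^+)$, using precisely the positivity conditions $w(\Sigma_L^+)\subseteq\Sigma^+$ and $w^{-1}(\Sigma_M^+)\subseteq\Sigma^+$ coming from $w\in D_{M,L}$. The only cosmetic difference is that the paper phrases the root-space decomposition directly at the level of the unipotent group rather than passing through the Lie algebra.
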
 
\begin{proof} 
Note that $\dim(\cdot) = [\field:\QQ_p]\cdot \dim_{\field}(\cdot)$. 
Put $\Sigma_{N} =
\Sigma^+\setminus\Sigma^+_L$ and $\Sigma_{\ol{U}} =
-(\Sigma^+\setminus\Sigma^+_M)$.
Then
\[
n_w^{-1}(\ol{\alg{U}})\cap \alg{N}= \prod_{\alpha\in \Sigma_N\cap
w^{-1}(\Sigma_{\ol U})}
\alg{U}_\alpha.
\]
The claim thus reduces to $\Sigma_N\cap w^{-1}(\Sigma_{\ol{U}}) =
\Sigma^+\cap w^{-1}(-\Sigma^+)$. By definition of $D_{M,L}$ we have
$w^{-1}(\Sigma^+_M) \subseteq \Sigma^+$ and $\Sigma^+_L \subseteq
w^{-1}(\Sigma^+)$.
Hence,
\begin{align*}
\Sigma_N \cap w^{-1}(\Sigma_{\ol{U}}) &= \Sigma^+\setminus \Sigma^+_L \cap
-w^{-1}(\Sigma^+ \setminus \Sigma_M^+)\\
&= \Sigma^+\setminus w^{-1}(-\Sigma_M^+) \cap w^{-1}(-\Sigma^+)\setminus
\Sigma_L^+ = \Sigma^+ \cap w^{-1}(-\Sigma^+). \qedhere
\end{align*}
\end{proof} 
\begin{notation}\label{nota:delta_w} 
For each $w\in D_{M,L}$, let $\delta_w \in \Rep_k(n_w^{-1}(M)\cap L)$ be the
character defined by
\[
\omega_w\coloneqq \omega_{n_w} = \delta_{w} \bigl[[\field:\QQ_p]d_{w}\bigr],
\]
see \S\ref{sss:omega_n} and Lemma~\ref{lem:dw=dim}. 
If $w$ is the simple reflection corresponding to $\alpha\in \Delta_G$, we also
write $\delta_\alpha$ for $\delta_{w}$ and $\omega_\alpha$ for $\omega_w$.
\end{notation} 

\begin{lem}\label{lem:delta_w}
Let $w\in D_{M,L}$ and let $\delta'_w \colon n_w^{-1}(M) \cap L \to \field^\times$ be the determinant of the $\field$-linear adjoint action on $\Lie(n_w^{-1}(\ol{U}) \cap N)$. One has that
\[
\delta_w = \varepsilon_{\field} \circ \delta'_w.
\]
\end{lem}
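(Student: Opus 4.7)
\emph{Plan.} Write $n := n_w$ and set $P' := n^{-1}(P)$, $M' := n^{-1}(M)$, $\ol U' := n^{-1}(\ol U)$, $H := P'\cap Q$, and $\wt G := (P'\cap L)N$. Then $N\normal \wt G$ and $\wt G = HN$. By the definition in \S\ref{sss:omega_n} combined with Lemma~\ref{lem:omega_LN_cInd}, $\omega_w$ is the restriction to $M'\cap L$ of the character $\omega_{H\backslash\wt G}\in \D(\wt G/N) = \D(P'\cap L)$. My plan is to apply Corollary~\ref{cor:explicit-character} (its hypotheses hold because $\alg N$ is unipotent, exactly as in the proof of Proposition~\ref{prop:geometrical}) and Proposition~\ref{prop:explicit-character} to express $\omega_{H\backslash\wt G}$ via the dualizing characters $\omega_H$ and $\omega_{\wt G}$, then invoke Proposition~\ref{prop:duality-character} to make these explicit as $\varepsilon_\field\circ\det_\field(\Ad_{\Lie K})$, and finally finish with a Lie-algebra calculation.

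\emph{Step~1.} The proof of Corollary~\ref{cor:explicit-character} (via Proposition~\ref{prop:explicit-character}) identifies $\Inf^{\wt G/N}_H\omega_{H\backslash\wt G}$ with $\omega_H^\vee\otimes_k\Res^{\wt G}_H\omega_{\wt G}$. Using $\omega_K\cong \lie d_K[\dim K]$ from Proposition~\ref{prop:duality-character} and restricting to $M'\cap L$, I obtain
\[
\omega_w \cong \bigl(\lie d_H^{-1}\cdot \lie d_{\wt G}\bigr)\bigr|_{M'\cap L}\bigl[\dim \wt G - \dim H\bigr].
\]
The shift $\dim\wt G-\dim H = \dim(\ol U'\cap N) = [\field:\QQ_p]d_w$ follows from the product decomposition $\wt G = H\cdot(\ol U'\cap N)$ of $p$-adic manifolds combined with Lemma~\ref{lem:dw=dim}, so this matches Notation~\ref{nota:delta_w} and yields $\delta_w = (\lie d_H^{-1}\cdot \lie d_{\wt G})|_{M'\cap L}$.

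\emph{Step~2.} Since $H$ and $\wt G$ are $\field$-points of algebraic $\field$-groups, the remark at the end of \S\ref{sss:d_G} gives $\lie d_K = \varepsilon_\field\circ\det_\field(\Ad_{\Lie K})$ for both. Multiplicativity of the determinant on short exact sequences then reduces the statement to the $M'\cap L$-equivariant identification $\Lie\wt G/\Lie H\cong \Lie(\ol U'\cap N)$. This identification follows by combining three ingredients: the semidirect decomposition $\wt G = (P'\cap L)\ltimes N$ (using $L\cap N = \{1\}$) gives $\Lie \wt G = \Lie(P'\cap L)\oplus \Lie N$; the decomposition $P\cap Q = (M\cap L)\ltimes\bigl((U\cap L)(M\cap N)(U\cap N)\bigr)$ from \S\ref{sss:setup_reductive_group}, applied to $(P',Q)$, gives $\Lie H = \Lie(P'\cap L)\oplus \Lie(P'\cap N)$; and the root-space splitting $\Lie N = \Lie(P'\cap N)\oplus\Lie(\ol U'\cap N)$ coming from $\Sigma_N = (\Sigma_N\cap\Sigma_{P'})\sqcup(\Sigma_N\cap\Sigma_{\ol U'})$ is $T$-equivariant, hence $M'\cap L$-equivariant since $T\subseteq M'\cap L$ (both $M'$ and $L$ are semistandard). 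These combine to give $\delta_w = \varepsilon_\field\circ\det_\field\bigl(\Ad_{\Lie(\ol U'\cap N)}\bigr) = \varepsilon_\field\circ \delta'_w$.

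The main obstacle is purely bookkeeping of the Lie-algebra decompositions, and the verification that everything remains $M'\cap L$-equivariant under the adjoint action; the conceptual content is already packaged into Corollary~\ref{cor:explicit-character} and Proposition~\ref{prop:duality-character}.
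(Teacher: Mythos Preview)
Your proof is correct and follows essentially the same route as the paper: apply Corollary~\ref{cor:explicit-character} to identify $\omega_w$ (after restriction to $M'\cap L$) with $\omega_H^\vee\otimes\Res^{\wt G}_H\omega_{\wt G}$, invoke Proposition~\ref{prop:duality-character} to convert dualizing characters into $\lie d$-characters, and finish via the $M'\cap L$-equivariant decomposition $\Lie\wt G = \Lie H\oplus\Lie(\ol U'\cap N)$. The paper's proof is terser but uses exactly these three ingredients; your Step~2 supplies more detail on the Lie-algebra decomposition than the paper does.

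One small correction in Step~2: your justification that the splitting $\Lie N = \Lie(P'\cap N)\oplus\Lie(\ol U'\cap N)$ is $M'\cap L$-equivariant ``since $T\subseteq M'\cap L$'' is backwards --- a $T$-equivariant decomposition need not be equivariant for a \emph{larger} group containing $T$. The correct reason is that $M'\cap L$ normalizes each summand separately: $M'$ normalizes both $U'$ and $\ol U'$, while $L$ normalizes $N$, so $M'\cap L$ normalizes $P'\cap N$ and $\ol U'\cap N$. This is what the paper asserts (without proof) when it speaks of an ``$n_w^{-1}(M)\cap L$-equivariant decomposition''.
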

\begin{proof}
By Corollary~\ref{cor:explicit-character} we have
\[
\omega_{w} = \Res^{n_w^{-1}(P)\cap Q}_{n_w^{-1}(M)\cap L}\omega_{n_w^{-1}(P) \cap Q}^{\vee} \otimes \Res^{(n_w^{-1}(P)\cap L)N}_{n_w^{-1}(M)\cap L}\omega_{(n_w^{-1}(P)\cap L)N}
\]
and hence Proposition~\ref{prop:duality-character} implies $\delta_w = \lie{d}_{n_w^{-1}(P)\cap Q}^{-1}\cdot \lie{d}_{(n_w^{-1}(P)\cap L)N}$ as characters of $n_w^{-1}(M)\cap L$. The assertion now follows from \S\ref{sss:d_G} and the observation that we have an $n_w^{-1}(M)\cap L$-equivariant decomposition $\Lie\bigl((n_w^{-1}(P)\cap L)N\bigr) = \Lie\bigl(n_w^{-1}(P)\cap Q\bigr) \oplus \Lie\bigl(n_w^{-1}(\ol{U})\cap N\bigr)$. 
\end{proof}


\subsection{Computation of \texorpdfstring{$\Ext$}{Ext}-groups} 
Recall the setup in \S\ref{sss:setup_reductive}
\begin{ex}\label{ex:LU(PS)} 
Let $\chi\colon M\to k^\times$ be a smooth character. We then have
\begin{equation}\label{eq:PS}
\L^{-j}(N,\pInd_P^G\chi) = \bigoplus_{\substack{w\in D_{M,L}\\ [\field:\QQ_p]d_w =
j}} \pInd_{n_w^{-1}(P)\cap L}^L
\bigl(\delta_w\otimes_k n_{w*}^{-1}\chi\bigr).
\end{equation}
Indeed, fix $j\ge0$. Consider the open $P\times Q$-invariant subsets $Z\subseteq
Z'\subseteq G$ defined by $Z \coloneqq
\bigsqcup_{\substack{w\in D_{M,L}\\ [\field:\QQ_p]d_w>j}}Pn_wQ$ and
$Z'\coloneqq \bigsqcup_{\substack{w\in D_{M,L}\\ [\field:\QQ_p]d_w\ge j}}Pn_wQ$.
By Proposition~\ref{prop:filtration} we obtain a filtration
\[
\Phi_Z(\chi) \too \Phi_{Z'}(\chi) \too \Phi_G(\chi) = \L(N,\pInd_P^G\chi).
\]
Applying $\H^{-j}(\blank)$ to the triangle $\Phi_{Z'}(\chi) \to \Phi_{G}(\chi)
\to \Phi_{G\setminus Z'}(\chi)\xrightarrow{+}$ yields an exact sequence
\begin{equation}\label{eq:PS-1}
\H^{-j-1}\bigl(\Phi_{G\setminus Z'}(\chi)\bigr) \too
\H^{-j}\bigl(\Phi_{Z'}(\chi)\bigr) \too \H^{-j}\bigl(\Phi_G(\chi)\bigr) \too
\H^{-j}\bigl(\Phi_{G\setminus Z'}(\chi)\bigr).
\end{equation}
By Proposition~\ref{prop:filtration} and Theorem~\ref{thm:geometrical} there
exists a filtration on $\Phi_{G\setminus Z'}(\chi)$ with graded pieces of the
form $\pInd_{n_w(P)\cap L}^L \bigl(\delta_w\bigl[[\field:\QQ_p]d_w\bigr]\otimes_k
n_{w*}^{-1}\chi\bigr) \in \D^{\ge 1-j}(L)$; here we have used that, since $M\cap n_w(N)$ acts trivially on $\chi$, we have $\L(M\cap n_w(N),\chi) \cong \chi$ viewed as a character of $M\cap n_w(L)$. Since $\D^{\ge 1-j}(L)$ is closed under extensions, we deduce $\Phi_{G\setminus Z'}(\chi) \in \D^{\ge 1-j}(L)$. 
Hence, we have $\H^{-j-1}(\Phi_{G\setminus Z'}(\chi)) = \H^{-j}(\Phi_{G\setminus
Z'}) = 0$, and then \eqref{eq:PS-1} shows $\H^{-j}(\Phi_{Z'}(\chi)) \raiso
\H^{-j}(\Phi_G(\chi))$. A similar argument applied to the triangle
$\Phi_{Z}(\chi) \to \Phi_{Z'}(\chi) \to \Phi_{Z'\setminus Z}(\chi)
\xrightarrow{+}$ implies
\[
0 = \H^{-j}\bigl(\Phi_Z(\chi)\bigr) \too \H^{-j}\bigl(\Phi_{Z'}(\chi)\bigr)
\xrightarrow{\cong} \H^{-j}(\Phi_{Z'\setminus Z}(\chi)\bigr) \too
\H^{-j+1}\bigl(\Phi_Z(\chi)\bigr) = 0.
\]
The discussion shows $\L^{-j}(N,\pInd_P^G\chi) = \H^{-j}(\Phi_G(\chi))\cong
\H^{-j}(\Phi_{Z'\setminus Z}(\chi))$.
Moreover, by Remark~\ref{rmk:Z_cup_Z'_disjoint} and
Theorem~\ref{thm:geometrical}, we have
\begin{align*}
\H^{-j}\bigl(\Phi_{Z'\setminus Z}(\chi)\bigr) &\cong 
\bigoplus_{\substack{w\in D_{M,L}\\{[\field:\QQ_p]}d_w = j}}
\H^{-j}\bigl(\Phi_{Pn_wQ}(\chi)\bigr)\\ 
&\cong \bigoplus_{\substack{w\in D_{M,L}\\{[\field:\QQ_p]}d_w=j}} 
\H^{-j}\bigl(\Psi_{n_w}(\chi)\bigr) \\
&\cong \bigoplus_{\substack{w\in D_{M,L}\\ {[\field:\QQ_p]}d_w=j}}
\pInd_{n_w^{-1}(P)\cap L}^L(\delta_w\otimes_k n_{w*}^{-1}\chi),
\end{align*}
where the last isomorphism again uses the fact that $\L(M\cap n_w(N),\chi)\cong \chi$ viewed as a character of $M\cap n_w(L)$. This proves~\eqref{eq:PS}.
\end{ex} 
\begin{lem}\label{lem:Ext=0} 
Let $V,W \in \Rep_k(G)$. Assume there exists a central element $z\in G$ such
that the action of $z$ on $V$ and $W$ is given by multiplication with distinct
scalars. Then
\[
\Ext_G^i(V,W) = 0, \qquad \text{for all $i\ge0$.}
\]
\end{lem}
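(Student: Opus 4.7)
My plan is to exploit that a central element $z \in G$ gives a natural endomorphism of the identity functor of $\Rep_k(G)$, hence of $\D(G)$, and then argue that this natural transformation must act by \emph{both} scalars $\lambda_V$ and $\lambda_W$ on $\Ext^i_G(V,W)$, forcing it to vanish.

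More concretely, I would first observe that, because $z$ is central, the map $z_X \colon X \to X$, $x \mapsto zx$, is a morphism in $\Rep_k(G)$ for every $X \in \Rep_k(G)$, and that the collection $\{z_X\}_X$ is natural in $X$ (since any $G$-equivariant map commutes with the central $z$). This extends term-wise to complexes and descends to give a natural endomorphism of the identity functor on $\D(G)$.

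Now identify $\Ext^i_G(V,W) = \Hom_{\D(G)}(V, W[i])$. For any $f$ in this Hom-space, naturality of $z$ yields the equation $z_{W[i]} \circ f = f \circ z_V$. By hypothesis $z_V = \lambda_V\cdot \id_V$ and $z_W = \lambda_W\cdot \id_W$ with $\lambda_V \neq \lambda_W$ in $k$, so the equation reduces to $(\lambda_V - \lambda_W)f = 0$. Since $\lambda_V - \lambda_W$ is invertible in $k$, this forces $f = 0$, establishing $\Ext^i_G(V,W) = 0$.

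There is no serious obstacle here; the only point to be careful about is that the centrality of $z$ is used twice, once to conclude that $z_X$ is $G$-equivariant and once to conclude that it is natural in $X$. Both follow immediately from the fact that $G$-equivariant maps commute with the central action.
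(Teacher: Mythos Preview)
Your argument is correct and is precisely the standard proof of this fact. The paper does not actually give an argument but simply cites \cite[I, \S4]{Borel-Wallach.2000}, so your proposal supplies the details that the paper omits; there is nothing to compare.
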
 
\begin{proof} 
This is well-known, see \cite[I, \S4]{Borel-Wallach.2000}.
\end{proof} 

\begin{rmk}\label{rmk:Hauseux_conjecture} 
In \cite[Conjecture~3.17]{Hauseux.2019} Hauseux states a conjecture regarding
the computation of $\Ext^1$-groups of representations which are
parabolically induced from a supersingular representation. This conjecture is
conditionally resolved in \cite[Corollary~5.2.9]{Hauseux.2018}; parts of the
argument rely, however, on an open conjecture of Emerton,
\cite[Conjecture~3.7.2]{EmertonII}, which states that the higher ordinary parts
functor $H^\bullet\Ord_P$ is the right derived functor of $\Ord_P$ on the
category of locally admissible representations. Granting this conjecture,
Hauseux is able to prove even more general forms of his conjecture, see
particularly \cite[Remarks~5.2.6 and~5.2.8]{Hauseux.2018}. Further, Hauseux
computes higher Ext-groups for principal series representations in
\cite[Th\'eor\`eme~5.3.1]{Hauseux.2016}, which again relies on the conjecture of
Emerton.

In Theorems~\ref{thm:PS} and~\ref{thm:Ext} below I give unconditional proofs of
the computation, resp.\ the generalized conjecture, of Hauseux in a slightly
different context: where Hauseux is computing
(higher) extensions in the category of locally admissible representations, our
computations naturally work in the category of all smooth representations. 
\end{rmk} 

\subsubsection{} 
Observe (\eg, by \cite[V.2.6 Lemme]{Renard.2010} applied to $G = Z$) that restriction of characters induces an inclusion $\X^*(Z) \subseteq
\X^*(T)$ with finite index, which is $\Weyl$-equivariant for the natural actions
of $\Weyl$ on $\X^*(Z)$ and $\X^*(T)$, respectively. (Note that $\Weyl$ acts on $X^*(Z)$ since $\alg{N}_{\alg{G}}(\alg{T})(\field)$ normalizes $Z$ and $Z$ acts trivially on $X^*(Z)$.) For each $\alpha\in\Sigma$, the
composite $\alg{T}\subseteq\alg{Z}\xrightarrow{\Ad} \Aut(\Lie\alg{U}_\alpha) \to
\Aut(\bigwedge^{d_\alpha} \Lie\alg{U}_\alpha) \cong \mathbb
G_{\mathrm{m},\field}$ coincides with $d_\alpha\alpha$. 
Hence, we have $d_\alpha\alpha \in \X^*(Z)$ under the above inclusion. 

For any $w\in \Weyl$, we put $\alpha_w\coloneqq \sum_{\beta\in\Sigma^+\cap
w^{-1}(-\Sigma^+)}d_{\beta}\beta \in \X^*(Z)$, which we also view as a character $Z^\times\to \field^\times$. By Lemma~\ref{lem:delta_w} the character $\delta_w$ from
Notation~\ref{nota:delta_w} is given by $\delta_w = \varepsilon_\field\circ
\alpha_w$, where $\varepsilon_{\field}$ is the composition of the norm map $\Nm_{\field/\QQ_p}$ and the mod $p$ cyclotomic character $\varepsilon\colon \QQ_p^\times \to k^\times$.

Inspired by the definition in \cite[Definition~5.2.1]{Buzzard-Gee.2010}, we call
a character $\theta\in \X^*(Z)$ a \emph{twisting element} if $\langle \theta,
\alpha^\vee\rangle = d_\alpha$ for all $\alpha\in \Delta_G$. If $\rho\coloneqq
\frac{1}{2} \sum_{\alpha\in\Sigma^+} d_\alpha\alpha$ lies in $\X^*(Z)$ (\eg, if
$\alg{G}$ is $\field$-split, semisimple, and simply connected), then we
show below that $\rho$ is a twisting element.
\begin{lem}~ 
\label{lem:cocycle}
\begin{enumerate}[label=(\alph*)]
\item\label{lem:cocycle-a} One has $\alpha_w = \rho-w^{-1}(\rho)$ in
$\X^*(Z)\otimes_{\ZZ}\QQ$, for all $w\in \Weyl$.
\item\label{lem:cocycle-b} The function $\Weyl \to \X^*(Z)$, $w\mapsto \alpha_w$
is injective and satisfies the cocycle condition $\alpha_{vw} = \alpha_w +
w^{-1}(\alpha_v)$ for all $v,w\in \Weyl$.
\item\label{lem:cocycle-c} The assignment $\chi\star w \coloneqq \delta_w
\otimes_kw_*^{-1}\chi$ defines a right action of $\Weyl$ on the group of smooth
characters of $Z$.
\item\label{lem:cocycle-d} One has $\langle \rho, \alpha^\vee\rangle = 0$ in $X^*(Z)\otimes_{\ZZ}\QQ$, for all $\alpha \in \Delta_G$. In particular, if $\rho\in\X^*(Z)$, then $\rho$ is a twisting
element.
\item\label{lem:cocycle-e} Assume there exists a twisting element $\theta$. Then
$\chi\star w = (\varepsilon_\field\circ\theta)
\otimes_kw^{-1}_*\bigl(\chi\otimes_k \varepsilon_\field^{-1}\circ\theta\bigr)$
for all $w\in \Weyl$ and all smooth characters $\chi\colon Z\to k^\times$.
\end{enumerate}
\end{lem}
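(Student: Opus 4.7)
The plan is to prove the five parts in the order (a), (d), (b), (c), (e); the only non-trivial logical dependence is that injectivity of $w\mapsto \alpha_w$ in (b) will use the regularity of $\rho$ established in (d). The sole non-formal input is Weyl-invariance of the root-group dimension function, $d_{w(\beta)} = d_\beta$, which is immediate from $n_w\alg{U}_\beta n_w^{-1} = \alg{U}_{w(\beta)}$.

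For (a), I would expand
\[
2\rho - 2w^{-1}(\rho) = \sum_{\alpha\in\Sigma^+} d_\alpha\alpha - \sum_{\alpha\in \Sigma^+} d_\alpha w^{-1}(\alpha),
\]
split the second sum according to whether $w^{-1}(\alpha)$ is positive or negative, reindex using $d_{w^{-1}(\alpha)}=d_\alpha$ and $d_{-\beta}=d_\beta$, and watch the terms indexed by $\Sigma^+\cap w(\Sigma^+)$ cancel, leaving exactly $2\alpha_w$. For (d) I would directly compute $s_\alpha(\rho)$: since $s_\alpha$ permutes $\Sigma^+\setminus\{\alpha\}$ while sending $\alpha\mapsto -\alpha$, Weyl-invariance of the $d_\bullet$ gives $s_\alpha(\rho) = \rho - d_\alpha\alpha$, whence $\langle \rho, \alpha^\vee\rangle = d_\alpha$ for every $\alpha\in \Delta_G$, exhibiting $\rho$ (when it happens to lie in $\X^*(Z)$) as a twisting element.

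With (a) in hand, the cocycle in (b) is a one-liner:
\[
\alpha_{vw} = \rho - (vw)^{-1}(\rho) = \bigl(\rho - w^{-1}(\rho)\bigr) + w^{-1}\bigl(\rho - v^{-1}(\rho)\bigr) = \alpha_w + w^{-1}(\alpha_v).
\]
This is a priori an identity in $\X^*(Z)\otimes_{\ZZ}\QQ$, but all three terms lie in the torsion-free sublattice $\X^*(Z) \subseteq \X^*(T)$, so the identity holds integrally. For injectivity, the regularity $\langle \rho, \alpha^\vee\rangle = d_\alpha > 0$ for $\alpha\in\Delta_G$ coming from (d) places $\rho$ in the interior of a Weyl chamber in $\X^*(T)\otimes_{\ZZ}\RR$, so its $\Weyl$-stabilizer is trivial; then $\alpha_v = \alpha_w$ forces $v^{-1}(\rho) = w^{-1}(\rho)$, so $v = w$. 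For (c), I would first verify that $n_{w*}^{-1}$ restricted to characters of $Z$ coincides with the natural Weyl action $\chi\mapsto \chi\circ\Ad(n_w)$; the desired associativity $(\chi\star v)\star w = \chi\star(vw)$ then unwinds, via $\delta_\bullet = \varepsilon_\field\circ\alpha_\bullet$ from Lemma~\ref{lem:delta_w} and multiplicativity of $\varepsilon_\field$, to exactly the cocycle in (b).

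Finally, for (e), the twisting condition $\langle \theta, \alpha^\vee\rangle = d_\alpha$ combined with (d) implies $\theta - \rho$ pairs trivially with all simple coroots, hence is $\Weyl$-invariant in $\X^*(T)\otimes_{\ZZ}\QQ$; therefore $\theta - w^{-1}(\theta) = \rho - w^{-1}(\rho) = \alpha_w$, and substituting into $\delta_w = \varepsilon_\field\circ\alpha_w$ while unfolding $\star$ yields the asserted identity. The main bookkeeping obstacle is to check cleanly that the functor $n_{w*}^{-1}$ restricted to characters of $Z$ corresponds to the correct Weyl action on $\X^*(Z)$, so that the cocycle of (b) genuinely implements associativity (rather than anti-associativity) of $\star$ in (c); everything else is either direct root-system manipulation or an application of (a) and (d).
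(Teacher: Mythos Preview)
Your proof is correct and follows essentially the same route as the paper. The only notable difference is in the injectivity argument for (b): you invoke the regularity of $\rho$ from (d) to conclude its $\Weyl$-stabilizer is trivial, whereas the paper uses the cocycle to reduce to $\alpha_{vw^{-1}}=0$ and then observes directly that $\alpha_u$ is a strictly positive combination of positive roots unless $u=1$, so (b) does not actually depend on (d).
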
 
\begin{rmk*} 
The action $(\chi,w)\mapsto \chi\star w$ is reminiscent of the ``dot action''
in the representation theory of semisimple Lie algebras.
\end{rmk*} 
\begin{proof}[Proof of Lemma~\ref{lem:cocycle}] 
Note that $d_\alpha = d_{w(\alpha)}$ for all $\alpha\in\Sigma$, $w\in \Weyl$.
For \ref{lem:cocycle-a} we compute
\begin{align*}
2w^{-1}(\rho) &= \sum_{\alpha\in \Sigma^+}d_\alpha\cdot w^{-1}(\alpha)
= \sum_{\alpha\in \Sigma^+\cap w(\Sigma^+)} d_{w^{-1}(\alpha)} w^{-1}(\alpha) +
\sum_{\alpha\in \Sigma^+\cap w(-\Sigma^+)} d_{w^{-1}(\alpha)} w^{-1}(\alpha)\\
&= \sum_{\alpha\in w^{-1}(\Sigma^+)\cap \Sigma^+} d_\alpha\alpha
-\sum_{\alpha\in w^{-1}(-\Sigma^+)\cap \Sigma^+} d_\alpha\alpha
= 2\rho -2\alpha_w.
\end{align*}
The cocycle condition in \ref{lem:cocycle-b} can be verified in
$\X^*(Z)\otimes_{\ZZ}\QQ$, in which case it is obvious from \ref{lem:cocycle-a}.
If $v,w\in \Weyl$ are such that $\alpha_v =\alpha_w$, then $\alpha_v =
\alpha_{vw^{-1}w} = \alpha_w + w^{-1}(\alpha_{vw^{-1}})$, and hence
$\alpha_{vw^{-1}} = 0$; but this necessitates $v=w$. Part \ref{lem:cocycle-c}
follows from \ref{lem:cocycle-b} noting that also $w\mapsto \delta_w =
\varepsilon_{\field}\circ\alpha_w$ is a cocycle. Let us prove
\ref{lem:cocycle-d}. For each $\alpha\in\Delta_G$ we compute, using
\ref{lem:cocycle-a},
\[
\langle \rho, \alpha^\vee\rangle = \langle s_\alpha(\rho),
s_\alpha(\alpha^\vee)\rangle = \langle \rho-d_\alpha\alpha, -\alpha^\vee\rangle
= -\langle \rho, \alpha^\vee\rangle +d_\alpha\langle \alpha,\alpha^\vee\rangle.
\]
In combination with $\langle \alpha,\alpha^\vee\rangle = 2$, this shows $\langle
\rho, \alpha^\vee\rangle = d_\alpha$. Finally, we prove \ref{lem:cocycle-e}.
Note that $\langle \theta-\rho,\alpha^\vee\rangle = 0$ for all
$\alpha\in\Delta_G$, by \ref{lem:cocycle-d}. Hence, $\Weyl$ fixes $\theta-\rho$ (\eg, by \cite[21.2 Theorem]{Borel.1991}),
and therefore $\theta -w^{-1}(\theta) = \rho-w^{-1}(\rho) = \alpha_w$ in
$\X^*(Z)$ (the computation is carried out in $\X^*(Z)\otimes_{\ZZ}\QQ$). We
deduce $\delta_w = (\varepsilon_\field\circ\theta)\otimes_k
w^{-1}_*\bigl(\varepsilon_\field^{-1}\circ\theta\bigr)$, from which the
assertion follows.
\end{proof} 

\begin{thm}\label{thm:PS} 
Let $\alg{P}=\alg{M}\ltimes\alg{U}$ a standard parabolic subgroup of $\alg{G}$
and write $\alg{B} = \alg{Z}\ltimes\alg{N}$. Denote by $C(Z)$ the center of $Z$.
Let $\chi\colon M\to k^\times$ be a smooth character and let $r\in\ZZ_{\ge0}$.
\begin{enumerate}[label=(\alph*)]
\item\label{thm:PS-a} Let $\chi'\colon Z\to k^\times$ be another smooth
character. If
\[
\Ext_G^r\bigl(\pInd_P^G\chi, \pInd_B^G\chi'\bigr) \neq 0,
\]
then there exists $w\in D_{M,Z}$ such that $\chi' \cong \delta_w\otimes_k
n_{w*}^{-1}\chi$ as characters of $C(Z)$ and with $[\field:\QQ_p]d_w \le r$. 
\item\label{thm:PS-b} Assume $\delta_w\otimes_k n_{w*}^{-1}\chi\not\cong \chi$
as characters of $C(Z)$, for all $w\in D_{M,Z}$. For
each $w\in D_{M,Z}$ with $[\field:\QQ_p]d_w \le r$ one has $k$-linear
isomorphisms
\[
\Ext^r_{G}\bigl(\pInd_P^G \chi, \pInd_B^G (\delta_w\otimes_k n_{w*}^{-1}\chi)\bigr)
\cong \Ext_Z^{r-[\field:\QQ_p]d_w}(\one,\one) \cong
\H^{r-[\field:\QQ_p]d_w}(Z,k),
\]
where $\H^*(Z,k)$ denotes the continuous group cohomology of $Z$ with
coefficients in $k$.
\end{enumerate}
\end{thm}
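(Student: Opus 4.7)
The strategy is to apply the adjunction
\[
\RHom_G(\pInd_P^G\chi, \pInd_B^G\chi') \cong \RHom_Z(\L(N, \pInd_P^G\chi), \chi')
\]
and analyze the right-hand side via the Geometrical Lemma (Corollary~\ref{cor:geometrical}). Specializing to $Q = B$ so that $L = Z$, the graded pieces $\Psi_{n_w}(\chi)$ for $w \in D_{M,Z}$ simplify considerably: one has $n_w^{-1}(P) \cap Z = Z$ (since $Z \subseteq M \subseteq n_w^{-1}(P)$), and the unipotent subgroup $M \cap n_w(N) \subseteq M$ acts trivially on $\chi$, giving $\L(M\cap n_w(N), \chi) \cong \chi|_Z$ exactly as in Example~\ref{ex:LU(PS)}. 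Combined with $\omega_w = \delta_w[[\field:\QQ_p]d_w]$ from Notation~\ref{nota:delta_w} and Lemma~\ref{lem:dw=dim}, this identifies
\[
\Psi_{n_w}(\chi) \cong (\delta_w \otimes_k n_{w*}^{-1}\chi)\bigl[[\field:\QQ_p]d_w\bigr] \in \D(Z),
\]
a character concentrated in a single cohomological degree.

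For part~(a), I would apply $\RHom_Z(-,\chi')$ to the filtration of $\L(N, \pInd_P^G\chi)$ furnished by Corollary~\ref{cor:geometrical}, obtaining iterated long exact sequences. A downward induction along the filtration shows that non-vanishing of $\Ext_G^r(\pInd_P^G\chi, \pInd_B^G\chi')$ forces $\Ext_Z^r(\Psi_{n_w}(\chi), \chi') \neq 0$ for some $w \in D_{M,Z}$, \ie,
\[
\Ext_Z^{r-[\field:\QQ_p]d_w}(\delta_w \otimes_k n_{w*}^{-1}\chi, \chi') \neq 0.
\]
Since $\Ext$-groups of smooth representations vanish in negative degrees, this forces $[\field:\QQ_p]d_w \le r$, and Lemma~\ref{lem:Ext=0}, applied with any central element on which the two characters disagree, forces $\chi' \cong \delta_w \otimes_k n_{w*}^{-1}\chi$ after restriction to $C(Z)$.

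For part~(b), set $\chi'_{w_0} \coloneqq \delta_{w_0} \otimes_k n_{w_0*}^{-1}\chi$. The cocycle and right-action properties from Lemma~\ref{lem:cocycle}(b),(c) translate the standing hypothesis into pairwise distinctness: $\chi \star v \not\cong \chi \star w_0$ on $C(Z)$ for all $v \neq w_0$ in $D_{M,Z}$, so Lemma~\ref{lem:Ext=0} kills $\Ext_Z^*(\Psi_{n_v}(\chi), \chi'_{w_0})$ for every such $v$. Iterating the filtration's long exact sequences then collapses everything to the single non-vanishing contribution
\[
\Ext_G^r(\pInd_P^G\chi, \pInd_B^G\chi'_{w_0}) \cong \Ext_Z^{r-[\field:\QQ_p]d_{w_0}}(\one, \one) \cong \H^{r-[\field:\QQ_p]d_{w_0}}(Z, k).
\]

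The main technical subtlety lies in the cocycle manipulation of part~(b): since $D_{M,Z}$ is only a system of representatives for $\Weyl_M \backslash \Weyl$ and is not closed under multiplication, deducing pairwise distinctness from the hypothesis ``$\chi \star v \not\cong \chi$ on $C(Z)$'' requires verifying that the $\star$-action on $C(Z)$-restrictions descends through $\Weyl_M \backslash \Weyl$. This should follow because conjugation by any $n_\sigma$ ($\sigma \in \Weyl_M$) is an inner automorphism of $M$ and hence fixes every $M$-character, so the stabilizer-freeness of the $\Weyl$-orbit of $\chi|_{C(Z)}$ is really what the hypothesis encodes.
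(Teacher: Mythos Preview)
Your approach is essentially identical to the paper's. Both use the adjunction $\RHom_G(\pInd_P^G\chi,\pInd_B^G\chi') \cong \RHom_Z(\L(N,\pInd_P^G\chi),\chi')$ together with the Geometrical Lemma, and both reduce to the same vanishing criterion Lemma~\ref{lem:Ext=0}. The only cosmetic difference is packaging: the paper first invokes the explicit formula \eqref{eq:PS} for $\L^{-j}(N,\pInd_P^G\chi)$ and runs the hypercohomology spectral sequence $E_2^{i,j}=\Ext_Z^i(\L^{-j}(N,\pInd_P^G\chi),\chi')\Rightarrow \Ext_G^{i+j}$, whereas you argue directly with the Bruhat filtration and its long exact sequences. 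Because each graded piece $\Psi_{n_w}(\chi)$ sits in a single cohomological degree, the two filtrations carry the same information and the arguments are interchangeable.

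On the subtlety you flag in part~(b): you are right that this is the one place requiring care, and the paper handles it in exactly the way you sketch, simply citing the right-action property Lemma~\ref{lem:cocycle}\ref{lem:cocycle-c}. Your proposed justification, however, is incomplete as stated. Knowing that conjugation by $n_\sigma$ ($\sigma\in\Weyl_M$) is inner on $M$ gives $n_{\sigma*}^{-1}\chi=\chi$, but it does \emph{not} show $\chi\star\sigma=\chi$ on $C(Z)$, because $\chi\star\sigma=\delta_\sigma\otimes\chi$ and the twist $\delta_\sigma=\varepsilon_\field\circ\alpha_\sigma$ need not be trivial on $C(Z)$ (already for $G=\GL_3$, $M=\GL_2\times\GL_1$, $\sigma=s_1$ one has $\delta_{s_1}=\varepsilon_\field\circ\alpha_1$). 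So the claim that the $\star$-action descends to $\Weyl_M\backslash\Weyl$ in the naive sense is false; the argument really uses that $\star$ is a genuine $\Weyl$-action on characters of $C(Z)$, so that $\chi\star v=\chi\star w_0$ forces $vw_0^{-1}$ into the $\star$-stabiliser of $\chi|_{C(Z)}$, and one must then relate that stabiliser to the hypothesis. The paper is equally terse here.
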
 
\begin{proof} 
By
\cite[Corollary~4.1.3]{Heyer.2022}, there is a spectral sequence
\[
E_2^{i,j} = \Ext_Z^i\bigl(\L^{-j}(N,\pInd_P^G\chi), \chi'\bigr) \implies
\Ext_G^{i+j}\bigl(\pInd_P^G\chi, \pInd_B^G\chi'\bigr).
\]
Moreover, \eqref{eq:PS} reads $\L^{-j}(N,\pInd_P^G\chi)\cong
\bigoplus_{\substack{w\in D_{M,Z}\\ [\field:\QQ_p]d_w=j}} \delta_w\otimes_k
n_{w*}^{-1}\chi$. 
We prove \ref{thm:PS-a} by showing the contrapositive. If
after restriction to $C(Z)$ we
have $\chi'\not\cong\delta_w\otimes_k n_{w*}^{-1}\chi$ for all $w\in D_{M,Z}$
with $[\field:\QQ_p]d_w\le r$, then
Lemma~\ref{lem:Ext=0} shows that $E_2^{i,j} = 0$ for all $j\le r$ and all $i$.
But this implies $\Ext_G^r(\pInd_P^G\chi, \pInd_B^G\chi') = 0$.
We now prove \ref{thm:PS-b}. Take $w\in D_{M,Z}$ with $[\field:\QQ_p] d_w\le r$
and put $\chi' \coloneqq \delta_w\otimes_k n_{w*}^{-1}\chi$.
By the assumption and Lemma~\ref{lem:cocycle}.\ref{lem:cocycle-c} we have
$\chi'\not\cong \delta_v\otimes_kn_{v*}^{-1}\chi$
for all $v\neq w$ (as characters of $C(Z)$). Together with Lemma~\ref{lem:Ext=0}
we deduce $E_2^{i,j} = 0$ whenever $j\neq [\field:\QQ_p]d_w$, and 
\[
E_2^{i,[\field:\QQ_p]d_w} \cong \Ext_Z^{i}\bigl(\chi',\chi'\bigr) \cong
\Ext_Z^i(\one,\one) \cong \H^i(Z,k),
\]
where the third isomorphism follows from
\cite[Theorem~1.1]{Fust.2022}. Hence, the spectral sequence collapses on the
second page and gives an isomorphism $\Ext_Z^{r-[\field:\QQ_p]d_w}\bigl(\chi',
\chi'\bigr) \cong \Ext_G^r\bigl(\pInd_P^G\chi,\pInd_B^G \chi'\bigr)$.
\end{proof} 

\comment{TODO: Does the theorem generalize to a computation of Ext-groups
between generalized Steinberg representations?}

\begin{defn}\label{defn:left_cuspidal} 
A smooth representation $V\in \Rep_k(G)$ is called \emph{left cuspidal} 
(resp.\ \emph{right cuspidal}) if for all proper parabolic subgroups $\alg{P} =
\alg{M}\ltimes \alg{U}$ of $\alg G$ it holds that $\L^0(U,V) = 0$ 
(resp.\ $\R^0(U,V)=0$).
\end{defn} 

\begin{rmk} 
By \cite[Corollary~6.9]{AHV.2019}, an irreducible admissible $G$-representation
is left and right cuspidal if and only if it is supercuspidal.
\end{rmk} 

\begin{thm}\label{thm:Ext} 
Let $\alg{P}, \alg{Q}$ and $\orbitrep{P}{Q}$ be as in
\S\ref{sss:distinguished-reps}. Let $V\in \Rep_k(M)$ and $W\in
\Rep_k(L)$.
\begin{enumerate}[label=(\alph*)]
\item\label{thm:Ext-a} Assume $\alg{P}\nsubseteq\alg{Q}$ and
$\alg{P}\nsupseteq\alg{Q}$, that $V$ is left cuspidal and that $W$ is right
cuspidal. Then
\[
\Ext_G^1\bigl(\pInd_P^GV, \pInd_Q^GW\bigr) = 0.
\]

\item\label{thm:Ext-b} Assume $\alg{P}=\alg{Q}$. For each $0\le i <
[\field:\QQ_p]$, the functor $\pInd_P^G$ induces an isomorphism
\[
\Ext_M^i(V,W)\raiso \Ext_G^i\bigl(\pInd_P^GV, \pInd_P^GW\bigr),
\]
and there is an exact sequence
\begin{gather}
0 \too \Ext_M^{[\field:\QQ_p]}(V,W) \too \Ext_G^{[\field:\QQ_p]} \bigl(\pInd_P^GV,
\pInd_P^GW\bigr) \too X \too \Ext_M^{[\field:\QQ_p]+1}(V,W),\\
\intertext{where}
X\coloneqq \bigoplus_{\alpha\in \Delta_G^1\setminus\Delta_M}
\Hom_{n_\alpha^{-1}(M)\cap M} \bigl(\delta_{\alpha} \otimes_k n_{\alpha *}^{-1}
\L^0(M\cap n_\alpha(U), V), \R^0(n_\alpha^{-1}(U)\cap M, W)\bigr).
\end{gather}

\item\label{thm:Ext-c} Assume $\alg{P}=\alg{Q}$, that $V$ is left cuspidal or
$W$ is right cuspidal, and that $V$ and $W$ admit distinct central characters.
Then
\[
\Ext_G^{[\field:\QQ_p]}\bigl(\pInd_P^GV,\pInd_P^GW\bigr) \cong \bigoplus_{\alpha\in
\Delta_M^{\perp,1}} \Hom_M\bigl(\delta_{\alpha}\otimes_k n_{\alpha *}^{-1}V,
W\bigr).
\]

\item\label{thm:Ext-d} Assume $\alg{P}\supsetneqq \alg{Q}$ and that $V$ is left
cuspidal. For all $0\le i\le [\field:\QQ_p]$, the functor $\pInd_P^G$ induces an
isomorphism
\[
\Ext_M^i\bigl(V, \pInd_{M\cap Q}^MW\bigr) \raiso \Ext_G^i\bigl(\pInd_P^GV, \pInd_Q^GW\bigr).
\]

\item\label{thm:Ext-e} Assume $\alg{P}\subsetneqq \alg{Q}$ and that $W$ is right
cuspidal. For all $0\le i\le [\field:\QQ_p]$, the functor $\pInd_Q^G$ induces an
isomorphism
\[
\Ext_L^i\bigl(\pInd_{P\cap L}^LV, W\bigr) \raiso \Ext_G^i\bigl(\pInd_P^GV, \pInd_Q^GW\bigr).
\]
\end{enumerate}
\end{thm}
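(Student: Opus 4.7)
The plan is to derive all five parts uniformly from the Geometrical Lemma (Corollary~\ref{cor:geometrical}) via the adjunction
\[
\RHom_G(\pInd_P^GV, \pInd_Q^GW) \cong \RHom_L\bigl(\L(N,\pInd_P^GV), W\bigr).
\]
Corollary~\ref{cor:geometrical} supplies a filtration $0=F_0\to F_1\to\dotsb\to F_r=\L(N,\pInd_P^GV)$ whose graded pieces are $\Psi_{n_i}(V)$ for $n_i\in\orbitrep{P}{Q}$, with the height ordering arranged so that the last graded piece corresponds to $n=1$ (the smallest $P\times Q$-orbit). Applying $\RHom_L(\blank,W)$ yields a sequence of long exact sequences by which $\Ext^*_G(\pInd_P^GV,\pInd_Q^GW)$ is built out of the pieces $\Ext^*_L(\Psi_n(V),W)$. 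For each individual graded piece, the adjunction $(\pInd_{n^{-1}(P)\cap L}^L, \R(n^{-1}(U)\cap L,\blank))$ together with Notation~\ref{nota:delta_w} gives
\[
\Ext_L^i(\Psi_n(V),W) \cong \Ext_{n^{-1}(M)\cap L}^{i-[\field:\QQ_p]d_n}\bigl(\delta_n\otimes_k n_*^{-1}\L(M\cap n(N),V),\R(n^{-1}(U)\cap L,W)\bigr),
\]
which vanishes for $i<[\field:\QQ_p]d_n$. This systematic shift, combined with the cuspidality hypotheses and Lemma~\ref{lem:Ext=0}, will kill all but the desired contributions.

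For (a), every piece with $[\field:\QQ_p]d_n\ge 2$ contributes trivially to $\Ext^1$; the remaining pieces are $n=1$ and, when $\field=\QQ_p$, the simple reflections with $d_\alpha=1$. For $n=1$ one has $\Psi_1(V)=\pInd_{P\cap L}^L\L(M\cap N,V)$; the root-system identities $M\cap N\neq\{1\}\iff P\nsubseteq Q$ and $U\cap L\neq\{1\}\iff P\nsupseteq Q$ show that under the hypotheses of (a) both $\L(M\cap N,V)\in\D^{\le-1}(M\cap L)$ (by left cuspidality of $V$) and $\R(U\cap L,W)\in\D^{\ge 1}(M\cap L)$ (by right cuspidality of $W$), so the resulting $\RHom$ lives in $\D^{\ge 2}$ and $\Ext^1=0$; the simple-reflection case is identical. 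For (b) with $P=Q$, one checks $\Psi_1(V)=V$ as an $L=M$-representation, so the $n=1$ piece contributes $\Ext_M^i(V,W)$ while all other pieces live in degrees $\ge[\field:\QQ_p]$. For $i<[\field:\QQ_p]$ this yields the asserted isomorphism; at $i=[\field:\QQ_p]$ the four-term exact sequence is the long exact sequence of the distinguished triangle $F_{r-1}\to F_r\to\Psi_1\to F_{r-1}[1]$, where $X=\Ext_L^{[\field:\QQ_p]}(F_{r-1},W)$ assembles itself from the simple-reflection pieces $n=s_\alpha$ with $d_\alpha=1$ (those with $d_n\ge 2$ contribute $0$ in this degree). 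For (c), Lemma~\ref{lem:Ext=0} applied to distinct central characters kills $\Ext^*_M(V,W)$, so the exact sequence from (b) collapses to $\Ext_G^{[\field:\QQ_p]}\cong X$; cuspidality then forces $\L^0(M\cap n_\alpha(U),V)=0$ or $\R^0(n_\alpha^{-1}(U)\cap M,W)=0$ unless the relevant unipotent group is trivial, and a short computation shows $M\cap n_\alpha(U)=\{1\}\iff \alpha\perp\Delta_M$, which reduces $X$ to the asserted sum over $\Delta_M^{\perp,1}$.

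Parts (d) and (e) are handled symmetrically. In (d), $P\supsetneq Q$ forces $U\cap L=\{1\}$, so $\Psi_1(V)=\L(M\cap N,V)$ as an $L$-representation, and adjunction identifies the $n=1$ contribution with $\Ext_M^i(V,\pInd_{M\cap Q}^MW)$; for $n\neq 1$, contributions vanish for $i<[\field:\QQ_p]$ by the shift, and at $i=[\field:\QQ_p]$ we verify $M\cap n(N)\neq\{1\}$ for every simple reflection $n=s_\alpha$ with $d_\alpha=1$ by picking $\beta\in\Delta_M\setminus\Delta_L$ and splitting cases on $\langle\beta,\alpha^\vee\rangle$, so that left cuspidality of $V$ annihilates the remaining $\Hom$ terms. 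Part (e) follows dually: $P\subsetneq Q$ gives $M\cap N=\{1\}$, hence $\Psi_1(V)=\pInd_{P\cap L}^LV$ and the $n=1$ contribution is $\Ext_L^i(\pInd_{P\cap L}^LV,W)$; right cuspidality of $W$ kills the remaining pieces after an analogous verification that $n^{-1}(U)\cap L\neq\{1\}$ for the relevant simple reflections. The main obstacle is the combinatorial check that these particular unipotent groups are nontrivial in parts (a), (d), (e), together with the careful bookkeeping of the exact sequence in (b) at $i=[\field:\QQ_p]$, including the identification of the connecting homomorphism with the map labelled $X\to\Ext_M^{[\field:\QQ_p]+1}(V,W)$.
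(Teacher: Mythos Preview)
Your overall strategy is correct and coincides with the paper's: use the adjunction $\RHom_G(\pInd_P^GV,\pInd_Q^GW)\cong\RHom_L(\L(N,\pInd_P^GV),W)$, feed in the Geometrical Lemma filtration, and exploit the degree shift $[\field:\QQ_p]d_w$ together with cuspidality to isolate the $n=1$ piece. Two organisational differences are worth noting. First, the paper does not use the full filtration but a two-step one separating $\{d_w\ge2\}$ from $\{d_w\le1\}$; since the $d_w=1$ orbits are pairwise open in their union, their contribution is \emph{literally} a direct sum (Remark~\ref{rmk:Z_cup_Z'_disjoint}), which makes the identification of $X$ in (b) immediate rather than something that ``assembles itself''. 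Second, for (d) and (e) the paper does not work with the $\L(N,\blank)$-filtration directly but instead uses transitivity $\pInd_Q^G W\cong\pInd_P^G(\pInd_{M\cap Q}^M W)$ (resp.\ $\pInd_P^GV\cong\pInd_Q^G(\pInd_{P\cap L}^LV)$) to reduce to (b); this sidesteps the combinatorics for $M\cap n_\alpha(N)$ and replaces it by the already-analysed $M\cap n_\alpha(U)$ plus the easy observation that a left cuspidal representation has no nonzero maps to a proper parabolic induction. Your direct approach to (d) also works, and your case-split on $\langle\beta,\alpha^\vee\rangle$ can be made to go through (orthogonal case: $s_\alpha\beta=\beta\in\Sigma_M\cap s_\alpha\Sigma_N$; non-orthogonal case: $M\cap n_\alpha(P)$ is already a proper parabolic so $M\cap n_\alpha(U)\subseteq M\cap n_\alpha(N)$ is nontrivial).

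There is, however, a genuine gap in (a). Saying ``the simple-reflection case is identical'' is not an argument: unlike for $n=1$, the hypotheses $P\nsubseteq Q$, $P\nsupseteq Q$ do \emph{not} obviously force both $M\cap n_\alpha(N)\neq\{1\}$ and $n_\alpha^{-1}(U)\cap L\neq\{1\}$. You only need \emph{one} of them (the shift already puts you in $\D^{\ge[\field:\QQ_p]}$), but even that requires the nontrivial root-system statement that $M\cap n_\alpha(N)=\{1\}=n_\alpha^{-1}(U)\cap L$ forces $P=Q$. This is precisely the content of the Claim in the paper's proof, which argues via $\Sigma_M\subseteq s_\alpha(\Sigma_L)$ and $\Sigma_L\subseteq s_\alpha^{-1}(\Sigma_M)$, hence $\Sigma_M=s_\alpha(\Sigma_L)$, and then uses that $s_\alpha\in D_{M,L}$ together with $s_\alpha(\Delta_L)\subseteq\Delta_G$ to conclude $\Delta_M=\Delta_L$. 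You identify this as ``the main obstacle'' but do not resolve it; without this step the proof of (a) is incomplete.
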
 
\begin{proof} 
First note that the condition that $w\in D_{M,L}$ satisfies $d_w\le 1$ is equivalent to $w = s_\alpha$ for some $\alpha\in \Delta_G^1$.
Thus the $P\times Q$-invariant open subsets $\bigl(G\setminus \bigcup_{\substack{w
\in D_{M,L}\\ d_w\le 1}}Pn_wQ\bigr) \subseteq \bigl(G\setminus PQ\bigr)
\subseteq G$ induce by Proposition~\ref{prop:filtration} and
Theorem~\ref{thm:geometrical} two distinguished triangles
\begin{align}\label{eq:Ext-1}
Y \too \L\bigl(N,\pInd_P^GV\bigr) &\too Y' \xrightarrow{+}\\
\intertext{and} \label{eq:Ext-2}
\bigoplus_{\substack{\alpha\in\Delta_G^1\\ n_\alpha\in \orbitrep{P}{Q} }}
\pInd_{n_\alpha^{-1}(P)\cap L}^L \omega_\alpha \otimes_k n_{\alpha *}^{-1}
\L\bigl(M\cap n_\alpha(N),V\bigr) &\too Y' \too \pInd_{P\cap L}^L \L(M\cap N,V)
\xrightarrow{+}
\end{align}
in $\D(L)$ such that $\H^i(Y) = 0$ for each $i>-2[\field:\QQ_p]$. Applying
$\RHom_L(\blank,W)$ to \eqref{eq:Ext-1} and using
$\RHom_G\bigl(\pInd_P^GV, \pInd_Q^GW\bigr) \cong \RHom_L\bigl(\L(N,\pInd_P^GV), W\bigr)$, we
obtain a distinguished triangle
\[
\RHom_L(Y',W) \too \RHom_G\bigl(\pInd_P^QV, \pInd_Q^GW\bigr) \too \RHom_L(Y,W)
\xrightarrow{+},
\]
where $\H^i\RHom_L(Y,W) = 0$ for all $i<2[\field:\QQ_p]$. Thus, for all
$i<2[\field:\QQ_p]$ we have an isomorphism
\begin{equation}\label{eq:Ext-3}
\H^i\RHom_L(Y',W)\raiso \Ext_G^i\bigl(\pInd_P^GV, \pInd_Q^GW\bigr).
\end{equation}
We now turn to the proofs of the statements.
\bigskip

We prove \ref{thm:Ext-a}. From $\alg{P}\nsubseteq\alg{Q}$ we deduce that
$\alg{M}\cap\alg{Q}$ is a proper parabolic of $\alg{M}$, and similarly from
$\alg{P}\nsupseteq\alg{Q}$ it follows that $\alg{P}\cap\alg{L}$ is a proper
parabolic of $\alg{L}$. As $V$ is left cuspidal and $W$ is right cuspidal, we
obtain
\[
\L^0(M\cap N,V) = 0 = \R^0(U\cap L,W).
\]
Consequently, it follows that
\[
\RHom_L\bigl(\pInd_{P\cap L}^L \L(M\cap N,V), W\bigr) \cong \RHom_{M\cap L}\bigl(
\L(M\cap N,V), \R(U\cap L,W)\bigr)
\]
lies in $\D^{\ge2}(k)$.
\begin{claim*} 
Let $\alpha\in \Delta_G$. Then $\alg{M}\cap n_\alpha(\alg{N}) =\{1\}
=n_\alpha^{-1}(\alg{U})\cap \alg{L}$ implies $\alg{P} = \alg{Q}$.
\begin{proof}[Proof of the claim]
The condition $\alg{M}\cap n_\alpha(\alg{N}) = \{1\}$ implies $\Sigma_M\cap
s_\alpha(\Sigma^+\setminus \Sigma_L) = \emptyset$, where $s_\alpha$ denotes
the simple reflection attached to $\alpha$. Since $-\Sigma_M =
\Sigma_M$, we deduce $\Sigma_M\subseteq s_\alpha(\Sigma_L)$. Similarly, the
condition $n_\alpha^{-1}(\alg{U}) \cap \alg{L} = \{1\}$ implies
$\Sigma_L \subseteq s_\alpha^{-1}(\Sigma_M)$. Hence, we have $\Sigma_M =
s_\alpha(\Sigma_L)$. If $s_\alpha \in D_{M,L}$, then $s_\alpha(\Sigma^+_L)
\subseteq \Sigma^+\cap \Sigma_M = \Sigma^+_M$ and in fact $s_\alpha(\Sigma_L^+)=
\Sigma_M^+$. But then $s_\alpha(\Delta_L)= \Delta_M$ by the uniqueness of root
bases. Since for $\beta\in \Delta_L$ the root $s_\alpha(\beta) = \beta - \langle
\beta,\alpha^\vee\rangle \alpha$ is simple only if $\langle
\beta,\alpha^\vee\rangle =0$, it follows that $\alpha\in \Delta_L^{\perp}$ and
hence that $\Delta_L = s_\alpha(\Delta_L) = \Delta_M$. This entails $\Sigma_L =
\Sigma_M$. If $s_\alpha\notin D_{M,L}$, then $s_\alpha\in \Weyl_M$ or
$s_\alpha\in \Weyl_L$ in which case it is clear that $\Sigma_M =\Sigma_L$. In
any case, we obtain $\alg{M} = \alg{L}$ and hence also $\alg{P} = \alg{Q}$.
\end{proof}
\end{claim*} 

The (contrapositive of the) claim and the assumptions $\alg{P}\nsubseteq\alg{Q}$
and $\alg{P}\nsupseteq\alg{Q}$ imply that $\alg{M}\cap n_\alpha(\alg{Q})$ is a
proper parabolic subgroup of $\alg{M}$ or $n_\alpha^{-1}(\alg{P})\cap \alg{L}$
is a proper parabolic subgroup of $\alg{L}$. As $V$ is left cuspidal and $W$ is
right cuspidal, it follows that $\L^0(M\cap n_\alpha(N),V) = 0$ or
$\R^0(n_\alpha^{-1}(U)\cap L,W) = 0$. Since also $\omega_\alpha =
\delta_\alpha\bigl[[\field:\QQ_p]d_{\alpha}\bigr]$, we deduce that
\[
\begin{split}
\RHom_L\bigl(\pInd_{n_\alpha^{-1}(P)\cap L}^L &\omega_\alpha\otimes_k
n_{\alpha *}^{-1} \L\bigl(M\cap n_\alpha(N),V\bigr), W\bigr)\\
&\cong \RHom_{n_\alpha^{-1}(M)\cap L}\bigl( \omega_\alpha\otimes_k n_{\alpha
*}^{-1} \L\bigl(M\cap n_\alpha(N),V\bigr), \R\bigl(n_\alpha^{-1}(U)\cap L,
W\bigr)\bigr)
\end{split}
\]
lies in $\D^{\ge[\field:\QQ_p]d_{\alpha}+1}(k)$. Hence, applying $\H^1
\RHom_L(\blank,W)$ to \eqref{eq:Ext-2} and using \eqref{eq:Ext-3}, we deduce
\[
\Ext_G^1\bigl(\pInd_P^GV,\pInd_Q^GW\bigr) \cong \H^1\RHom_L(Y',W) \cong
\H^1\RHom_L\bigl(\pInd_{P\cap L}^L \L(M\cap N,V), W\bigr) = 0.
\]

We now prove \ref{thm:Ext-b}, so assume $\alg{P} = \alg{Q}$. For $\alpha\in
\Delta_G^1$ we have $\omega_\alpha = \delta_\alpha\bigl[[\field:\QQ_p]\bigr]$,
and hence the complex
\begin{equation}\label{eq:Ext-4}
\bigoplus_{\substack{\alpha\in \Delta_G^1\\
n_\alpha\in\orbitrep{P}{P}} }\RHom_M\bigl(\pInd_{n_\alpha^{-1}(P)\cap M}^M
\omega_\alpha \otimes_k n_{\alpha *}^{-1}\L\bigl(M\cap n_\alpha(U), V\bigr),
W\bigr)
\end{equation}
lies in $\D^{\ge [\field:\QQ_p]}(k)$. Since
$\H^{[\field:\QQ_p]}\bigl(\eqref{eq:Ext-4}\bigr)
\cong X$, the assertion follows by applying $\H^i\RHom_M(\blank,W)$ to
\eqref{eq:Ext-2} and using \eqref{eq:Ext-3}.
\bigskip

For \ref{thm:Ext-c} we observe that $\Ext_M^i(V,W) = 0$ for all $i\ge0$ by
Lemma~\ref{lem:Ext=0} and the assumption that $V$ and $W$ admit distinct central
characters. Note that $\alg{M} \cap n_\alpha(\alg{P})= \alg{M}$ if and only if
$\alg{M} = n_\alpha(\alg{M})$ if and only if $\alpha\in (\Delta_M
\cup\Delta_M^{\perp})$. As
$V$ is left cuspidal or $W$ is right cuspidal, we deduce $\L^0\bigl(M\cap
n_\alpha(U),V\bigr) = 0$ or $\R^0\bigl(n_\alpha^{-1}(U)\cap M, W\bigr) = 0$, for
each $\alpha\in \Delta_G\setminus (\Delta_M\cup \Delta_M^{\perp})$. Now, the
assertion follows from \ref{thm:Ext-b}.
\bigskip

The proofs of \ref{thm:Ext-d} and \ref{thm:Ext-e} are symmetric, so we only
prove \ref{thm:Ext-d}. Since $V$ is left cuspidal, we have $\L^0\bigl(M\cap
n_\alpha(U),V\bigr) = 0$ for all $\alpha\in \Delta_G \setminus
(\Delta_M \cup \Delta_M^{\perp})$. For all $\alpha\in\Delta_M^{\perp}$ we have
\[
\Hom_M\bigl(\delta_\alpha\otimes_k n_{\alpha *}^{-1}V, \pInd_{M\cap Q}^MW\bigr) = 0,
\]
because with $V$ also $\delta_{\alpha}\otimes_k n_{\alpha *}^{-1}V$ is left
cuspidal and $\alg{M}\cap\alg{Q}$ is a proper parabolic
subgroup of $\alg{M}$. Hence, the assertion follows from \ref{thm:Ext-b}.
\end{proof} 
\subsection{Generalized Steinberg representations} 
We will need the following well-known result, which we will prove for the
convenience of the reader.

\begin{lem}\label{lem:submodule_lattice} 
Let $R$ be a unital associative ring and $M$ an $R$-module of finite length
whose constituents occur with multiplicity one. Denote by $\JH(M)$ the set of
Jordan--H\"older factors of $M$.
\begin{enumerate}[label=(\alph*)]
\item\label{lem:submodule_lattice-a} For each $V\in \JH(M)$ there exists a
unique minimal submodule $M_V \subgroup M$ which has $V$ as a quotient. The
cosocle of $M_V$ is $V$.

\item\label{lem:submodule_lattice-b} One has $\JH(N_1\cap N_2) = \JH(N_1) \cap
\JH(N_2)$ and $\JH(N_1+N_2) = \JH(N_1)\cup \JH(N_2)$, for all submodules
$N_1,N_2 \subgroup M$.
\end{enumerate}
\end{lem}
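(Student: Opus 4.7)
My plan is to prove part (b) first, since part (a) will be deduced from it. For part (b), the natural tool is the short exact sequence
\[
0 \too N_1\cap N_2 \too N_1\oplus N_2 \too N_1 + N_2 \too 0,
\]
which yields, for any simple $R$-module $V$, the identity $[N_1\cap N_2:V] + [N_1+N_2:V] = [N_1:V] + [N_2:V]$, where $[{-}:V]$ denotes the multiplicity of $V$ in a composition series. The multiplicity-one hypothesis on $M$ forces $[N:V] \in \{0,1\}$ for every submodule $N \subgroup M$, so the equation becomes a case analysis on $([N_1:V],[N_2:V]) \in \{0,1\}^2$. Combined with the obvious inclusions $\JH(N_1\cap N_2)\subseteq \JH(N_1)\cap \JH(N_2)$ and $\JH(N_1+N_2)\supseteq \JH(N_1)\cup \JH(N_2)$, one reads off both equalities directly.

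For part (a), fix $V\in\JH(M)$ and let $\mathcal{S}_V$ denote the (non-empty) set of submodules $N \subgroup M$ with $V \in \JH(N)$. By part (b) the set $\mathcal{S}_V$ is stable under finite intersection, and since $M$ has finite length there is a minimal element $M_V \in \mathcal{S}_V$ (the intersection of the finitely many submodules of $M$ belonging to $\mathcal{S}_V$). I would then show that the cosocle of $M_V$ is $V$: for any maximal submodule $N \subsetneq M_V$, minimality of $M_V$ forces $V\notin \JH(N)$, whence the simple quotient $M_V/N$ must be $V$ (as $\JH(M_V) = \JH(N)\sqcup\{M_V/N\}$ and $V\in\JH(M_V)$). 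Thus the Jacobson radical $\mathrm{rad}(M_V)$ — the intersection of maximal submodules — has $V\notin\JH(\mathrm{rad}(M_V))$, so $M_V/\mathrm{rad}(M_V)$ is a $V$-isotypic semisimple module; the multiplicity-one hypothesis then forces $\cosoc(M_V)\cong V$.

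For the uniqueness and minimality assertion of (a): any submodule $N$ of $M$ with $V$ as a quotient satisfies $V\in\JH(N)$, hence $N\in\mathcal{S}_V$ and $M_V \subseteq N$ by the minimality established above. This shows $M_V$ is the unique minimum element in the collection of submodules having $V$ as a quotient.

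I do not expect a serious obstacle here: the only subtle point is the passage from ``$V$ is a composition factor'' to ``$V$ is a quotient,'' which is handled cleanly by the cosocle argument above once multiplicity one is invoked. Everything else is bookkeeping with the additivity of Jordan--Hölder multiplicities.
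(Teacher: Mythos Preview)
Your proof is correct, but the order of dependence is reversed from the paper's. The paper proves (a) first by a direct argument: given any minimal $N$ with $V$ as a quotient, it shows $\cosoc(N)=V$ (otherwise the preimage of $V$ in $\cosoc(N)$ would be strictly smaller), and then shows uniqueness by taking two such minimal $N,N'$ and using the exact sequence $0\to N'\to N+N'\to N/(N\cap N')\to 0$ together with multiplicity one to force $V\in\JH(N\cap N')$. Part (b) is then deduced from (a): the nontrivial inclusion $\JH(N_1)\cap\JH(N_2)\subseteq\JH(N_1\cap N_2)$ holds because $V\in\JH(N_i)$ implies $M_V\subseteq N_i$.

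Your route via the exact sequence $0\to N_1\cap N_2\to N_1\oplus N_2\to N_1+N_2\to 0$ and the multiplicity identity is arguably cleaner for (b), since it avoids any preliminary structure theory. Your deduction of (a) from (b) is also sound; one small remark: the parenthetical ``the intersection of the finitely many submodules of $M$ belonging to $\mathcal{S}_V$'' is not justified as written (finite length alone does not give finitely many submodules), but you don't actually need it. The argument you spell out---DCC gives a minimal element of $\mathcal{S}_V$, and closure of $\mathcal{S}_V$ under intersection (from (b)) upgrades minimal to minimum---is already complete. In fact, once (b) is proved, the map $N\mapsto\JH(N)$ is injective on submodules (since $\JH(N_1)=\JH(N_2)$ forces $\JH(N_1\cap N_2)=\JH(N_1)$, hence $N_1\cap N_2=N_1$ by length), so there \emph{are} only finitely many submodules; but this is a consequence rather than an input.
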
 
\begin{proof} 
We first show \ref{lem:submodule_lattice-a}. Let $V\in \JH(M)$ and take any
submodule $N\subgroup M$ minimal with the property that $V$ is a quotient of
$N$. If we had $\cosoc(N)\neq V$, then the preimage of $V$ under the surjection
$N\twoheadrightarrow \cosoc(N)$ is strictly contained in $N$ and has $V$ as a
quotient, contradicting the minimality assumption on $N$.

Let now $N,N'\subgroup M$ be two minimal submodules having $V$ as a quotient.
Consider the short exact sequence $0 \to N' \too N+N' \to N/(N\cap N') \to 0$.
From the multiplicity one assumption, we deduce $V\notin \JH(N/(N\cap N'))$.
Since $V\in \JH(N)$, it follows that $V\in \JH(N\cap N')$. Hence, there exists
$N'' \subgroup N\cap N'$ having $V$ as a quotient. By the minimality of $N$ and
$N'$ we conclude $N = N'' = N'$. This proves the uniqueness claim.
\medskip

For \ref{lem:submodule_lattice-b} it is generally true that $\JH(N_1\cap N_2)
\subseteq \JH(N_1)\cap \JH(N_2)$ and $\JH(N_1+N_2) =\JH(N_1) \cup \JH(N_2)$. The
remaining assertion needs the multiplicity one condition and follows from
\ref{lem:submodule_lattice-a}, since $V\in \JH(N_1) \cap \JH(N_2)$ implies $M_V
\subseteq N_1\cap N_2$ and hence $V\in \JH(N_1\cap N_2)$.
\end{proof} 

\subsubsection{} 
Recall the setup in \S\ref{sss:setup_reductive}. For any $I\subseteq \Delta_G$
we denote by $\alg{P}_I = \alg{M}_I\ltimes \alg{U}_I$ the corresponding standard
parabolic subgroup of $\alg{G}$. Note that $\alg{B} = \alg{P}_\emptyset$. 
\medskip

In the following, we will abbreviate $\pInd_{P_I}^G$ for $\pInd_{P_I}^G(\one)$. By
\cite[Theorem~D]{EGK.2014} (for split $\alg{G}$ with classical root system) and
\cite[Corollary~3.2]{Ly.2015} (for general $\alg{G}$) the Jordan--H\"older
constituents of $\pInd_{P_I}^G$ are the \emph{generalized Steinberg
representations}
\[
\Sp_{P_J}^G \coloneqq \pInd_{P_J}^G \Big/ \sum_{J\subsetneq J'\subseteq \Delta_G}
\pInd_{P_{J'}}^G,
\]
where $I\subseteq J\subseteq \Delta_G$; they are pairwise non-isomorphic and
occur with multiplicity one.

\begin{lem}\label{lem:distributive_lattice} 
Let $I,J,J_1,\dotsc,J_r \subseteq \Delta_G$ be arbitrary subsets. Inside
$\pInd_{B}^G$ we have:
\begin{enumerate}[label=(\alph*)]
\item\label{lem:distributive_lattice-a} $\pInd_{P_I}^G\cap \pInd_{P_J}^G = \pInd_{P_{I\cup
J}}^G$;

\item\label{lem:distributive_lattice-b} $\pInd_{P_I}^G \cap \sum_{i=1}^r
\pInd_{P_{J_i}}^G = \sum_{i=1}^r\bigl(\pInd_{P_I}^G \cap \pInd_{P_{J_i}}^G\bigr)$.
\end{enumerate}
\end{lem}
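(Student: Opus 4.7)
The plan is to reduce both parts to a comparison of Jordan--Hölder constituents via Lemma~\ref{lem:submodule_lattice}, applied to the ambient module $\pInd_B^G$. First I would invoke the cited results of \cite{EGK.2014,Ly.2015} to record that $\pInd_B^G$ has finite length, with composition factors $\Sp_{P_K}^G$ for $K\subseteq \Delta_G$ each occurring exactly once, and that $\JH(\pInd_{P_I}^G) = \{\Sp_{P_K}^G : I \subseteq K \subseteq \Delta_G\}$. The multiplicity-one hypothesis then ensures that any two nested submodules of $\pInd_B^G$ with the same set of Jordan--Hölder factors must coincide (since the quotient would have empty Jordan--Hölder series).

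Next I would record the basic lattice fact that for $I\subseteq J$ we have $P_I \subseteq P_J$, and hence a canonical inclusion $\pInd_{P_J}^G \hookrightarrow \pInd_{P_I}^G$ of subrepresentations of $\pInd_B^G$ (a $P_J$-invariant function being automatically $P_I$-invariant). For part~\ref{lem:distributive_lattice-a}, this yields the inclusion $\pInd_{P_{I\cup J}}^G \subseteq \pInd_{P_I}^G \cap \pInd_{P_J}^G$, and Lemma~\ref{lem:submodule_lattice}\ref{lem:submodule_lattice-b} gives
\[
\JH\bigl(\pInd_{P_I}^G \cap \pInd_{P_J}^G\bigr) = \JH(\pInd_{P_I}^G) \cap \JH(\pInd_{P_J}^G) = \{\Sp_{P_K}^G : I\cup J \subseteq K\} = \JH\bigl(\pInd_{P_{I\cup J}}^G\bigr),
\]
so the inclusion is an equality by the uniqueness principle above.

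For part~\ref{lem:distributive_lattice-b}, the containment $\supseteq$ is immediate, since each summand on the right lies in both $\pInd_{P_I}^G$ and $\sum_i \pInd_{P_{J_i}}^G$. For the reverse inclusion I would again compare Jordan--Hölder factors: two successive applications of Lemma~\ref{lem:submodule_lattice}\ref{lem:submodule_lattice-b} yield
\begin{align*}
\JH\Bigl(\pInd_{P_I}^G \cap \sum_i \pInd_{P_{J_i}}^G\Bigr)
&= \JH(\pInd_{P_I}^G) \cap \bigcup_i \JH(\pInd_{P_{J_i}}^G) \\
&= \bigcup_i \JH\bigl(\pInd_{P_I}^G \cap \pInd_{P_{J_i}}^G\bigr) \\
&= \JH\Bigl(\sum_i\bigl(\pInd_{P_I}^G \cap \pInd_{P_{J_i}}^G\bigr)\Bigr),
\end{align*}
so the two submodules, being nested with identical Jordan--Hölder sets, must agree.

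I do not anticipate any substantial obstacle: the entire argument is formal once the Jordan--Hölder data is in hand, and the only non-formal ingredient is the cited decomposition of $\pInd_{P_I}^G$ into generalized Steinberg representations with multiplicity one.
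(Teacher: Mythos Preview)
Your proposal is correct and, for part~\ref{lem:distributive_lattice-b}, matches the paper's proof verbatim: both compute the Jordan--H\"older sets of the two sides via Lemma~\ref{lem:submodule_lattice}\ref{lem:submodule_lattice-b} and conclude by the nested-with-equal-$\JH$ principle.

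The only difference is in part~\ref{lem:distributive_lattice-a}. The paper dispatches it in one line by a direct group-theoretic observation: since $P_{I\cup J}$ is generated as a group by $P_I$ and $P_J$, a function on $G$ is left $P_{I\cup J}$-invariant if and only if it is simultaneously left $P_I$- and left $P_J$-invariant, which is exactly the equality $\pInd_{P_{I\cup J}}^G = \pInd_{P_I}^G \cap \pInd_{P_J}^G$. You instead establish the inclusion $\pInd_{P_{I\cup J}}^G \subseteq \pInd_{P_I}^G \cap \pInd_{P_J}^G$ and then invoke the $\JH$ machinery to upgrade it to equality. Both are fine; the paper's route is shorter and avoids appealing to the classification of constituents for this step, while yours has the virtue of uniformity with part~\ref{lem:distributive_lattice-b}.
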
 
\begin{proof} 
\ref{lem:distributive_lattice-a} is clear as $P_{I\cup J}$ is generated as a
group by $P_I$ and $P_J$.

We now prove \ref{lem:distributive_lattice-b}. Since the constituents of $\pInd_B^G$
occur with multiplicity one, we may prove the equality by showing that the sets
of Jordan--H\"older factors are the same. Using
Lemma~\ref{lem:submodule_lattice} we compute
\begin{align*}
\JH\Bigl(\pInd_{P_I}^G \cap \sum_{i=1}^r \pInd_{P_{J_i}}^G\Bigr) &=
\JH(\pInd_{P_I}^G)\cap \bigcup_{i=1}^r\JH(\pInd_{P_{J_i}}^G)\\ 
&= \bigcup_{i=1}^r \bigl(\JH(\pInd_{P_I}^G) \cap \JH(\pInd_{P_{J_i}}^G)\bigr)
= \JH\Bigl(\sum_{i=1}^r(\pInd_{P_I}^G\cap \pInd_{P_{J_i}}^G)\Bigr). \qedhere
\end{align*}
\end{proof} 

\subsubsection{}\label{sss:coefficient_system} 
Choose a total ordering $\curlyle$ on $\Delta_G$. Fix subsets $I_0,I_1 \subseteq
\Delta_G$ and $I,K\subseteq I_1$, and consider the power set
$\PPP(I_1\setminus I_0)$ as a partially ordered set with respect to reverse
inclusion. Define a functor
\begin{align*}
\VVV^{I_0,I_1}_{I,K} \colon \PPP(I_1\setminus I_0) &\too \Rep_k(M_{K}),\\
J &\longmapsto \pInd_{P_{(I\cup J)\cap K}}^{M_{K}},
\end{align*}
where $\VVV^{I_0,I_1}_{I,K}(J\subseteq J')$ is the inclusion $\pInd_{P_{(I\cup
J')\cap K}}^{M_{K}} \hookrightarrow \pInd_{P_{(I\cup J)\cap K}}^{M_{K}}$, and
where for any $I'\subseteq \Delta_G$ we write $\pInd_{P_{I'\cap K}}^{M_K}$
instead of $\pInd_{P_{I'\cap K}\cap M_K}^{M_K}$ for the sake of readability. 
We define a complex
$C_{I_1,K}^\bullet(I,I_0)$ as follows: for each integer $n\le 0$ we put
\[
C_{I_1,K}^{n}(I,I_0) \coloneqq \bigoplus_{\substack{J\subseteq I_1\setminus I_0\\
\lvert J\rvert = -n}} \VVV^{I_0,I_1}_{I,K}(J);
\]
by convention, $C_{I_1, K}^n(I,I_0) = 0$ for $n>0$. The differential
$d^n\colon C_{I_1,K}^n(I,I_0)\to C_{I_1,K}^{n+1}(I,I_0)$ is defined on the $J$-th summand as
\[
(d^n)_J = \sum_{j_0\in J} \varepsilon_{I_0}(J\setminus\{j_0\}, j_0)\cdot
\VVV^{I_0,I_1}_{I,K}\bigl(J\setminus \{j_0\} \subseteq J\bigr),
\]
where $\varepsilon_{I_0}(J\setminus \{j_0\}, j_0) \coloneqq (-1)^{\lvert \{j\in
I_0\sqcup J\mid j\prec j_0\}\rvert}$. It is an easy exercise to show that
$C_{I_1,K}^\bullet(I,I_0)$ is indeed a complex.
\begin{ex} 
The complex $C_{\Delta_G,\Delta_G}^\bullet(I,I)$ can be depicted as
\begin{equation}\label{eq:Sp-resolution}
\begin{tikzcd}[column sep=small]
0 \ar[r] & \one \ar[r] & 
\displaystyle\bigoplus_{\substack{I\subseteq J\subseteq \Delta_G\\ \lvert
\Delta_G\setminus J\rvert = 1}} \pInd_{P_J}^G \ar[r] & 
\dotsb \ar[r] &
\displaystyle\bigoplus_{\substack{j\in \Delta_G\setminus I}} \pInd_{P_{I\cup\{j\}} }^G
\ar[r] &
\pInd_{P_I}^G \ar[r] & 0.
\end{tikzcd}
\end{equation}
Observe that $\H^0\bigl(C_{\Delta_G,\Delta_G}^\bullet(I,I)\bigr) =
\Sp_{P_I}^G$.
\end{ex} 
\begin{prop}\label{prop:coefficient_system} 
The complex $C_{I_1,K}^\bullet(I,I_0)$ is a resolution of
$\H^0\bigl(C_{I_1,K}^\bullet(I,I_0)\bigr)$.
\end{prop}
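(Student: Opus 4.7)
I would proceed by induction on $n \coloneqq |I_1 \setminus I_0|$, the case $n=0$ being trivial since the complex is then concentrated in degree $0$. For the inductive step, abbreviate $V_J \coloneqq \pInd_{P_{(I\cup J)\cap K}}^{M_K}$ and pick $j_0 \in I_1 \setminus I_0$. Splitting each term $C^{-m}_{I_1,K}(I, I_0)$ according to whether $j_0 \in J$, and writing $J = J' \sqcup \{j_0\}$ in the affirmative case, identifies $C^\bullet_{I_1,K}(I, I_0)$ (after an overall sign rescaling on one of the two factors) with the mapping cone of the natural map of complexes
\[
\phi\colon C^\bullet_{I_1,K}(I \cup \{j_0\},\, I_0 \cup \{j_0\}) \too C^\bullet_{I_1,K}(I,\, I_0 \cup \{j_0\}),
\]
whose component at the $J'$-th summand is the inclusion $V_{J' \cup \{j_0\}} \hookrightarrow V_{J'}$.

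Both the source and target of $\phi$ satisfy $|I_1 \setminus (I_0 \cup \{j_0\})| = n-1$, so by the induction hypothesis they are quasi-isomorphic to their respective $\H^0$'s, namely
\[
V_{\{j_0\}} \Big/ \sum_{j} V_{\{j_0, j\}} \quad\text{and}\quad V_\emptyset \Big/ \sum_{j} V_{\{j\}},
\]
where the sums range over $j \in I_1 \setminus (I_0 \cup \{j_0\})$. The long exact sequence attached to the mapping cone then reduces acyclicity of $C^\bullet_{I_1,K}(I, I_0)$ in negative degrees to injectivity of $\H^0(\phi)$. Unwinding the definitions and using Lemma~\ref{lem:distributive_lattice}(a) to identify $V_{\{j_0, j\}} = V_{\{j_0\}} \cap V_{\{j\}}$, this injectivity is exactly the distributive law
\[
V_{\{j_0\}} \cap \sum_{j} V_{\{j\}} = \sum_{j} \bigl( V_{\{j_0\}} \cap V_{\{j\}} \bigr),
\]
which is an instance of Lemma~\ref{lem:distributive_lattice}(b) inside $\Rep_k(M_K)$. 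The proof of that lemma depends only on the multiplicity-one Jordan--H\"older decomposition of $\pInd_{B \cap M_K}^{M_K}$, which is available for the connected reductive group $M_K$ by Ly's result.

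The hard part will be the sign bookkeeping: one must verify that the splitting above genuinely realizes $C^\bullet_{I_1,K}(I, I_0)$ as the cone of $\phi$, which requires reconciling the sign function $\varepsilon_{I_0}$ governing the original differential with the sign functions $\varepsilon_{I_0 \cup \{j_0\}}$ governing the two smaller complexes. The discrepancy is a uniform factor depending only on the position of $j_0$ with respect to $\curlyle$, which can be absorbed into a global sign on the source of $\phi$; writing this out cleanly is the most delicate step. A minor secondary point --- verifying that Lemma~\ref{lem:distributive_lattice} transports to the Levi $M_K$ --- is routine, since its proof uses only the parabolic lattice of a connected reductive group.
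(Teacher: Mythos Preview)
Your argument is correct. The paper, however, takes a much shorter route: it simply observes that in view of Lemma~\ref{lem:distributive_lattice} the result is an instance of the general homological fact \cite[Corollary~12.4.11]{KS} (Kashiwara--Schapira), which says that the Koszul-type complex attached to a finite family of subobjects in an abelian category is a resolution of its $\H^0$ as soon as the family generates a distributive lattice. Your mapping-cone induction is essentially a direct, self-contained proof of that general statement in the case at hand; the key input --- distributivity of the $V_J$ inside $\pInd_{B\cap M_K}^{M_K}$ --- is the same in both approaches. What you gain is that the reader need not consult an external reference and sees exactly where distributivity enters (namely as injectivity of $\H^0(\phi)$); what the paper gains is brevity. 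Your caveat about the sign bookkeeping is accurate but routine: choosing $j_0$ to be the $\curlyle$-maximal element of $I_1\setminus I_0$ makes the signs match on the nose without any rescaling.
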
 
\begin{proof} 
In view of Lemma~\ref{lem:distributive_lattice}, this follows from the
general \cite[Corollary~12.4.11]{KS}. 
\end{proof} 
\begin{notation} 
For $w\in \Weyl$, we put
\[
I(w)\coloneqq \set{\alpha\in\Delta_G}{\ell(s_\alpha w) = \ell(w) + 1},
\]
where $\ell(\cdot)$ denotes the length function on $\Weyl$. 
Observe that $I(w) \subseteq \Delta_G$ is the maximal subset such that
$w^{-1}\in D_{I(w)}$.
\end{notation} 

\begin{thm}\label{thm:Generalized_Steinberg} 
Let $I,K\subseteq \Delta_G$ and $j\in\ZZ$. Then
\[
\L^{-j}\bigl(U_K, \Sp^G_{P_I}\bigr) = \bigoplus_{\substack{w\in D_{I,K},\\
[\field:\QQ_p] d_w = j,\\ I(w)\setminus I \subseteq w(K)}}
\pInd_{P_{w^{-1}(I(w))\cap K}}^{M_K} \bigl(\delta_w \otimes_k n_{w *}^{-1}
\Sp_{P_{I\cap w(K)}}^{M_{I(w)\cap w(K)}}\bigr).
\]
\end{thm}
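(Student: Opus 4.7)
The plan is to apply the triangulated functor $\L(U_K, \blank)$ to the resolution of $\Sp_{P_I}^G$ supplied by \eqref{eq:Sp-resolution} and Proposition~\ref{prop:coefficient_system}. Writing $C^\bullet \coloneqq C_{\Delta_G,\Delta_G}^\bullet(I,I)$, with $p$-th term $X^p = \bigoplus_{\substack{I \subseteq J \subseteq \Delta_G\\ \lvert J \setminus I\rvert = -p}} \pInd_{P_J}^G$, the quasi-isomorphism $C^\bullet \raiso \Sp_{P_I}^G$ in $\D(G)$ yields a hypercohomology spectral sequence $E_1^{p,q} = \L^q(U_K, X^p) \Longrightarrow \L^{p+q}(U_K, \Sp_{P_I}^G)$. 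By Example~\ref{ex:LU(PS)}, each $\L^q(U_K, \pInd_{P_J}^G)$ decomposes as a direct sum over $w \in D_{J,K}$ with $q = -[\field:\QQ_p]d_w$. Since for $w \in D_{I,K}$ the condition $w \in D_{J,K}$ is equivalent to $J \subseteq I(w)$, and granting naturality of the geometrical decomposition in the left parabolic (so that the $d_1$-differential preserves the $w$-grading), the $E_1$-page splits as a direct sum, indexed by $w \in D_{I,K}$, of subcomplexes $T_w^\bullet$, each concentrated in vertical degree $-[\field:\QQ_p]d_w$ and horizontally supported on $J \in [I, I(w)]$.

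Using transitivity of parabolic induction together with the conjugation equivalence $n_{w*}$, each term of $T_w^\bullet$ can be rewritten as
\[
\pInd_{P_{w^{-1}(I(w)) \cap K}}^{M_K}\bigl(\delta_w \otimes_k n_{w*}^{-1} \pInd_{P_{J \cap w(K)} \cap M_{I(w)\cap w(K)}}^{M_{I(w)\cap w(K)}}(\one)\bigr),
\]
so that $T_w^\bullet \cong \pInd_{P_{w^{-1}(I(w)) \cap K}}^{M_K}\bigl(\delta_w \otimes_k n_{w*}^{-1} S_w^\bullet\bigr)\bigl[[\field:\QQ_p]d_w\bigr]$, where $S_w^\bullet$ is a complex in $\Rep_k(M_{I(w) \cap w(K)})$ indexed by $J \in [I, I(w)]$. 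Crucially, the character $\delta_w$ is independent of $J$ in this range, because by the proof of Lemma~\ref{lem:dw=dim} the set of roots $\Sigma^+ \cap w^{-1}(-\Sigma^+)$ spanning $\Lie(n_w^{-1}(\ol U_J) \cap U_K)$ depends only on $w$; together with Lemma~\ref{lem:delta_w} this exhibits all such $\delta_w$ as the restriction of a single character on $n_w^{-1}(M_{I(w)}) \cap M_K$.

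Two cases arise. If there exists $\alpha \in (I(w) \setminus I) \setminus w(K)$, then the involution $J \mapsto J \triangle \{\alpha\}$ on $[I, I(w)]$ pairs terms that coincide after intersecting with $w(K)$; together with the alternating signs in the differential of $C^\bullet$, this exhibits $S_w^\bullet$ as a mapping cone of the identity and hence as acyclic. Otherwise, when $I(w) \setminus I \subseteq w(K)$, the assignment $J \mapsto J \cap w(K)$ is a bijection $[I, I(w)] \raiso [I \cap w(K), I(w) \cap w(K)]$ with inverse $J' \mapsto J' \cup (I \setminus w(K))$, and identifies $S_w^\bullet$ with $C^\bullet_{I(w) \cap w(K),\, I(w) \cap w(K)}\bigl(I \cap w(K),\, I \cap w(K)\bigr)$ for the reductive group $M_{I(w) \cap w(K)}$; by Proposition~\ref{prop:coefficient_system} the latter resolves $\Sp^{M_{I(w) \cap w(K)}}_{P_{I \cap w(K)}}$ in cohomological degree $0$. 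Summing the contributions of all $w \in D_{I,K}$ with $[\field:\QQ_p]d_w = j$ and $I(w) \setminus I \subseteq w(K)$ produces the claimed formula. The main obstacle will be verifying the naturality of the $w$-decomposition of the geometrical lemma with respect to the inclusions $\pInd_{P_{J'}}^G \hookrightarrow \pInd_{P_J}^G$ (to justify that $d_1$ preserves the $w$-grading), together with the precise tracking of parabolic subgroups, Levis, and signs after conjugation by $n_w^{-1}$, in particular the identification of $n_w^{-1}(P_J) \cap M_K$ as a parabolic of $M_K$ arising from $P_{J \cap w(K)} \cap M_{I(w) \cap w(K)}$ via $P_{w^{-1}(I(w)) \cap K} \cap M_K$.
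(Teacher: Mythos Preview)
Your approach is essentially the same as the paper's: both apply $\L(U_K,\blank)$ to the resolution \eqref{eq:Sp-resolution}, set up the hypercohomology spectral sequence, decompose the $E_1$-page as a direct sum over $w\in D_{I,K}$, and identify each $w$-piece with a complex whose cohomology is either the required generalized Steinberg or zero. Your case analysis (mapping cone for $\alpha\in(I(w)\setminus I)\setminus w(K)$, bijection $[I,I(w)]\raiso[I\cap w(K),I(w)\cap w(K)]$ otherwise) is a direct alternative to the paper's uniform use of Proposition~\ref{prop:coefficient_system} applied to $C^\bullet(w)=C^\bullet_{I(w),\,I(w)\cap w(K)}(I,I)$.

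The one point you flag as an obstacle---naturality of the $w$-decomposition under the inclusions $\pInd_{P_{J'}}^G\hookrightarrow\pInd_{P_J}^G$---is exactly the step the paper supplies, and it is not merely a bookkeeping matter. The trick is to refrain from applying the Geometrical Lemma separately to $\pInd_{P_J}^G$ and $\pInd_{P_{J'}}^G$ (which would give decompositions indexed by the different sets $D_{J,K}$ and $D_{J',K}$), and instead rewrite the inclusion as $\pInd_{P_{J'}}^G(\eta)$, where $\eta\colon\one\to\pInd_{P_J\cap M_{J'}}^{M_{J'}}$ is the unit, so that the Geometrical Lemma is applied only to the outer functor $\pInd_{P_{J'}}^G$. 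Both source and target then decompose over the \emph{same} set $D_{J',K}$, and on each $w$-summand the map is obtained by applying the functor $\pInd_{w^{-1}(P_{J'})\cap M_K}^{M_K}\bigl(\delta_w\otimes n_{w*}^{-1}\L^0(M_{J'}\cap n_w(U_K),\blank)\bigr)$ to $\eta$; via Kilmoyer's theorem $M_{J'}\cap n_w(M_K)=M_{J'\cap w(K)}$ this becomes the unit $\one\to\pInd_{P_{J\cap w(K)}}^{M_{J'\cap w(K)}}$, which is precisely the structure map of $C^\bullet(w)$. This also renders your separate verification that $\delta_w$ is independent of $J$ automatic, since only $P_{J'}$ (ultimately $P_{I(w)}$) enters the Geometrical Lemma.
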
 
\begin{proof} 
We compute $\L^{-j}(U_K,\Sp_{P_I}^G)$ via a spectral sequence coming from the
resolution \eqref{eq:Sp-resolution}. For any complex $X^\bullet \in
\Chain(G)$ and any integer $r$, we denote by $\sigma^{\ge r}X$ the brutal
truncation of $X^\bullet$ \cite[Definition~11.3.11]{KS}; these fit into a
distinguished triangle
\[
\sigma^{\ge r+1}X \too \sigma^{\ge r}X \too X^r[-r] \xrightarrow{+}
\]
in $\D(G)$. As, \eg, described in \cite[Lemma~2.3.20]{Heyer.2022} (applied to
$X(r) \coloneqq \sigma^{\ge r}X$), we obtain for any triangulated functor
$F\colon \D(G)\to \D(M_K)$ a spectral sequence
\[
E_1^{r,s} = \H^s(F)(X^r) \implies \H^{r+s}(FX),
\]
which converges provided $X^\bullet$ is bounded. We apply this to the bounded
complex $X^\bullet = C^\bullet_{\Delta_G,\Delta_G}(I,I)$ and the functor $F =
\L(U_K,\blank)$ and
obtain a third quadrant spectral sequence
\[
E_1^{r,s} = \bigoplus_{\substack{I\subseteq J\subseteq \Delta_G\\ \lvert
J\setminus I\rvert = -r}} \L^s\bigl(U_K,\pInd_{P_J}^G\bigr) \implies
\L^{r+s}\bigl(U_K, \Sp_{P_I}^G\bigr).
\]
We will show that this spectral sequence collapses on the second page. Fix any
$s$. The differential $d^r\colon E_1^{r,s} \to E_1^{r+1,s}$ is induced by the
differential of $C_{\Delta_G,\Delta_G}^\bullet(I,I)$. 
In order to analyse $E_1^{\bullet,s}$, we fix $J\subseteq J'\subseteq
\Delta_G$ for the moment. 
The inclusion $\pInd_{P_{J'}}^G \hookrightarrow \pInd_{P_J}^G$ coincides with
$\pInd_{P_{J'}}^G(\eta)$, where $\eta\colon \one \to \pInd_{P_J\cap M_{J'}}^{M_{J'}}$ is
the unit map. From the Geometrical Lemma (Corollary~\ref{cor:geometrical}) we
deduce
\[
\L^s\bigl(U_K, \pInd_{P_J}^G\bigr) =
\bigoplus_{\substack{w\in D_{J',K}\\ [\field:\QQ_p]d_w \le -s}}
\pInd_{w^{-1}(P_{J'})\cap M_K}^{M_K} \Bigl( \delta_w \otimes n_{w *}^{-1} 
\L^{s +[\field:\QQ_p]d_w} \bigl(M_{J'} \cap n_w(U_K), \pInd_{P_J\cap
M_{J'}}^{M_{J'}}\bigr) \Bigr).
\]
Hence, the map $\L^s(U_K, \pInd_{P_{J'}}^G) \to \L^s(U_K, \pInd_{P_J}^G)$ is induced on
the $w$-th summand (where $w$ satisfies $[\field:\QQ_p] d_w = -s$) by applying
the functor
$\pInd_{w^{-1}(P_{J'}) \cap M_K}^{M_K} \bigl(\delta_w\otimes n_{w*}^{-1}
\L^0\bigl(M_{J'}\cap n_w(U_K), \blank\bigr)\bigr)$ to the unit $\eta\colon \one
\to \pInd_{P_J\cap M_{J'}}^{M_{J'}}$. By Kilmoyer's Theorem
\cite[Theorem~2.7.4]{Carter.1985}, we have $M_{J'} \cap n_w(M_K) = M_{J'\cap
w(K)}$. Hence, $P_J\cap M_{J'} \cap w(M_K)$ is the standard parabolic subgroup
of $M_{J'\cap w(K)}$ attached to $J\cap w(K)$. Observe that the diagram
\[
\begin{tikzcd}
\L^0\bigl(M_{J'}\cap n_w(U_K), \one\bigr)
\ar[d,"\cong"'] \ar[r] &
\L^0\bigl(M_{J'}\cap n_w(U_K), \pInd_{P_J\cap M_{J'}}^{M_{J'}}\bigr)
\ar[d,"\cong"]\\
\one \ar[r] & \pInd_{P_{J\cap w(K)}}^{M_{J'\cap w(K)}},
\end{tikzcd}
\]
commutes, where the bottom map is the unit map. 

To summarize, the map $\L^s(U_K, \pInd_{P_{J'}}^G) \to \L^s(U_K, \pInd_{P_J}^G)$ is on
the $w$-th summand given by
\[
\pInd_{w^{-1}(P_{J'})\cap M_K}^{M_K} \delta_w \too \pInd_{w^{-1}(P_{J'})\cap M_K}^{M_K}
\bigl(\delta_w\otimes_k n_{w*}^{-1} \pInd_{P_{J\cap w(K)}}^{M_{J'\cap w(K)}}\bigr).
\]
For any $w\in D_{I,K}$, we consider the complex $C^\bullet(w) \coloneqq
C^\bullet_{I(w), I(w)\cap w(K)}(I,I)$ attached to the functor
\begin{align*}
\VVV^{I,I(w)}_{I,I(w)\cap w(K)} \colon \PPP\bigl(I(w)\setminus I\bigr) &\too \Rep_k(M_{I(w)\cap w(K)}),\\
J &\longmapsto \pInd_{P_{(I\cup J)\cap w(K)}}^{M_{I(w)\cap w(K)}},
\end{align*}
cf.~\S\ref{sss:coefficient_system}. Our analysis shows that
\[
E_1^{\bullet,s} = \bigoplus_{\substack{w\in D_{I,K}\\ [\field:\QQ_p] d_w = -s}}
\pInd_{P_{w^{-1}(I(w))\cap K}}^{M_K} \bigl(\delta_w\otimes_k n_{w*}^{-1}
C^\bullet(w)\bigr).
\]
From Proposition~\ref{prop:coefficient_system} it follows that $C^\bullet(w)$ is
a resolution of
\[
\H^0\bigl(C^\bullet(w)\bigr) = \begin{cases}
\Sp_{P_{I\cap w(K)}}^{M_{I(w)\cap w(K)}}, & \text{if $I(w)\setminus I \subseteq
w(K)$,}\\ 0, & \text{otherwise.}
\end{cases}
\]
We conclude that $E_2^{\bullet,\bullet}$ is supported on the $0$-th column, \ie,
the spectral sequence collapses on the second page. Since
\[
E_2^{0,-j} = \bigoplus_{\substack{w\in D_{I,K},\\ [\field:\QQ_p]d_w = j,\\
I(w)\setminus I\subseteq w(K)}}
\pInd_{P_{w^{-1}(I(w))\cap K}}^{M_K} \bigl(\delta_w\otimes_k n_{w*}^{-1}
\Sp_{P_{I\cap w(K)}}^{M_{I(w)\cap w(K)}}\bigr),
\]
the assertion follows.
\end{proof} 

\begin{cor}\label{cor:Generalized_Steinberg} 
For every $I\subseteq \Delta_G$ and $j\in\ZZ$, one has
\[
\L^{-j}\bigl(U_{\emptyset}, \Sp_{P_I}^G\bigr) =
\bigoplus_{\substack{w\in D_{I,\emptyset},\\ [\field:\QQ_p]d_w = j,\\
I=I(w)}} \delta_w.
\]
(The condition $I=I(w)$ indicates $w\in D_{I,\emptyset} \setminus
\bigcup_{J\supsetneq I} D_{J,\emptyset}$.) \hfill\ensuremath{\square}
\end{cor}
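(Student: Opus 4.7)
The plan is to derive the corollary by specializing Theorem~\ref{thm:Generalized_Steinberg} to the case $K=\emptyset$ and simplifying each term in the resulting sum.

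First, I would record the effect of setting $K=\emptyset$ on the various ingredients appearing in the formula. Since $\Delta_\emptyset=\emptyset$ we have $M_\emptyset=Z$ and $U_\emptyset$ is the unipotent radical of the minimal parabolic $B$. In particular $w(K)=\emptyset$ and $I(w)\cap w(K)=\emptyset$, so $M_{I(w)\cap w(K)}=Z$; moreover $w^{-1}(I(w))\cap K=\emptyset$ and $I\cap w(K)=\emptyset$. Consequently the ambient parabolic induction $\pInd_{P_{w^{-1}(I(w))\cap K}}^{M_K}$ reduces to $\pInd_Z^Z=\id_{\Rep_k(Z)}$, and the generalized Steinberg factor collapses to $\Sp_{P_{\emptyset}}^{Z}=\Sp_Z^Z=\one$. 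Hence the $w$-th summand simplifies to $\delta_w\otimes_k n_{w*}^{-1}\one = \delta_w$, viewed as a character of $Z$ (note that $n_w$ normalizes $Z\subseteq Z_G(T)$, so $n_w^{-1}(M_I)\cap Z = Z$, and $\delta_w$ is indeed defined on $Z$).

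Next, I would reconcile the two conditions on $w$. From the theorem the summand indexed by $w\in D_{I,\emptyset}$ appears precisely when $I(w)\setminus I\subseteq w(K)=\emptyset$, i.e., $I(w)\subseteq I$. On the other hand $w\in D_{I,\emptyset}=(D_{M_I})^{-1}\cap D_{M_\emptyset}=(D_{M_I})^{-1}$ means $w^{-1}(\Delta_I)\subseteq\Sigma^+$. Using the characterization (recalled just before \S\ref{sss:distinguished-reps}'s notation) that $w^{-1}\in D_J$ holds if and only if $J\subseteq I(w)$, the membership $w\in D_{I,\emptyset}$ translates to $I\subseteq I(w)$. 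Combining the two inclusions yields the equality $I=I(w)$, which is exactly the constraint stated in the corollary. Finally, I would verify the parenthetical remark by observing that $I(w)$ is by definition the maximal subset with $w^{-1}\in D_{I(w)}$, so $I=I(w)$ is equivalent to $w\in D_{I,\emptyset}$ together with $w\notin D_{J,\emptyset}$ for every $J\supsetneq I$.

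Putting these simplifications together, the formula of Theorem~\ref{thm:Generalized_Steinberg} specializes to the asserted direct sum, completing the proof. There is no real obstacle here beyond careful bookkeeping of the indexing sets; the only point that requires a moment's care is the matching of the two conditions $I(w)\subseteq I$ (from the theorem) and $I\subseteq I(w)$ (from $w\in D_{I,\emptyset}$) into the single equality $I=I(w)$.
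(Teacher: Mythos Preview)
Your proposal is correct and follows exactly the intended route: the paper records the corollary with no proof beyond the box, since it is the direct specialization of Theorem~\ref{thm:Generalized_Steinberg} to $K=\emptyset$, and your bookkeeping (in particular the observation that $w\in D_{I,\emptyset}$ amounts to $I\subseteq I(w)$ while the theorem's constraint $I(w)\setminus I\subseteq w(\emptyset)$ forces $I(w)\subseteq I$) is precisely what is needed.
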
 

\appendix
\section{}
\subsection{General abstract nonsense} 
\label{ss:abstract}
The purpose of this appendix is to formulate some formal statements about
adjunctions in monoidal categories. These will be applied in
Lemmas~\ref{lem:LN_Inf_commute}, \ref{lem:LN_cInd_commute},
and~\ref{lem:omega_LN_cInd}.

For the relevant notions concerning monoidal categories we refer to
\cite[\href{https://kerodon.net/tag/00BL}{Section 00BL}]{kerodon}.

\subsubsection{}\label{sss:mates} 
The primary tool for the constructions in this section is the \enquote*{mate correspondence} of \cite[Proposition~2.1]{Kelly-Street.1974}. We quickly recall here the formulation. For two functors $F,G\colon \cat C \to \cat D$ we denote by $\Nat(F,G)$ the class of natural transformations from $F$ to $G$.

\begin{prop}
Consider a (not necessarily commutative) diagram of functors
\begin{equation}\label{eq:mate-diagram}
\begin{tikzcd}[baseline = (current bounding box.center)]
\cat C \ar[d,"F"'] \ar[r, bend left=15, "L"] & \cat D \ar[d,"G"] \ar[l,bend left=15, "R"] \\
\cat C' \ar[r,bend left=15, "L'"] & \cat D' \ar[l,bend left=15, "R'"],
\end{tikzcd}
\end{equation}
where $L$ (resp. $L'$) is left adjoint to $R$ (resp. $R'$). Denote by $\eta\colon \id_{\cat C} \To RL$ and $\eta'\colon \id_{\cat C'} \To R'L'$ the units and by $\varepsilon\colon LR \To \id_{\cat D}$ and $\varepsilon'\colon L'R' \To \id_{\cat D'}$ the counits of the adjunctions. Then there is a natural bijection
\[
\begin{tikzcd}[row sep=0em]
\Nat(L'F, GL) \ar[r,leftrightarrow, "\cong"] & \Nat(FR, R'G) \\
\alpha \ar[r,mapsto] & (R'G\varepsilon) \circ (R'\alpha_R) \circ (\eta'_{FR}), \\
(\varepsilon'_{GL}) \circ (L'\beta_L) \circ (L'F\eta) & \beta \ar[l,mapsto] ,
\end{tikzcd}
\]
which is compatible with horizontal and vertical compositions of the diagrams of the form \eqref{eq:mate-diagram}.
\end{prop}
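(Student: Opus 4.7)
The plan is to construct the two maps in the statement explicitly, verify they are mutually inverse using the triangle identities of both adjunctions, and then reduce the compatibility with compositions to a formal 2-categorical observation. Set
\[
\Phi(\alpha) \coloneqq (R'G\varepsilon) \circ (R'\alpha_R) \circ (\eta'_{FR}) \qquad\text{and}\qquad \Psi(\beta) \coloneqq (\varepsilon'_{GL}) \circ (L'\beta_L) \circ (L'F\eta);
\]
each is evidently a well-defined function between the two Nat-classes and each side of the asserted bijection is functorial in all the data involved.

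To verify $\Psi\Phi = \id$, I would take $\alpha \colon L'F \To GL$ and expand $\Psi(\Phi(\alpha))$ as the five-fold whiskered composition $(\varepsilon'_{GL}) \circ (L'R'G\varepsilon_L) \circ (L'R'\alpha_{RL}) \circ (L'\eta'_{FRL}) \circ (L'F\eta)$. Two applications of the naturality of $\varepsilon'$, first at the morphism $G\varepsilon_L$ and then at the natural transformation $\alpha_{RL}$, slide $\varepsilon'_{GL}$ past the middle pieces and produce the factor $(\varepsilon'_{L'FRL}) \circ (L'\eta'_{FRL})$, which collapses to $\id_{L'FRL}$ by the triangle identity for the primed adjunction. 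What remains is $(G\varepsilon_L) \circ (\alpha_{RL}) \circ (L'F\eta)$, and the naturality of $\alpha$ applied to the components of $\eta$ rewrites this as $(G\varepsilon_L) \circ (GL\eta) \circ \alpha$; the triangle identity $(\varepsilon_L)(L\eta) = \id_L$ for the unprimed adjunction then reduces the whole thing to $\alpha$. The identity $\Phi\Psi = \id$ is entirely symmetric, swapping the roles of $(L,R,\eta,\varepsilon)$ and $(L',R',\eta',\varepsilon')$.

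Compatibility with horizontal and vertical pasting of diagrams of the shape \eqref{eq:mate-diagram} follows from the standard fact that composite adjunctions $L_1 L_2 \dashv R_2 R_1$ have unit and counit which factor through those of the components. Given two composable squares, unfolding the definition of the mate of the pasted square and applying the above factorization, together with the naturality of the units and counits, shows that the result coincides with the pasted mates; the same argument works in both the horizontal and vertical directions. The main obstacle is the bookkeeping of whiskerings in the verification of $\Psi\Phi = \id$, where one must apply naturality and the triangle identities in precisely the right order; everything else is either symmetric to this step or a routine unfolding of definitions.
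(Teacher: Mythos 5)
The paper does not give a proof of this proposition: it is explicitly recalled as the mate correspondence of Kelly--Street, \cite[Proposition~2.1]{Kelly-Street.1974}, and cited without argument. So there is no paper proof to compare against. Your direct verification is correct and is the standard one: expanding $\Psi(\Phi(\alpha))$ as $(\varepsilon'_{GL}) \circ (L'R'G\varepsilon_L) \circ (L'R'\alpha_{RL}) \circ (L'\eta'_{FRL}) \circ (L'F\eta)$, then sliding $\varepsilon'$ leftward twice by naturality, collapsing $\varepsilon'_{L'FRL}\circ L'\eta'_{FRL}$ by a triangle identity, using naturality of $\alpha$ against $\eta$, and finally the other triangle identity, gives $\alpha$; and the other direction is indeed the symmetric calculation. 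Your treatment of compatibility with horizontal and vertical pasting is only a sketch — for instance you don't exhibit the unit/counit factorisation for the composite adjunction or actually chase the resulting diagram — but the outline is the right one and the details are routine, and are carried out in full in Kelly--Street, which is precisely why the paper simply cites it.
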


\subsubsection{} 
\label{sss:setup1}
Let $\cat A$, $\cat B$, $\cat C$, $\cat D$ be monoidal categories and consider a
diagram
\[
\begin{tikzcd}
\cat A \ar[r,"\ol g^*"] \ar[d,"\ol f^*"'] & \cat B \ar[d,"f^*"]\\
\cat C\ar[r,"g^*"'] \ar[ur,Rightarrow, shorten=4mm, "\alpha" description]
& \cat D
\end{tikzcd}
\]
of (strongly) monoidal functors, where $\alpha\colon g^*\ol f^*
\Raiso f^*\ol g^*$ is a monoidal natural isomorphism. We make two
assumptions:
\begin{enumerate}[label=(\textbf{A\arabic*}),series=abstract,
leftmargin=5em]
\item\label{Assumption1} We have the following adjunctions (of the underlying
ordinary categories)
\begin{align*}
f_{(1)} \dashv f^* \dashv f_* \dashv f^{(1)}, \qquad g^* \dashv g_*\\
\ol f_{(1)} \dashv \ol f^* \dashv \ol f_* \dashv \ol f^{(1)}, \qquad \ol g^*
\dashv \ol g_*,
\end{align*}
where the notation ``$F\dashv G$'' means ``$F$ is left adjoint to $G$''.
\end{enumerate}
As $f^*$ is monoidal, we have a natural isomorphism
\[
\mon_f \colon f^*(b') \otimes f^*(b) \raiso f^*(b'\otimes b).
\]
The right mate of $f^*\circ (\blank\otimes b) \To (\blank\otimes f^*(b))
\circ f^*$ yields a natural map
\[
\rpf_f\colon f_*(d) \otimes b \too f_*\bigl(d\otimes f^*(b)\bigr).
\]
Similarly, $\rpf_{\ol f}$ and $\mon_h$, for $h\in \{\ol f, g, \ol g\}$, are
defined. Define $\rpf_g\colon c\otimes g_*(d) \to
g_*(g^*(c)\otimes d)$ as the right mate of $g^*\circ (c\otimes \blank)
\To (g^*(c)\otimes \blank) \circ g^*$ and similarly for $\rpf_{\ol g}$.
The second assumption we make is:
\begin{enumerate}[resume*=abstract]
\item\label{Assumption2}  $\rpf_f$ and 
$\rpf_{\ol f}$ are isomorphisms. In other words: the adjunctions $f^*\dashv f_*$
and $\ol f^* \dashv \ol f_*$ satisfy (right) projection formulas.
\end{enumerate}
Finally, we consider the natural maps (which we do not require to be
isomorphisms)
\[
\renewcommand{\arraystretch}{1.5}
\begin{matrix}
\groth_f\colon & f^{(1)}(b') \otimes f^*(b) & \too & f^{(1)}(b'\otimes b)\\
\wirth_f\colon & f_*\bigl(f^{(1)}(b)\otimes d\bigr) & \too & b\otimes f_{(1)}(d),
\end{matrix}
\]
and similarly for $f$ replaced by $\ol f$. 
Here, the morphism $\groth_f$ arises as the right mate of the map
$\rpf_f^{-1}\colon f_*\circ (\blank \otimes f^*(b)) \To (\blank\otimes
b)\circ f_*$ and the map $\wirth_f$ arises as the left mate of the natural
transformation
$\groth_f\colon (f^{(1)}(b)\otimes \blank) \circ f^*
\To f^{(1)}\circ (b\otimes \blank)$.

Denoting $r(\,\cdot\,)$ and $l(\,\cdot\,)$ the passage to right and left
mates,\footnote{These operations are not well-defined, because there are
usually several ways to pass to right/left mates. However, the context provides
enough information to make the intended meaning unambiguous.}
respectively, we consider the following natural transformations:
\[
\renewcommand{\arraystretch}{1.5}
\begin{matrix*}[r]
\alpha\colon & g^*\ol f^* & \Raiso & f^* \ol g^*\\
r^2(\alpha^{-1})\colon & \ol f_* g_* & \Raiso & \ol g_* f_*\\
\beta\coloneqq r(r(\alpha^{-1})^{-1})\colon & g^*\ol f^{(1)} & \Too &
f^{(1)} \ol g^*\\
\gamma\coloneqq l(r(\alpha)^{-1})\colon & \ol f_{(1)} g_* & \Too & \ol
g_* f_{(1)}.
\end{matrix*}
\]
Note also that $l(\gamma) = l^2(r(\alpha)^{-1}) = l^2(r(\alpha))^{-1} =
l(\alpha)^{-1}$ and therefore $\gamma= r(l(\alpha)^{-1})$.

\begin{lem}\label{lem:abstract-1} 
For all $a',a\in \cat A$ and $d\in\cat D$, the following diagrams commute:
\begin{enumerate}[label=(\alph*),ref=\thelem(\alph*),itemsep=2ex]
\item\label{lem:abstract-a} 
\qquad
\begin{tikzcd}[baseline=(current bounding box.center)] 
g^*\ol f^{(1)} (a') \otimes g^* \ol f^*(a)
\ar[r,"\mon_g"] \ar[d,"\beta \otimes \alpha"']
&
g^*\bigl(\ol f^{(1)}(a') \otimes \ol f^*(a)\bigr)
\ar[r,"g^*\groth_{\ol f}"]
&
g^*\ol f^{(1)}(a'\otimes a)
\ar[d,"\beta"]
\\
f^{(1)}\ol g^*(a') \otimes f^*\ol g^*(a)
\ar[r,"\groth_f"']
&
f^{(1)}\bigl(\ol g^*(a')\otimes \ol g^*(a)\bigr)
\ar[r,"f^{(1)}\mon_{\ol g}"']
&
f^{(1)} \ol g^*(a'\otimes a);
\end{tikzcd} 

\item\label{lem:abstract-b}
\qquad
\begin{tikzcd}[baseline=(current bounding box.center)] 
\ol f_*g_* \bigl(g^*\ol f^{(1)}(a) \otimes d\bigr)
\ar[r,"r^2(\alpha^{-1})"]
&
\ol g_*f_*\bigl(g^*\ol f^{(1)}(a)\otimes d\bigr)
\ar[r,"\ol g_*f_*(\beta\otimes\id)"]
&[1em]
\ol g_*f_*\bigl(f^{(1)}\ol g^*(a) \otimes d\bigr)
\ar[d,"\ol g_*\wirth_f"]
\\
\ol f_*\bigl(\ol f^{(1)}(a)\otimes g_*(d)\bigr)
\ar[u,"\ol f_*\rpf_g"] \ar[d,"\wirth_{\ol f}"']
& &
\ol g_*\bigl(\ol g^*(a) \otimes f_{(1)}(d)\big)
\\
a \otimes \ol f_{(1)}g_*(d)
\ar[rr,"\id\otimes \gamma"']
& &
a \otimes \ol g_* f_{(1)}(d).
\ar[u,"\rpf_{\ol g}"']
\end{tikzcd} 
\end{enumerate}
\end{lem}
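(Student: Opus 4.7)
Both statements are purely formal identities between natural transformations assembled from $\alpha$, the monoidal constraints of the four functors $f^*, g^*, \ol{f}^*, \ol{g}^*$, and the units and counits of the adjunctions in \ref{Assumption1}. My plan is to reduce each diagram, via the mate correspondence of \S\ref{sss:mates}, to a statement which depends only on the naturality of $\alpha$ together with the hypothesis that $\alpha$ is a monoidal natural isomorphism.

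For part (a), I would first unpack the definitions: $\groth_f = r(\rpf_f^{-1})$, where $\rpf_f$ is itself a right mate built from $\mon_f$, and $\beta = r(r(\alpha^{-1})^{-1})$. Using compatibility of the mate correspondence with horizontal and vertical composition, both composites in the diagram can be expressed as right mates of natural transformations with target $\ol{f}^*(a'\otimes a)$; by the uniqueness clause of the Kelly--Street proposition it suffices to show that these two domain transformations agree. That comparison amounts to the compatibility of $\alpha$ with $\mon_f, \mon_g, \mon_{\ol f}, \mon_{\ol g}$, which is precisely what it means for $\alpha$ to be monoidal.

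For part (b) the plan is analogous but with more moving parts. I would apply the Kelly--Street correspondence repeatedly, translating each named map ($\rpf_g$, $\wirth_{\ol f}$, $\wirth_f$, $\rpf_{\ol g}$) back to its defining mate and simultaneously replacing $\beta$ and $\gamma$ by iterated mates of $\alpha$. The compatibility of the mate correspondence with horizontal and vertical composition then reduces commutativity of the diagram to an identity involving only $\alpha$, the monoidal constraints and adjunction units/counits, which holds by the monoidality of $\alpha$ together with the triangle identities.

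The main obstacle is bookkeeping: one has to handle four interacting adjunctions and four monoidal structures simultaneously without losing track of which unit or counit is inserted where. To keep this manageable, I would isolate short auxiliary observations translating each named natural transformation ($\rpf, \groth, \wirth, \beta, \gamma$) back to its defining mate, and apply the Kelly--Street proposition at the level of those defining data, rather than chasing the full diagrams displayed in the statement.
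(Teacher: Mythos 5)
Your proposal is correct and takes essentially the same route as the paper: both proofs start from the commutative square witnessing that $\alpha$ is a monoidal isomorphism and pass through several successive applications of the Kelly--Street mate correspondence (two rounds of right mates for part~(a); right mates followed by a left mate for part~(b)), using compatibility of the correspondence with horizontal and vertical pasting at each step. The paper carries out the full bookkeeping explicitly via tables of $\Nat(\cdot,\cdot)$ sets, but the underlying strategy is exactly what you describe.
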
 
\begin{hideproof} 
To prove \hyperref[lem:abstract-a]{(a)}, we start with the observation that,
since $\alpha$ is a monoidal isomorphism, there is a commutative diagram
\begin{hideeq}\label{eq:abstract-1}
\begin{tikzcd}[proof,baseline = (current bounding box.center)]
f^*(\ol g^*(a') \otimes \ol g^*(a))
\ar[d,"\mon_f^{-1}"',"\cong"] \ar[r,"f^*\mon_{\ol g}"]
&[2em]
f^*\ol g^*(a'\otimes a)
\ar[r,"\alpha^{-1}","\cong"']
&
g^*\ol f^*(a'\otimes a)
\ar[d,"g^*\mon_{\ol f}^{-1}","\cong"']
\\
f^*\ol g^*(a') \otimes f^*\ol g^*(a)
\ar[r,"\alpha^{-1}\otimes \alpha^{-1}"',"\cong"]
&
g^*\ol f^*(a') \otimes g^*\ol f^*(a)
\ar[r,"\mon_g"']
&
g^*(\ol f^*(a')\otimes \ol f(a)).
\end{tikzcd}
\end{hideeq}
We now compute the right mate at $\mon_f^{-1}$:
\[
\begin{tikzcd}[proof]
\Nat\bigl( f^*(\blank\otimes \ol g^*(a)), (\blank\otimes f^*\ol g^*(a)) f^*\bigr)
\ar[r,leftrightarrow]
\ar[d,"(\ol g^*)^*"']
&
\Nat\bigr (\blank\otimes \ol g^*(a)) f_*, f_*(\blank\otimes f^*\ol g^*(a))\bigr)
\ar[d,"(g^*)^*"]
\\
\Nat\bigl(f^*(\blank\otimes \ol g^*(a))\ol g^*, (\blank\otimes f^*\ol g^*(a))
f^*\ol g^*\bigr)
\ar[d,"(\alpha^{-1})_*"']
&
\Nat\bigl((\blank\otimes \ol g^*(a))f_*g^*, f_*(\blank\otimes f^*\ol g^*(a))
g^*\bigr)
\ar[d,"r(\alpha^{-1})^*"]
\\
\Nat\bigl(f^*(\blank\otimes \ol g^*(a))\ol g^*, (\blank\otimes f^*\ol g^*(a))
g^*\ol f^*\bigr)
\ar[r,leftrightarrow]
\ar[d,"(\alpha^{-1})_*"']
&
\Nat\bigl((\blank\otimes \ol g^*(a)) \ol g^*\ol f_*, f_*(\blank\otimes f^*\ol
g^*(a)) g^*\bigr)
\ar[d,"(\alpha^{-1})_*"]
\\
\Nat\bigl(f^*(\blank\otimes \ol g^*(a)) \ol g^*, (\blank\otimes g^*\ol f^*(a))
g^*\ol f^*\bigr)
\ar[r,leftrightarrow]
\ar[d,"(\mon_g)_*"']
&
\Nat\bigl((\blank\otimes \ol g^*(a)) \ol g^*\ol f_*, f_*(\blank\otimes g^*\ol
f^*(a)) g^*\bigr)
\ar[d,"(\mon_g)_*"]
\\
\Nat\bigl(f^*(\blank\otimes \ol g^*(a)) \ol g^*, g^*(\blank\otimes \ol
f^*(a))\ol f^*\bigr)
\ar[r,leftrightarrow]
&
\Nat\bigl((\blank\otimes \ol g^*(a))\ol g^*\ol f_*, f_*g^*(\blank\otimes \ol
f^*(a))\bigr)
\\
\Nat\bigl(f^*\ol g^*(\blank\otimes a), g^*(\blank\otimes \ol f^*(a))\ol
f^*\bigr)
\ar[r,leftrightarrow]
\ar[u,"(\mon_{\ol g})^*"]
&
\Nat\bigl(\ol g^*(\blank\otimes a)\ol f_*, f_*g^*(\blank\otimes \ol
f^*(a))\bigr)
\ar[u,"(\mon_{\ol g})^*"']
\\
\Nat\bigl(g^*\ol f^*(\blank\otimes a), g^*(\blank\otimes \ol f^*(a))\ol
f^*\bigr)
\ar[u,"(\alpha^{-1})^*"]
&
\Nat\bigl(\ol g^*(\blank\otimes a)\ol f_*, \ol g^*\ol f_*(\blank\otimes \ol
f^*(a))\bigr)
\ar[u,"r(\alpha^{-1})_*"']
\\
\Nat\bigl(\ol f^*(\blank\otimes a), (\blank\otimes \ol f^*(a))\ol f^*\bigr)
\ar[r,leftrightarrow]
\ar[u,"(g^*)_*"]
&
\Nat\bigl((\blank\otimes a)\ol f_*, \ol f_*(\blank\otimes \ol f^*(a))\bigr)
\ar[u,"(\ol g^*)_*"'].
\end{tikzcd}
\]
The commutativity of \eqref{eq:abstract-1} means that $\mon_f^{-1}$ in the upper
left and $\mon_{\ol f}^{-1}$ in the lower left map to the same transformation in
the fourth line from the bottom. By the mate correspondence, this means that
$\rpf_f = r(\mon_f^{-1})$ in the upper right and $\rpf_{\ol f} = r(\mon_{\ol
f}^{-1})$ in the lower right map to the same transformation in the fourth line
from the bottom. The resulting commutative diagram is
\[
\begin{tikzcd}[proof]
\ol g^*\ol f_*(c)\otimes \ol g^*(a)
\ar[r,"r(\alpha^{-1})\otimes \id"]
\ar[d,"\mon_{\ol g}"']
&
f_*g^*(c)\otimes \ol g^*(a)
\ar[r,"\rpf_f"]
&
f_*(g^*(c)\otimes f^*\ol g^*(a))
\ar[r,"f_*(\id\otimes \alpha^{-1})"]
&
f_*(g^*(c)\otimes g^*\ol f^*(a))
\ar[d,"f_*\mon_g"]
\\
\ol g^*(\ol f_*(c)\otimes a)
\ar[r,"\ol g^*\rpf_{\ol f}"']
&
\ol g^*\ol f_*(c\otimes \ol f^*(a))
\ar[rr,"r(\alpha^{-1})"']
&&
f_*g^*(c\otimes \ol f^*(a)).
\end{tikzcd}
\]
Since the maps $\rpf_f$, $\rpf_{\ol f}$, $\alpha^{-1}$, and $r(\alpha^{-1})$ are
invertible, we obtain the commutative diagram
\begin{hideeq}\label{eq:abstract-1.2}
\begin{tikzcd}[proof,baseline = (current bounding box.center)]
f_*(g^*(c)\otimes g^*\ol f^*(a))
\ar[r,"f_*(\id\otimes \alpha)"]
\ar[d,"f_*\mon_g"']
&
f_*(g^*(c)\otimes f^*\ol g^*(a))
\ar[r,"\rpf_f^{-1}"]
&
f_*g^*(c) \otimes \ol g^*(a)
\ar[r,"r(\alpha^{-1})^{-1}\otimes\id"]
&
\ol g^*\ol f_*(c) \otimes \ol g^*(a)
\ar[d,"\mon_{\ol g}"]
\\
f_*g^*(c\otimes \ol f^*(a))
\ar[r,"r(\alpha^{-1})^{-1}"']
&
\ol g^*\ol f_*(c\otimes \ol f^*(a))
\ar[rr,"\ol g^* \rpf_{\ol f}^{-1}"']
&&
\ol g^*(\ol f_*(c)\otimes a),
\end{tikzcd}
\end{hideeq}
which, again, we compute the right mate of:
\[
\begin{tikzcd}[proof]
\Nat\bigl(f_*(\blank\otimes f^*\ol g^*(a)), (\blank\otimes \ol g^*(a))f_*\bigr)
\ar[r,leftrightarrow]
\ar[d,"(g^*)^*"']
&
\Nat\bigl((\blank\otimes f^*\ol g^*(a)) f^{(1)}, f^{(1)}(\blank\otimes \ol
g^*(a)) \bigr)
\ar[d,"(\ol g^*)^*"]
\\
\Nat\bigl(f_*(\blank\otimes f^*\ol g^*(a)) g^*, (\blank\otimes \ol g^*(a))
f_*g^*\bigr)
\ar[d,"(r(\alpha^{-1})^{-1})_*"']
&
\Nat\bigl((\blank\otimes f^*\ol g^*(a))f^{(1)}\ol g^*, f^{(1)}(\blank\otimes \ol
g^*(a)) \ol g^*\bigr)
\ar[d,"(r(r(\alpha^{-1})^{-1}))^*"]
\\
\Nat\bigl(f_*(\blank\otimes f^*\ol g^*(a)) g^*, (\blank\otimes \ol g^*(a)) \ol
g^* \ol f_*\bigr)
\ar[r,leftrightarrow]
\ar[d,"\alpha^*"']
&
\Nat\bigl((\blank\otimes f^*\ol g^*(a)) g^*\ol f^{(1)}, f^{(1)}(\blank\otimes
\ol g^*(a)) \ol g^*\bigr)
\ar[d,"\alpha^*"]
\\
\Nat\bigl(f_*(\blank\otimes g^*\ol f^*(a))g^*, (\blank\otimes \ol g^*(a)) \ol g^*\ol
f_*\bigr)
\ar[r,leftrightarrow]
\ar[d,"(\mon_{\ol g})_*"']
&
\Nat\bigl((\blank\otimes g^*\ol f^*(a))g^*\ol f^{(1)}, f^{(1)}(\blank\otimes \ol
g^*(a))\ol g^*\bigr)
\ar[d,"(\mon_{\ol g})_*"]
\\
\Nat\bigl(f_*(\blank\otimes g^*\ol f^*(a))g^*, \ol g^*(\blank\otimes a)\ol
f_*\bigr)
\ar[r,leftrightarrow]
&
\Nat\bigl((\blank\otimes g^*\ol f^*(a))g^*\ol f^{(1)}, f^{(1)}\ol
g^*(\blank\otimes a)\bigr)
\\
\Nat\bigl(f_*g^*(\blank\otimes \ol f^*(a)), \ol g^*(\blank\otimes a)\ol
f_*\bigr)
\ar[r,leftrightarrow]
\ar[u,"(\mon_g)^*"]
&
\Nat\bigl(g^*(\blank\otimes \ol f^*(a)) \ol f^{(1)}, f^{(1)} \ol
g^*(\blank\otimes a)\bigr)
\ar[u,"(\mon_g)^*"']
\\
\Nat\bigl(\ol g^*\ol f_*(\blank\otimes \ol f^*(a)), \ol g^*(\blank\otimes a)\ol
f_*\bigr)
\ar[u,"(r(\alpha^{-1})^{-1})^*"]
&
\Nat\bigl(g^*(\blank\otimes \ol f^*(a))\ol f^{(1)}, g^*\ol
f^{(1)}(\blank\otimes a)\bigr)
\ar[u,"(r(r(\alpha^{-1})^{-1}))_*"']
\\
\Nat\bigl(\ol f_*(\blank\otimes \ol f^*(a)), (\blank\otimes a)\ol f_*\bigr)
\ar[r,leftrightarrow]
\ar[u,"(\ol g^*)_*"]
&
\Nat\bigl((\blank\otimes \ol f^*(a))\ol f^{(1)}, \ol f^{(1)}(\blank\otimes
a)\bigr)
\ar[u,"(g^*)_*"'].
\end{tikzcd}
\]
The commutativity of \eqref{eq:abstract-1.2} means that $\rpf_f^{-1}$ in the upper
left and $\rpf_{\ol f}^{-1}$ in the lower left map to the same transformation in
the fourth line from the bottom. By the mate correspondence, this means that
$\groth_f = r(\rpf_f^{-1})$ in the upper right and $\groth_{\ol f} = r(\rpf_{\ol
f}^{-1})$ in the lower right map to the same transformation in the fourth line
from the bottom. Unraveling the definitons, we deduce that the diagram in the
assertion commutes.
\bigskip

We now prove \hyperref[lem:abstract-b]{(b)}. The following diagram
\begin{hideeq}\label{eq:lpf-1}
\begin{tikzcd}[proof,baseline=(current bounding box.center)]
&[-3em]
g^*\ol f^*(a')\otimes g^*\ol f^*(a)
\ar[r,"\alpha\otimes\alpha"]
&
f^*\ol g^*(a') \otimes f^*\ol g^*(a)
\ar[dr,"\mon_f"]
&[-3em]
\\
g^*(\ol f^*(a')\otimes \ol f^*(a))
\ar[dr,"g^*\mon_{\ol f}"']
\ar[ur,"\mon_g^{-1}","\cong"']
&&&
f^*(\ol g^*(a')\otimes \ol g^*(a))
\\
&
g^*\ol f^*(a'\otimes a)
\ar[r,"\alpha"']
&
f^*\ol g^*(a'\otimes a)
\ar[ur,"f^*\mon_{\ol g}^{-1}"',"\cong"]
\end{tikzcd}
\end{hideeq}
commutes, since $\alpha$ is a monoidal isomorphism. Let us compute the right mate:
\[
\begin{tikzcd}[proof]
\Nat\bigl(g^*(\ol f^*(a')\otimes \blank), (g^*\ol f^*(a')\otimes \blank)
g^*\bigr)
\ar[r,leftrightarrow]
\ar[d,"(\ol f^*)^*"']
&
\Nat\bigl((\ol f^*(a')\otimes \blank)g_*, g_* (g^*\ol f^*(a')\otimes
\blank)\bigr)
\ar[d,"(f^*)^*"]
\\
\Nat\bigl(g^*(\ol f^*(a')\otimes \blank)\ol f^*, (g^*\ol f^*(a')\otimes
\blank)g^*\ol f^*\bigr)
\ar[d,"\alpha_*"']
&
\Nat\bigl((\ol f^*(a')\otimes \blank)g_*f^*, g_*(g^*\ol f^*(a')\otimes \blank)
f^*\bigr)
\ar[d,"(r(\alpha))^*"]
\\
\Nat\bigl(g^*(\ol f^*(a')\otimes \blank)\ol f^*, (g^*\ol f^*(a')\otimes \blank)
f^*\ol g^*\bigr)
\ar[r,leftrightarrow]
\ar[d,"\alpha_*"']
&
\Nat\bigl((\ol f^*(a')\otimes \blank)\ol f^*\ol g_*, g_*(g^*\ol f^*(a')\otimes
\blank)f^*\bigr)
\ar[d,"\alpha_*"]
\\
\Nat\bigl(g^*(\ol f^*(a')\otimes \blank)\ol f^*, (f^*\ol g^*(a')\otimes \blank)
f^*\ol g^*\bigr)
\ar[r,leftrightarrow]
\ar[d,"(\mon_f)_*"']
&
\Nat\bigl((\ol f^*(a')\otimes \blank)\ol f^*\ol g_*, g_*(f^*\ol g^*(a')\otimes
\blank)f^*\bigr)
\ar[d,"(\mon_f)_*"]
\\
\Nat\bigl(g^*(\ol f^*(a')\otimes \blank)\ol f^*, f^*(\ol g^*(a')\otimes
\blank)\ol g^*\bigr)
\ar[r,leftrightarrow]
&
\Nat\bigl((\ol f^*(a')\otimes \blank)\ol f^*\ol g_*, g_*f^*(\ol g^*(a')\otimes
\blank)\bigr)
\\
\Nat\bigl(g^*\ol f^*(a'\otimes \blank), f^*(\ol g^*(a')\otimes \blank)\ol
g^*\bigr) 
\ar[u,"(\mon_{\ol f})^*"]
\ar[r,leftrightarrow]
&
\Nat\bigl(\ol f^*(a'\otimes \blank)\ol g_*, g_*f^*(\ol g^*(a')\otimes
\blank)\bigr)
\ar[u,"(\mon_{\ol f})^*"']
\\
\Nat\bigl(f^*\ol g^*(a'\otimes \blank), f^*(\ol g^*(a')\otimes \blank) \ol
g^*\bigr)
\ar[u,"\alpha^*"]
&
\Nat\bigl(\ol f^*(a'\otimes \blank)\ol g_*, \ol f^*\ol g_*(\ol g^*(a')\otimes
\blank)\bigr)
\ar[u,"(r(\alpha))_*"']
\\
\Nat\bigl(\ol g^*(a'\otimes \blank), (\ol g^*(a')\otimes \blank)\ol g^*\bigr)
\ar[r,leftrightarrow]
\ar[u,"(f^*)_*"]
&
\Nat\bigl((a'\otimes \blank)\ol g_*, \ol g_*(\ol g^*(a')\otimes \blank)\bigr)
\ar[u,"(\ol f^*)_*"'].
\end{tikzcd}
\]

The commutativity of \eqref{eq:lpf-1} means that $\mon_g^{-1}$ in the upper left
and $\mon_{\ol g}^{-1}$ in the lower left map to the same transformation in
the fourth line from the bottom. By the mate correspondence, this means that
$\rpf_g = r(\mon_g^{-1})$ in the upper right and $\rpf_{\ol g} = r(\mon_{\ol
g})$ in the lower right map to the same transformation in the fourth line from
the bottom. The resulting diagram is
\begin{hideeq}\label{eq:lpf-2}
\begin{tikzcd}[proof,baseline=(current bounding box.center)]
\ol f^*(a')\otimes \ol f^*\ol g_*(b)
\ar[d,"\mon_{\ol f}"']
\ar[r,"\id\otimes r(\alpha)"]
&
\ol f^*(a')\otimes g_* f^*(b)
\ar[r,"\rpf_g"]
&
g_*(g^*\ol f^*(a')\otimes f^*(b))
\ar[r,"g_*(\alpha\otimes\id)"]
&
g_*(f^*\ol g^*(a')\otimes f^*(b))
\ar[d,"g_*\mon_f"]
\\
\ol f^*(a'\otimes \ol g_*(b))
\ar[rr,"\ol f^*\rpf_{\ol g}"']
&&
\ol f^*\ol g_*(\ol g^*(a')\otimes b)
\ar[r,"r(\alpha)"']
&
g_*f^*(\ol g^*(a')\otimes b).
\end{tikzcd}
\end{hideeq}
Replacing $\mon_{\ol f}$, $g_*\mon_f$, and $g_*(\alpha\otimes\id)$ by
their inverses, we obtain the diagram
\begin{hideeq}\label{eq:lpf-3}
\begin{tikzcd}[proof,baseline=(current bounding box.center)]
\ol f^*(a'\otimes \ol g_*(b))
\ar[r,"\mon_{\ol f}^{-1}","\cong"']
\ar[d,"\ol f^*\rpf_{\ol g}"']
&
\ol f^*(a')\otimes \ol f^*\ol g_*(b)
\ar[rr,"\id\otimes r(\alpha)"]
&[1em] &[2em]
\ol f^*(a')\otimes g_*f^*(b)
\ar[d,"\rpf_g"]
\\
\ol f^*\ol g_*(\ol g^*(a')\otimes b)
\ar[r,"r(\alpha)"']
&
g_*f^*(\ol g^*(a')\otimes b)
\ar[r,"g_*\mon_f^{-1}"'{yshift=-3pt},"\cong"]
&
g_*(f^*\ol g^*(a')\otimes f^*(b))
\ar[r,"g_*(\alpha^{-1}\otimes\id)"'{yshift=-3pt},"\cong"]
&
g_*(g^*\ol f^*(a')\otimes f^*(b)).
\end{tikzcd}
\end{hideeq}

We now pass to the right mate:
\[
\begin{tikzcd}[proof]
\Nat\bigl(\ol f^*(\blank\otimes \ol g_*(b)), (\blank\otimes \ol f^*\ol g_*(b))
\ol f^*\bigr)
\ar[r,leftrightarrow]
\ar[d,"(r(\alpha))_*"']
&
\Nat\bigl((\blank\otimes\ol g_*(b))\ol f_*, \ol f_*(\blank\otimes \ol f^*\ol
g_*(b))\bigr)
\ar[d,"(r(\alpha))_*"]
\\
\Nat\bigl(\ol f^*(\blank\otimes \ol g_*(b)), (\blank\otimes g_*f^*(b))\ol
f^*\bigr)
\ar[r,leftrightarrow]
\ar[d,"(\rpf_g)_*"']
&
\Nat\bigl((\blank\otimes \ol g_*(b))\ol f_*, \ol f_*(\blank\otimes
g_*f^*(b))\bigr)
\ar[d,"(\rpf_g)_*"]
\\
\Nat\bigl(\ol f^*(\blank\otimes\ol g_*(b)), g_*(\blank\otimes f^*(b))g^*\ol
f^*\bigr)
\ar[r,leftrightarrow]
&
\Nat\bigl((\blank\otimes\ol g_*(b))\ol f_*, \ol f_*g_*(\blank\otimes
f^*(b))g^*\bigr)
\\
\Nat\bigl(\ol f^*\ol g_*(\blank\otimes b) \ol g^*, g_*(\blank\otimes
f^*(b))g^*\ol f^*\bigr)
\ar[r,leftrightarrow]
\ar[u,"(\rpf_{\ol g})^*"]
&
\Nat\bigl(\ol g_*(\blank\otimes b)\ol g^*\ol f_*, \ol f_* g_*(\blank\otimes
f^*(b))g^*\bigr)
\ar[u,"(\rpf_{\ol g})^*"']
\\
\Nat\bigl(g_*f^*(\blank\otimes b)\ol g^*, g_*(\blank\otimes f^*(b)) g^*\ol
f^*\bigr)
\ar[u,"(r(\alpha))^*"]
&
\Nat\bigl(\ol g_*(\blank\otimes b)\ol g^*\ol f_*, \ol g_*f_*(\blank\otimes
f^*(b)) g^*\bigr)
\ar[u,"(r^2(\alpha))_*"']
\\
\Nat\bigl(f^*(\blank\otimes b)\ol g^*, (\blank\otimes f^*(b))g^*\ol f^*\bigr)
\ar[r,leftrightarrow]
\ar[u,"(g_*)_*"]
&
\Nat\bigl((\blank\otimes b)\ol g^*\ol f_*, f_*(\blank\otimes f^*(b)) g^*\bigr)
\ar[u,"(\ol g_*)_*"']
\\
\Nat\bigl(f^*(\blank\otimes b)\ol g^*, (\blank\otimes f^*(b))f^*\ol g^*\bigr)
\ar[u,"(\alpha^{-1})_*"]
&
\Nat\bigl((\blank\otimes b)f_*g^*, f_*(\blank\otimes f^*(b)) g^*\bigr)
\ar[u,"(r(\alpha^{-1}))^*"']
\\
\Nat\bigl(f^*(\blank\otimes b), (\blank\otimes f^*(b))f^*\bigr)
\ar[r,leftrightarrow]
\ar[u,"(\ol g^*)^*"]
&
\Nat\bigl((\blank\otimes b)f_*, f_*(\blank\otimes f^*(b))\bigr)
\ar[u,"(g_*)^*"']
\end{tikzcd}
\]
The commutativity of \eqref{eq:lpf-3} means that $\mon_{\ol f}^{-1}$ in the
upper left and $\mon_f^{-1}$ in the lower left map to the same transformation in
the third line from the top. By the mate correspondence, this means that
$\rpf_{\ol f} = r(\mon_{\ol f}^{-1})$ in the upper right and $\rpf_f =
r(\mon_f^{-1})$ in the lower right map to the same transformation in the third
line from the top. The resulting diagram is
\[
\begin{tikzcd}[proof]
\ol f_*(c)\otimes \ol g_*(b)
\ar[r,"\rpf_{\ol f}"]
\ar[d,"\rpf_{\ol g}"']
&
\ol f_*(c\otimes \ol f^*\ol g_*(b))
\ar[rr,"\ol f_*(\id\otimes r(\alpha))"]
&&
\ol f_*(c\otimes g_*f^*(b))
\ar[d,"\ol f_*(\rpf_g)"]
\\
\ol g_*(\ol g^*\ol f_*(c)\otimes b)
\ar[r,"\ol g_*(r(\alpha^{-1})\otimes \id)"'{yshift=-3pt}]
&
\ol g_*(f_*g^*(c)\otimes b)
\ar[r,"\ol g_*(\rpf_f)"'{yshift=-3pt}]
&
\ol g_*f_*(g^*(c)\otimes f^*(b))
\ar[r,"r^2(\alpha)"']
&
\ol f_*g_*(g^*(c)\otimes f^*(b)).
\end{tikzcd}
\]
Inverting the maps $\rpf_{\ol f}, \ol g_*(\rpf_f), r(\alpha^{-1})$, and
$r^2(\alpha)$, we obtain the following diagram:
\begin{hideeq}\label{eq:lpf-4}
\begin{tikzcd}[proof,baseline=(current bounding box.center)]
\ol f_*(c\otimes \ol f^*\ol g_*(b))
\ar[r,"\rpf_{\ol f}^{-1}","\cong"']
\ar[d,"\ol f_*(\id\otimes r(\alpha))"']
&
\ol f_*(c)\otimes \ol g_*(b)
\ar[rr,"\rpf_{\ol g}"]
&&
\ol g_*(\ol g^*\ol f_*(c)\otimes b)
\\
\ol f_*(c\otimes g_*f^*(b))
\ar[r,"\ol f_*(\rpf_g)"'{yshift=-3pt}]
&
\ol f_*g_*(g^*(c)\otimes f^*(b))
\ar[r,"r^2(\alpha^{-1})"'{yshift=-3pt},"\cong"]
&
\ol g_*f_*(g^*(c)\otimes f^*(b))
\ar[r,"\ol g_*(\rpf_f^{-1})"'{yshift=-3pt},"\cong"]
&
\ol g_*(f_*g^*(c)\otimes b)
\ar[u,"\ol g_*(r(\alpha^{-1})^{-1}\otimes \id)"',"\cong"].
\end{tikzcd}
\end{hideeq}

We now compute the right mate of this diagram.

\[
\begin{tikzcd}[proof]
\Nat\bigl(\ol f_*(-\otimes \ol f^*\ol g_*(b)), (-\otimes \ol g_*(b))\ol f_*\bigr)
\ar[r,leftrightarrow]
\ar[d,"(\rpf_{\ol g})_*"']
&
\Nat\bigl((-\otimes \ol f^*\ol g_*(b))\ol f^{(1)}, \ol f^{(1)} (-\otimes \ol
g_*(b))\bigr)
\ar[d,"(\rpf_{\ol g})_*"]
\\
\Nat\bigl(\ol f_*(-\otimes \ol f^*\ol g_*(b)), \ol g_*(-\otimes b)\ol g^* \ol
f_*\bigr)
\ar[r,leftrightarrow]
&
\Nat\bigl((-\otimes \ol f^*\ol g_*(b))\ol f^{(1)}, \ol f^{(1)} \ol g_*(-\otimes b)
\ol g^*\bigr)
\\
\Nat\bigl(\ol f_*(-\otimes g_*f^*(b)), \ol g_*(-\otimes b) \ol g^*\ol f_*\bigr)
\ar[r,leftrightarrow]
\ar[u,"(r(\alpha))^*"]
&
\Nat\bigl((-\otimes g_*f^*(b))\ol f^{(1)}, \ol f^{(1)} \ol g_*(-\otimes b)\ol
g^*\bigr)
\ar[u,"(r(\alpha))^*"']
\\
\Nat\bigl(\ol f_*g_*(-\otimes f^*(b))g^*, \ol g_*(-\otimes b)\ol g^*\ol f_*\bigr)
\ar[r,leftrightarrow]
\ar[u,"(\rpf_g)^*"]
&
\Nat\bigl(g_*(-\otimes f^*(b))\ol f^{(1)}, \ol f^{(1)} \ol g_*(-\otimes b) \ol
g^*\bigr)
\ar[u,"(\rpf_g)^*"']
\\
\Nat\bigl(\ol g_*f_*(-\otimes f^*(b))g^*, \ol g_*(-\otimes b)\ol g^*\ol f_*\bigr)
\ar[u,"(r^2(\alpha^{-1}))^*"]
&
\Nat\bigl(g_*(-\otimes f^*(b)) g^*\ol f^{(1)}, g_*f^{(1)}(-\otimes b)\ol g^*\bigr)
\ar[u,"(r^3(\alpha^{-1}))_*"']
\\
\Nat\bigl(f_*(-\otimes f^*(b))g^*, (-\otimes b)\ol g^*\ol f_*\bigr)
\ar[r,leftrightarrow]
\ar[u,"(\ol g_*)_*"]
&
\Nat\bigl((-\otimes f^*(b))g^*\ol f^{(1)}, f^{(1)}(-\otimes b)\ol g^*\bigr)
\ar[u,"(g_*)_*"']
\\
\Nat\bigl(f_*(-\otimes f^*(b))g^*, (-\otimes b)f_*g^*\bigr)
\ar[u,"(r(\alpha^{-1})^{-1})_*"]
&
\Nat\bigl((-\otimes f^*(b))f^{(1)}\ol g^*, f^{(1)}(-\otimes b)\ol g^*\bigr)
\ar[u,"(r(r(\alpha^{-1})^{-1}))^*"']
\\
\Nat\bigl(f_*(-\otimes f^*(b)), (-\otimes b)f_*\bigr)
\ar[r,leftrightarrow]
\ar[u,"(g^*)^*"]
&
\Nat\bigl((-\otimes f^*(b)) f^{(1)}, f^{(1)}(-\otimes b)\bigr)
\ar[u,"(\ol g^*)^*"'].
\end{tikzcd}
\]

The commutativity of \eqref{eq:lpf-4} means that $\rpf_{\ol f}^{-1}$ in the
upper left and $\rpf_f^{-1}$ in the lower left map to the same transformation in
the second line from the top. By the mate correspondence, this means that
$\groth_{\ol f} = r(\rpf_f^{-1})$ in the upper right and $\groth_f =
r(\rpf_f^{-1})$ in the lower right map to the same transformation in the second
line from the top. The resulting diagram is
\begin{hideeq}\label{eq:lpf-5}
\begin{tikzcd}[proof,baseline=(current bounding box.center)]
\ol f^{(1)}(a)\otimes g_*f^*(b)
\ar[d,"\rpf_g"']
&
\ol f^{(1)}(a)\otimes \ol f^*\ol g_*(b)
\ar[r,"\groth_{\ol f}"]
\ar[l,"\id\otimes r(\alpha)"',"\cong"]
&
\ol f^{(1)}(a\otimes \ol g_*(b))
\ar[r,"\ol f^{(1)}\rpf_{\ol g}"]
&
\ol f^{(1)}\ol g_*(\ol g^*(a)\otimes b)
\\
g_*(g^*\ol f^{(1)}(a)\otimes f^*(b))
\ar[rr,"g_*(r(r(\alpha^{-1})^{-1})\otimes \id)"']
&&
g_*(f^{(1)}\ol g^*(a)\otimes f^*(b))
\ar[r,"g_*\groth_f"']
&
g_*f^{(1)}(\ol g^*(a)\otimes b)
\ar[u,"r^3(\alpha^{-1})"'].
\end{tikzcd}
\end{hideeq}

After taking the inverse of $\id\otimes r(\alpha)$, we compute the left mate:
\[
\begin{tikzcd}[proof]
\Nat\bigl(\ol f_*(\ol f^{(1)}(a)\otimes \blank), (a\otimes \blank) \ol
f_{(1)}\bigr)
\ar[r,leftrightarrow]
\ar[d,"(g_*)^*"']
&
\Nat\bigl((\ol f^{(1)}(a)\otimes \blank)\ol f^*, \ol f^{(1)}(a\otimes
\blank)\bigr) 
\ar[d,"(\ol g_*)^*"]
\\
\Nat\bigl(\ol f_*(\ol f^{(1)}(a)\otimes \blank) g_*, (a\otimes \blank) \ol
f_{(1)}g_*\bigr)
\ar[d,"(l(r(\alpha)^{-1}))_*"']
&
\Nat\bigl((\ol f^{(1)}(a)\otimes \blank)\ol f^*\ol g_*, \ol f^{(1)}(a\otimes
\blank) \ol g_*\bigr)
\ar[d,"(r(\alpha)^{-1})^*"]
\\
\Nat\bigl(\ol f_*(\ol f^{(1)}(a)\otimes \blank)g_*, (a\otimes \blank)\ol g_*
f_{(1)}\bigr)
\ar[r,leftrightarrow]
\ar[d,"(\rpf_{\ol g})_*"']
&
\Nat\bigl((\ol f^{(1)}(a)\otimes \blank)g_*f^*, \ol f^{(1)}(a\otimes \blank)\ol
g_*\bigr)\ar[d,"(\rpf_{\ol g})_*"]
\\
\Nat\bigl(\ol f_*(\ol f^{(1)}(a)\otimes \blank)g_*, \ol g_*(\ol g^*(a)\otimes
\blank) f_{(1)}\bigr)
\ar[r,leftrightarrow]
&
\Nat\bigl((\ol f^{(1)}(a)\otimes \blank)g_*f^*, \ol f^{(1)}\ol g_*(\ol
g^*(a)\otimes \blank)\bigr)
\\
\Nat\bigl(\ol f_* g_*(g^*\ol f^{(1)}(a)\otimes \blank), \ol g_*(\ol
g^*(a)\otimes \blank) f_{(1)}\bigr)
\ar[r,leftrightarrow]
\ar[u,"(\rpf_g)^*"]
&
\Nat\bigl(g_*(g^*\ol f^{(1)}(a)\otimes \blank)f^*, \ol f^{(1)}\ol g_*(\ol
g^*(a)\otimes \blank)\bigr)
\ar[u,"(\rpf_g)^*"']
\\
\Nat\bigl(\ol g_*f_*(g^*\ol f^{(1)}(a)\otimes \blank), \ol g_*(\ol
g^*(a)\otimes \blank)f_{(1)}\bigr)
\ar[u,"(r^2(\alpha^{-1}))^*"]
&
\Nat\bigl(g_*(g^*\ol f^{(1)}(a)\otimes \blank)f^*, g_*f^{(1)}(\ol g^*(a)\otimes
\blank)\bigr)
\ar[u,"(r^3(\alpha^{-1}))_*"']
\\
\Nat\bigl(f_*(g^*\ol f^{(1)}(a)\otimes \blank), (\ol g^*(a)\otimes
\blank)f_{(1)}\bigr)
\ar[u,"(\ol g_*)_*"]
\ar[r,leftrightarrow]
&
\Nat\bigl((g^*\ol f^{(1)}(a)\otimes \blank)f^*, f^{(1)}(\ol g^*(a)\otimes
\blank)\bigr)
\ar[u,"(g_*)_*"'].
\\
\Nat\bigl(f_*(f^{(1)}\ol g^*(a)\otimes \blank), (\ol g^*(a)\otimes
\blank)f_{(1)}\bigr)
\ar[r,leftrightarrow]
\ar[u,"(r(r(\alpha^{-1})^{-1}))^*"]
&
\Nat\bigl((f^{(1)}\ol g^*(a)\otimes \blank)f^*, f^{(1)}(\ol g^*(a)\otimes
\blank)\bigr)
\ar[u,"(r(r(\alpha^{-1})^{-1}))^*"']
\end{tikzcd}
\]

The commutativity of \eqref{eq:lpf-5} means that $\groth_{\ol f}$ in the upper
right and $\groth_f$ in the lower right map to the same transformation in the
fourth line from the top. By the mate correspondence, this means that
$\lpf_{\ol f} = l(\groth_{\ol f})$ in the upper left and $\lpf_f = l(\groth_f)$
in the lower left map to the same transformation in the fourth line from the
top. Unraveling the definitions, we deduce that the diagram in
\hyperref[lem:abstract-b]{(b)} commutes.
\end{hideproof} 

\subsubsection{} 
\label{sss:setup2}
Consider a diagram
\[
\begin{tikzcd}[column sep=tiny]
\cat A 
\ar[dr,"f^*"] \ar[dd,bend right,"h^*"'] \ar[rrr,"\alpha^*"] 
& &[5em] &
\ol{\cat A}
\ar[dl,"\ol{f}^*"'] \ar[dd,bend left,"\ol{h}^*"]
\\
& 
\cat B
\ar[dl,shift right,"g^*"'] \ar[r,"\beta^*" description]
\ar[urr,Rightarrow,shorten=3em,xshift=-1em,"\sigma" description]
& 
\ol{\cat B}
\ar[dr,shift left,"\ol{g}^*"]
\\
\cat C 
\ar[ur,dashed,shift right,"g_!"'] \ar[rrr,"\gamma^*"']
\ar[urr,Rightarrow,shorten=3em,xshift=1.4em,"\tau" description]
& & & 
\ol{\cat C},
\ar[ul,dashed,shift left,"\ol{g}_!"]
\end{tikzcd}
\]
where the solid diagram commutes and consists of (strongly) monoidal functors
between monoidal categories, and where 
$\sigma\colon \beta^*f^*\Raiso \ol{f}^*\alpha^*$ and $\tau\colon \gamma^*g^*
\Raiso \ol{g}^*\beta^*$ are monoidal natural isomorphisms. Identify $h^* =
g^*f^*$ and $\ol{h}^* = \ol{g}^* \ol{f}^*$ and put
$\rho\coloneqq \ol{g}^*\sigma \circ \tau f^* \colon \gamma^*h^* \Raiso \ol{h}^*
\alpha^*$. Let further
\[
\phi\colon \ol{g}_!\gamma^* \Raiso \beta^* g_!
\]
be a natural isomorphism. We make the following assumptions:
\begin{enumerate}[resume*=abstract] 
\item\label{Assumption3} The functors $f^*, \ol{f}^*, h^*, \ol{h}^*$ admit left
adjoints:
\[
f_{(1)} \dashv f^*,\qquad \ol{f}_{(1)}\dashv \ol{f}^*, \qquad
h_{(1)} \dashv h^*,\qquad \ol{h}_{(1)}\dashv \ol{h}^*.
\]

\item\label{Assumption4} There exist natural isomorphisms
\[
\renewcommand{\arraystretch}{1.5}
\begin{matrix}
\pf_g \colon & g_!\bigl(c \otimes g^*(b)\bigr) & \raiso & g_!(c)\otimes b
\\
\pf_{\ol{g}} \colon & \ol{g}_!\bigl(\bar c\otimes \ol{g}^*(\bar b)\bigr) &
\raiso & \ol{g}_!(\bar{c})\otimes \bar{b}
\end{matrix}
\]
such that the following diagram commutes:
\[
\begin{tikzcd}
\ol{g}_!\bigl(\gamma^*(c)\otimes \ol{g}^*\beta^*(b)\bigr)
\ar[d,"\ol{g}_!(\id\otimes\tau^{-1})"']
\ar[r,"\pf_{\ol{g}}"] 
&
\ol{g}_!\gamma^*(c) \otimes \beta^*(b)
\ar[d,"\phi\otimes\id"]
\\
\ol{g}_!\bigl(\gamma^*(c)\otimes \gamma^*g^*(b)\bigr)
\ar[d,"\ol{g}_!\mon_\gamma"']
&
\beta^*g_!(c)\otimes \beta^*(b)
\ar[dd,"\mon_\beta"]
\\
\ol{g}_!\gamma^*\bigl(c\otimes g^*(b)\bigr)
\ar[d,"\phi"']
&
\\
\beta^*g_!\bigl(c\otimes g^*(b)\bigr)
\ar[r,"\pf_g"']
&
\beta^*\bigl(g_!(c)\otimes b\bigr).
\end{tikzcd}
\]
\end{enumerate} 

By~\ref{Assumption3}, the left mate of $\mon_f\colon (\blank\otimes f^*(a))\circ
f^* \Raiso f^*\circ (\blank\otimes a)$ yields a natural map
\[
\lpf_f\colon f_{(1)}\bigl(b\otimes f^*(a)\bigr) \too f_{(1)}(b)\otimes a,
\]
and similarly for $\lpf_{\ol{f}}$. Passing to the left mate of the composite
\[
f_{(1)}g_!\circ (c\otimes \blank) \circ g^*f^* \xrightarrow{f_{(1)}\pf_g}
f_{(1)}\circ \bigl(g_!(c)\otimes \blank\bigr)\circ f^* \xrightarrow{\lpf_f}
f_{(1)}g_!(c)\otimes \blank,
\]
and using $h^* = g^*f^*$, we obtain a natural map
\[
\ltm_{f,g}\colon f_{(1)}g_!(c'\otimes c) \too f_{(1)}g_!(c') \otimes
h_{(1)}(c).
\]
Similarly, the map $\ltm_{\ol{f},\ol{g}}\colon \ol{f}_{(1)}\ol{g}_! (\bar{c}'
\otimes \bar{c}) \to \ol{f}_{(1)}\ol{g}_!(\bar{c}') \otimes
\ol{h}_{(1)}(\bar{c})$ is defined.

\begin{lem}\label{lem:abstract-2} 
For all $c',c\in \cat C$ the following diagram commutes:
\[\begin{tikzcd}[baseline=(current bounding box.center)] 
\ol{f}_{(1)}\ol{g}_!\bigl(\gamma^*(c')\otimes \gamma^*(c)\bigr)
\ar[r,"\ltm_{\ol{f},\ol{g}}"]
\ar[d,"\ol{f}_{(1)}\ol{g}_!\mon_\gamma"']
&
\ol{f}_{(1)}\ol{g}_!\gamma^*(c') \otimes \ol{h}_{(1)}\gamma^*(c)
\ar[d,"\ol{f}_{(1)}\phi\otimes\id"]
\\
\ol{f}_{(1)}\ol{g}_!\gamma^*(c'\otimes c)
\ar[d,"\ol{f}_{(1)}\phi"']
&
\ol{f}_{(1)}\beta^*g_!(c') \otimes \ol{h}_{(1)}\gamma^*(c)
\ar[d,"l(\sigma)\otimes l(\rho)"]
\\
\ol{f}_{(1)}\beta^*g_!(c'\otimes c)
\ar[d,"l(\sigma)"']
&
\alpha^*f_{(1)}g_!(c')\otimes \alpha^*h_{(1)}(c)
\ar[d,"\mon_\alpha"]
\\
\alpha^*f_{(1)}g_!(c'\otimes c)
\ar[r,"\alpha^*\ltm_{f,g}"']
&
\alpha^*\bigl(f_{(1)}g_!(c')\otimes h_{(1)}(c)\bigr).
\end{tikzcd}\] 
\end{lem}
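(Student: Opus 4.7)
The argument is entirely formal and follows the same strategy as the proof of Lemma~\ref{lem:abstract-1}: the plan is to decompose the assertion into two elementary compatibility squares—one relating the projection formulas $\pf_g$ and $\pf_{\ol g}$, the other relating the left projection formulas $\lpf_f$ and $\lpf_{\ol f}$—and then assemble the desired diagram via the mate correspondence.

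The first compatibility square is supplied directly by the hypothesis~\ref{Assumption4}, which relates $\pf_{\ol g}$ to $\beta^*\pf_g$ through $\phi$, $\tau$ and $\mon_\beta$. The second square must be derived from the monoidality of $\sigma\colon \beta^*f^* \Raiso \ol f^*\alpha^*$: this monoidality says exactly that $\sigma$ intertwines the composite monoidal structure maps $\mon_{\beta^*f^*}$ and $\mon_{\ol f^*\alpha^*}$, and passing to the left mate using~\ref{Assumption3} produces a commutative square relating $\lpf_{\ol f}$, $\alpha^*\lpf_f$ and $l(\sigma)$ in direct analogy with the $\rpf$-square extracted in the proof of Lemma~\ref{lem:abstract-1}\ref{lem:abstract-a}.

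With both squares in hand, the proof proceeds by gluing. Recall that, by construction, $\ltm_{f,g}$ is the left mate of the two-step composite $\lpf_f \circ f_{(1)}\pf_g$ (after the identification $h^* = g^*f^*$), and similarly for $\ltm_{\ol f,\ol g}$. Vertically stacking the two compatibility squares therefore produces a compatibility of these two composites with respect to $\phi$, $\sigma$, $\tau$ and the $\mon$-isomorphisms. A final application of the mate correspondence—using the identity $l(\rho) = l(\ol g^*\sigma) \circ l(\tau f^*)$, which is immediate from the functoriality of the mate construction applied to $\rho = \ol g^*\sigma \circ \tau f^*$—translates this compatibility into precisely the diagram in the statement. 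The main obstacle is not conceptual but organizational: following the model of the hidden proof of Lemma~\ref{lem:abstract-1}, one must write down each compatibility square explicitly, apply the mate correspondence at the appropriate arrow in each, and then carefully read off the resulting composite, which involves tracking a large number of natural transformations through several iterated mate operations.
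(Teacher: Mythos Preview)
Your proposal is correct and follows essentially the same route as the paper: extract a compatibility between $\lpf_{\ol f}$ and $\lpf_f$ from the monoidality of $\sigma$ via a left mate, combine it with the compatibility of $\pf_{\ol g}$ and $\pf_g$ supplied by~\ref{Assumption4}, and then take one further left mate (at $\lpf_{\ol f}\circ \ol f_{(1)}\pf_{\ol g}$) to turn the resulting composite square into the $\ltm$-diagram of the statement. The paper's proof is organized in exactly this way, with the only difference being that it writes out the intermediate diagrams explicitly rather than describing them in prose.
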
 
\begin{hideproof} 
Since $\sigma$ is a monoidal isomorphism, we have a commutative diagram
\[
\begin{tikzcd}[proof]
\beta^*f^*(a')\otimes \beta^*f^*(a)
\ar[r,"\mon_\beta"] \ar[d,"\sigma\otimes\sigma"']
&
\beta^*\bigl(f^*(a')\otimes f^*(a)\bigr)
\ar[r,"\beta^*\mon_f"]
&
\beta^*f^*(a'\otimes a)
\ar[d,"\sigma"]
\\
\ol f^*\alpha^*(a')\otimes \ol f^*\alpha^*(a)
\ar[r,"\mon_{\ol f}"']
&
\ol f^*\bigl(\alpha^*(a')\otimes \alpha^*(a)\bigr)
\ar[r,"\ol f^*\mon_\alpha"']
&
\ol f^*\alpha^*(a'\otimes a).
\end{tikzcd}
\]
After passing to the left mates at $\mon_{\ol{f}}\colon (\blank\otimes
\ol{f}^*\alpha^*(a)\bigr) \ol{f}^*\alpha^* \To \ol{f}^*\bigl(\blank\otimes
\alpha^*(a)\bigr) \alpha^*$, we obtain a commutative diagram
\[
\begin{tikzcd}[proof]
\ol f_{(1)}\bigl(\beta^*g_!(c)\otimes \ol f^*\alpha^*(a)\bigr)
\ar[r,"\lpf_{\ol f}"]
&
\ol f_{(1)}\beta^*g_!(c)\otimes \alpha^*(a)
\ar[d,"l(\sigma)\otimes\id"]
\\
\ol f_{(1)}\bigl(\beta^*g_!(c)\otimes \beta^*f^*(a)\bigr)
\ar[u,"\ol f_{(1)}(\id\otimes\sigma)"]
\ar[d,"\mon_\beta"']
&
\alpha^*f_{(1)}g_!(c)\otimes \alpha^*(a)
\ar[dd,"\mon_\alpha"]
\\
\ol f_{(1)}\beta^*\bigl(g_!(c)\otimes f^*(a)\bigr)
\ar[d,"l(\sigma)"']
\\
\alpha^*f_{(1)}\bigl(g_!(c)\otimes f^*(a)\bigr)
\ar[r,"\alpha^*\lpf_f"']
&
\alpha^*\bigl(f_{(1)}g_!(c)\otimes a\bigr).
\end{tikzcd}
\]
Using \ref{Assumption4}, one now easily checks that the diagram
\[
\begin{tikzcd}[proof]
\ol f_{(1)}\ol g_!\bigl(\gamma^*(c)\otimes \ol g^*\ol f^*\alpha^*(a)\bigr)
\ar[r,"\ol{f}_{(1)}\pf_{\ol g}"]
&
\ol f_{(1)}\bigl(\ol g_!\gamma^*(c)\otimes \ol f^*\alpha^*(a)\bigr)
\ar[r,"\lpf_{\ol f}"]
&
\ol f_{(1)}\ol g_!\gamma^*(c)\otimes\alpha^*(a)
\ar[d,"l(\sigma)\ol f_{(1)}\phi\otimes\id"]
\\
\ol f_{(1)}\ol g_!\bigl(\gamma^*(c)\otimes \gamma^*g^*f^*(a)\bigr)
\ar[u,"\ol f_{(1)}\ol g_!(\id\otimes\rho)"]
\ar[d,"\ol{f}_{(1)}\ol{g}_!\mon_\gamma"']
& &
\alpha^*f_{(1)}g_!(c)\otimes\alpha^*(a)
\ar[dd,"\mon_{\alpha}"]
\\
\ol f_{(1)}\ol g_!\gamma^*(c\otimes g^*f^*(a))
\ar[d,"l(\sigma)\ol f_{(1)}\phi"']
\\
\alpha^*f_{(1)}g_!(c\otimes g^*f^*(a))
\ar[r,"\alpha^*f_{(1)}\pf_g"']
&
\alpha^*f_{(1)}\bigl(g_!(c)\otimes f^*(a)\bigr)
\ar[r,"\alpha^*\lpf_f"']
&
\alpha^*\bigl(f_{(1)}g_!(c)\otimes a\bigr).
\end{tikzcd}
\]
commutes. Now, passing to the left mates at 
\[
\lpf_{\ol{f}}\circ \ol{f}_{(1)}\pf_{\ol{g}}\colon
\ol{f}_{(1)}\ol{g}_!\bigl(\gamma^*(c)\otimes\blank\bigr)
\ol{g}^*\ol{f}^*\alpha^* \Too \bigl(\ol{f}_{(1)}\ol{g}_!\gamma^*(c)\otimes
\blank\bigr) \alpha^*
\]
yields the commutativity of the diagram in the lemma.
\end{hideproof} 

\subsubsection{} 
\label{sss:setup3}
We stay in the context of \S\ref{sss:setup2} but restrict to the subdiagram
\[
\begin{tikzcd}
\cat A \ar[r,"f^*"] \ar[dr,bend right,"h^*"'] 
& 
\cat B \ar[r,"\beta^*"] \ar[d,shift right,"g^*"'] 
& 
\ol{\cat B} \ar[d,shift left,"\ol{g}^*"]
\\
&
\cat C \ar[u,shift right,dashed,"g_!"'] \ar[r,"\gamma^*"']
& 
\ol{\cat C} \ar[u,shift left,dashed,"\ol{g}_!"].
\end{tikzcd}
\]
Besides \ref{Assumption3} and \ref{Assumption4} we additionally assume:
\begin{enumerate}[resume*=abstract]
\item\label{Assumption5} The functors $\beta^*$ and $\gamma^*$ admit left
adjoints:
\[
\beta_{(1)}\dashv \beta^*,\qquad\qquad \gamma_{(1)}\dashv \gamma^*.
\]
\end{enumerate}
We denote $\varepsilon_\beta\colon \beta_{(1)}\beta^*\To \id_{\cat B}$ 
the counit. 
Observe that $f_{(1)}\beta_{(1)}$ is a left adjoint of $\beta^*f^*$.
Passing to the left mate of the composite
$f_{(1)}\beta_{(1)}\ol{g}_!\circ (\bar{c}'\otimes\blank) \circ
\ol{g}^*\beta^*f^*
\xRightarrow{\pf_{\ol g}} f_{(1)}\beta_{(1)}\circ(\ol{g}_!(\bar{c}') \otimes
\blank)\circ \beta^*f^* \xRightarrow{\lpf_\beta} f_{(1)}\circ
(\beta_{(1)}\ol{g}_!(\bar{c}')\otimes \blank)\circ f^*
\xRightarrow{\lpf_f} f_{(1)}\beta_{(1)}\ol{g}_!(\bar{c}')\otimes \blank$, and
using $\ol{g}^*\beta^*f^* = \gamma^*h^*$ we obtain a natural map
\[
\ltm_{f\beta,\ol{g}}\colon f_{(1)}\beta_{(1)}\ol{g}_!(\bar{c}'\otimes
\bar{c}) \too f_{(1)}\beta_{(1)}\ol{g}_!(\bar{c}') \otimes
h_{(1)}\gamma_{(1)}(\bar{c}).
\]
Let $\lpf_\gamma\colon \gamma_{(1)}(\gamma^*(c)\otimes \bar c)
\to c\otimes \gamma_{(1)}(\bar c)$ be the left mate of $\mon_{\gamma}\colon
(\gamma^*(c)\otimes \blank)\circ \gamma^* \Raiso \gamma^*\circ (c\otimes
\blank)$.

\begin{lem}\label{lem:abstract-3} 
For all $c\in \cat C$ and $\bar{c}\in \ol{\cat C}$ the following diagram
commutes:
\[\begin{tikzcd} 
f_{(1)}\beta_{(1)}\ol{g}_!\bigl(\gamma^*(c)\otimes \bar{c}\bigr)
\ar[r,"\ltm_{f\beta,\ol{g}}"]
\ar[d,"f_{(1)}l(\phi)"']
&
f_{(1)}\beta_{(1)}\ol{g}_!\gamma^*(c)\otimes h_{(1)}\gamma_{(1)}(\bar{c})
\ar[d,"f_{(1)}\beta_{(1)}\phi\otimes\id"]
\\
f_{(1)}g_!\gamma_{(1)}\bigl(\gamma^*(c)\otimes\bar{c}\bigr)
\ar[d,"f_{(1)}g_!\lpf_\gamma"']
&
f_{(1)}\beta_{(1)}\beta^*g_!(c)\otimes h_{(1)}\gamma_{(1)}(\bar{c})
\ar[d,"f_{(1)}\varepsilon_\beta\otimes\id"]
\\
f_{(1)}g_!\bigl(c\otimes\gamma_{(1)}(\bar{c})\bigr)
\ar[r,"\ltm_{f,g}"']
&
f_{(1)}g_!(c)\otimes h_{(1)}\gamma_{(1)}(\bar{c}).
\end{tikzcd}\] 
\end{lem}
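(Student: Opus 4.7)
The plan is to follow the same strategy as in the proof of Lemma~\ref{lem:abstract-2}: unfold both $\ltm$ maps via the mate correspondence, pass to the right mates of the entire hexagon, and reduce the commutativity to a diagram whose faces express either naturality, the zig-zag identities for one of the four adjunctions in play, or the compatibility in \ref{Assumption4}.

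More concretely, the first step is to recall that, by construction, $\ltm_{f,g}$ is the left mate of the composite $\lpf_f \circ f_{(1)}\pf_g$, and $\ltm_{f\beta,\bar{g}}$ is the left mate of the longer composite
\[
\lpf_f \circ f_{(1)}\lpf_\beta \circ f_{(1)}\beta_{(1)}\pf_{\bar{g}}.
\]
Likewise, $l(\phi)$ is the left mate of $\phi$ with respect to the adjunctions $\gamma_{(1)}\dashv \gamma^*$ and $\beta_{(1)}\dashv \beta^*$, and $\lpf_\gamma$ is the left mate of $\mon_\gamma$. Since mates are functorial under horizontal and vertical composition, the commutativity of the hexagon in the lemma is equivalent to commutativity of the analogous diagram obtained after passing to right mates with respect to the adjunction $h_{(1)}\gamma_{(1)}\dashv \gamma^* h^*$ acting on the second factor of the tensor product, in which every arrow is a composite of $\pf$'s, $\mon$'s and the isomorphism $\phi$.

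The second step is to invoke \ref{Assumption4} to break this outer diagram into smaller commuting faces. Assumption~\ref{Assumption4} says precisely that, up to the monoidality map $\mon_\gamma$ and the isomorphism $\tau$, the projection formula $\pf_{\bar{g}}$ factors through $\phi$ and $\pf_g$. Passing to the left mate of this factorization, and using that $l(\phi)$ interacts with $\lpf_\gamma$ in the obvious way (both being left mates of monoidality-compatible maps), yields a factorization of $\ltm_{f\beta,\bar{g}}$ through $l(\phi)$ and $\ltm_{f,g}$, up to the difference between $\beta_{(1)}\beta^*$ and $\id$. This is accounted for exactly by the counit $\varepsilon_\beta$ appearing in the right column of the hexagon, via the zig-zag identity of the adjunction $\beta_{(1)}\dashv \beta^*$ applied to $g_!(c)$.

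The main obstacle is purely combinatorial: one has to keep track of several simultaneous left-mate operations with respect to the distinct adjunctions $f_{(1)}\dashv f^*$, $\beta_{(1)}\dashv \beta^*$, $\gamma_{(1)}\dashv \gamma^*$ and $\bar{g}_!\dashv \bar{g}^*$, and verify that all the zig-zag compositions assemble correctly into the composite along the bottom row. This bookkeeping is of the same nature as — but one adjoint layer more involved than — the diagram chase hidden in the proof of Lemma~\ref{lem:abstract-2}, and I expect the argument to go through by the same method once the right mates are written out in full.
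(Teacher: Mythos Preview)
Your proposal is correct and follows essentially the same route as the paper: the paper also passes from the hexagon to its right mate (with respect to $h_{(1)}\gamma_{(1)}\dashv \gamma^*h^*=\ol g^*\beta^*f^*$), obtaining an explicit diagram built from $\pf_{\ol g}$, $\pf_g$, $\lpf_\beta$, $\lpf_f$, $\phi$, $l(\phi)$, and the counits $\varepsilon_\beta,\varepsilon_\gamma$, and then breaks it into faces using exactly the two ingredients you name---\ref{Assumption4} for the face involving $\pf_{\ol g}$ and $\pf_g$, and a small general identity relating $\varepsilon_\beta\circ\beta_{(1)}\phi$ with $g_!\varepsilon_\gamma\circ l(\phi)$ (which is indeed just an unwinding of the zig-zag identities). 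The only thing the paper adds beyond your sketch is writing that last identity out as a separate ``general fact'' and checking it from the triangle identities; your outline already anticipates this step.
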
 
\begin{hideproof} 
It suffices to show that the following diagram commutes, because then passing to
the left mates at $\lpf_f \circ f_{(1)}\lpf_\beta\circ
f_{(1)}\beta_{(1)}\pf_{\ol{g}}\colon
f_{(1)}\beta_{(1)}\ol{g}_!(\gamma*(c)\otimes\blank)
\ol{g}^*\beta^*f^* \To (f_{(1)}\beta_{(1)}\ol{g}_!\gamma^*(c)\otimes \blank)$
proves the assertion.
\[
\begin{tikzcd}[proof,sep=large]
f_{(1)}\beta_{(1)}\ol{g}_!\bigl(\gamma^*(c)\otimes \ol{g}^*\beta^*f^*(a)\bigr)
\ar[r,"f_{(1)}l(\phi)(\id\otimes\tau^{-1})"{yshift=5pt}]
\ar[d,"f_{(1)}\beta_{(1)}\pf_{\ol{g}}"']
&
f_{(1)}g_!\gamma_{(1)}\bigl(\gamma^*(c)\otimes\gamma^*g^*f^*(a)\bigr)
\ar[r,"f_{(1)}g_!\gamma_{(1)}\mon_\gamma"{yshift=5pt}]
&
f_{(1)}g_!\gamma_{(1)}\gamma^*\bigl(c\otimes g^*f^*(a)\bigr)
\ar[d,"f_{(1)}g_!\varepsilon_\gamma"]
\\
f_{(1)}\beta_{(1)}\bigl(\ol{g}_!\gamma^*(c)\otimes \beta^*f^*(a)\bigr)
\ar[d,"f_{(1)}\lpf_\beta"']
& &
f_{(1)}g_!\bigl(c\otimes g^*f^*(a)\bigr)
\ar[d,"f_{(1)}\pf_g"]
\\
f_{(1)}\bigl(\beta_{(1)}\ol{g}_!\gamma^*(c)\otimes f^*(a)\bigr)
\ar[d,"\lpf_f"']
\ar[r,"f_{(1)}(\beta_{(1)}\phi\otimes\id)"]
&
f_{(1)}\bigl(\beta_{(1)}\beta^*g_!(c)\otimes f^*(a)\bigr)
\ar[d,"\lpf_f"']
\ar[r,"f_{(1)}(\varepsilon_\beta\otimes\id)"]
&
f_{(1)}\bigl(g_!(c)\otimes f^*(a)\bigr)
\ar[d,"\lpf_f"]
\\
f_{(1)}\beta_{(1)}\ol{g}_!\gamma^*(c)\otimes a
\ar[r,"f_{(1)}\beta_{(1)}\phi\otimes\id"']
&
f_{(1)}\beta_{(1)}\beta^*g_!(c)\otimes a
\ar[r,"f_{(1)}\varepsilon_\beta\otimes\id"']
&
f_{(1)}g_!(c)\otimes a
\end{tikzcd}
\]
The small squares commute by the naturality of $\lpf_f$. For the big square, we
consider
\[
\begin{tikzcd}[proof]
\beta_{(1)}\ol{g}_!\bigl(\gamma^*(c)\otimes \ol{g}^*\beta^*(b)\bigr)
\ar[dd,"\beta_{(1)}\ol{g}_!(\id\otimes\tau^{-1})"']
\ar[rr,"\beta_{(1)}\pf_{\ol{g}}"]
\ar[rrdd,phantom,"(A)"{font=\tiny}]
& &
\beta_{(1)}\bigl(\ol{g}_!\gamma^*(c)\otimes\beta^*(b)\bigr)
\ar[d,"\beta_{(1)}(\phi\otimes\id)"']
\ar[r,"\lpf_\beta"]
&
\beta_{(1)}\ol{g}_!\gamma^*(c)\otimes b
\ar[d,"\beta_{(1)}\phi\otimes\id"]
\\
& &
\beta_{(1)}\bigl(\beta^*g_!(c)\otimes \beta^*(b)\bigr)
\ar[r,"\lpf_\beta"]
\ar[d,"\beta_{(1)}\mon_\beta"']
\ar[ddr,phantom,"(B)"{yshift=1.5em,font=\tiny}]
&
\beta_{(1)}\beta^*g_!(c)\otimes b
\ar[dd,"\varepsilon_\beta\otimes\id"]
\\
\beta_{(1)}\ol{g}_!\bigl(\gamma^*(c)\otimes \gamma^*g^*(b)\bigr)
\ar[d,"l(\phi)"']
\ar[r,"\beta_{(1)}\ol{g}_!\mon_\gamma"]
&
\beta_{(1)}\ol{g}_!\gamma^*\bigl(c\otimes g^*(b)\bigr)
\ar[d,"l(\phi)"]
&
\beta_{(1)}\beta^*\bigl(g_!(c)\otimes b\bigr)
\ar[dr,"\varepsilon_\beta"']
\\
g_!\gamma_{(1)}\bigl(\gamma^*(c)\otimes \gamma^*g^*(b)\bigr)
\ar[r,"g_!\gamma_{(1)}\mon_\gamma"'{yshift=-5pt}]
&
g_!\gamma_{(1)}\gamma^*\bigl(c\otimes g^*(b)\bigr)
\ar[r,"g_!\varepsilon_\gamma"']
&
g_!\bigl(c\otimes g^*(b)\bigr)
\ar[r,"\pf_g"']
&
g_!(c)\otimes b,
\end{tikzcd}
\]
where we have put $b \coloneqq f^*(a)$.
The commutativity of the small squares on the left and top right are clear from
the naturality of $l(\phi)$ and $\lpf_\beta$, respectively. For the diagram
$(A)$ we consider
\[
\begin{tikzcd}[proof]
\beta_{(1)}\ol{g}_!\bigl(\gamma^*(c)\otimes \ol{g}^*\beta^*(b)\bigr)
\ar[d,"\beta_{(1)}\ol{g}_!(\id\otimes\tau^{-1})"']
\ar[r,"\beta_{(1)}\pf_{\ol{g}}"]
&
\beta_{(1)}\bigl(\ol{g}_!\gamma^*(c)\otimes\beta^*(b)\bigr)
\ar[r,"\beta_{(1)}(\phi\otimes\id)"]
&
\beta_{(1)}\bigl(\beta^*g_!(c)\otimes\beta^*(b)\bigr)
\ar[dd,"\beta_{(1)}\mon_\beta"]
\\
\beta_{(1)}\ol{g}_!\bigl(\gamma^*(c)\otimes\gamma^*g^*(b)\bigr)
\ar[d,"\beta_{(1)}\ol{g}_!\mon_\gamma"']
\\
\beta_{(1)}\ol{g}_!\gamma^*\bigl(c\otimes g^*(b)\bigr)
\ar[d,"l(\phi)"']
\ar[r,"\beta_{(1)}\phi"]
\ar[dr,phantom,"(B')"{font=\tiny}]
&
\beta_{(1)}\beta^*g_!\bigl(c\otimes g^*(b)\bigr)
\ar[d,"\varepsilon_\beta"']
\ar[r,"\beta_{(1)}\beta^*\pf_g"]
&
\beta_{(1)}\beta^*\bigl(g_!(c)\otimes b\bigr)
\ar[d,"\varepsilon_\beta"]
\\
g_!\gamma_{(1)}\gamma^*\bigl(c\otimes g^*(b)\bigr)
\ar[r,"g_!\varepsilon_\gamma"']
&
g_!\bigl(c\otimes g^*(b)\bigr)
\ar[r,"\pf_g"']
&
g_!(c)\otimes b.
\end{tikzcd}
\]
Here, the lower right square commutes by naturality of $\varepsilon_\beta$, and
the top square commutes by \ref{Assumption4}. The diagrams $(B)$ and $(B')$
commute by the following general fact: Consider the left diagram of functors
\[
\begin{tikzcd}
\cat X
\ar[r,shift left,"L"] \ar[d,"F"']
\ar[dr,Rightarrow,shorten=1em,"\chi" description]
&
\cat Y
\ar[l,shift left,"R"] \ar[d,"G"]
&[3em]
L'FR \ar[r,"L'\chi"] \ar[d,"l(\chi)R"'] & L'R'G \ar[d,"\varepsilon'G"]
\\
\cat X'
\ar[r,shift left,"L'"]
&
\cat Y'
\ar[l,shift left,"R'"],
&
GLR \ar[r,"G\varepsilon"'] & G
\end{tikzcd}
\]
where $\chi\colon FR\To R'G$ is a natural transformation with left mate
$l(\chi)\colon L'F\To GL$. Denote by $\varepsilon\colon LR\To \id_{\cat Y}$ and $\varepsilon'\colon L'R'\To \id_{\cat Y'}$ the counits of the respective adjunctions. Then the right diagram above commutes, since the diagram
\[
\begin{tikzcd}[proof,sep=large]
L'FR
\ar[d,"L'F\eta R" description] \ar[dr,equals,bend left] 
\ar[ddd, start anchor=south west, end anchor=north west,bend right,"l(\chi)R"']
\\
L'FRLR 
\ar[r,"L'FR\varepsilon"] \ar[d,"L'\chi LR" description]
& 
L'FR
\ar[d,"L'\chi"]
\\
L'R'GLR
\ar[d,"\varepsilon'GLR" description] \ar[r,"L'R'G\varepsilon"] 
& 
L'R'G
\ar[d,"\varepsilon'G"]
\\
GLR 
\ar[r,"G\varepsilon"']
& 
G
\end{tikzcd}
\]
is commutative, where $\eta\colon \id_{\cat X}\To RL$ denotes the unit of the
adjunction $L\dashv R$.

\end{hideproof} 

\subsubsection{} 
\label{sss:setup4}
Consider a diagram
\[
\begin{tikzcd}
& & \cat A 
\ar[dl,shift right,"f^*"'] \ar[dd,shift left,bend left,"h^*"]
\\
\cat D
\ar[urr,bend left,"\alpha^*"] \ar[r,"\beta^*"] \ar[drr,bend right,"\gamma^*"']
&
\cat B
\ar[ur,shift right,dashed,"f_!"'] \ar[dr,shift right,"g^*"']
\\
& &
\cat C
\ar[ul,shift right,dashed, "g_!"'] \ar[uu,shift left, dashed, bend right,
"h_!"]
\end{tikzcd}
\]
where the solid diagram commutes and consists of (strongly) symmetric monoidal
functors between symmetric monoidal categories, and where the dashed diagram
commutes. The commutativity of the right triangles is witnessed by natural
isomorphisms
\[
\lambda\colon h_! \Raiso f_!g_!
\qquad\text{and}\qquad 
\mu\colon h^* \Raiso g^*f^*,
\]
where $\mu$ is monoidal. We make the following assumption:
\begin{enumerate}[resume*=abstract]
\item\label{Assumption6} The functors $\alpha^*$, $\beta^*$, and $\gamma^*$
admit left adjoints:
\[
\alpha_{(1)}\dashv \alpha^*,\qquad \beta_{(1)}\dashv \beta^*,\qquad 
\gamma_{(1)}\dashv \gamma.
\]
\end{enumerate}
Assume moreover that there are natural isomorphisms
\[
\begin{matrix}
\pf_f\colon & f_!\bigl(b\otimes f^*(a)\bigr) &\raiso& f_!(b)\otimes a,\\
\pf_g\colon & g_!\bigl(c\otimes g^*(b)\bigr) &\raiso& g_!(c)\otimes b,\\
\pf_h\colon & h_!\bigl(c\otimes h^*(a)\bigr) &\raiso& h_!(c)\otimes a,
\end{matrix}
\]
which satisfy the following conditions:
\begin{enumerate}[resume*=abstract]
\item\label{Assumption7} For all $a\in\cat A$, $c\in \cat C$ the diagram
\[
\begin{tikzcd}
h_!\bigl(c\otimes h^*(a)\bigr)
\ar[rr,"\pf_h"] \ar[d,"\lambda"']
& &
h_!(c)\otimes a
\ar[dd,"\lambda\otimes\id"]
\\
f_!g_!\bigl(c\otimes h^*(a)\bigr)
\ar[d,"f_!g_!(\id\otimes\mu)"']
\\
f_!g_!\bigl(c\otimes g^*f^*(a)\bigr)
\ar[r,"f_!\pf_g"']
&
f_!\bigl(g_!(c)\otimes f^*(a)\bigr)
\ar[r,"\pf_f"']
&
f_!g_!(c)\otimes a
\end{tikzcd}
\]
is commutative.

\item\label{Assumption8} For all $a',a\in \cat A$, $b\in \cat B$ the diagram
\[
\begin{tikzcd}
f_!\bigl((b\otimes f^*(a'))\otimes f^*(a)\bigr)
\ar[d,"f_!\assoc"',"\cong"] \ar[r,"\pf_f"]
&[2em]
f_!\bigl(b\otimes f^*(a')\bigr)\otimes a
\ar[r,"\pf_f\otimes\id"]
&
\bigl(f_!(b)\otimes a'\bigr)\otimes a
\ar[d,"\assoc","\cong"']
\\
f_!\bigl(b\otimes (f^*(a')\otimes f^*(a))\bigr)
\ar[r,"f_!(\id\otimes\mon_f)"']
&
f_!\bigl(b\otimes f^*(a'\otimes a)\bigr)
\ar[r,"\pf_f"']
&
f_!(b)\otimes (a'\otimes a)
\end{tikzcd}
\]
is commutative, and similarly with $f$ replaced by $g$. Here, $\assoc$ denotes
the associativity constraint in the respective monoidal categories.
\end{enumerate}

Finally, let $\wt{\pf}_g$ be the unique natural isomorphism making the
diagram
\[
\begin{tikzcd}
g_!\bigl(g^*(b)\otimes c\bigr)
\ar[r,"\wt{\pf}_g"] \ar[d,"\sym"',"\cong"]
&
b\otimes g_!(c)
\ar[d,"\sym","\cong"']
\\
g_!\bigl(c\otimes g^*(b)\bigr)
\ar[r,"\pf_g"']
&
g_!(c)\otimes b
\end{tikzcd}
\]
commutative, where $\sym$ denotes the symmetry constraints in the respective
symmetric monoidal categories.

\begin{lem}\label{lem:abstract-4} 
For all $b\in \cat B$ and $c',c\in \cat C$ the following diagram commutes:
\[
\begin{tikzcd}[font=\small,labels={font=\tiny},column sep=3em]
\alpha_{(1)}h_!\bigl((g^*(b)\otimes c')\otimes c\bigr)
\ar[d,"\alpha_{(1)}\lambda"'] 
\ar[r,"\ltm_{\alpha,h}"]
&
\alpha_{(1)}h_!\bigl(g^*(b)\otimes c'\bigr)\otimes \gamma_{(1)}(c)
\ar[r,"\alpha_{(1)}\lambda \otimes\id"]
&
\alpha_{(1)}f_!g_!\bigl(g^*(b)\otimes c'\bigr)\otimes \gamma_{(1)}(c)
\ar[d,"\alpha_{(1)}f_!\wt{\pf}_g\otimes\id"]
\\
\alpha_{(1)}f_!g_!\bigl((g^*(b)\otimes c')\otimes c\bigr)
\ar[d,"\alpha_{(1)}f_!g_!\assoc"']
& &
\alpha_{(1)}f_!\bigl(b\otimes g_!(c')\bigr)\otimes \gamma_{(1)}(c)
\ar[d,"\ltm_{\alpha,f}\otimes\id"]
\\
\alpha_{(1)}f_!g_!\bigl(g^*(b)\otimes (c'\otimes c)\bigr)
\ar[d,"\alpha_{(1)}f_!\wt{\pf}_g"']
& &
\bigl(\alpha_{(1)}f_!(b)\otimes \beta_{(1)}g_!(c')\bigr)\otimes \gamma_{(1)}(c)
\ar[d,"\assoc"]
\\
\alpha_{(1)}f_!\bigl(b\otimes g_!(c'\otimes c)\bigr)
\ar[r,"\ltm_{\alpha,f}"']
&
\alpha_{(1)}f_!(b)\otimes \beta_{(1)}g_!(c'\otimes c)
\ar[r,"\id\otimes\ltm_{\beta,g}"']
&
\alpha_{(1)}f_!(b)\otimes \bigl(\beta_{(1)}g_!(c')\otimes \gamma_{(1)}(c)\bigr),
\end{tikzcd}
\]
where $\ltm_{\alpha,h}$, $\ltm_{\alpha,f}$, and $\ltm_{\beta,g}$ are defined as
in \S\ref{sss:setup2}.
\end{lem}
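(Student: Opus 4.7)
The plan is to verify the commutativity by reducing it to the explicit compatibilities \ref{Assumption7} and \ref{Assumption8}, in the spirit of the mate calculations carried out in Lemmas~\ref{lem:abstract-1}--\ref{lem:abstract-3}. Each of the three $\ltm$-maps is by construction the left mate of a composite of a projection-formula isomorphism ($\pf_h$, $\pf_f$, or $\pf_g$, respectively) followed by the appropriate $\lpf$-type map, as spelled out in \S\ref{sss:setup2}. By the naturality of the mate correspondence recalled in \S\ref{sss:mates}, the commutativity of the diagram in the lemma is equivalent to the commutativity of its adjoint version, obtained by passing back through the adjunctions $\alpha_{(1)} \dashv \alpha^*$, $\beta_{(1)} \dashv \beta^*$, and $\gamma_{(1)} \dashv \gamma^*$. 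In the reformulated diagram every arrow is built from $\pf_h, \pf_f, \pf_g$, the isomorphisms $\lambda$ and $\mu$, the associator $\assoc$, the symmetry $\sym$ (entering through $\wt{\pf}_g$), and standard monoidality constraints.

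The heart of the argument is then a two-step reduction. First, \ref{Assumption7} identifies the left vertical composite containing $\alpha_{(1)}\lambda$ and $\alpha_{(1)}\pf_h$ with the iterated composite $\pf_f \circ f_! \pf_g$ (after transporting $h^*$ to $g^* f^*$ via $\mu$); this realigns the left side of the diagram, which is phrased via $h_!$, with the right side, which is already phrased in terms of $f_! g_!$. Second, the central square containing the associator $\assoc$ commutes by applying \ref{Assumption8} successively: once for $g$ to unfold $\wt{\pf}_g$ on the nested tensor $(g^*(b) \otimes c') \otimes c$, and once for $f$ to pass the $b$-factor past the two outputs $g_!(c')$ and $\gamma_{(1)}(c)$. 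All remaining sub-squares are either naturality squares for $\pf_f, \pf_g, \sym$, coherence diagrams of the symmetric monoidal structure, or the standard triangular identities relating units/counits of the three vertical adjunctions.

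The main obstacle will be the combinatorial bookkeeping rather than any genuinely difficult idea. Because the argument $g^*(b)$ sits on the \emph{left} of $c' \otimes c$, the twist $\wt{\pf}_g = \sym \circ \pf_g \circ g_!(\sym^{-1})$ injects additional symmetry constraints that must be chased through the associator coherence and the mate correspondence, and one has to pay close attention to line everything up with \ref{Assumption8} (which is formulated for $\pf_g$, not $\wt{\pf}_g$). Once this tracking is done, however, the chase is entirely formal and uses no hypotheses beyond \ref{Assumption6}, \ref{Assumption7}, and \ref{Assumption8} together with the coherence of symmetric monoidal categories.
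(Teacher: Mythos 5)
Your sketch matches the paper's own proof in structure and in the ingredients it invokes: the paper passes to left mates at the single composite $\lpf_\alpha\circ\alpha_{(1)}\pf_h$ (rather than separately for each of the three adjunctions, which is a minor imprecision in your wording), uses \ref{Assumption7} for the rectangle containing $\alpha_{(1)}\lambda$ and $\alpha_{(1)}\pf_h$, chases the $\wt{\pf}_g$-twist through \ref{Assumption8} for $g$ together with the hexagon identity, and closes a final sub-square via \ref{Assumption8} for $f$ together with the monoidality of $\alpha^*$. The substantial multi-page diagram chase you flag as "combinatorial bookkeeping" is precisely what the paper's proof spends its effort on, and you have correctly identified both the sources of complexity and the exact hypotheses that resolve them.
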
 
\begin{hideproof} 
It suffices to show that the diagram
\[ 
\hspace{-6em}
\begin{tikzcd}[proof]
\alpha_{(1)}h_!\bigl((g^*(b)\otimes c') \otimes h^*\alpha^*(d)\bigr)
\ar[d,"\alpha_{(1)}\lambda(\id\otimes\mu)"']
\ar[rr,"\alpha_{(1)}\pf_h"]
& &
\alpha_{(1)}\bigl(h_!(g^*(b)\otimes c')\otimes \alpha^*(d)\bigr)
\ar[r,"\lpf_\alpha"]
\ar[d,"\alpha_{(1)}(\lambda\otimes\id)"]
&
\alpha_{(1)}h_!\bigl(g^*(b)\otimes c')\otimes d
\ar[d,"\alpha_{(1)}\lambda\otimes \id"]
\\
\alpha_{(1)}f_!g_!\bigl((g^*(b)\otimes c')\otimes g^*f^*\alpha^*(d)\bigr)
\ar[d,"\alpha_{(1)}f_!g_!\assoc"']
\ar[r,"\alpha_{(1)}f_!\pf_g"]
\ar[ddr,phantom,"\ref{lwirth-trans-I}"]
&
\alpha_{(1)}f_!\bigl(g_!(g^*(b)\otimes c')\otimes f^*\alpha^*(d)\bigr)
\ar[d,"\alpha_{(1)}f_!(\wt{\pf}_g\otimes\id)"]
\ar[r,"\alpha_{(1)}\pf_f"]
&
\alpha_{(1)}\bigl(f_!g_!(g^*(b)\otimes c')\otimes \alpha^*(d)\bigr)
\ar[r,"\lpf_\alpha"]
\ar[dd,"\alpha_{(1)}(f_!\wt{\pf}_g\otimes\id)"]
&
\alpha_{(1)}f_!g_!\bigl(g^*(b)\otimes c'\bigr)\otimes d
\ar[dd,"\alpha_{(1)}f_!\wt{\pf}_g\otimes \id"]
\\
\alpha_{(1)}f_!g_!\bigl(g^*(b)\otimes (c'\otimes g^*\beta^*(d))\bigr)
\ar[d,"\alpha_{(1)}f_!\wt{\pf}_g"']
&
\alpha_{(1)}f_!\bigl((b\otimes g_!(c'))\otimes \beta^*(d)\bigr)
\ar[d,"\alpha_{(1)}f_!\assoc"']
\ar[dr,"\alpha_{(1)}\pf_f"]
\\
\alpha_{(1)}f_!\bigl(b\otimes g_!(c'\otimes g^*\beta^*(d))\bigr)
\ar[d,"\ltm_{\alpha,f}"']
\ar[r,"\alpha_{(1)}f_!(\id\otimes \pf_g)"]
&
\alpha_{(1)}f_!\bigl(b\otimes (g_!(c')\otimes \beta^*(d))\bigr)
\ar[d,"\ltm_{\alpha,f}"']
&
\alpha_{(1)}\bigl(f_!(b\otimes g_!(c'))\otimes \alpha^*(d)\bigr)
\ar[r,"\lpf_\alpha"]
\ar[d,phantom, "\ref{lwirth-trans-II}"]
&
\alpha_{(1)}f_!\bigl(b\otimes g_!(c')\bigr)\otimes d
\ar[d,"\ltm_{\alpha,f}\otimes\id"]
\\
\alpha_{(1)}f_!(b)\otimes \beta_{(1)}g_!\bigl(c'\otimes g^*\beta^*(d)\bigr)
\ar[r,"\id\otimes\beta_{(1)}\pf_g"']
&
\alpha_{(1)}f_!(b)\otimes \beta_{(1)}\bigl(g_!(c')\otimes \beta^*(d)\bigr)
\ar[r,"\id\otimes\lpf_\beta"']
&
\alpha_{(1)}f_!(b)\otimes \bigl(\beta_{(1)}g_!(c')\otimes d\bigr)
&
\bigl(\alpha_{(1)}f_!(b)\otimes \beta_{(1)}g_!(c')\bigr)\otimes d
\ar[l,"\assoc"]
\end{tikzcd}
\] 
commutes, because then passing to the left mates at $\lpf_\alpha
\circ\alpha_{(1)}\pf_h \colon \alpha_{(1)}h_!\bigl((g^*(b)\otimes c')\otimes
\blank) h^*\alpha^* \To \alpha_{(1)}h_!\bigl(g^*(b)\otimes c'\bigr)\otimes
\blank$ shows that the diagram in the lemma commutes. The upper left rectangle
commutes by \ref{Assumption7}, and the
commutativity of all squares except \ref{lwirth-trans-I} and
\ref{lwirth-trans-II} is clear. 
\begin{enumerate}[label=Ad (\Roman*), ref=(\Roman*)]
\item\label{lwirth-trans-I} We need to show that the outer diagram
\[ 
\begin{tikzcd}[proof]
g_!\bigl((g^*(b)\otimes c')\otimes g^*(b')\bigr)
\ar[d,"g_!(s\otimes\id)"']
\ar[r,"\pf_g"]
\ar[ddddd,start anchor=south west, end anchor=north west,bend
right,"g_!(\assoc)"']
&
g_!\bigl(g^*(b)\otimes c'\bigr)\otimes b'
\ar[d,"g_!s\otimes \id"]
\ar[r,"\wt{\pf}_g\otimes\id"]
&
\bigl(b\otimes g_!(c')\bigr)\otimes b'
\ar[d,"s\otimes \id"]
\ar[ddddd,start anchor=south east, end anchor=north east, bend left, "\assoc"]
\\
g_!\bigl((c'\otimes g^*(b)\bigr)\otimes g^*(b')\bigr)
\ar[d,"g_!\assoc"']
\ar[r,"\pf_g"]
&
g_!\bigl(c'\otimes g^*(b)\bigr)\otimes b'
\ar[r,"\pf_g\otimes\id"]
\ar[d,phantom,"(*)"{font=\tiny}]
&
\bigl(g_!(c')\otimes b\bigr)\otimes b'
\ar[d,"\assoc"]
\\
g_!\bigl(c'\otimes (g^*(b)\otimes g^*(b'))\bigr)
\ar[d,"g_!(\id\otimes s)"']
\ar[r,"g_!(\id\otimes\mon_g)"]
&
g_!\bigl(c'\otimes g^*(b\otimes b')\bigr)
\ar[d,"g_!(\id\otimes g^*s)"]
\ar[r,"\pf_g"]
&
g_!(c')\otimes (b\otimes b')
\ar[d,"\id\otimes s"]
\\
g_!\bigl(c'\otimes (g^*(b')\otimes g^*(b))\bigr)
\ar[r,"g_!(\id\otimes\mon_g)"]
&
g_!\bigl(c'\otimes g^*(b'\otimes b)\bigr)
\ar[r,"\pf_g"']
\ar[d,phantom,"(**)"{font=\tiny}]
&
g_!(c')\otimes (b'\otimes b)
\\
g_!\bigl((c'\otimes g^*(b'))\otimes g^*(b)\bigr)
\ar[u,"g_!\assoc"]
\ar[r,"\pf_g"]
&
g_!\bigl(c'\otimes g^*(b')\bigr)\otimes b
\ar[r,"\pf_g\otimes\id"]
&
\bigl(g_!(c')\otimes b'\bigr)\otimes b
\ar[u,"\assoc"']
\\
g_!\bigl(g^*(b)\otimes (c'\otimes g^*(b'))\bigr)
\ar[u,"g_!s"]
\ar[r,"\pf_g"']
&
b\otimes g_!\bigl(c'\otimes g^*(b')\bigr)
\ar[u,"s"]
\ar[r,"\id\otimes\pf_g"']
&
b\otimes \bigl(g_!(c')\otimes b'\bigr)
\ar[u,"s"']
\end{tikzcd}
\] 
commutes, where we have put $b'\coloneqq f^*\alpha^*(d)$. It is clear that all
the small squares commute; for example, the left square in
the middle commutes, because $g$ is a symmetric monoidal functor. The diagrams
marked $(*)$ and
$(**)$ commute by \ref{Assumption8}. The diagrams on the far left and far right
commute by the hexagon identity.

\item\label{lwirth-trans-II} After passing to the right mate at
$\id\otimes\lpf_\beta$ or $\ltm_{\alpha,f}\otimes \id$, and using $\beta^* =
f^*\alpha^*$, it suffices to show that the following diagram commutes:
\[
\begin{tikzcd}[proof]
\alpha_{(1)}f_!\bigl(b\otimes (f^*\alpha^*(d')\otimes f^*\alpha^*(d))\bigr)
\ar[r,"\alpha_{(1)}f_!(\id\otimes \mon_f)"]
&
\alpha_{(1)}f_!\bigl(b\otimes f^*(\alpha^*(d')\otimes \alpha^*(d))\bigr)
\ar[r,"\alpha_{(1)}f_!(\id\otimes f^*\mon_\alpha)"]
\ar[d,"\alpha_{(1)}\pf_f"]
&
\alpha_{(1)}f_!\bigl(b\otimes f^*\alpha^*(d'\otimes d)\bigr)
\ar[d,"\alpha_{(1)}\pf_f"]
\\
\alpha_{(1)}f_!\bigl((b\otimes f^*\alpha^*(d'))\otimes f^*\alpha^*(d)\bigr)
\ar[u,"\alpha_{(1)}f_!\assoc"]
\ar[d,"\alpha_{(1)}\pf_f"']
&
\alpha_{(1)}\bigl(f_!(b)\otimes (\alpha^*(d')\otimes \alpha^*(d))\bigr)
\ar[r,"\alpha_{(1)}(\id\otimes\mon_\alpha)"]
&
\alpha_{(1)}\bigl(f_!(b)\otimes \alpha^*(d'\otimes d)\bigr)
\ar[d,"\lpf_\alpha"]
\\
\alpha_{(1)}\bigl(f_!(b\otimes f^*\alpha^*(d'))\otimes \alpha^*(d)\bigr)
\ar[d,"\lpf_\alpha"']
\ar[r,"\alpha_{(1)}(\pf_f\otimes\id)"]
&
\alpha_{(1)}\bigl((f_!(b)\otimes \alpha^*(d'))\otimes \alpha^*(d)\bigr)
\ar[u,"\alpha_{(1)}\assoc"]
\ar[d,"\lpf_\alpha"']
&
\alpha_{(1)}f_!(b)\otimes (d'\otimes d)
\\
\alpha_{(1)}f_!\bigl(b\otimes f^*\alpha^*(d')\bigr)\otimes d
\ar[r,"\alpha_{(1)}\pf_f\otimes\id"']
&
\alpha_{(1)}\bigl(f_!(b)\otimes \alpha^*(d')\bigr) \otimes d
\ar[r,"\lpf_\alpha\otimes\id"']
&
\bigl(\alpha_{(1)}f_!(b)\otimes d'\bigr)\otimes d
\ar[u,"\assoc"']
\end{tikzcd}
\]
The lower left square commutes by naturality of $\lpf_\alpha$, and the upper
right square by the naturality of $\alpha_{(1)}\pf_f$. The upper left rectangle
commutes by \ref{Assumption8}. For the lower right rectangle, we pass to the
right mate at, say, $\lpf_\alpha\colon \alpha_{(1)}(-\otimes\alpha^*(d'\otimes
d)) \Rightarrow (-\otimes (d'\otimes d))\alpha_{(1)}$, and then it suffices to
show that the diagram
\[
\begin{tikzcd}[proof]
\bigl(\alpha^*(d'')\otimes \alpha^*(d')\bigr)\otimes \alpha^*(d)
\ar[r,"\mon_\alpha\otimes\id"]
\ar[d,"\assoc"']
&
\alpha^*(d''\otimes d')\otimes \alpha^*(d)
\ar[r,"\mon_\alpha"]
&
\alpha^*\bigl((d''\otimes d')\otimes d\bigr)
\ar[d,"\alpha^*\assoc"]
\\
\alpha^*(d'')\otimes \bigl(\alpha^*(d')\otimes \alpha^*(d)\bigr)
\ar[r,"\id\otimes\mon_\alpha"']
&
\alpha^*(d'')\otimes \alpha^*(d'\otimes d)
\ar[r,"\mon_\alpha"']
&
\alpha^*\bigl(d''\otimes (d'\otimes d)\bigr).
\end{tikzcd}
\]
is commutative. But this follows from the fact that $\alpha$ is a monoidal
functor. Thus, we deduce that \ref{lwirth-trans-II} commutes.
\end{enumerate}
\end{hideproof} 

\bibliographystyle{alphaurl}
\bibliography{../references}{}

\begin{thebibliography}{AHV19}

\bibitem[AHV19]{AHV.2019}
Noriyuki Abe, Guy Henniart, and Marie-France Vign{\'e}ras.
\newblock {Modulo $p$ representations of reductive $p$-adic groups: Functorial
  properties}.
\newblock {\em Transactions of the American Mathematical Society},
  371(12):8297--8337, 2019.
\newblock \href {https://doi.org/10.1090/tran/7406}
  {\path{doi:10.1090/tran/7406}}.

\bibitem[BG10]{Buzzard-Gee.2010}
Kevin Buzzard and Toby Gee.
\newblock {The conjectural connections between automorphic representations and
  Galois representations}.
\newblock {\em Automorphic forms and Galois representations}, 1:135--187, 2010.

\bibitem[Bor91]{Borel.1991}
Armand Borel.
\newblock {\em Linear Algebraic Groups}.
\newblock Graduate texts in mathematics 126. Springer, 2nd edition, 1991.

\bibitem[Bou07]{Bourbaki.2007}
{\em {Alg{\`e}bre: Chapitres 1 {\`a} 3}}.
\newblock Springer Science \& Business Media, 2007.

\bibitem[BT65]{Borel-Tits.1965}
Armand Borel and Jacques Tits.
\newblock Groupes r\'eductifs.
\newblock {\em Publications Math\'ematiques de l'IH\'ES}, 27:55--151, 1965.
\newblock URL: \url{http://www.numdam.org/item/PMIHES_1965__27__55_0/}.

\bibitem[BW00]{Borel-Wallach.2000}
Armand Borel and Nolan~R Wallach.
\newblock {\em Continuous cohomology, discrete subgroups, and representations
  of reductive groups}.
\newblock Number~67. American Mathematical Soc., 2000.

\bibitem[BZ76]{Bernstein-Zelevinski.1976}
Joseph Bernstein and Alexander Zelevinsky.
\newblock {Representations of the group $\GL(n,F)$ where $F$ is a
  non-archimedean local field}.
\newblock {\em Russian Mathematical Surveys}, 31(3):1--68, jun 1976.
\newblock \href {https://doi.org/10.1070/rm1976v031n03abeh001532}
  {\path{doi:10.1070/rm1976v031n03abeh001532}}.

\bibitem[BZ77]{Bernstein-Zelevinski.1977}
I.~N. Bernstein and A.~V. Zelevinsky.
\newblock Induced representations of reductive ${\mathfrak {p}}$-adic groups.
  {I}.
\newblock {\em Annales scientifiques de l'\'Ecole Normale Sup\'erieure}, Ser.
  4, 10(4):441--472, 1977.
\newblock \href {https://doi.org/10.24033/asens.1333}
  {\path{doi:10.24033/asens.1333}}.

\bibitem[Car85]{Carter.1985}
Roger~William Carter.
\newblock {Finite groups of Lie type: Conjugacy classes and complex
  characters}.
\newblock {\em Pure Appl. Math.}, 44, 1985.

\bibitem[Cas95]{Casselman.1995}
Bill Casselman.
\newblock Introduction to admissible representations of $p$-adic groups.
\newblock {\em unpublished notes}, 1995.
\newblock URL:
  \url{https://personal.math.ubc.ca/~cass/research/pdf/p-adic-book.pdf}.

\bibitem[Eme10]{EmertonII}
Matthew Emerton.
\newblock {Ordinary parts of admissible representations of $p$-adic reductive
  groups II. Derived functors}.
\newblock In {\em Repr\'esentations $p$-adiques de groupes $p$-adiques III :
  m\'ethodes globales et g\'eom\'etriques}, number 331 in Ast\'erisque.
  Soci\'et\'e math\'ematique de France, 2010.
\newblock URL: \url{http://www.numdam.org/item/AST_2010__331__403_0/}.

\bibitem[Fus22]{Fust.2022}
Paulina Fust.
\newblock Continuous group cohomology and ext-groups.
\newblock {\em M{\"u}nster Journal of Mathematics}, 2022.

\bibitem[GK14]{EGK.2014}
Elmar Grosse-Kl{\"o}nne.
\newblock {On special representations of $p$-adic reductive groups}.
\newblock {\em Duke Mathematical Journal}, 163(12):2179 -- 2216, 2014.
\newblock \href {https://doi.org/10.1215/00127094-2785697}
  {\path{doi:10.1215/00127094-2785697}}.

\bibitem[Hau16]{Hauseux.2016}
Julien Hauseux.
\newblock {Extensions entre s{\'e}ries principales}.
\newblock {\em Journal of the Institute of Mathematics of Jussieu},
  15(2):225--270, 2016.

\bibitem[Hau18]{Hauseux.2018}
Julien Hauseux.
\newblock {Parabolic induction and extensions}.
\newblock {\em Algebra \& Number Theory}, 12(4):779--831, 2018.
\newblock \href {https://doi.org/10.2140/ant.2018.12.779}
  {\path{doi:10.2140/ant.2018.12.779}}.

\bibitem[Hau19]{Hauseux.2019}
Julien Hauseux.
\newblock {Sur une conjecture de Breuil–Herzig}.
\newblock {\em Journal f\"ur die reine und angewandte Mathematik (Crelles
  Journal)}, 2019(751):91--119, 2019.
\newblock \href {https://doi.org/doi:10.1515/crelle-2016-0039}
  {\path{doi:doi:10.1515/crelle-2016-0039}}.

\bibitem[Hey23]{Heyer.2022}
Claudius Heyer.
\newblock The left adjoint of derived parabolic induction.
\newblock {\em Math. Z.}, 305(46), 2023.
\newblock \href {https://doi.org/10.1007/s00209-023-03385-5}
  {\path{doi:10.1007/s00209-023-03385-5}}.

\bibitem[HW22]{Hu-Wang.2022}
Yongquan Hu and Haoran Wang.
\newblock On the mod $p$ cohomology for $\mathrm{GL}_2$: the non-semisimple
  case.
\newblock {\em Cambridge Journal of Mathematics}, 10(2):261--431, 2022.
\newblock \href {https://doi.org/10.4310/CJM.2022.v10.n2.a1}
  {\path{doi:10.4310/CJM.2022.v10.n2.a1}}.

\bibitem[Koh17]{Kohlhaase.2017}
Jan Kohlhaase.
\newblock {Smooth duality in natural characteristic}.
\newblock {\em Advances in Mathematics}, 317:1--49, 2017.
\newblock \href {https://doi.org/https://doi.org/10.1016/j.aim.2017.06.038}
  {\path{doi:https://doi.org/10.1016/j.aim.2017.06.038}}.

\bibitem[KS74]{Kelly-Street.1974}
G.~M. Kelly and Ross Street.
\newblock Review of the elements of 2-categories.
\newblock In Gregory~M. Kelly, editor, {\em Category Seminar}, pages 75--103,
  Berlin, Heidelberg, 1974. Springer Berlin Heidelberg.
\newblock \href {https://doi.org/10.1007/BFb0063101}
  {\path{doi:10.1007/BFb0063101}}.

\bibitem[KS06]{KS}
Masaki Kashiwara and Pierre Schapira.
\newblock {\em {Categories and Sheaves}}.
\newblock Grundlehren der Math. 332. Springer-Verlag, Berlin, 2006.

\bibitem[Lur23]{kerodon}
Jacob Lurie.
\newblock Kerodon.
\newblock \url{https://kerodon.net}, 2023.

\bibitem[Ly15]{Ly.2015}
Tony Ly.
\newblock {Repr{\'e}sentations de Steinberg modulo $p$ pour un groupe
  r{\'e}ductif sur un corps local}.
\newblock {\em Pacific Journal of Mathematics}, 277(2):425--462, 2015.
\newblock \href {https://doi.org/10.2140/pjm.2015.277.425}
  {\path{doi:10.2140/pjm.2015.277.425}}.

\bibitem[Ren10]{Renard.2010}
David Renard.
\newblock {\em Repr\'esentations des groupes r\'eductifs $p$-adiques},
  volume~17 of {\em Cours sp\'ecialis\'es}.
\newblock Soci\'et\'e Math\'ematiques de France, 2010.
\newblock URL: \url{http://www.cmls.polytechnique.fr/perso/renard/Padic.pdf}.

\bibitem[Sch11]{Schneider.2011}
Peter Schneider.
\newblock {\em $p$-adic Lie groups}, volume 344 of {\em Grundlehren der
  mathematischen Wissenschaften}.
\newblock Springer Science \& Business Media, 2011.
\newblock \href {https://doi.org/10.1007/978-3-642-21147-8}
  {\path{doi:10.1007/978-3-642-21147-8}}.

\bibitem[Sch19]{Condensed}
Peter Scholze.
\newblock Condensed mathematics.
\newblock {\em Lecture notes based on joint work with D. Clausen}, 2019.
\newblock URL: \url{https://www.math.uni-bonn.de/people/scholze/Condensed.pdf}.

\bibitem[Ver69]{Verdier.1969}
Jean-Louis Verdier.
\newblock Base change for twisted inverse image of coherent sheaves.
\newblock {\em Algebraic geometry}, pages 393--408, 1969.

\bibitem[Vig96]{Vigneras.1996}
Marie-France Vign{\'e}ras.
\newblock {Repr\'esentations $l$-modulaires d'un groupe r\'eductif $p$-adique
  avec $l\neq p$}.
\newblock {\em Progress in Mathematics}, 137, Birkh\"auser: Boston, 1996.

\end{thebibliography}
\end{document}